\newcommand*{\B}[1]{\ifmmode\bm{#1}\else\textbf{#1}\fi}
\definecolor{mycolor}{RGB}{196,19,47}
\definecolor{mygray}{gray}{0.4}
\theoremstyle{remark}
\newtheorem{remark}[equation]{Remark}
\newtheorem*{remark_nn}{Remark}
\theoremstyle{definition}
\newtheorem{df}[equation]{Definition}
\newtheorem{cons}[equation]{Construction}
\newtheorem{ex}[equation]{Example}
\theoremstyle{plain}
\newtheorem*{thm_nn}{Theorem}
\newtheorem{theorem}[equation]{Theorem}
\newtheorem*{lemma_nn}{Lemma}
\newtheorem{corollary}[equation]{Corollary}
\newtheorem*{cor_nn}{Corollary}
\newtheorem{lemma}[equation]{Lemma}
\newtheorem*{thm_nn_a0}{Theorem A.0}
\newtheorem*{thm_nn_a1}{Theorem A.1}
\newtheorem*{thm_nn_b}{Theorem B}
\newtheorem*{thm_nn_c}{Theorem C}
\newtheorem*{thm_nn_d}{Theorem D}
\newtheorem*{thm_nn_e}{Theorem E}
\newtheorem*{thm_nn_f}{Theorem F}
\newtheorem*{Claim}{Claim}
\newtheorem*{Conj_nn}{Conjecture}
\providecommand*{\rightbotmark}{\expandafter\@rightmark\botmark\@empty\@empty}
\begin{document}
\pagestyle{plain}

\begin{center}
    
\large{\textbf{Kontsevich's Cocycle Construction and Quantization of the Loday-Quillen-Tsygan Theorem}}
\end{center}
\hspace{0.5cm}
\begin{center}
    Jakob Ulmer
  \end{center}
\hspace{1cm}

\textbf{Abstract.} We relate graph complexes, Calabi-Yau $A_\infty$-categories and Kontsevich's cocycle construction. Our main result produces a commutative square of shifted Poisson algebras; one of its edges is the Loday-Quillen-Tsygan map, generalized to $A_\infty$-categories. We describe a quantized version via Beilinson-Drinfeld algebras. The larger context is to provide categorical methods which relate enumerative geometry (as in mirror symmetry) and large $N$ gauge theories.  
\tableofcontents
\section{Introduction}
In this paper we explain a connection between seemingly unrelated mathematical works, which are \begin{itemize}
\item  the \textbf{Loday-Quillen-Tsygan} (LQT) \textbf{theorem} \cite{LQ84, Tsy83}, 
\item \textbf{Kontsevich's cocycle construction} \cite{Kon92b}, \item  and various \textbf{graph complexes}.
 \end{itemize}Another contribution of this paper is generalizing the input of the LQT theorem from $A_\infty$-algebras to \textbf{$A_\infty$-categories}.
To be more precise we connect these three different works when taking as input datum a dimension $d$ Calabi-Yau\footnote{In fact here we work with something stricter than Calabi-Yau categories. We consider (smooth) cyclic $A_\infty$-categories.
See also remark \ref{cy_cyc}.} category and we are relating these objects equipped with \textbf{$(d-2)$-shifted Poisson algebra} structures.

We provide a quantized version of such a relationship in the category of \textbf{Batalin-Vilkovisky algebras}\footnote{To be more precise we use Beilinson-Drinfeld, which are a slight variation of Batalin-Vilkovisky algebras.}, building upon work \cite{GGHZ21} on a quantized LQT theorem. Here the input is a `quantization' of a Calabi-Yau category, described by a Maurer-Cartan element, also known as a solution to the open Quantum Master Equation associated to the Calabi-Yau category\footnote{Equivalently such an element may be described as a possibly curved (involutive bi-Lie$)_\infty$-algebra structure on a central extension of cyclic cochains of this category.
Compare work by Campos-Merkulov-Willwacher \cite{Ca16}, section 5 and also work by Cieliebak-Fukaya-Latschev \cite{CiFuLa20}, section 12 and work by Naef-Willwacher \cite{NaWi19}, section 7.
However, these authors don't consider the central extension, which is crucial for us.}.

In section \ref{MM from CY} we provide an independent motivation for the relevance of such Maurer-Cartan elements, coming from enumerative geometry. As an upshot, a quantization of a Calabi-Yau category provides an answer to the following question:  
\begin{equation}\label{Q}
\text{\textbf{Can we start with a Calabi-Yau category and produce a large $N$ gauge theory?}}
\end{equation}
Indeed, the perspective advocated in \cite{GGHZ21,CL15} is that a solution to the open Quantum Master Equation describes, within the Batalin-Vilkovisky formalism from theoretical physics and under the quantized LQT map, the partition function of a so called large $N$ matrix model, the finite dimensional cousins of large $N$ gauge theories,\footnote{Compare section \ref{gap}.
To tackle proper gauge theories an important task is to  understand whether the obtained partition function is sufficiently smooth. See also the exciting recent work \cite{ha25a, ha25b}.} thus offering an answer to \eqref{Q}.

Work in preparation \cite{AmTu25}, \cite{Ul25}, together with fundamental results \cite{HVZ08}  \cite{Zw98} will give a criterion when and how to obtain a quantization of a Calabi-Yau category. 

We explain the content of this paper in more detail: Let $A$ be a a cyclic $A_\infty$-algebra (for instance a graded Frobenius algebra) whose pairing is of odd degree $d$.
The first result is that the original Loday-Quillen-Tsygan map
 $$LQT:\ Sym\big(Cyc^*_+(A)[-1]\big)\rightarrow C^{*}_+(\mathfrak{gl}_NA),$$
 fits into a commuting diagram (we recall that it connects $Cyc^*_+(A)$, the cyclic cochains of $A$ computing its \textbf{cyclic cohomology}, and Lie algebra cochains of the Lie algebra $\mathfrak{gl}_NA$ of $N\times N$ matrices with values in $A$). 
 The other corners are given by two \textbf{graph complexes}, denoted 
$\mathcal{RT}^{tw}$ respectively $\mathcal{T}^{tw}$, built from (ribbon) tree graphs, with a natural map $\pi$ between them.
Those objects are in fact \textbf{(d-2)-shifted Poisson dg algebras}, induced by gluing leafs.
The differential is given by the standard differential on graph complexes, twisted by a certain Maurer-Cartan element, respectively.
The vertical maps are generalization of Kontsevich cocycle construction, respectively its 'commutative' analogue \cite{pe96}.
Summarizing, we have:
\begin{thm_nn_a0}
Given a cyclic $A_\infty$-algebra of odd degree $d$ there is a commutative diagram of \textbf{(d-2)-shifted Poisson dg algebras}    
$$
\begin{tikzcd}
    Sym\big(Cyc^*_+(A)[-1]\big)\arrow{r}{LQT}&C^{*}_+(\mathfrak{gl}_NA)\\
    \mathcal{RT}^{tw}\arrow{u}{}&\mathcal{T}^{tw}\arrow{l}{}\arrow{u}{},
\end{tikzcd}$$
functorial with respect to strict cyclic $A_\infty$-morphisms.
\end{thm_nn_a0}
If $A$ is $\mathbb{Z}$-graded, then so are the corners of this diagram.
For readability we only show the underlying $\mathbb{Z}/2$-graded objects in this first overview and omit some even shifts.
\par
Another contribution of this paper is \textbf{generalizing the LQT theorem to $A_\infty$-categories}:
\begin{thm_nn_b}[see theorems \ref{LQT_c} and \ref{LQT_conj}]
Given a small $A_\infty$-category $\mathcal{C}$ there is a map of dg-algebras 
$$\begin{tikzcd}
LQT_{\mathcal{C}}:\ Sym\big(Cyc^*_+(\mathcal{C})[-1]\big)\arrow{r}& C^*(\mathfrak{gl}_NA_\mathcal{C}),
\end{tikzcd}$$
which is functorial in $\mathcal{C}$ with respect to $A_\infty$-functors.

If $\mathcal{C}$ is unital then for $N\rightarrow \infty$ this map becomes a quasi-isomorphism
   $$ \begin{tikzcd}
        LQT_{\mathcal{C}}:\ Sym\big(Cyc^*_+(\mathcal{C})[-1]\big)\arrow{r}{\sim}& \underset{N}{\varprojlim}\ C^*(\mathfrak{gl}_NA_\mathcal{C}).\end{tikzcd}$$
\end{thm_nn_b}
Here $\mathfrak{gl}_NA_\mathcal{C}$ is what we call the commutator $L_\infty$-algebra with values in $N\times N$-matrices associated to $\mathcal{C}$.\footnote{After the writing of this paper was completed the author became aware of \cite{zeng23}, which deals with closely related phenomena.}

The domain of the LQT map can be equipped with a (d-2)-shifted Poisson dg algebra in case the category is \textit{cyclic of degree d}.
However, to do so also with the codomain we need to restrict to essentially finite cyclic $A_\infty$-categories.\footnote{That is cyclic $A_\infty$-categories which admit a finite skeleton (see definition \ref{essfin}). For the sake of this introduction we identify an essentially finite category with such a skeleton.} This allows us to generalize theorem A.0:  
\begin{thm_nn_a1}[see theorem \ref{Main}]
Given an essentially finite cyclic $A_\infty$-category $\mathcal{C}$ of odd degree $d$  there is a commutative diagram of $(d-2)$-shifted Poisson dg algebras
$$\begin{tikzcd}
    Sym\big(Cyc^*_+(\mathcal{C})[-1]\big)\arrow{r}{LQT_{\mathcal{C}}}&C_+^*(\mathfrak{gl}_NA_\mathcal{C})\\
    \mathcal{RT}^{tw}_{\mathcal{C}}\arrow{u}{}&\mathcal{T}^{tw}\arrow{l}{}\arrow{u}{},
\end{tikzcd}$$
functorial with respect to strict cyclic $A_\infty$-morphisms.
Here the ribbon tree graph complex from before is now decorated, see definition \ref{Dec}, by the set of objects of $\mathcal{C}$.
\end{thm_nn_a1}
\begin{remark_nn}
It is a natural question how far this construction can be made homotopically coherent.
One may try to generalize theorem A.1 to proper Calabi-Yau categories, but allowing shifted Poisson $\infty$-algebras and maps of those.
It is to be expected that proper Calabi-Yau categories can be strictified to a cyclic one.\footnote{See theorem 10.2.2 of \cite{KoSo24} and \cite{ChLe10} for a proof in the algebra case and theorem 2.34 of \cite{AmTu22} for the category case, but where additionally smoothness is required.} 
\end{remark_nn}

Next, we explain a \textbf{quantized} version of this story.
It applies to quantizations of cyclic $A_\infty$-categories $\mathcal{C}$ of degree $d$; cyclic categories whose Hamiltonian (see definition \ref{Hamint}) lifts to a solution to above mentioned Quantum Master Equation (see definition \ref{uzwq}); those are sometimes called quantum $A_\infty$-categories.
Algebraically, we find a slight variant of so called \textbf{Beilinson-Drinfeld} ($BD$)-\textbf{algebras}, which we refer to as twisted, see definition \ref{BD}.
A closely related notion are Batalin-Vilkovisky algebras.

We can deform the domain and target of the LQT theorem by adjoining a formal variable $\hbar$, adding a free BV differential and twisting by the quantized Hamiltonian. We denote the obtained BD algebras by  $\mathcal{F}^q(\mathcal{C})$ respectively $\mathcal{O}^q(\mathfrak{gl}_NA_\mathcal{C}).$  
\begin{thm_nn_c}[see theorem \ref{LQT_q} and theorem \ref{Main}] Given a quantization of an essentially finite cyclic $A_\infty$-category of odd degree $d$, there is a commutative diagram of $(d-2)$ twisted $BD$ algebras
$$
\begin{tikzcd}
    \mathcal{F}^q(\mathcal{C})\arrow{r}{LQT^q_N}&\mathcal{O}^q(\mathfrak{gl}_NA_\mathcal{C})\\
s\mathcal{RG}^{tw}_{\mathcal{C}}\llbracket\hbar^2\rrbracket\arrow{u}&s\mathcal{G}^{tw}\llbracket\hbar\rrbracket\arrow{l}\arrow{u}{},
\end{tikzcd}$$
functorial with respect to strict cyclic $A_\infty$-morphisms of quantizations of cyclic $A_\infty$-categories.
Specifically, the upper horizontal map is a weighted map of twisted BD algebras, a notion we introduce see definition \ref{weimap}.
The lower horizontal map is dual to such a map.  

 For $N\rightarrow\infty$ the map $LQT^q_N$ becomes a quasi-isomorphism.
\end{thm_nn_c} 
The symbol $s\mathcal{RG}$ denotes what is called the stable ribbon graph complex (definition \ref{sRG}) and has a close relationship with the \textbf{Deligne-Mumford compactification} of the \textbf{moduli space of Riemann surfaces} \cite{Ha07}, \cite{Ba10}. $s\mathcal{G}$ denotes the \textbf{stable graph complex}.
In both these complexes the BD structure is induced by gluing leaves; further the graph differential also creates loops, dual to loop contraction and is twisted by a Maurer-Cartan element to the BD algebra structure, respectively.

Lastly theorem A.1 and theorem C fit together into a commutative diagram (theorem \ref{Main}) by what we call \textbf{dequantization maps} (definition \ref{dq}).
\par
The vertical map(s) on the right hand in theorem A (and C) are also defined for a cyclic $L_\infty$-algebra (and a quantization thereof) even if this structure is not induced by a cyclic $A_\infty$-category (or a quantization thereof) and are functorial with respect to strict cyclic $L_\infty$-morphisms, see theorem \ref{fde}.
Thus these maps express the tree complex (respectively the stable graph complex) as a \textbf{universal source} of the Lie algebra cochains of cyclic $L_\infty$-algebras (respectively their quantizations). \par
\textbf{Acknowledgements.} I would like to thank my advisors Grégory Ginot and Owen Gwilliam for continuous feedback and support.
A conversation with Junwu Tu in fall 2023 lead me to think about theorem C.
My PhD is founded by the European Union’s Horizon 2020 research and innovation programme under the Marie Skłodowska-Curie grant agreement No 945332.
\subsection{Overview}
 We begin by expanding upon the introduction: \begin{itemize}
\item In 1.2 we elaborate on Calabi-Yau categories, enumerative invariants and matrix models. 
\item In 1.3 we give some more details on the constructions. This allows us to comment on the relation to other works, notably Costello's work on TCFT's \cite{Cos07a}, Barannikov's work on modular operads \cite{Ba10}.
Lastly we comment on the appearance of these structures in geometry and physics, eg. Cieliebak-Fukaya-Latschev \cite{CiFuLa20}, Costello-Li \cite{CL15}.
\end{itemize}
The rest of the paper is organized as follows:
\begin{itemize}
    \item In section 2 we give the definitions of d twisted BD algebras, weighted maps of BD algebras and dequantization maps, next to fixing some notation and recalling other definitions.
    \item Section 3.1 is devoted to the algebraic structures on cyclic cochains of a cyclic $A_\infty$-category.
    \item Section 3.2 is devoted to algebraic structures on Lie algebra cochains of a cyclic $L_\infty$-algebra. 
    \item In section 4 we set up the LQT map for $A_\infty$-categories and prove the large $N$ statement. We explain how to incorporate the shifted Poisson and BD algebra structures.
    \item Section 5 is about various graph complexes and their algebraic properties induced by gluing leafs. Further we identify certain Maurer-Cartan elements, which play a crucial role later.
    \item In section 6 we explain the generalized Kontsevich's cocycle construction, both in the commutative and non-commutative setting.
    \item In section 7 we show how classically and quantized LQT map, graph complexes and generalized Kontsevich cocycle construction fit together, and how quantum and classical setting are connected by dequantization maps. 

\end{itemize}

\subsection{Enumerative Invariants from Calabi-Yau Categories}\label{MM from CY}
 To provide a motivation for question (\ref{Q}), we make some historical remarks:
 \textit{Enumerative} mirror symmetry postulates \cite{can91} a close relationship between numbers on the one hand obtained from a given symplectic manifold and on the other hand obtained from a complex manifold, which are additionally both Calabi-Yau manifolds and dubbed mirror partner.
 In the prior case these numbers go under the name of Gromov-Witten invariants.
 Kontsevich's visionary approach \cite{Kon94} to mirror symmetry was:
 Let us not try to directly compare numbers obtained from these two geometric set-ups.
 Let us associate categories to the geometric context, concretely the so called Fukaya category associated to the symplectic manifold
 and the derived category of coherent sheafs associated to the complex manifold and then compare these categories.
 The statement of \textit{homological} mirror symmetry is then that these two categories should be equivalent in a suitable sense if the two geometric contexts are 'mirror'.
 Further, the categories that we find are Calabi-Yau, which roughly means that there are non-degenerate pairings on their Hom-spaces.
$$\begin{tikzcd}
    Numbers\arrow[bend left=60]{r}{Kontsevich}& Categories\arrow[bend left=60, swap, dotted]{l}{?}
\end{tikzcd}$$
However, now one may ask the question \textbf{how it is possible to pass back from these categories to the numbers} that we were originally interested in.

A proposal has been given in the works of Costello \cite{Cos05}, Caldararu-Tu \cite{CaTu24}.
They tell us that given a smooth and proper Calabi-Yau $A_\infty$-category, together with an additional datum `s', a splitting of its noncommutative Hodge filtration, which is canonically determined in the geometric contexts of mirror symmetry (see sections 6.2 and 6.3 of \cite{AmTu22}), we recover an element \begin{equation}\label{ccei}
\mathcal{D}^{\mathcal{C},s}\in Sym\left(HH^*(\mathcal{C})\llbracket u\rrbracket \right).\end{equation}

We can understand $\mathcal{D}^{\mathcal{C},s}$ just as a polynomial in variables given by powers of $u$ and the Hochschild cohomology of $\mathcal{C}$.
Roughly speaking\footnote{Indeed, one expects to match the geometric invariants only when considering a family of categories and taking a certain limit. We ignore this point here.} the coefficients of these variables of $\mathcal{D}^{\mathcal{C},s}$ should be the enumerative invariants that we are interested in.
The following conjecture, likely going back to Kontsevich, see also Costello \cite{Cos05}, phrases this expectations: 
\begin{Conj_nn}
Let $X$ be a compact symplectic manifold\footnote{We write  $\Lambda$ for its Novikov ring.} and denote $\mathcal{C}=Fuk(X)$ its Fukaya category.
Then there exist a canonical splitting s and an isomorphism $HH^*(\mathcal{C})\cong H^*(X,\Lambda)$ and for $\gamma_i\in H^*(X,\Lambda)$ 
the coefficient of  $\gamma_1u^{k_1}\cdots\gamma_nu^{k_n}$ of 
$\mathcal{D}^{\mathcal{C},s}\in Sym\left(HH^*(\mathcal{C})\llbracket u\rrbracket\right)$
is the GW-invariant
$$\int_{\bar{\mathcal{M}}_{g,n}(X)}\psi^{k_1}ev_*(\gamma_1)\dots \psi^{k_n}ev_*(\gamma_n).$$
\end{Conj_nn}
In fact for the simple case of $X=pt$ it is true:
\begin{thm_nn}[\cite{Tu21}]
For $\mathcal{C}=\mathbb{Q}$, the category determined by the algebra of rational numbers, we have $HH^*(\mathbb{Q})\cong \mathbb{Q}$,
there is a unique splitting $s$ and the coefficient of  $u^{k_1}\cdots u^{k_n}$ of $\mathcal{D}^{\mathcal{C},s}\in Sym\left(\mathbb{Q}\llbracket u\rrbracket\right)$ is
$$\int_{\bar{\mathcal{M}}_{g,n}}\psi^{k_1}\dots \psi^{k_n},$$
\end{thm_nn}
ie exactly \textbf{the intersection numbers on the moduli space of Riemann surfaces}.

\subsubsection*{Matrix Models}
 Are there other tools to package these enumerative invariants?
 Indeed, both in mathematics and physics a different way to encode enumerative information is known, namely through \textbf{large $N$ matrix models}.
 One advantage is that they are amenable by different means as well, for instance formal PDE methods.
\begin{itemize}
    \item 

The prime example for such a matrix model comes from Kontsevich \cite{Kon92a}.
Here he  considered the following  integral over hermitian $N\times N$ matrices and proved that when $N \rightarrow \infty$ it can be written, suitably normalized, as a rational polynomial
in variables $tr(\Lambda^{-2i+1}), i\in\mathbb{N} $, where $\Lambda$ is just some parameter matrix.
$$\lim_{N\rightarrow \infty}\int_{\mathfrak{h}_N}e^{itr(X^3)}e^{tr(X\Lambda X)}dX\ \in\ \mathbb{Q}[tr(\Lambda^{-1}),tr(\Lambda^{-3}),tr(\Lambda^{-5}),..].$$
Finally he related the coefficients of this polynomial to the mentioned \textbf{intersection numbers on the moduli space of Riemann surfaces}.
\item
Another instance of a large $N$ matrix model appears in the r-spin intersection theory case \cite{BCEGF23}.
Conjecturally,  under mirror symmetry the partition function of this matrix model should be related, using the machinery \eqref{ccei}, to categories of matrix factorizations \cite{CaLiTu18}, which also have a Calabi-Yau structure.
\item
We should also mention the case of large $N$ Chern-Simons on $S^3$, which is an infinite dimensional matrix model, aka. a gauge theory, that has been conjectured (\cite{GoVa99}) to describe the Gromov-Witten invariants of the resolved conifold.
See \cite{EkSh25} for recent work.
\item In a similar vein Costello-Gaiotto \cite{cg21} describe a duality between the so called large $N$ $\beta\gamma$-system and the B-model of the deformed conifold, described by the Calabi-Yau category of coherent sheafs.
They conjecture that this duality describes a subsector of the celebrated $AdS_5/CFT_4$ correspondence in physics \cite{Ma99}. 
\end{itemize}
Summarizing, there sometimes seem to be two different ways to encode the enumerative invariants of a Calabi-Yau category, extracted from a geometric context.\begin{enumerate}[label=\roman*)]
    \item
 On the one hand one can pass through the works of Costello, Caldararu, Tu. 
 \item On the other hand sometimes there is a  large $N$ matrix model, also encoding these invariants.
\end{enumerate}
A natural question is whether there is an abstract theory that tells us how and when to obtain large $N$ matrix models from Calabi-Yau categories.
Furthermore one may ask for such an overarching framework within which one can relate i) and ii).
Work in progress \cite{AmTu25}\cite{Ul25} is in particular about proposing an answer to this question.

The BV-formulation, quantized version of the Loday-Quillen-Tsyagan theorem is an integral part of that proposed answer, eg that is how the `large $N$ part' comes into play \cite{GGHZ21}, see also \cite{CL15}.

\subsection{Some more Details and Relation to Other Work}
We elaborate on how to obtain the vertical maps in our main theorems, at the example of the ribbon graph side.
By doing so we explain precisely the connection to Kontsevich's original construction. 
Further this allows us to comment on the relation  to previous work. 
\subsubsection*{Introduction Continued}
Given a set $B$, not necessarily finite, next to the ribbon tree complex $\mathcal{RT}_{B}$ and the stable ribbon graph complex 
$s\mathcal{RG}_{B}\llbracket\gamma\rrbracket$ previously mentioned, there is also the ribbon graph complex
$\mathcal{RG}_{B}\llbracket\gamma\rrbracket$, which can also be equipped with a twisted BD algebra structure.
This result and following theorem are explained in theorem \ref{rbg_comp}:
\begin{thm_nn_d}
There is a commutative diagram
    \begin{equation}\label{icnam} \begin{tikzcd}
\mathcal{RT}_{B}&\mathcal{RG}_{B}\llbracket\gamma\rrbracket\arrow[swap]{l}{p}\arrow{r}{v}&
s\mathcal{RG}_{B}\llbracket\gamma\rrbracket\arrow[bend left=25]{ll}{p}.
    \end{tikzcd},    
    \end{equation}
    where the object on the left is a shifted Poisson algebra, the other two are twisted BD algebras.
    The $p$ maps are dequantization maps and $v$ is a map of twisted BD algebras.
\end{thm_nn_d}
Let 
$$V_B:=\big(V_{ij}, \langle\_,\_\rangle_{ij}\big)_{i,j\in B}$$
be a collection of cyclic chain complexes of degree $d$, that is chain complexes $V_{ij}$ indexed by $i,j\in B$ together with symmetric non-degenerate pairings $\langle\_,\_\rangle_{ij}:V_{ij}\otimes V_{ji}\rightarrow k[-d]$ .
Note that Kontsevich-Soibelman \cite{KoSo24} refer to such an object as a quiver.
We can equip the cochain complex, the \textbf{`observables of the free theory'}
\begin{equation}
\mathcal{F}^{fr}(V_B):=\big(Sym(Cyc^*_+(V_B)[-1]),\cdot,d,\{\_,\_\}\big)
\end{equation}
with a (d-2) shifted Poisson dg algebra, see theorem \ref{fP}. 
\begin{lemma_nn}[See \ref{Ham1}]
A cyclic $A_\infty$-category $\mathcal{C}$ with underlying collection of cyclic chain complexes $V_B$, ie in particular $B=Ob\mathcal{C}$, induces a Maurer-Cartan element \begin{equation}\label{Hamint}
I\in MCE(\mathcal{F}^{fr}(V_B)).
\end{equation}  
\end{lemma_nn}
Similarly given $V_B$ a collection of cyclic chain complexes of degree d the cochain complex,\footnote{Again we only show the $\mathbb{Z}/2$ graded object of a possibly $\mathbb{Z}$ graded object.}
the \textbf{`observables of the prequantum theory'}
 \begin{equation}\label{pqobs}
\mathcal{F}^{pq}(V_B):=\big(Sym\big(Cyc^*(V_B)[-1]\big) \llbracket\gamma\rrbracket ,\cdot,d+\nabla+\gamma\delta,\{\_,\_\}\big)
\end{equation}
 can be equipped, see theorem \ref{nc_pq} with a (d-2) twisted BD algebra structure. 
 Further there is a dequantization map 
 $$p:\mathcal{F}^{pq}(V_B)\rightarrow \mathcal{F}^{fr}(V_B).$$
 A \textbf{quantization} of $\mathcal{C}$ above is by definition a Maurer-Cartan element \begin{equation}\label{uzwq}
    I^q \in\mathcal{F}^{pq}(V_B)
    \end{equation}
    such that $p(I^q)=I$.
\begin{thm_nn_e}[See theorem \ref{comp_rg_al}]
Let $\mathcal{C}$ be a cyclic $A_\infty$-category with underlying collection of cyclic chain complexes $V_B$.
Denote by $I\in MCE(F^{fr}(V_B))$ its associated hamiltonian.
This datum induces the two vertical maps of shifted Poisson algebras on the left of the diagram below.
Given a quantization of $\mathcal{C}$, denoted $I^q\in MCE(F^{pq}(V_B))$, we further get the right vertical map of BD algebras, compatible as follows:
\begin{equation}\label{tot_nc}
\begin{tikzcd}
\mathcal{F}^{fr}(V_B)&\mathcal{F}^{pq}(V_B)\arrow{l}{p}&
\mathcal{F}^{pq}(V_B)\arrow[equal]{l}\\
\mathcal{RT}_{B}\arrow{u}{\rho_{I}}&\mathcal{RG}_{B}\llbracket\gamma\rrbracket\arrow[swap]{l}\arrow{r}\arrow{u}{\rho_{I}}&
s\mathcal{RG}_{B}\llbracket\gamma\rrbracket\arrow[bend left=20]{ll}{p}\arrow{u}{\rho_{I^q}}.
    \end{tikzcd}
    \end{equation}
\end{thm_nn_e}
\begin{thm_nn_f}[See theorems \ref{RS_mce}, \ref{sRG_mce} and lemmas \ref{comp_1} and \ref{comp_2}]
There is 
$$D\in MCE(\mathcal{RT}_{B})\ \text{and}\ S\in MCE(s\mathcal{RG}_{B}\llbracket\gamma\rrbracket)$$
such that $p(S)=D$ and further $\rho_I(D)=I$ and $\rho_{I^q}(S)=I^q$.
\end{thm_nn_f}
\begin{cor_nn}
Twisting the outer diagram of theorem E by the compatible Maurer-Cartan elements results in another commutative diagram, which is how we obtain the left vertical maps of theorem A and C and their compatibility by dequantization maps.
\end{cor_nn}
\begin{remark_nn}[Kontsevich's Original Construction]
Let us denote by $\mathcal{M}_{g,b}$ the moduli space of connected Riemann surfaces of genus $g$ with $b$ marked points. It is a standard result (for us particularly relevant Costello's \cite{Cos07b} proof) that 
$$\mathcal{RG}_{m=0}^{con.}\simeq C_*(\amalg_{stable} \mathcal{M}_{g,b},\mathcal{L}),$$
where $\mathcal{RG}_{m=0}^{con.}$ denotes connected ribbon graphs without leafs and $\mathcal{L}$ is a certain line bundle.
Let $A$ be a cyclic $A_\infty$-algebra and $I\in MCE(\mathcal{F}(V))$ its associated hamiltonian.
Denote by $\rho|_{m=0}$ the restriction of the middle vertical map $\rho_I$ from diagram \ref{tot_nc} to finite sums of connected graphs without leaves and to the $\gamma=0$ part.
It then gives a map 
$$\begin{tikzcd}
    \mathcal{RG}_{m=0}^{con.}\arrow{r}{\rho|_{m=0}}&k[\nu,\gamma]\arrow{r}{\nu=\gamma=1}&k
\end{tikzcd}$$
which coincides with Kontsevich's construction \cite{Kon92b}, ie produces a cocycle on $\amalg\mathcal{M}_{g,b}$.
Indeed, as a corollary of above theorem E this composition is a chain map, ie determines a cochain.
In fact in comparison to Kontsevich's construction we are additionally weighting his map by $\gamma$ to the power of the genus of the respective ribbon graph.
\end{remark_nn}

\begin{remark_nn}
All the results have analogues for cyclic $L_\infty$-algebras and (ordinary) graph complexes, which we explain in the bulk of this paper.
Further we can recover Penkava's construction \cite{pe96}, the commutative analog of Kontsevich's construction. 
\end{remark_nn}

\subsubsection{2d Topological Field Theories}\label{2dop}
We recall the category $\mathcal{O}_B:=C_*(\mathcal{M}^{o}_B,\mathcal{L})$ from \cite{Cos07a}, definition 3.0.2.
Its objects are tuples of elements of B and its Hom spaces are built from chains on the moduli space of stable Riemann surfaces with boundary
and intervals embedded into the boundary, the free boundaries decorated by objects of a set $B$.

\begin{figure}[h]
\centering
\includegraphics[]{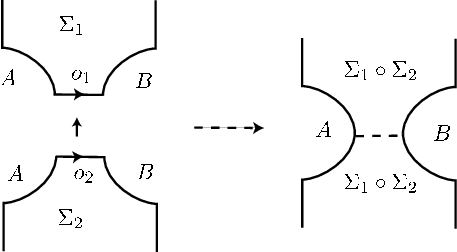}
\caption{An example of composition in $\mathcal{O}_B$, from \cite{Cos07a}}
\end{figure}

In loc.cit. Costello proved that a cyclic $A_\infty$-category $\mathcal{C}$ of odd dimension with set of objects $B$ is the same as an open TCFT, that is a symmetric monoidal functor 
$$F:(\mathcal{O}_B,\cup)\rightarrow (Ch,\otimes).$$
Further we have that \begin{equation}\label{tensf}
F\big((\lambda_1^a,\lambda_1^b)\cup\cdots\cup (\lambda_n^a,\lambda_n^b)\big)=Hom_\mathcal{C}(\lambda_1^a,\lambda_1^b)\otimes\cdots \otimes Hom_\mathcal{C}(\lambda_n^a,\lambda_n^b) \end{equation}

Let us denote by 
$$Tot^+(\mathcal{O}_B):=\bigoplus_{n\in\mathbb{N},(x_1,\cdots, x_n)\in Ob\mathcal{O}_B^{\times n}} Hom_{\mathcal{O}_B}(x_1\cup\cdots\cup x_n,\varnothing)/S_n$$
the direct sum over all Hom-spaces with empty output, quotient out by the symmetric group acting on the ordering of the inputs.
In the modular operad context this is known as totalization, see \cite{DoShVaVa24}.
Totalization applied to the closed TCFT obtained from $\mathcal{C}$ is crucial in the work of Costello, Caldararu-Tu on categorical enumerative invariants \cite{Cos05} \cite{CaTu24}.
Let us denote by $V_B$ the collection of cyclic chain complexes underlying $\mathcal{C}$.
\begin{Claim}[`Conjecture']\label{ibssm}
 We claim that $F$ induces a map 
 $$F_{tot}:Tot^+(\mathcal{O}_B)\rightarrow \mathcal{F}^{pq}(V_B).$$
We further claim that
$$  Tot^+(\mathcal{O}_B)\simeq \mathcal{RG}_{B}$$
and under this conjectural equivalence $\rho_I=F_{tot}$ when extended $\gamma$-linearly (see diagram \ref{tot_nc}).
Further this quasi-isomorphism is compatible with the operation of gluing open boundaries together and corresponds to gluing leafs together. 
\end{Claim}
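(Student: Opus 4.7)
The plan is to (i) construct $F_{tot}$ directly from the TCFT $F$, (ii) extend Costello's ribbon-graph equivalence to allow leaves decorated by $B$, and (iii) identify $F_{tot}$ with $\rho_I$ cell by cell.

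For (i), given $\phi\in Hom_{\mathcal{O}_B}(x_1\cup\cdots\cup x_n,\varnothing)$ with $x_i=(\lambda_i^a,\lambda_i^b)$, I apply $F$ to obtain by (\ref{tensf}) a linear functional on $V_{\lambda_1^a\lambda_1^b}\otimes\cdots\otimes V_{\lambda_n^a\lambda_n^b}$. The cyclic order of the marked intervals around each boundary circle of the underlying surface groups the tensor factors into cyclic words, and dualising along the pairings $\langle\_,\_\rangle_{ij}$ expresses $F(\phi)$ as a product, over the boundary circles of the surface, of cyclic cochains in $Cyc^*(V_B)$. Interpreting each boundary circle as one factor of $Sym(Cyc^*(V_B)[-1])$ and weighting by $\gamma^g$ (with $g$ the sum of the genera of the connected components of the surface) places the result in $\mathcal{F}^{pq}(V_B)$; symmetric monoidality of $F$ then lets the construction descend through the $S_n$-quotient to the desired $F_{tot}$. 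Checking that $F_{tot}$ is a chain map amounts to translating the boundary-of-moduli differential on $Tot^+(\mathcal{O}_B)$ through $F$ into the twisted BD differential $d+\nabla+\gamma\delta$ on the target: $\nabla$ records degenerations that sew intervals on distinct boundary circles, whereas $\gamma\delta$ records self-sewings on a single circle, which raise the genus and thereby produce the $\gamma$-weight.

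For (ii), I would adapt Costello's \cite{Cos07b} chain-level equivalence $\mathcal{RG}_{m=0}^{\mathrm{con.}}\simeq C_*(\amalg\mathcal{M}_{g,b},\mathcal{L})$ by tensoring it with the combinatorial data of $B$-labels on the free boundary arcs. A ribbon graph with leaves decorated by $B$ is dual to an open Riemann surface whose free boundary arcs carry the same labels; the totalisation then corresponds exactly to the symmetry under relabeling of leaves in $\mathcal{RG}_B$, while gluing two open intervals in $\mathcal{O}_B$ corresponds to gluing the respective leaves in $\mathcal{RG}_B$, matching the twisted BD structures. For the identification $\rho_I=F_{tot}$ it then suffices to compare both maps on a single ribbon graph cell: $\rho_I$ contracts one copy of the hamiltonian $I$, which repackages the $A_\infty$-operations of $\mathcal{C}$, at each vertex and pairs edges via $\langle\_,\_\rangle_{ij}$; dually, $F_{tot}$ evaluates $F$ on the corresponding cell, whose combinatorial discs are indexed by the same vertices and each contribute the same elementary $A_\infty$-operation, sewn along edges by the same pairing, and the $\gamma$-weight on both sides is the genus.

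The main obstacle is step (ii): Costello's proof uses a spectral sequence built from the boundary stratification of $\amalg\mathcal{M}_{g,b}$ together with the acyclicity of certain combinatorial complexes, and upgrading it to the open, $B$-decorated setting in a way that is manifestly compatible with gluing (and hence with the full twisted BD structure, including the loop-creating $\gamma\delta$ term) is the technical heart of the claim. A secondary subtlety is that $\mathcal{F}^{pq}$ is built from $Cyc^*(V_B)$ rather than $Cyc^*_+(V_B)$, so one must include the ``degenerate'' cyclic cochains coming from boundary circles with no marked intervals, which on the ribbon-graph side correspond to leafless loops and to insertions of the strict $A_\infty$-unit; verifying that the chain-level equivalence records these correctly is an additional piece of bookkeeping — and probably the reason the statement is advertised here only as a claim.
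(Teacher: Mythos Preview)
The paper does not actually prove this statement: it is explicitly labeled a Claim (``Conjecture''), and immediately afterwards the author writes that it ``should follow from Costello's work, e.g.\ Proposition 6.2.1 of \cite{Cos07a} and generally section 6.2 of \cite{Cos07a}'' but that ``we did not check the details on signs and gradings and thus formulated the above as a conjecture.'' The only argument the paper offers is the footnote observation that one can reorder the tensor factors of \eqref{tensf}, modulo $S_n$, by remembering the cyclic orderings imposed by the boundary components of the surface. Your proposal is therefore not competing with a proof in the paper; it is a (considerably more detailed) sketch along the same lines the paper gestures at, and your identification of step (ii) --- upgrading Costello's ribbon-graph model to the open $B$-decorated setting compatibly with gluing --- as the real technical core is exactly right.

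That said, there is a concrete error in your step (i): you have the roles of $\nabla$ and $\gamma\delta$ reversed. In $\mathcal{F}^{pq}(V_B)$ the operator $\nabla$ is the cobracket (Construction \ref{nabla}): it takes a single cyclic word and splits it into two by pairing two of its letters. Geometrically this is sewing two intervals lying on the \emph{same} boundary circle, which cuts that circle in two and leaves the genus unchanged. The operator $\delta$ is the Chevalley--Eilenberg differential of the Lie bracket and merges two cyclic words into one; this corresponds to sewing intervals on \emph{distinct} boundary circles of the same connected component, which is precisely the move that raises the genus and hence carries the $\gamma$-weight. A second omission: in constructing $F_{tot}$ you record $\gamma^{g}$ but not the central factors $\nu_\lambda$ coming from boundary circles with no marked intervals; these appear in the paper's formula \eqref{main_constr_nc} for $\rho_I$ via $\nu^{f_{tot}(\Gamma)}$ and are exactly the ``degenerate'' contributions you flag at the end but do not build into your map.
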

Up to signs and possible gradings, this claim should follow from Costello's work, eg Proposition 6.2.1 of \cite{Cos07a} and generally section 6.2 of \cite{Cos07a}, see also \cite{Cos07b} and may be straightforward to experts.\footnote{The idea is that we can reorder the tensor factors as in \ref{tensf}, quotiented by the symmetric group, by remembering their preimage under $F$.
For each element of $Tot^+(\mathcal{O}_B)$ we obtain a symmetric ordering of cyclic orderings of elements of $B$, provided by the coloring of free boundaries in the boundary components of a surface.} 

We did not check the details on signs and gradings and thus formulated the above as a conjecture.

\subsubsection{Cyclic and Modular Operads}
Theorem 1 of Barannikov's paper \cite{Ba10} (see also the very good exposition \cite{bra12}, section 8.2.1.) describes, given a modular operad $\mathcal{P}$, a bijection between: \begin{itemize}
    \item 
 algebras on a chain complex $V$ over an operad built from the Feynman transform of $\mathcal{P}$, 
 \item  Maurer-Cartan elements in a dg shifted Lie algebra built out of $V$ and $\mathcal{P}$.
 \end{itemize}
In section 10 of loc.cit. Barannikov constructs an explicit modular operad $\mathbb{S}[t]$.
In theorem 2 of loc.cit. he shows that the Feynman transform of $\mathbb{S}[t]$ is, after applying a totalization functor similar as in the previous subsection, the same as $s\mathcal{RG}$, at least when restricting to the part without leaves.
Furthermore, restricting to the cyclic operad part of  $\mathbb{S}[t]$ recovers in the similar way $\mathcal{RT}$, see eg \cite{geKa96} 5.9 and section 9 of \cite{Ba10}.
The dg shifted Lie algebra he describes, associated to $\mathbb{S}[t]$ and a chain complex $V$, is basically $\mathcal{F}^{pq}(V)$ considered here and its cyclic part reduces to $\mathcal{F}^{fr}(V)$, see around equation 6.1 of loc.cit.
\begin{remark_nn}
One could expect that our diagram \ref{tot_nc} can be obtained as a corollary of Barannikov's result after applying a totalization functor, analogous to the previous subsection.
It is interesting to note the existence of Maurer-Cartan elements in the induced dg-Lie algebras.
\end{remark_nn}
A similar construction may be available for the commutative side:
That is one may ponder the existence of a modular operad, analogous to $\mathbb{S}[t]$, which is a modular extension of $Com$ and whose Feynman transform gives $s\mathcal{G}$.
The LQT map may then be derived as induced from a Morita invariance map and a map from $\mathbb{S}[t]$ to this  modular generalization of $Com$, which would naturally explain our compatibility result.
Compare also other works of Barannikov, eg \cite{Ba10b}, who worked on these ideas before.

Furthermore in our examples we allow not necesarilly connected graphs, which gives rise to a commutative product.
Form an operadic point of view this is studied in \cite{djpp22}.
\begin{remark_nn}
 Summarizing, the outer two objects of \ref{icnam} can be understood as the totalization of the Feynam transform of $Ass$ and $\mathbb{S}[t]$.
 In \cite{co04} Costello characterized the middle object of \ref{icnam} as the (totalization of the) modular envelope of the cyclic $A_\infty$-operad.
 Is there an operad theoretic way to understand diagram \ref{icnam}?
 Similarly for the graph complex analog, see diagram \ref{ag_dq}.  
\end{remark_nn}
\begin{remark_nn}
    On the other hand, from \cite{Cos07b} the geometric meaning of the two left objects of \ref{icnam} are clear,
    describing chains on moduli spaces of disks with marked points on the boundary respectively arbitrary Riemann surfaces with marked points on the boundary.
    On the other hand the geometric meaning of the complex on the right of diagram \ref{icnam} is not definitely clear to the author. 
\end{remark_nn}
\subsubsection{Geometry and Physics}\label{gap}
The algebraic structures studied here, that is the non-commutative part, and notably their generalization to certain infinite dimensional examples as
$\Omega^*(M)$ de-Rham forms on a manifold $M$ are also studied in \cite{CiVo23}, \cite{CiFuLa20} and \cite{Che10}, with applications to string topology.

In \cite{sla24} Slawinski has explicitly analyzed the lift of the $A_\infty$-structure on the Fukaya category of an elliptic curve to a quantum $A_\infty$-structure by using a certain combinatorial model.

There are general analytic approaches to deal with $A_\infty$- and $L_\infty$- algebras which are cyclic only on homology,
see Costello's work on renormalizaton \cite{Co11} and notably \cite{CL15} by Costello-Li.
In this work the authors construct a shifted Poisson structure for the
`cyclic' algebra of holomorphic de-Rham forms on a compact Calabi-Yau manifold, leading to so called large $N$ holomorphic Chern Simons under the LQT map.
The techniques developed by Cieliebak and Volkov \cite{CiVo23} to deal with
the `cyclic' algebra of de Rham forms on a compact 3-manifold should similarly lead to large $N$ Chern Simons theory, at least classically.

Naef-Willwacher \cite{NaWi19} study related phenomena and especially section 7.4 has many formal similarities to our work, which we would like to study further in the future.

\section{Notation and Preliminaries}
We fix $k$ a field of characteristic 0, unless remarked otherwise.
Furthermore we work in the category of chain complexes over $k$, denoted $Ch$, and stick to the homological convention.
The suspension $[\_]$ of a chain complex $V$ is defined in such a way that if $V$ is concentrated in degree zero, then V[1] is concentrated in degree $-1$.
The results of this paper also apply to the $\mathbb{Z}/{2}$-graded context.

See e.g 1.4.2 of \cite{CPTTV17} for the following definition.
\begin{df}
An \emph{$r$-shifted Poisson differential graded algebra} is a differential graded, not necessarily unital, k-algebra such that 
\begin{itemize}
    \item 
 the differential has degree 1
 \item it has a Poisson bracket of degree $r$, that is it is a Lie bracket of degree $r$ that fulfills the Leibniz rule with respect to the product and is compatible with the differential.
 \end{itemize}

A \emph{map of shifted Poisson dg algebras} is both a map of the underlying dg-algebras and the shifted Lie algebras.    

We denote by $(Poiss_{r})$ the category whose objects are $r$-shifted Poisson algebras and whose morphisms are maps of $r$-shifted Poisson algebras.
\end{df}

 \begin{remark} 
 Note that by forgetting the multiplication a shifted Poisson dg algebra forgets to a dg shifted Lie algebra.
 When we say Maurer-Cartan element of a shifted Poisson dg algebra we refer to a MCE of this dg shifted Lie algebra.
 We can \textit{twist} both this dg shifted Lie algebra as well as the overlying shifted Poisson algebra by such a MCE.
\end{remark}

We restrict now to $r$ an odd integer.

\begin{df}\label{BD}
An \emph{$r$-twisted Beilinson-Drinfeld ($BD$) algebra} is a graded commutative unital algebra over the ring $k\llbracket \mu\rrbracket$ of formal power series in a variable $\mu$, which has cohomological degree $1-r$ such that 
\begin{itemize}
    \item it is endowed with a Poisson bracket $\{\_,\_\}$ of degree $r$, which is $R\llbracket \mu\rrbracket$-linear
    \item 
    it is endowed with an $R\llbracket\mu\rrbracket$-linear map $d$ of degree 1 that squares to zero 
    \item
    and such that
$$d(a\cdot b)=d(a)\cdot b+(-1)^{|a|}a\cdot d(b)+\mu \{a,b\}.$$
\end{itemize}
\end{df}

\begin{remark}
We will also encounter $r$-twisted $BD$ algebras whose multiplication has even degree $m$.
Here we require the formal variable to have degree $1-r+m$.
Note that we could always reduce to the previous definition by performing a shift.\footnote{We do not do so since then the quantized LQT map would have non zero degree.}
We do not want to introduce additional notation for this case. 
\end{remark}
     
\begin{remark}
In \cite{CPTTV17}, section 3.5.1, the notion of a $BD_d$ algebra is introduced.
It is not clear to the author what is the relationship of this notion and ours, in general.
However, for $r=1$ an $r$-twisted BD algebra is the same as a $BD_0$ algebra.
\end{remark}

\begin{remark}
Since in our paper we will only deal with $r$-twisted $BD$-algebras we will sometimes refer to them as simply BD algebras as well. 
\end{remark}

\begin{remark}
Note that by forgetting the multiplication a BD algebra forgets to a dg shifted Lie algebra.
When we say Maurer-Cartan element of a BD algebra we refer to a MCE of this dg shifted Lie algebra.
Note that we can \textit{twist} not only this dg shifted Lie algebra, but also the overlying BD algebra by such a MCE.
\end{remark}
We will only see maps of BD algebras which have the same dimension.
However, we will encounter maps of BD algebras which do not directly respect the algebra structure, but do so up to a certain weighting and live over different base rings:
\begin{df}\label{weimap}
Let A be an $s$-twisted $BD$ algebra over $k\llbracket\gamma\rrbracket$ and B be an $s$-twisted $BD$ algebra over $k\llbracket\hbar\rrbracket$.
Given $t\in \mathbb{N}_{}$ we say that a \emph{$t$-weighted map of BD algebras} is given by a map of algebras
$$g:k\llbracket\gamma\rrbracket\rightarrow k\llbracket\hbar\rrbracket$$
such that 
\begin{equation}\label{pow}
    g(\gamma)=\hbar^t 
    \end{equation}
and a map of $k\llbracket\gamma\rrbracket$ dg shifted Lie algebras $$f:A\rightarrow g^*B$$ such that 
\begin{equation}\label{mult}
f(a_1\cdot \ldots \cdot a_k)=\hbar^{(t-1)(k-1)}f(a_1)\cdot \ldots \cdot f(a_k).\end{equation}
\end{df}
\begin{remark}
    For $t=1$ we recover the standard definition of a map of BD algebras.
    We indicate this by simply dropping the adjective weighted. 
\end{remark}
\begin{remark}
    Composition of a $t$-weighted map of BD algebras and an $s$-weighted map of BD algebras is an $s\cdot t$-weighted map of BD algebras. 
\end{remark}
\begin{df}
  Denote by $(\text{BD}^{r-tw})$ the category whose objects are $r$-twisted BD algebras and whose morphisms are maps of BD algebras.
\end{df}
\begin{remark}
Note that \eqref{pow} is a necessary condition allowing us to extend g to a map
$$\tilde{g}:k(\!(\gamma)\!)\rightarrow k(\!(\hbar)\!)$$
where $\tilde{g}(r\cdot \gamma^{-k})=g(r)g(\gamma)^{-k}.$
We further get an induced map
$$\tilde{f}:A(\!(\gamma)\!)\rightarrow \tilde{g}^*B(\!(\hbar)\!).$$
Conidition \eqref{mult} implies that for any $x\in A$ we have 
  \begin{equation}\label{int_rel}
 \tilde{f}e^{x/\gamma}=\hbar^{1-t} e^{f(x)/\hbar}. 
 \end{equation} 
This guarantees commutativity of 
$$\begin{tikzcd}
MCE(A)\arrow{r}{f}\arrow{d}{exp(-/\gamma)}&MCE(B)\arrow{d}{exp(-/\hbar)}\\
Z(A(\!(\gamma)\!))\arrow{r}{\hbar^{t-1}\tilde{f}}&Z(B(\!(\hbar)\!))
\end{tikzcd},$$
where MCE refers to the Maurer-Cartan set of the respective dg shifted Lie algebra and Z refers to the cycles of the underlying chain complex.\footnote{As is well known, we can replace $Z$ by homology if we replace MCE by its quotient by the gauge group action. } 
For $t=1$ commutativity of the diagram is a standard fact about BD algebras, for general $t$ it follows from equation~\eqref{int_rel}.
\end{remark}
\begin{df}\label{dq}
Let $P$ be an r-shifted Poisson algebra and $R$ be an $r$-twisted BD algebra. 
We call 
$$p:R\rightarrow P$$
 a \emph{dequantization map} if it is a  degree zero map of the underlying chain complexes and of shifted Lie algebras.
\end{df}
\begin{remark}
    Note that we do not ask that this map respects the products, which is what we will see in our examples.
\end{remark}

\section{Algebraic Constructions}
\subsection{Non-Commutative World}\label{ncw}

Let $B$ be a set and $V_B:=\{V_{ij}\}_{i,j\in B}$ be a set of chain complexes.
Then its bar-construction\footnote{In the sense that it is the Bar complex of a dg-category with zero compositions.} is defined by 
\begin{equation}\label{bar}BarV_B:=\bigoplus_{n=1}^\infty \bigoplus_{\lambda_0,\cdots,\lambda_n\in B}V_{\lambda_{0},\lambda_{1}}[1]\otimes V_{\lambda_{1},\lambda_2}[1]\otimes \cdots \otimes V_{\lambda_{n-1},\lambda_n} [1]
\end{equation}
and it is naturally a dg coalgebra $(BarV_B,d)$ whose comultiplication, see e.g. \cite{KoSo24}, example 2.1.6, we don't give an extra symbol.
\begin{df}
A \emph{small $A_\infty$-category} $\mathcal{C}$ is given by a set of objects $Ob(\mathcal{C})$, a set of chain complexes  $\{V_{\lambda_i,\lambda_j}\}_{\lambda_i,\lambda_j\in Ob(\mathcal{C})}$ and a collection of maps
\begin{equation*}
m_{n}\colon V_{\lambda_0,\lambda_{1}}[1]\otimes V_{\lambda_{1},\lambda_{2}}[1]\otimes\ldots\otimes V_{\lambda_{n-1},\lambda_n}[1] \to V_{\lambda_0,\lambda_{n}}[1]
\end{equation*} 
for all $n\geq 1$ and all tupels of objects $(\lambda_0,\lambda_1,\cdots,\lambda_n).$
We can uniquely extend those to a coderivation 
$$m_\mathcal{C}:BarV_{Ob(\mathcal{C})}\rightarrow BarV_{Ob(\mathcal{C})}$$
and we require that $$m_\mathcal{C}^2=0$$ and that it commutes with the internal differential $d$ of the chain complex $(BarV_{Ob(\mathcal{C})},d)$. 
\end{df}
In this case we denote $V_{\lambda_i,\lambda_j}=:Hom_\mathcal{C}(\lambda_i,\lambda_j)$ and 
$(BarV_{Ob(\mathcal{C})},d+m_\mathcal{C})=:(Bar\mathcal{C},d+m_\mathcal{C}).$ 
\begin{remark}
We will only consider small $A_\infty$-categories in this paper and thus drop this adjective from now on.    
\end{remark}
\begin{df}
We say that an $A_\infty$-category $\mathcal{C}$ is \emph{unital} if there is an element 
$1_{i}\in Hom_\mathcal{C}(\lambda_i,\lambda_i)$ for all $\lambda_i\in Ob(\mathcal{C})$ such that 
$m_2(g,1_i)=g$ for all $g\in Hom_\mathcal{C}(\lambda_k,\lambda_i)$, 
$\lambda_k\in Ob(\mathcal{C})$ and $m_2(1_i,f)=f$ for all $f\in Hom_\mathcal{C}(\lambda_i,\lambda_l)$, $\lambda_l\in Ob(\mathcal{C})$
and if in $m_n(a_1,a_2,\cdots)$ any entry is one of the $1_{i}$ then $m_n(a_1,a_2,\cdots)=0$ for $n\geq 3$.
\end{df}
\begin{remark} 
We can somewhat unpack the definition of an $A_\infty$-category:
\begin{itemize}
\item   
The fact that $m_\mathcal{C}^2=0$ is  equivalent to the infinite list of relations, one for each $n \geq 1$, 
\begin{equation}\label{A_inf_re}
    \sum_{\substack{r,t\geq 0, s\geq 1\\r+s+t=n}} m_{r+1+t}(id^{\otimes r} \otimes m_{s} \otimes id^{\otimes t}) = 0.
\end{equation}
 Here $id$ denotes ambiguously the identity on whichever Hom-space it is applied to.
 \item Using this it is easy to see that in the case that $m_\mathcal{C}$ is only non-zero on $Bar^2V_B$,
 i.e. on the summand $n=2$, we recover the definition of a (not necessarily unital) differential graded category; where $f\circ g=m_2(g,f)$ for composable $f,g.$
     
\item In fact we assume that $m_1=0$ for the rest of this article since we can just absorb it into the differential of the underlying chain complexes. 
\end{itemize}
\end{remark}

\begin{df}
A \emph{functor between two $A_\infty$-categories} $F:\mathcal{C}\rightarrow \mathcal{D}$ is given by a map of sets
$$F: Ob(\mathcal{C})\rightarrow Ob(\mathcal{D})$$
and maps
    $$F_n: Hom_\mathcal{C}({\lambda_0,\lambda_1})[1]\otimes Hom_\mathcal{C}({\lambda_1,\lambda_2})[1]\otimes \cdots \otimes Hom_\mathcal{C}({\lambda_{n-1},\lambda_n})[1]\rightarrow Hom_\mathcal{D}(F(\lambda_0),F(\lambda_n))[1],$$
for all $n\geq 1$ and all tupels of objects $(\lambda_0,\lambda_1,\cdots,\lambda_n).$ 
We can uniquely extend this to a map of coalgebras, which we require to determine a chain map $$F_*:\  (Bar(\mathcal{C}),d+m_\mathcal{C})\rightarrow (Bar\mathcal{D}),d+m_\mathcal{D}).$$ 
\end{df}
We denote by $A_\infty cat$ the category whose objects are $A_\infty$-categories and with morphisms given by $A_\infty$-functors.

\begin{remark}
    Again we can unpack the requirement that $F_*$ intertwines the differentials. 
    Equivalently it means that
    \begin{equation}\label{F_inf_rel}
        \sum_{r+s+t=n} F_{r+1+t}(id^{\otimes r} \otimes m_{\mathcal{C},s} \otimes id^{\otimes t}) = \sum_{i_1+\ldots+i_l=n}  m_{\mathcal{D},l}(F_{i_1} \otimes F_{i_2} \otimes \ldots \otimes F_{i_l})
    \end{equation}
for $n\geq 1$ and where $m_1=d$. 
Notably $F_1$ induces maps of chain complexes on the Hom spaces. 
\end{remark}
\begin{df}
 We say that an $A_\infty$-functor $F:\mathcal{C}\rightarrow \mathcal{D}$ of unital $A_\infty$-categories is \emph{unital} if $F_1(1_{i})=1_{F(i)}$ and $F_n(\cdots,1_{i},\cdots)=0$ for all $n\geq 2$. \end{df}
\begin{df}
   We call an $A_\infty$-functor 
   $$F:\mathcal{C}\rightarrow \mathcal{D}$$
   \emph{strict} if $F_n=0$ for all $n\geq 2$. 
\end{df}

 Given an unital $A_\infty$-functor $F: \mathcal{C}\rightarrow \mathcal{D}$ taking homology induces a functor of ordinary categories $H_*F: H_*\mathcal{C}\rightarrow H_*\mathcal{D}.$  

\begin{df}\label{equiv}
We call an unital $A_\infty$-functor $F:\mathcal{C}\rightarrow \mathcal{D}$ an \emph{equivalence} if the induced functor 
$$H_*F_1: H_*\mathcal{C}\rightarrow H_*\mathcal{D}$$
of ordinary categories is an equivalence.
\end{df}

Let us recall some further notions some of which we, strictly speaking, won't need.
Since we alluded to them in the introduction we wish to record them, but are relatively fast in doing so:
\par
Given an $A_\infty$-category $\mathcal{C}$ there is dg-category of $\mathcal{C}$-bimodules as described in \cite{Ga19}, section 3.1 and references therein.
Further in loc. cit. it is explained that there are certain objects called the diagonal bimodule $\mathcal{C}_\Delta$, respectively the linear dual diagonal bimodule $\mathcal{C}_\Delta^\vee$ in this category.
Then one can define 
\begin{df}
$$CH_*(\mathcal{C}):=\mathcal{C}_\Delta\otimes^{\mathbb{L}}_{\mathcal{C}^e}\mathcal{C}_\Delta\in Ch,$$
 given by the derived tensor product in the category of $\mathcal{C}$-bimodules (see e.g. \cite{Ga19}, section 3.4).
 Its homology is called \emph{Hochschild homology}.
\end{df}
\begin{remark}
There are explicit complexes that compute this derived tensor product, which is one way to see that there are maps of chain complexes, for all objects $x$ of $\mathcal{C}$
$$\iota_x: Hom_\mathcal{C}(x,x)\rightarrow CH_*(\mathcal{C}).$$
\end{remark}
Next we introduce important properties of $A_\infty$-categories:

\begin{df}
    We say that an $A_\infty$-category $\mathcal{C}$ is \emph{smooth} if $\mathcal{C}_\Delta$ is a compact, i.e. a perfect object in the dg category of $\mathcal{C}$-bimodules.

    We say that an $A_\infty$-category $\mathcal{C}$ is \emph{proper} if $Hom_\mathcal{C}(x,y)\in Ch$ is perfect for all $x,y\in Ob(\mathcal{C}).$ 
\end{df}
\begin{remark}
Compare also \cite{BrDy19} ex. 2.8 and ex. 2.9 for a more abstract phrasing, in the setting of dg categories, which shows that the notions smooth and proper are complementary in a certain sense.
\end{remark}
Let us come back to the notion of Hochschild homology.
One of its important properties is that there is an $S^1$-action on Hochschild chains (\cite{co85}, in our context e.g \cite{Ga23}, section 3.2).
By taking its homotopy $S^1$ orbits we arrive at
\begin{df}
   Given an $A_\infty$-category we call the homology of $$CC_*(\mathcal{C}):=CH_*(\mathcal{C})_{hS^1}\in Ch$$ its \emph{cyclic homology}. 
\end{df}
\begin{remark}\label{cph}
Thus just by abstract reasons we  get a map $\pi:CH_*(\mathcal{C})\rightarrow CC_*(\mathcal{C}).$ \end{remark}
\begin{df}
Given $\mathcal{C}$ a proper $A_\infty$-category we 
call a choice of quasi-isomorphism of $\mathcal{C}$-bimodules
$$\mathcal{C}_\Delta \simeq\mathcal{C}_\Delta^\vee[-d]$$
a \emph{weak proper Calabi-Yau} structure of dimension $d$.
\end{df}
\begin{remark}[See rmk. 49 of \cite{Ga23}]
A weak proper Calabi-Yau structure of dimension $d$, also called weak right Calabi-Yau structure, may be equivalently described by saying that there is a map of complexes 
$$tr: CH_*(\mathcal{C})\rightarrow k[-d]$$
such that the induced map 
$$\begin{tikzcd}
    Hom_\mathcal{C}(x,y)\otimes Hom_\mathcal{C}(y,x)\arrow{r}{m^2_{x,y}}& Hom_\mathcal{C}(x,x)\arrow{r}{\iota_x}& CH_*(\mathcal{C})\arrow{r}{tr}& k[-d]\end{tikzcd}$$
is non-degenerate on homology, for all objects $x,y\in Ob\mathcal{C}$.
If we additionally ask that the trace map comes from a map from cyclic homology along the map $\pi$ from remark \ref{cph}, this is called a strong proper Calabi-Yau structure.
\end{remark}
\begin{remark}
There is (are) also the notion(s) of a (weak and strong) smooth Calabi-Yau, also called (weak and strong) left Calabi-Yau structure on a smooth $A_\infty$-category.
See definitions 12 and 13 of \cite{Ga23}.
We won't need this here.   
\end{remark}
Finally we want to consider a concrete model computing the cyclic (co)homology of an $A_\infty$-category:
\begin{cons}
    Let $\mathcal{C}$ be an $A_\infty$-category and consider the map given by cyclic permutation:
    $$\begin{tikzcd}
     Hom(\lambda_0,\lambda_1)[1]\otimes Hom(\lambda_1,\lambda_2)[1]\otimes \cdots \otimes Hom(\lambda_{n},\lambda_0)[1]\arrow{d}{t}&\ni&a_{01}\otimes a_{12}\otimes a_{23}\otimes \cdots\otimes a_{no}\arrow[mapsto]{d} \\  Hom(\lambda_1,\lambda_2)[1]\otimes Hom(\lambda_2,\lambda_3)[1]\otimes \cdots \otimes Hom(\lambda_{0},\lambda_1)[1]&\ni&(-1)^* a_{12}\otimes a_{23}\otimes\cdots\otimes a_{no}\otimes a_{01}
    \end{tikzcd}$$
    Here $(-1)^*$ comes from the Koszul sign rule.
    We extend $t$ to  $Bar\mathcal{C}$ by zero on the subspaces that are not of the above form, i,e, where $\lambda_n\neq \lambda_0$ and denote it by the same letter.
    \end{cons}
    \begin{lemma}
Given an $A_\infty$-category the differential $m_\mathcal{C}+d$ on $Bar\mathcal{C}$ restricts to ${ker(1-t)}$.
    \end{lemma}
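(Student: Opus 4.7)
The plan is to verify separately that the internal differential $d$ and the coderivation $m_\mathcal{C}$ each preserve $\ker(1-t)$. The internal differential $d$ acts factor-wise on $Bar\,V_B$; the Koszul signs one picks up when cycling a differential past a tensor product of shifted elements are exactly those already built into the definition of $t$, so $d\circ t = t\circ d$ on the nose and $d(\ker(1-t))\subset \ker(1-t)$ is immediate. The substance of the lemma is therefore the analogous claim for $m_\mathcal{C}$.

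For $m_\mathcal{C}$ I would first reduce to summands of the form $V_{\lambda_0,\lambda_1}[1]\otimes\cdots\otimes V_{\lambda_{n-1},\lambda_0}[1]$, since on any other summand $t$ is defined to be zero so $\ker(1-t)$ restricted there is the full summand and $m_\mathcal{C}$ trivially preserves it. On such a closed-up summand I would write
$$m_\mathcal{C}(a_0\otimes\cdots\otimes a_{n-1}) = \sum_{k\ge 2}\sum_{0\le i\le n-k}(-1)^{|a_0|+\cdots+|a_{i-1}|}\,a_0\otimes\cdots\otimes m_k(a_i,\dots,a_{i+k-1})\otimes\cdots\otimes a_{n-1},$$
with degrees and signs in the shifted convention (so that each $m_k$ has shifted degree $1$). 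Then I would compute $t\circ m_\mathcal{C}(x)$ and $m_\mathcal{C}\circ t(x)$ term by term. The ``bulk'' terms with $1\le i\le n-k$ in $m_\mathcal{C}(x)$ pair off naturally with the terms with $0\le i-1\le n-k-1$ in $m_\mathcal{C}(t(x))$: after $t$ cycles $a_0$ to the end the positions of $m_k$ coincide, and the Koszul signs agree after a routine check.

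The unpaired contributions are exactly two: the $i=0$ term in $m_\mathcal{C}(x)$, whose $m_k$-output sits in the first slot and then is sent to the back by $t$; and the ``wrap-around'' $i=n-k$ term in $m_\mathcal{C}(t(x))$, where $m_k$ contracts the last $k$ tensor factors of $t(x)$, namely $a_{n-k+1},\dots,a_{n-1},a_0$. Here I would use the hypothesis $t(x)=x$: iterating it $k-1$ times lets me re-express that block $a_{n-k+1}\otimes\cdots\otimes a_0$ appearing inside $t(x)$ as the initial block $a_0\otimes\cdots\otimes a_{k-1}$ of $x$, up to a global Koszul sign coming from cycling $k-1$ letters past the remaining $n-k+1$ letters. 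Applying $m_k$ and collecting the result identifies the wrap-around term with the $i=0$ contribution, and the two signs are designed to cancel; the required identity reads exactly like the cyclic symmetry that $x$ already satisfies. This yields $(1-t)m_\mathcal{C}(x)=0$ for $x\in\ker(1-t)$, completing the proof.

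The main obstacle is the Koszul sign bookkeeping. The shift $[1]$ makes each $m_k$ a shifted degree-$1$ operator, and one has to track three different sources of signs simultaneously: the coderivation expansion of $m_\mathcal{C}$, the cyclic rotation $t$ (which is defined with a sign depending on the full right-hand tensor degree), and the iteration of $t$ used to convert the wrap-around block into a non-wrap-around one. A conceptually cleaner organization, which I would use as a consistency check, is to introduce a \emph{cyclically symmetrized} version $\tilde m_\mathcal{C}$ that sums over all $n$ cyclic positions of length-$k$ contractions (wrap-arounds included); this $\tilde m_\mathcal{C}$ is manifestly $t$-equivariant by construction, and the orbit-sum identity $\sum_{j=0}^{n-1}m_\mathcal{C}(t^j x)=n\,m_\mathcal{C}(x)$ on invariants allows one to relate $m_\mathcal{C}|_{\ker(1-t)}$ to $\tilde m_\mathcal{C}|_{\ker(1-t)}$, from which preservation of $\ker(1-t)$ follows without per-term sign matching.
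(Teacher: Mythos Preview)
Your proposal is essentially correct and spells out what the paper leaves to its citation of Loday's Lemma~2.1.1. One slip: on a non-closed-up summand $t$ acts as zero, so $\ker(1-t)$ there is $\ker(\mathrm{id})=0$, not the full summand. The reduction is still valid, but for the opposite reason: $\ker(1-t)$ is supported entirely on closed-up summands, and $m_\mathcal{C}$ preserves the closed-up condition, so there is nothing to check elsewhere.

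The computational core matches Loday's argument, though Loday packages it as an \emph{operator identity} on the whole bar complex rather than as a computation restricted to invariants: one introduces the auxiliary operator $b$ that augments $m_\mathcal{C}$ by the wrap-around contractions and proves $m_\mathcal{C}\,N = N\,b$ with $N=\sum_j t^j$, whence $m_\mathcal{C}(\ker(1-t))=m_\mathcal{C}(\mathrm{im}\,N)\subset\mathrm{im}\,N=\ker(1-t)$ in characteristic zero. That route never invokes $t^{k-1}(x)=x$ for each $k$ separately inside the sign chase. Your term-by-term matching reaches the same cancellation but only on invariants; your alternative via a $t$-equivariant $\tilde m_\mathcal{C}$ is close in spirit to this $N$-side formulation.
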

\begin{proof}
        This is a standard result, which follows in the same way as in lemma 2.1.1 of \cite{Lo13}.
\end{proof}
    Finally we have to include a degree shift by 1: 
\begin{theorem}[`Cyclic Words']
The cyclic chain complex computes cyclic homology $$(Cyc_*^+(\mathcal{C}),m_\mathcal{C}+d):=\big(ker(1-t)[-1],m_\mathcal{C}+d\big)\simeq CC_*(\mathcal{C})$$ 
\end{theorem}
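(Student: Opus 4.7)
The plan is to reduce this to the classical Connes theorem: in characteristic zero, the cyclic homology of a mixed complex is computed equivalently by the coinvariants or the invariants of the cyclic permutation. The adaptation to $A_\infty$-categories is then a matter of choosing explicit chain models and matching grading conventions.

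I would proceed in three steps. First, use the two-sided bar resolution of the diagonal bimodule $\mathcal{C}_\Delta$ to produce an explicit model for the derived tensor product defining $CH_*(\mathcal{C})$; concretely the complex
$$CH_*^{\mathrm{bar}}(\mathcal{C}) := \bigoplus_{n \geq 0}\ \bigoplus_{\lambda_0,\ldots,\lambda_n \in Ob(\mathcal{C})} \operatorname{Hom}(\lambda_0,\lambda_1)[1]\otimes\cdots\otimes \operatorname{Hom}(\lambda_{n-1},\lambda_n)[1]\otimes \operatorname{Hom}(\lambda_n,\lambda_0)[1],$$
with differential $b$ built from $d$, $m_\mathcal{C}$, and the ``wrap-around'' $A_\infty$-contributions crossing $\lambda_n$ back to $\lambda_0$, computes $CH_*(\mathcal{C})$; this is standard and recorded in Section 3.4 of \cite{Ga19}. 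Second, I would observe that the cyclic permutation $t$ constructed above extends to a mixed complex structure $(CH_*^{\mathrm{bar}}(\mathcal{C}), b, B)$, where Connes' operator $B$ is built from $t$ together with an extra degeneracy (obtained after unit-adjunction if $\mathcal{C}$ is not already unital). The $S^1$-action on $CH_*(\mathcal{C})$ referenced in Section 3.2 of \cite{Ga23} is modeled by this mixed complex, so that $CC_*(\mathcal{C}) = CH_*(\mathcal{C})_{hS^1}$ is quasi-isomorphic to the total complex of the $(b,B)$-bicomplex.

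Third, I would invoke Connes' theorem (see e.g.\ \cite{Lo13}, Ch.\ 2): in characteristic zero the total $(b,B)$-complex is quasi-isomorphic to the cyclic $\lambda$-complex of coinvariants $(CH_*^{\mathrm{bar}}(\mathcal{C})/(1-t),\, b)$, and the norm map $N = \sum_{i=0}^{n} t^i$ furnishes a further quasi-isomorphism with the invariants $(\ker(1-t),\, b)$, since $1-t$ is semisimple on each finite-rank piece. Finally I would reconcile the degree conventions: the $[1]$-suspensions on every factor in the bar construction are precisely compensated by the overall $[-1]$-shift in the definition of $Cyc_*^+(\mathcal{C})$, so the resulting identification matches the statement.

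The main obstacle is the second step in the $A_\infty$-setting. For an ordinary dg category this is classical, but here one must check that $B$ interacts correctly with the higher operations $m_k$, verifying $B^2 = 0$ and $bB + Bb = 0$ using the $A_\infty$-relations \eqref{A_inf_re} together with the cyclic equivariance of $m_\mathcal{C}$ on $\ker(1-t)$ asserted by the previous lemma. This adaptation is standard (and implicit in the Getzler--Jones construction of the cyclic structure on Hochschild chains of $A_\infty$-algebras), but the bookkeeping of signs and of extra degeneracies is the least trivial part of the argument.
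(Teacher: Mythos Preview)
Your proposal is correct and follows exactly the standard route that the paper merely cites: the paper's own proof is the single line ``This is well known,'' referring to \cite{Lo13}, theorem 2.1.5 for the algebra case and to \cite{AmTu22} for the $A_\infty$-categorical generality. Your three steps---the bar model for $CH_*(\mathcal{C})$, the $(b,B)$ mixed complex realizing the $S^1$-action, and Connes' characteristic-zero comparison with the $\lambda$-complex of invariants via the norm map---are precisely the content of those references, so you have reconstructed the argument they point to rather than taken a different path.
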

\begin{proof}
    This is well known.
    For the algebra case see \cite{Lo13}, theorem 2.1.5, in our generality e.g around definition 2.5 and remark 2.6 of \cite{AmTu22}.
\end{proof}
\begin{df}
  The cochain complex $$(Cyc^*_+(\mathcal{C}),m_\mathcal{C}+d):=Hom(Cyc_*^+(\mathcal{C}),k)$$ with the induced differential is called \emph{cyclic cochain complex} and its homology is referred to as the \emph{cyclic cohomology} of $\mathcal{C}$. 
\end{df}

\begin{remark}
    We think of a cyclic cochain as a cyclic word in the dual of the Hom-spaces with matching `boundary condition'.
    We adopt the notation 
    $$(a_1a_2\cdots a_n)\in Cyc^*_+(\mathcal{C})[-1],$$
where by definition $a_i\in Hom(\lambda,\mu)^\vee[-1]$ and $a_{i+1}\in Hom(\mu,\nu)^\vee[-1]$ for $\lambda,\mu,\nu\in Ob(\mathcal{C})$.
\end{remark}
\begin{df}
    A \emph{cyclic $A_\infty$-category} of degree $d$, denoted $(\mathcal{C},\langle\_,\_\rangle_\mathcal{C})$, is an
    $A_\infty$-category $\mathcal{C}$ together with a  symmetric non-degenerate chain map
    $$\langle\_,\_\rangle_{\mu\lambda}:\  Hom_\mathcal{C}(\mu,\lambda)\otimes Hom_\mathcal{C}(\lambda,\mu)\rightarrow k[-d]$$
    for all $\mu,\lambda\in Ob(\mathcal{C})$ such that 
    \begin{equation}\label{cycl}S_\mathcal{C}:=\langle (m+d)\_,\_\rangle\in Cyc^*_+(\mathcal{C})[-1],
    \end{equation}
    i.e. is cyclically symmetric.
    \end{df}
    Note that in this case $S_\mathcal{C}$ is of degree $3-d$.
    We further denote by 
    $$I_\mathcal{C}:=\langle m\_,\_\rangle\in Cyc^*_+(\mathcal{C})[-1],$$
    which has the same degree.

\begin{remark}\label{cy_cyc}
    See remark 9 of \cite{Ga23} for the fact that any cyclic $A_\infty$-category is canonically a strong (and thus weak) proper Calabi-Yau.
    Further it is explained that in characteristic zero one can strictify a strong proper Calabi-Yau category to a cyclic one.
    However, the notion of cyclic $A_\infty$-category is not preserved under general $A_\infty$-functors, whereas the one of proper Calabi-Yau categories is.
\end{remark}
Without giving details we state the result from definition 2.15 of \cite{AmTu22} that there is a functor
$$\Omega^{2,cl}: (A_\infty Cat)^{op} \rightarrow Ch,$$
which associates to an $A_\infty$-category a chain complex called its non-commutative 2-forms.
As described in definition 2.23 of loc. cit. a cyclic structure $\langle\_,\_\rangle_\mathcal{C}$ on an 
$A_\infty$-category $\mathcal{C}$ induces an element $\rho_\mathcal{C}\in\Omega^{2,cl}(\mathcal{C}).$

\begin{df}\label{cycAi}
   Given two cyclic $A_\infty$-categories $(\mathcal{C},\langle\_,\_\rangle_\mathcal{C})$ and $(\mathcal{D},\langle\_,\_\rangle_\mathcal{D})$
   a  \emph{cyclic $A_\infty$-functor} is an $A_\infty$-functor $F:\mathcal{C}\rightarrow \mathcal{D}$ such that

   $$F^*\rho_{\mathcal{D}}=\rho_{\mathcal{C}},$$
   where $\rho_{\mathcal{D}}$ respectively $\rho_{\mathcal{C}}$ refers to the non-commutative 2-form associated to the cyclic structure by the previous remark.
\end{df}
\begin{remark}\label{caf}
    This definition should reduce to definition 5.3 (2) of \cite{HaLa08} in the case of a category with one object.
    Note that the authors use the term symplectic where we used the term cyclic.
    See equation 13 of \cite{AmTu22} for an explicit  equivalent description of definition \ref{cycAi}.
    In particular one has 
    \begin{equation}\label{nfn}
        \langle F_1(x),F_1(y)\rangle_\mathcal{D}=\langle x,y\rangle_\mathcal{C}.
        \end{equation}
\end{remark}
 Denote by  $(cyc_dA_\infty cat)_{st}$ the (ordinary) category whose objects are cyclic $A_\infty$-categories of degree $d$ and whose morphisms are cyclic strict $A_\infty$-functors. 
\begin{df}\label{ColV}
    We say 
    $$V_B:=(V_{ij},\langle\_,\_\rangle_{ij})_{i,j\in B}$$
    is a \emph{collection of cyclic chain complexes} if it is a cyclic $A_\infty$-category $\mathcal{C}$ with $m_\mathcal{C}=0$ and $B=Ob(\mathcal{C})$. 
\end{df}

\subsubsection{Shifted Poisson Structure} 
As before, given a collection of cyclic chain complexes $V_B$ we have that $\big(Cyc^*_+(V_B)[-1],d\big)$ is a cochain complex.
Recall that we adopted the notation 
$$(a_1a_2\cdots a_n)\in Cyc^*_+({V_B})[-1],$$
where by definition $a_i\in Hom(\lambda,\mu)^\vee[-1]$ and $a_{i+1}\in Hom(\mu,\nu)^\vee[-1]$ for $\lambda,\mu,\nu\in B$.
\begin{cons}
   
 We define
 $$\{\_,\_\}:Cyc^*_+({V_B})[-1]\otimes Cyc^*_+({V_B})[-1]\rightarrow Cyc^*_+({V_B})[-1]$$
 by 
 \begin{equation}\label{bracket}
 \{(a_1a_2\cdots a_n),(b_1b_2\cdots b_m\}= \sum_{i=1}^m\sum_{j=1}^n\langle a_j,b_i\rangle^{-1}(-1)^*(a_{i+1}\cdots a_na_1\cdots a_{i-1}b_{j+1}\cdots b_mb_1\cdots b_{j-1})
 \end{equation}
for $n+m>2$ and zero otherwise, $(-1)^*$ being determined by the Koszul rule. 
Here 
\begin{equation*}
   \langle a_j,b_i\rangle^{-1}=
    \begin{cases}
      \langle a_j,b_i\rangle_{\lambda\mu}^{-1}, & \text{if}\ a_j\in Hom(\lambda,\mu)^\vee[-1]\ \text{and}\ b_i\in Hom(\mu,\lambda)^\vee[-1]\ \text{for some $\mu,\lambda\in Ob(\mathcal{C})$} \\
      0, & \text{otherwise}
    \end{cases}
      \end{equation*}
      \end{cons}
\begin{lemma}
   Let $V_B$ be a collection of cyclic chain complexes of degree $d$. Then 
   $$(Cyc^*_+(V_B)[-1],d,\{\_,\_\})$$
   is a $(d-2)$-shifted dg Lie algebra. 
   \end{lemma}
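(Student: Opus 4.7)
The plan is to verify the three defining conditions of a $(d-2)$-shifted dg Lie algebra: that the bracket $\{\_,\_\}$ has degree $d-2$, that it is graded antisymmetric and satisfies the graded Jacobi identity, and that $d$ is a graded derivation for $\{\_,\_\}$. The bracket is essentially Kontsevich's \emph{necklace bracket} on cyclic words adapted to the many-object setting, so every step reduces to unwinding the definition \eqref{bracket} together with a careful sign bookkeeping via the Koszul rule; the cyclic symmetry of the output and the $(1-t)$-relations guarantee that each expression is well-defined.

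First I would check the degree, which amounts to noting that the pairing $\langle\_,\_\rangle_{ij}$ takes values in $k[-d]$, so its inverse Casimir carries total internal degree $d$, and once one tracks the shifts $[-1]$ on the letters $a_i\in Hom(\lambda,\mu)^\vee[-1]$ one sees that removing two letters and contracting them via $\langle\_,\_\rangle^{-1}$ shifts the total degree by exactly $d-2$. Graded antisymmetry follows from the symmetry $\langle\_,\_\rangle_{ij}=\langle\_,\_\rangle_{ji}$ of the pairing (equivalently, of the Casimir): swapping the two arguments of the bracket exchanges the summation pair $(i,j)\leftrightarrow(j,i)$ and cyclically rotates the output, and the Koszul signs package this into the expected sign $(-1)^{(|x|+d-2)(|y|+d-2)}$. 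Compatibility with $d$ is also straightforward: $d$ acts termwise on the letters of a cyclic word, so distributing it over $\{x,y\}$ gives $\{dx,y\}+(-1)^{|x|}\{x,dy\}$ plus the terms in which $d$ hits one of the two contracted letters $a_j,b_i$; these latter terms assemble into $\langle da_j,b_i\rangle^{-1}+(-1)^{|a_j|}\langle a_j,db_i\rangle^{-1}$, which vanishes because the pairing is a chain map, and hence so is its inverse Casimir.

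The main obstacle is the graded Jacobi identity. I would expand the three cyclic permutations $\{\{x,y\},z\}$, $\{\{y,z\},x\}$, $\{\{z,x\},y\}$ via \eqref{bracket}; each summand is then labeled by a choice of two successive contraction pairs among the letters of $x$, $y$, $z$. Distinguishing between summands where both contractions use letters of two fixed words versus those involving letters from all three, I would set up an explicit bijection between contributions to the three summands of the Jacobiator indexed by the same unordered triple of contracted letters, and check that the Koszul signs either cause matched terms to cancel pairwise or force triples to contribute with vanishing signed sum. The delicate point is controlling the cyclic rotations of the ``fused'' word sitting between the two successive contractions, as these rotations interact non-trivially with the choice of basepoints in the cyclic words. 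A conceptually cleaner route, which I would take if the combinatorics proves unwieldy, is to recognize this bracket as the standard one on functions on the noncommutative symplectic formal affine space attached to $V_B$ in the style of \cite{KoSo24}; the Jacobi identity is then automatic, and \eqref{bracket} becomes simply its explicit model in the cyclic-word basis.
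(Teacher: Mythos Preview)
Your proposal is correct and more detailed than what the paper does. The paper's proof is essentially a citation: it refers to \cite{Che10}, Definition--Lemma~2, for the shifted Lie algebra axioms (and to \cite{Ha07}, Proposition~2.8, for the one-object case in closer notation), and then remarks in one line that compatibility with $d$ holds because the cyclic pairing is closed. Your direct verification---degree count, antisymmetry from the symmetry of the Casimir, Leibniz from the chain-map property of the pairing, and Jacobi via an explicit bijection on contraction pairs---is a genuinely different, self-contained route. It has the advantage of making the signs and the role of the Koszul rule explicit, at the cost of the combinatorial case analysis for Jacobi that you yourself flag as delicate. Your fallback plan, to identify the bracket with the one on functions on the noncommutative symplectic formal space attached to $V_B$, is essentially the strategy of the cited references, so in that case you would converge with the paper's approach.
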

\begin{proof}
The fact that this is a shifted Lie algebra is proven in \cite{Che10}, Definition-Lemma 2.
For the algebra case see \cite{Ha07}, proposition 2.8 where a notation closer to here is used.
The fact that the differential is compatible with the Lie structure follows from the fact that the cyclic structure is by definition closed with respect to this differential.
\end{proof}

\begin{lemma}\label{Ham1}

   A cyclic $A_\infty$-category structures on $V_B$ induces a Maurer-Cartan elements of $(Cyc^*_+(V_B)[-1],d,\{\_,\_\})$ of degree $(3-d)$.
    \end{lemma}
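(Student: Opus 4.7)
The natural candidate for the Maurer-Cartan element is exactly the one whose notation has already been fixed in the text, namely
\[
I := I_{V_B} = \langle m\,\_,\_\rangle \in Cyc^*_+(V_B)[-1],
\]
where $m = \sum_{n\ge 2} m_n$ is the total higher-multiplication of the given cyclic $A_\infty$-structure on $V_B$. A quick degree count (each $m_n$ has degree $+1$ on the bar shift and the pairing has degree $-d$) gives $|I| = 3-d$, as required. So the only content is to verify the Maurer-Cartan equation
\[
dI + \tfrac{1}{2}\{I,I\} = 0
\]
in the $(d-2)$-shifted dg Lie algebra $(Cyc^*_+(V_B)[-1],d,\{\_,\_\})$.

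The plan is to expand both terms as sums over tuples of composable morphisms and match them against the $A_\infty$-relations \eqref{A_inf_re}. For the second term, unpacking the bracket formula \eqref{bracket} at $I$ against $I$: the only way a non-zero summand can occur is when one chooses a dual basis vector $a_j^\vee$ appearing in the cyclic cochain $\langle m_s(\cdot), \cdot\rangle$ and pairs it via $\langle\_,\_\rangle^{-1}$ with another dual basis vector in the cochain $\langle m_t(\cdot),\cdot\rangle$. Using non-degeneracy of the pairing, evaluating this on a tuple $a_1\otimes\cdots\otimes a_n$ produces precisely the cyclic sum of expressions of the form $\langle m_{r+1+t}(\mathrm{id}^{\otimes r}\otimes m_s\otimes\mathrm{id}^{\otimes t})(\_),\_\rangle$ with $r+s+t = n$ and $s,r{+}1{+}t\ge 2$. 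In other words, $\tfrac{1}{2}\{I,I\}$, evaluated cyclically, reproduces exactly the cyclic symmetrization of the left-hand side of the quadratic $A_\infty$-relation for $m_{\ge 2}$. The term $dI$ handles the remaining contributions involving the internal differential $m_1 = d$, so the full equation $dI + \tfrac{1}{2}\{I,I\} = 0$ is a direct reformulation of the collection of relations \eqref{A_inf_re}.

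The main technical point, and the only real obstacle, is bookkeeping. One must verify:
\begin{enumerate}[label=(\roman*)]
\item that $I$ lives in $\ker(1-t)$, i.e., is genuinely a cyclic cochain — this is exactly the cyclic-symmetry condition \eqref{cycl} built into the definition of a cyclic $A_\infty$-category;
\item that the cyclic permutation appearing in the Lie bracket matches the cyclic summation implicit in re-writing the $A_\infty$-relation as an equation between cyclic cochains (this is where the factor of $\tfrac{1}{2}$ is absorbed);
\item that the Koszul signs arising from the bracket \eqref{bracket} agree with those from the $A_\infty$-relation after cyclic rotation, which relies only on the pairing being symmetric of degree $d$ and on $m$ being a coderivation.
\end{enumerate}
All three checks are well-known in the cyclic $A_\infty$-algebra case (see e.g.\ Hamilton–Lazarev \cite{HaLa08} or Kontsevich–Soibelman), and they go through verbatim for a collection $V_B$ indexed by a set $B$: the only change is that the pairing $\langle\_,\_\rangle^{-1}$ now carries indices in $B$ and vanishes unless the source/target decorations match, which is enforced automatically by the bar-construction \eqref{bar}.

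Finally, functoriality in strict cyclic $A_\infty$-morphisms (needed for later statements) is immediate once the MC element is identified with $\langle m\,\_,\_\rangle$: a strict cyclic morphism is determined by $F_1$, which respects pairings by \eqref{nfn} and intertwines $m_{\mathcal{C}}$ with $m_{\mathcal{D}}$ via \eqref{F_inf_rel}, so pulling back $I_{\mathcal{D}}$ along $F_1$ yields $I_{\mathcal{C}}$.
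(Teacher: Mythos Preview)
Your proposal is correct and follows the same approach as the paper: identify the candidate $I_{\mathcal{C}}=\langle m\_,\_\rangle$, note its degree is $3-d$, and observe that the Maurer--Cartan equation is a repackaging of the $A_\infty$-relations \eqref{A_inf_re} via the cyclic pairing. The paper phrases the last step slightly more conceptually (the coderivation $m_{\mathcal{C}}$ induces a square-zero differential on $Cyc^*_+(V_B)$, hence $dI_{\mathcal{C}}+\tfrac{1}{2}\{I_{\mathcal{C}},I_{\mathcal{C}}\}=0$) and defers details to \cite{da24}, whereas you unpack the bracket explicitly; your final paragraph on functoriality actually belongs to the next lemma in the paper rather than to this one.
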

   
\begin{proof}
This is a standard result, see e.g. 4.4.3 of \cite{da24}.
Recall that we can describe a cyclic $A_\infty$-category structure on $V_B$ as a degree $-1$ 
coderivation on $BarV_B$ that squares to zero and commutes with $d$.
We get an induced differential $m_\mathcal{C}$ on $Cyc^*_+(V_B)$ that squares to zero and commutes with the internal differential $d$.
This fact then implies that $I_\mathcal{C}=\langle m_\mathcal{C}\_,\_\rangle$ satisfies 
$$dI_\mathcal{C}+\frac{1}{2}\{I_\mathcal{C},I_\mathcal{C}\}=0,$$
i.e. it is a Maurer-Cartan element.
\end{proof}
 \begin{remark}
We call such Maurer Cartan elements also classical interaction term or $A_\infty$-hamiltonian.
    \end{remark}
 We recall that for $V$ a chain complex we use the notation $SymV:=\bigoplus_{n=0}V^{\otimes n}/S_n.$ 
 It is a standard fact that for $\mathfrak{g}$ a dg n-shifted Lie algebra $Sym(\mathfrak{g})$ 
 becomes an n-shifted Poisson dg algebra by extending the bracket according to the Leibniz rule and the differential as a derivation.
 Thus from the previous lemma it follows:
\begin{theorem}\label{fP}
    Let $V_B$ be a collection of cyclic chain complexes of degree $d$.
    Then `the observables of the free\footnote{a name motivated by physics, where a theory coming from a quadratic action functional is referred to as free.} theory' 
    $$\mathcal{F}^{fr}(V_B):=Sym(Cyc^*_+(V_B)[-1])$$
    are a $(d-2)$-shifted Poisson dg-algebra.

\end{theorem}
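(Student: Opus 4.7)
The plan is to reduce the statement to the standard fact that the symmetric algebra on a shifted dg Lie algebra carries a canonical shifted Poisson dg algebra structure, and then verify that this standard construction applies cleanly to our setting. By the preceding lemma we already know that $(Cyc^*_+(V_B)[-1], d, \{\_,\_\})$ is a $(d-2)$-shifted dg Lie algebra, so all that remains is to upgrade this to a Poisson structure on the free graded commutative algebra it generates.

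First I would endow $Sym(Cyc^*_+(V_B)[-1])$ with its tautological graded commutative multiplication coming from the symmetric algebra structure, and extend the internal differential $d$ as a derivation of this product in the unique way determined by its values on generators. The fact that $d^2=0$ on the extended algebra is immediate from $d^2=0$ on generators together with the derivation property. Next, I would extend the Lie bracket $\{\_,\_\}$ from $Cyc^*_+(V_B)[-1]$ to all of $\mathcal{F}^{fr}(V_B)$ by imposing the (graded, $(d-2)$-shifted) Leibniz rule
\begin{equation*}
\{a, b\cdot c\} = \{a,b\}\cdot c + (-1)^{(|a|+d-2)|b|} b\cdot \{a,c\},
\end{equation*}
and antisymmetry. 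Since $Cyc^*_+(V_B)[-1]$ generates the symmetric algebra and the bracket of generators lands back in $Cyc^*_+(V_B)[-1]$, this extension is well-defined on a set of generating tensors; one then checks (on pairs of generators) that it descends to the $S_n$-coinvariants and that the resulting operation does not depend on the chosen presentation.

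The main verification is that the extended bracket satisfies the graded Jacobi identity and is compatible with the extended differential. Both of these reduce to checks on generators using the Leibniz rule: for Jacobi, one writes the triple bracket of two generators against a product of generators, applies Leibniz twice, and invokes Jacobi on the generators to cancel terms; for compatibility with $d$, one uses that both $d$ and $\{x,\_\}$ are derivations of the product and that the compatibility $d\{x,y\}=\{dx,y\}\pm\{x,dy\}$ already holds on generators by the previous lemma, so the two derivations $d\circ\{x,\_\}$ and $\{dx,\_\}+\{x,d\_\}$ agree on generators and hence everywhere. I expect the main (though standard) subtlety to be bookkeeping of Koszul signs in the Leibniz-and-Jacobi check for a $(d-2)$-shifted bracket, but no genuine obstruction arises: this is the classical passage from a shifted dg Lie algebra to its free shifted Poisson algebra, specialized to $\mathfrak{g}=Cyc^*_+(V_B)[-1]$.
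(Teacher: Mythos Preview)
Your proposal is correct and follows exactly the paper's approach: the paper invokes the previous lemma to get a $(d-2)$-shifted dg Lie algebra structure on $Cyc^*_+(V_B)[-1]$ and then appeals to the standard fact that $Sym(\mathfrak{g})$ of a dg $n$-shifted Lie algebra is an $n$-shifted Poisson dg algebra by extending the bracket via Leibniz and the differential as a derivation. You simply spell out that standard fact in more detail than the paper does.
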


\begin{df}\label{F_cl} 
Given a cyclic $A_\infty$ category $\mathcal{C}$ of degree $d$  denote by
    $$\mathcal{F}^{cl}(\mathcal{C}):=\mathcal{F}^{fr}(V_{Ob\mathcal{C}})^{tw}$$
    the $(d-2)$-shifted Poisson dg-algebra obtained by twisting $\mathcal{F}^{fr}(V_{Ob\mathcal{C}})$ by the Maurer-Cartan element
    determined by cyclic $A_\infty$-structure through lemma \ref{Ham1}.
    
\end{df}
Here $cl$ stands for classical.\footnote{ a natural name from the physics perspective would also be interactive.}
\begin{lemma}\label{cov}
We find a functor
$$(cyc_dA_\infty cat)_{st}\rightarrow   (Poiss_{d-2})^{op},$$
$$(F:(\mathcal{C},\langle\_,\_\rangle_\mathcal{C})\rightarrow (\mathcal{D},\langle\_,\_\rangle_\mathcal{D}))\mapsto(F^*:\mathcal{F}^{cl}(\mathcal{D})\rightarrow \mathcal{F}^{cl}(\mathcal{C}))$$
\end{lemma}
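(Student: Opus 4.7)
The plan is to show that the assignment $F\mapsto F^*$ defines a functor by constructing $F^*$ at the level of underlying (cyclic) chain complexes and then checking in turn that (i) it is a chain map, (ii) a multiplicative map of graded commutative algebras, (iii) a map of shifted Lie algebras, and (iv) respects the Maurer-Cartan twist of Definition \ref{F_cl}; functoriality in $F$ will then be automatic.

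First I would construct, from any strict $A_\infty$-functor $F\colon\mathcal{C}\to\mathcal{D}$, a pushforward
$$F_*\colon Cyc_*^+(\mathcal{C})\longrightarrow Cyc_*^+(\mathcal{D}),\qquad a_1\otimes\cdots\otimes a_n\longmapsto F_1(a_1)\otimes\cdots\otimes F_1(a_n).$$
Strictness is what makes this well defined on the cyclic quotient, since only $F_1$ appears; and it forces the $A_\infty$-functor equations \eqref{F_inf_rel} to reduce to $F_1\circ m_{\mathcal{C},n}=m_{\mathcal{D},n}\circ F_1^{\otimes n}$, so $F_*$ commutes with the total differential $d+m$. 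Dualizing gives $F^*\colon Cyc^*_+(\mathcal{D})[-1]\to Cyc^*_+(\mathcal{C})[-1]$, a chain map, which extends uniquely and multiplicatively to a map of free symmetric algebras $F^*\colon \mathcal{F}^{fr}(V_{Ob\mathcal{D}})\to\mathcal{F}^{fr}(V_{Ob\mathcal{C}})$. Compatibility with the internal differential $d$ on $Sym$ follows from the Leibniz rule and the pointwise statement on generators.

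The main technical step is to show that $F^*$ intertwines the brackets of \eqref{bracket}. Here the cyclic hypothesis \eqref{nfn}, namely $\langle F_1(x),F_1(y)\rangle_\mathcal{D}=\langle x,y\rangle_\mathcal{C}$, is the essential input: it guarantees that the inverse pairings $\langle-,-\rangle^{-1}_{\lambda\mu}$ appearing in the contraction in \eqref{bracket} pull back consistently under $F_1\otimes F_1$. Concretely, I would dualize the bracket to a cobracket $\Delta$ on cyclic chains — it acts on a cyclic word by inserting the coevaluation at a split position — and then verify $\Delta_\mathcal{D}\circ F_*=(F_*\otimes F_*)\circ\Delta_\mathcal{C}$ by a direct bookkeeping argument on each cyclic word, the signs being governed by the Koszul rule in the same way as in \cite{Che10}. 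Taking linear duals then yields $F^*\{\alpha,\beta\}_\mathcal{D}=\{F^*\alpha,F^*\beta\}_\mathcal{C}$, and the Leibniz extension to $Sym$ is automatic.

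It remains to see that $F^*$ descends to the MC-twisted structures $\mathcal{F}^{cl}(\mathcal{D})\to\mathcal{F}^{cl}(\mathcal{C})$, for which it suffices to check $F^*(I_\mathcal{D})=I_\mathcal{C}$. Using strictness and \eqref{nfn},
$$(F^*I_\mathcal{D})(a_1,\dots,a_{n+1})=\langle m_\mathcal{D}(F_1 a_1,\dots,F_1 a_n),F_1 a_{n+1}\rangle_\mathcal{D}=\langle F_1 m_\mathcal{C}(a_1,\dots,a_n),F_1 a_{n+1}\rangle_\mathcal{D}=\langle m_\mathcal{C}(a_1,\dots,a_n),a_{n+1}\rangle_\mathcal{C},$$
which equals $I_\mathcal{C}(a_1,\dots,a_{n+1})$. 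Hence twisting by $I_\mathcal{D}$ on the target and by $I_\mathcal{C}$ on the source yields a compatible twisted differential $d+\{I,-\}$, and $F^*$ commutes with it. Functoriality $\mathrm{id}^*=\mathrm{id}$ and $(GF)^*=F^*G^*$ is immediate from the corresponding property of $F_*$ at the chain level. The main obstacle is the bracket compatibility in the previous paragraph, as the remaining checks are formal consequences of the definitions combined with strictness.
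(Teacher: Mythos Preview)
Your argument follows essentially the same route as the paper's: both reduce the bracket compatibility to the observation that the cyclic hypothesis \eqref{nfn} forces the inverse pairings appearing in \eqref{bracket} to be respected by $F^*$, after which strictness handles the rest; the paper carries this out directly on cochains via a short diagram chase through the pairing-induced isomorphisms $Hom\cong Hom^\vee[-d]$ (invoking injectivity of $F_1$), whereas you phrase the same step dually as compatibility of a cobracket on cyclic chains with $F_*$. One organizational difference is that you fold the verification $F^*(I_\mathcal{D})=I_\mathcal{C}$ into this lemma, while the paper records it separately as the lemma immediately following.
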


\begin{proof}
Since we assume that $F$ is cyclic we have commutativity of following diagram.
$$\begin{tikzcd}
    Hom_\mathcal{D}(F(i),F(j))^\vee\otimes Hom_\mathcal{D}(F(j),F(i))^\vee[-2d]\arrow{d}{F^*_{i,j}\otimes F^*_{j,i}}&Hom_\mathcal{D}(F(i),F(j))\otimes Hom_\mathcal{D}(F(j),F(i))\arrow{l}\arrow{r}{\langle\_,\_\rangle_\mathcal{D}}&k[-d]\\
    Hom_\mathcal{C}(i,j)^\vee\otimes Hom_\mathcal{C}(j,i)^\vee[-2d]&Hom_\mathcal{C}(i,j)\otimes Hom_\mathcal{C}(j,i)\arrow{l}\arrow[swap]{ru}{\langle\_,\_\rangle_\mathcal{C}}\arrow{u}{F_*^{ij}\otimes F_*^{ji}}
\end{tikzcd}$$
Here the horizontal two maps are defined in the standard way using the respective pairings, see section 3.1 of \cite{GGHZ21}.
Since we assume that these are non-degenerate we can invert the vertical maps and ponder commutativity of following diagram.
$$\begin{tikzcd}
    Hom_\mathcal{D}(F(i),F(j))^\vee\otimes Hom_\mathcal{D}(F(j),F(i))^\vee[-2d]\arrow{d}{F^*_{i,j}\otimes F^*_{j,i}}\arrow{r}&Hom_\mathcal{D}(F(i),F(j))\otimes Hom_\mathcal{D}(F(j),F(i))\arrow{r}{\langle\_,\_\rangle_\mathcal{D}}&k[-d]\\
    Hom_\mathcal{C}(i,j)^\vee\otimes Hom_\mathcal{C}(j,i)^\vee[-2d]\arrow{r}&Hom_\mathcal{C}(i,j)\otimes Hom_\mathcal{C}(j,i)\arrow[swap]{ru}{\langle\_,\_\rangle_\mathcal{C}}\arrow{u}{F_*^{ij}\otimes F_*^{ji}},
\end{tikzcd}$$
where now the horizontal maps are the inverses of the previous vertical maps.
Indeed, this diagram is also commutative, which follows by injectivity of the maps $F_*^{ji}$ and commutativity of the previous diagram.
This implies that for all $F(i),F(j)\in Ob(\mathcal{D})$ and $a\in Hom_\mathcal{D}(F(i),F(j))^\vee$ and $b\in Hom_\mathcal{D}(F(j),F(i))^\vee$ we have 
$\langle F^*_{ij}(a),F^*_{ji}(b)\rangle_{\mathcal{C}}^{-1}=\langle a,b\rangle_{\mathcal{D}}^{-1}.$
As we further assume that  $F$ is strict it is now straightforward to verify that the induced map 
$F^*:\mathcal{F}^{cl}(\mathcal{D})\rightarrow \mathcal{F}^{cl}(\mathcal{C})$ respects the shifted Poisson algebra structures, using defintion \eqref{bracket}.   
\end{proof}

\begin{lemma}
Given a cyclic strict $A_\infty$-morphism $F:\mathcal{C}\rightarrow \mathcal{D}$
we have that 
$$F^*(I_\mathcal{D})=I_\mathcal{C}$$
for the associated cyclic $A_\infty$ hamiltonians.
\end{lemma}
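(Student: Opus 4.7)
The plan is to unwind the definition of $F^*(I_\mathcal{D})$ on a basic cyclic monomial and apply two ingredients: the $A_\infty$-relation for strict functors, and the cyclicity condition for $F$, which together reduce $F^*(I_\mathcal{D})$ to $I_\mathcal{C}$ term by term.

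First, I would describe $F^*$ explicitly: since $F$ is strict, only $F_1$ is non-zero, so the induced pullback on $Cyc^*_+(-)[-1]$ is defined on an elementary cyclic cochain $(\alpha_1\alpha_2\cdots\alpha_n)\in Cyc^*_+(\mathcal{D})[-1]$ by dualization of $F_1^{\otimes n}$ (restricted to cyclically matching boundary conditions via the map $F$ on objects). Concretely, evaluating $F^*(I_\mathcal{D})$ on inputs $x_1\otimes\cdots\otimes x_n$ with $x_i\in Hom_\mathcal{C}(\lambda_{i-1},\lambda_i)$ and $\lambda_n=\lambda_0$ gives
\[
F^*(I_\mathcal{D})(x_1,\ldots,x_n)=I_\mathcal{D}\bigl(F_1(x_1),\ldots,F_1(x_n)\bigr)=\bigl\langle m_{\mathcal{D},n-1}\bigl(F_1(x_1),\ldots,F_1(x_{n-1})\bigr),F_1(x_n)\bigr\rangle_\mathcal{D}.
\]

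Second, because $F$ is strict, the general $A_\infty$-functor relation \eqref{F_inf_rel} collapses to $F_1\circ m_{\mathcal{C},k}=m_{\mathcal{D},k}\circ F_1^{\otimes k}$ for every $k\ge 2$ (and $F_1$ is a chain map on Hom-spaces when $k=1$). Applied to the expression above this turns the $\mathcal{D}$-multiplication inside the pairing into $F_1$ of an $\mathcal{C}$-multiplication:
\[
m_{\mathcal{D},n-1}\bigl(F_1(x_1),\ldots,F_1(x_{n-1})\bigr)=F_1\bigl(m_{\mathcal{C},n-1}(x_1,\ldots,x_{n-1})\bigr).
\]

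Third, I would invoke cyclicity of $F$ in the form of equation \eqref{nfn}, i.e.\ $\langle F_1(x),F_1(y)\rangle_\mathcal{D}=\langle x,y\rangle_\mathcal{C}$, to conclude
\[
\bigl\langle F_1(m_{\mathcal{C},n-1}(x_1,\ldots,x_{n-1})),F_1(x_n)\bigr\rangle_\mathcal{D}=\bigl\langle m_{\mathcal{C},n-1}(x_1,\ldots,x_{n-1}),x_n\bigr\rangle_\mathcal{C}=I_\mathcal{C}(x_1,\ldots,x_n).
\]
Since this holds on every cyclically matching tuple of inputs (and both sides vanish if the boundary conditions do not close up, because $F$ preserves sources and targets), we obtain $F^*(I_\mathcal{D})=I_\mathcal{C}$.

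The argument is essentially a bookkeeping exercise; the main subtlety I would expect is ensuring the Koszul signs introduced by passing $F_1^{\otimes n}$ through the pairing and by cyclic symmetrization match on both sides. These are however already implicitly handled by the conventions set up for $Cyc^*_+(-)[-1]$ and by the proof of Lemma \ref{cov}, where the analogous compatibility of $F^*$ with the inverse pairings was verified; no new combinatorial input is required beyond what was used there.
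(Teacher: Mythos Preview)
Your proof is correct and follows exactly the approach the paper sketches: the paper's proof simply points to equations \eqref{nfn} and \eqref{F_inf_rel} (and a reference), and you have carried out precisely that verification by collapsing \eqref{F_inf_rel} under strictness to $F_1\circ m_{\mathcal{C},k}=m_{\mathcal{D},k}\circ F_1^{\otimes k}$ and then applying \eqref{nfn}. There is nothing to add.
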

\begin{proof}
This is well known, see e.g. proposition 4.6 of \cite{da24}.
Concretely this can be verified using the explicit characterizations of cyclic $A_\infty$-functors,
equation \eqref{nfn} and equation \eqref{F_inf_rel}.
\end{proof}
\begin{remark}\label{befu}
    In the case of a cyclic $A_\infty$-algebra definition 2.7 and equation 2.5 of \cite{Ha07}
    provide an alternative description of the shifted Poisson bracket. Using this alternative 
    description lemma 5.5 of \cite{HaLa08} implies that the assignment of the shifted Poisson
    algebra $F^{cl}(A)$ to a cyclic $A_\infty$ algebra $A$ is also functorial with respect to 
    cyclic $A_\infty$-isomorphisms, not just strict ones. The authors use the so called non-commutative calculus for doing so.
    See section 4.5 of \cite{da24} for a sketch of such a non-commutative calculus for the category case. 
    We imagine that using this one should be able to prove a similar functoriality result also for categories.
    See also our remark \ref{caf} for a comparison of nomenclature used. 
    \end{remark}
    \begin{remark}\label{dar}
Similarly lemma 5.5 of \cite{HaLa08} implies that a cyclic $A_\infty$-isomorphism sends associated
$A_\infty$-hamiltonian to associated $A_\infty$-hamiltonian. We imagine that analogous comments to
above remark apply.
\end{remark}

Note that this discussion carries over to the full subcategory of $(cyc_dA_\infty cat)_{st}$ on
collections of cyclic chain complexes, i.e. explains functoriality of theorem \ref{fP}.
  
\subsubsection{Beilinson-Drinfeld Structure}
\begin{df}
    Given a collection of cyclic chain complexes $V_B$ we extend the previous dg shifted Lie
    algebra structure to
    \begin{equation}
    \label{ce}Cyc^*(V_B):=Cyc^*_+(V_B)\Pi (\prod_{\lambda\in B}\nu_\lambda k)[1], 
    \end{equation}
    where the vector space $\prod_{\lambda\in B}\nu_\lambda k$ spanned by the $\nu_\lambda$ is central and we further redefine the bracket on words of length one by 
    \begin{equation*}
    \{(a),(b)\}=
    \begin{cases}
      \langle a,b\rangle^{-1}\nu_\lambda, & \text{if}\ a\in Hom(\lambda,\lambda)^\vee[-1]\ \text{and}\ b_i\in Hom(\lambda,\lambda)^\vee[-1]\ \text{for some $\lambda\in Ob(\mathcal{C})$} \\
      0, & \text{otherwise.}
    \end{cases}
 \end{equation*}

\end{df}
\begin{remark}
   The image of the bracket of words of length one can contain infinite sums of $\nu_\lambda$ over
   different $\lambda$. Our definition is well defined since we are using the direct product over
   $B$ in the central extension. 
\end{remark}
\begin{cons}\label{nabla}
Given $V_B$ we define 
$$\nabla:Cyc^*(V_B)[-1]\rightarrow \big(Cyc^*(V_B)[-1]\otimes Cyc^*(V_B)[-1]\big)_{S_2}$$
by 
$$\nabla(a_1a_2\cdots a_n)= \sum_{1\leq i<j\leq n}\langle a_i,a_j\rangle^{-1}(-1)^*(a_{i+1}\cdots  a_{j-1})\otimes(a_{j+1}\cdots a_na_1\cdots a_{i-1}).$$
Here 
$$\langle a_i,a_j\rangle^{-1}\in k$$
is defined as before.
We define the value of the summand where $j=i+1$ and if $a_i\in Hom(\beta,\lambda)^\vee[-1]$ and 
$a_j=a_{i+1}\in Hom(\lambda,\beta)^\vee[-1]$ to be 
$$\langle a_i,a_{i+1}\rangle^{-1}_{\beta\lambda}\ \nu_\lambda\otimes(a_{i+2}\cdots a_na_1\cdots a_{i-1}),$$
otherwise zero. If additionally $n=2$ we define the value by 
$$\langle a_i,a_j\rangle^{-1}_{\mu\lambda}\ \nu_\lambda\otimes\nu_\mu$$
if $a_i\in Hom(\mu,\lambda)^\vee[-1]$ and $a_j=a_{i+1}\in Hom(\lambda,\mu)^\vee[-1]$, again
otherwise zero. 
Additionally $\nabla$ is zero on the vector space spanned by the $\nu$'s.
\end{cons}
We refer to definition 2.11 of \cite{Ha07} for the notion of an involutive bi Lie algebra.
\begin{lemma}\label{ibl}
Given $V_B$ a collection of cyclic chain complexes of degree $d$ we have that
    $(Cyc^*(V_B),d,\{\_,\_\},\nabla)$ is an involutive bi Lie dg-algebra of degree $(d-2)$.
\end{lemma}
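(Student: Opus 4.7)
My plan is to verify the five axioms defining an involutive bi Lie dg-algebra — graded antisymmetry and Jacobi for $\{\_,\_\}$, co-antisymmetry and co-Jacobi for $\nabla$, their Drinfeld compatibility, involutivity $\{\_,\_\}\circ\nabla=0$, and compatibility with $d$ — reducing as much as possible to the one-object case \cite{Ha07, Che10} by keeping track of object indices on the pairings.

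First I would handle the bracket. The shifted Lie structure on $Cyc^*_+(V_B)[-1]$ was already established in the previous lemma, so it only remains to extend it across the central summand. Since the $\nu_\lambda$ are declared central and the modified bracket of two length-one words takes values in this central subspace, graded antisymmetry follows from symmetry of $\langle\_,\_\rangle$ plus the Koszul signs, and Jacobi is automatic on any triple containing a $\nu_\lambda$ or any triple of length-one words, since iterated brackets land in and are then killed by the central part.

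Next, for $\nabla$, co-antisymmetry follows immediately by relabeling $i\leftrightarrow j$ and using symmetry of the pairing together with cyclic invariance. Co-Jacobi and Drinfeld compatibility are combinatorial matching arguments: applying $\nabla$ twice, or applying $\nabla$ to a bracketed word, in each case the pairs of contracted letters can be grouped to produce identical cyclic words with cancelling signs. The generic (interior) terms parallel the algebra case verbatim, the only additional data being the indices $\lambda,\mu\in B$ that must match across pairings. The delicate part, which I expect to be the \emph{main obstacle}, is organising the boundary cases where one of the subwords produced by $\nabla$ has length zero or one: these are precisely the contributions that the ad hoc rules at the end of construction \ref{nabla} prescribe, and they must fit together with the central-valued bracket on length-one words so that everything still cancels.

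This is also the heart of involutivity. Writing out $\{\_,\_\}\circ\nabla$ on a cyclic word, one contracts two letters (via $\nabla$) and then contracts two further letters (via $\{\_,\_\}$); for each unordered quadruple of positions the associated terms pair up and cancel by symmetry of the pairing, \emph{provided} we correctly book-keep the corner cases where contractions collapse a subword entirely and the output lives in the central extension. This is what forces the definition \eqref{ce} and the modified length-one bracket in exactly the form given. Finally, $d$-compatibility with both $\{\_,\_\}$ and $\nabla$ follows from the fact that the pairings are chain maps (so $d$ commutes with the contractions $\langle a_i,a_j\rangle^{-1}$) together with the closedness property \eqref{cycl} of the cyclic structure, as in the previous lemma; for the cobracket this is a termwise Leibniz-type check on the positions of the contracted pair. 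Modulo a careful sign analysis throughout, this exhausts the axioms.
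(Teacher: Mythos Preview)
Your proposal is correct and follows essentially the same approach as the paper: both reduce to the one-object case already established in \cite{Che10} (and \cite{Ha07}) and then argue that the central extension \eqref{ce} does not disturb the axioms. The paper's own proof is in fact just a citation to theorem~7 of \cite{Che10} together with the remark that the central extension ``can be checked directly'' (pointing to \cite{GGHZ21} for the one-object case with central extension), whereas you actually sketch what that direct check involves---in particular correctly isolating the boundary cases of construction~\ref{nabla} and the length-one bracket as the only nontrivial new content.
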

\begin{proof}
    This is proved in theorem 7 of \cite{Che10}. Note however that the author does not consider
    the central extension. One can check directly that the result remains true. For the one object
    case this is also explained in section 3.2 of \cite{GGHZ21}, where this central extension is included.
\end{proof}

From now on let us \textbf{restrict to odd $d$}: 
Introducing $\gamma$, a formal variable of cohomological degree $6-2d$, we denote 
$$\mathcal{F}^{pq}(V_B):=Sym\big(Cyc^*(V_B)[d-4]\big) \llbracket\gamma\rrbracket[3-d]$$
where $V_B$ is a collection of cyclic chain complexes of degree $d$. 
Denote by $\delta$ the Chevalley-Eilenberg differential extended to $\mathcal{F}^{pq}(V_B)$ from the Lie algebra chain
complex of the odd Lie algebra $Cyc^*(V_B)$. 
Denote by $d$ and $\nabla$ the differentials extended
from $Cyc^*(V_B)$ as derivations to $\mathcal{F}^{pq}(V_B)$ and by $\{\_,\_\}$
the shifted Poisson bracket extended from the odd Lie bracket on $Cyc^*(V_B)$.
\begin{theorem}\label{nc_pq} 
Let $V_B$ be a collection of cyclic chain complexes of degree $d$. Then 
    $$\Big(\mathcal{F}^{pq}(V_B) ,d+\nabla+\gamma\delta,\{\_,\_\}\Big)$$
    is a $(d-2)$ twisted Beilinson-Drinfeld algebra over $k\llbracket\gamma\rrbracket$. $\mathcal{F}^{pq}(V_B)$ has multiplication of even degree $(d-3)$.
\end{theorem}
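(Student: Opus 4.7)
The plan is to assemble the BD algebra structure on $\mathcal{F}^{pq}(V_B)$ piece by piece from the involutive bi-Lie structure on $Cyc^*(V_B)$ established in lemma \ref{ibl}, and verify the three BD axioms in turn. First I would set up the algebra and bracket: the commutative product is the standard one on $Sym$, extended $k\llbracket\gamma\rrbracket$-linearly; the Poisson bracket $\{\_,\_\}$ is extended from the $(d-2)$-shifted Lie bracket on $Cyc^*(V_B)$ by the graded Leibniz rule, and then $k\llbracket\gamma\rrbracket$-linearly. Standard Lie-theoretic arguments then produce a $(d-2)$-shifted Poisson algebra, and compatibility of $\{\_,\_\}$ with the internal differential $d$ is already part of the content of lemma \ref{ibl}.

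Second, the BD relation follows directly from how each piece of the total differential is extended to $Sym$. By construction $d$ and $\nabla$ are both extended as derivations and so contribute only Leibniz terms. The entire defect-from-derivation comes from $\gamma\delta$: here $\delta$ is the Chevalley--Eilenberg differential associated to the graded Lie algebra $(Cyc^*(V_B),\{\_,\_\})$, and the classical Koszul identity $\delta(a\cdot b) = \delta(a)\cdot b + (-1)^{|a|}\,a\cdot\delta(b) + \{a,b\}$ on $Sym$ yields, after multiplication by $\gamma$, exactly the correction $\gamma\{a,b\}$ required in the BD axiom.

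Third, to verify $(d+\nabla+\gamma\delta)^2 = 0$ I would expand into six cross-terms. Four of them are immediate from lemma \ref{ibl}: $d^2 = 0$ (internal), $[d,\nabla] = 0$ and $[d,\delta] = 0$ (both $\nabla$ and the bracket are chain maps), and $\delta^2 = 0$ (Jacobi for $\{\_,\_\}$). The remaining two are $\nabla^2 = 0$, equivalent to the co-Jacobi identity for the cobracket once $\nabla$ is extended to $Sym$ as a derivation, and the cross term $\gamma(\nabla\delta + \delta\nabla) = 0$, which combines the Lie-bialgebra (1-cocycle) compatibility between $\{\_,\_\}$ and $\nabla$ with the involutivity axiom. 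Finally I would check the degree bookkeeping: the shifts $[d-4]$ inside $Sym$ and $[3-d]$ outside are chosen so that $\{\_,\_\}$ has degree $d-2$, the multiplication has degree $d-3$, and $\gamma$ has degree $6-2d$, consistent with $\gamma\delta$ contributing to a degree-$1$ total differential. The main obstacle will be the cross term $\nabla\delta + \delta\nabla$: on a single generator it is the bialgebra compatibility, but extending $\nabla$ as a derivation and $\delta$ as a second-order Chevalley--Eilenberg operator across all of $Sym$ requires careful sign-tracking through the various shifts. Everything else is signs and Leibniz-rule bookkeeping.
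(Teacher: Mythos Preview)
Your proposal is correct and follows essentially the same approach as the paper: both deduce the BD structure from the involutive bi-Lie algebra structure established in lemma \ref{ibl}. The paper's proof simply cites lemma 3.4 of \cite{Ha07} for the passage from an involutive bi-Lie algebra to a BD algebra on the symmetric algebra, together with the degree bookkeeping, whereas you have explicitly unpacked that argument---correctly identifying that $d,\nabla$ contribute only Leibniz terms, that $\gamma\delta$ produces the BD correction via the Chevalley--Eilenberg identity, and that the vanishing of the cross term $[\nabla,\delta]$ is precisely where both the bialgebra cocycle condition and involutivity enter.
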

The superscript $pq$ stands for prequantum.\footnote{A better name should have been derived from
`quantization of free theory', which is what $\mathcal{F}^{pq}(V)$ is.}
\begin{proof}
     The result follows as in lemma 3.4 of \cite{Ha07} from our lemma \ref{ibl} and accounting for
     the shifts made. 
     Note that these shifts and the degree of $\gamma$ precisely give the correct
     degrees required in our definition of an $r$-twisted BD algebra, here for $s=(d-2)$.
\end{proof}
Given a strict cyclic $A_\infty$-morphism $F: V_B\rightarrow V_C$ we define a map 
$$F^*: Cyc^*(V_C)\rightarrow Cyc^*(V_B)$$
by setting 
$$F^*(\nu_\lambda)=\sum_{\mu\in F^{-1}(\lambda)}\nu_{\mu}.$$
One can check directly that this is compatible with the extension of the odd Lie bracket. 
Furthermore, by arguments analogous to the discussion of lemma \ref{cov}, it follows that $F^*$ intertwines the co Lie brackets.
We denote by the same letter the extension of this map to 
    $$F^*: \mathcal{F}^{pq}(V_C)\rightarrow \mathcal{F}^{pq}(V_B),$$
   which is a map of BD algebras over $k\llbracket\gamma\rrbracket$. As an upshot we find
\begin{lemma}
There is a functor from the full subcategory of collections of cyclic chain complexes within the category  
$(cyc_dA_\infty Cat)_{st}$ to the category of $(d-2)$ twisted BD algebras by the assignment 
$$\mathcal{F}^{pq}:\ (F: V_B\rightarrow V_C)\mapsto \big(F^*: \mathcal{F}^{pq}(V_C)\rightarrow \mathcal{F}^{pq}(V_B)\big).$$
 
\end{lemma}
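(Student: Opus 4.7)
The plan is to upgrade the map $F^*$ from its definition on generators to a BD algebra map on the full $\mathcal{F}^{pq}$, and then verify functoriality. Since $\mathcal{F}^{pq}(V_C) = \mathrm{Sym}\bigl(Cyc^*(V_C)[d-4]\bigr)\llbracket\gamma\rrbracket[3-d]$ is freely generated as a $k\llbracket\gamma\rrbracket$-commutative algebra by $Cyc^*(V_C)[d-4]$, there is a unique extension of $F^*$ to a $\gamma$-linear map of graded commutative algebras. So the content lies in showing that (i) $F^*$ respects the odd Lie bracket, the cobracket $\nabla$, and the internal differential $d$ on the generating space $Cyc^*(V_C)$, and (ii) the resulting algebra map commutes with $d+\nabla+\gamma\delta$ and intertwines the Poisson bracket $\{\_,\_\}$. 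Functoriality then reduces to a check at the level of $Cyc^*$.

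The verification of the structure maps on the non-central part $Cyc^*_+(V_C)$ follows by the same argument as in the proof of lemma \ref{cov}: strictness of $F$ gives a genuine chain map $F_1$ on Hom-spaces, cyclicity of $F$ together with non-degeneracy yields the inverse-pairing identity $\langle F_1^*(a),F_1^*(b)\rangle_{V_B}^{-1}=\langle a,b\rangle_{V_C}^{-1}$, and these two facts together imply compatibility with $d$, $\{\_,\_\}$ and $\nabla$ on cyclic words of length $\geq 2$. The new ingredient concerns the central generators $\nu_\lambda$. Here the defining formula $F^*(\nu_\lambda)=\sum_{\mu\in F^{-1}(\lambda)}\nu_\mu$, together with the fact that $Cyc^*(V_B)$ uses the \emph{direct product} over $B$ in its central extension, is exactly what is needed to accommodate the possibly infinite preimage. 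Compatibility with the length-one bracket is then the identity
\[
F^*\{a,b\}=\langle a,b\rangle_{V_C}^{-1}\!\!\sum_{\mu\in F^{-1}(\lambda)}\!\!\nu_\mu \;=\; \sum_{\mu,\mu'\in F^{-1}(\lambda)}\!\!\langle F_1^*a\rvert_\mu,F_1^*b\rvert_{\mu'}\rangle_{V_B}^{-1}\,\nu_\mu \;=\; \{F^*a,F^*b\},
\]
where the middle collapses to $\mu=\mu'$ by matching source/target and the last equality uses the inverse-pairing identity. An analogous one-line check on each summand of the definition in construction \ref{nabla} shows $F^*\circ\nabla=\nabla\circ F^*$ on length-two words, including the purely central output $\nu_\lambda\otimes\nu_\mu$.

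Once these generator-level identities hold, extending $F^*$ as a $\gamma$-linear algebra map gives compatibility with the Poisson bracket automatically (by the Leibniz rule), and compatibility with $\delta$ automatically (since $\delta$ is the Chevalley–Eilenberg differential of the odd Lie bracket that $F^*$ already preserves). The composite differential $d+\nabla+\gamma\delta$ is therefore intertwined term by term, so $F^*$ is a morphism of $(d-2)$-twisted BD algebras in the sense of definition \ref{BD}. Functoriality is then immediate on $Cyc^*_+$ by pullback of duals, and on the central part reduces to the set-theoretic identity $\bigsqcup_{\alpha\in G^{-1}(\lambda)}F^{-1}(\alpha)=(G\circ F)^{-1}(\lambda)$; identities clearly go to identities. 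The main obstacle I anticipate is purely bookkeeping: keeping track of signs from the suspensions $[d-4]$ and $[3-d]$ and ensuring that the chosen degree of $\gamma$ is preserved under $F^*$, which is manifest since $F^*$ is set up to be $\gamma$-linear of degree zero.
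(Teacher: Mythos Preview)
Your proposal is correct and follows essentially the same approach as the paper: the paper's argument (given in the paragraph immediately preceding the lemma) amounts to checking directly that $F^*$ is compatible with the extended odd Lie bracket on the central part and invoking the argument of lemma~\ref{cov} for the co-Lie bracket, then extending to the symmetric algebra. Your write-up is in fact more explicit than the paper's sketch, particularly in spelling out the length-one bracket compatibility and the set-theoretic functoriality of the $\nu_\lambda$ assignment.
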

\begin{df}\label{nc_dq}
Define  $$p:  \mathcal{F}^{pq}(V_B)\rightarrow Sym^1(Cyc^*_+(V_B)[d-4])[3-d]\cong Cyc^*_+(V_B)[-1]\rightarrow \mathcal{F}^{fr}(V_B),$$
where the first map is the projection on the `$Sym^1$, $\gamma=0$ and $\nu=0$ part'.
The last map is just the inclusion.
\end{df}
 It is straightforward to check that $p$ is a dequantization map in the sense of definition \ref{dq}. 
 For example we have $p(\gamma\delta)=0=p(\nabla).$ 
\begin{df}\label{qcat}
Given a collection of cyclic chain complexes $V_B$ and $I\in MCE (\mathcal{F}^{fr}(V_B))$, we say that 
$$I^q\in MCE (\mathcal{F}^{pq}(V_B))$$
of degree $(3-d)$ is a \emph{quantization of $I$} if $p(I^q)=I.$ 
\end{df}
\begin{remark}
Note that more generally one may ask a quantization to be a differential $d^q$ on the underlying
shifted Poisson algebra of $\mathcal{F}^{pq}(V)$ satisfying the BD relation and such $p\circ d^q=d^{cl}\circ p.$
Thus we are more restrictive, imposing $d^q=d+\nabla+\gamma\delta+\{I^q,\_\}$. 
We ask that a quantization $I^q$ of the interacting theory $\mathcal{F}^{cl}$ is compatible with the standard quanization of the free theory, i.e. with $\mathcal{F}^{pq}(V)$.
\end{remark}
\begin{remark}
Further we restrict to the case where $I^q$ has zero constant terms, that is its components in the ground field and the vector space spanned by the $\nu$'s are zero.
\end{remark}
Given $F:V_B\rightarrow V_C$ a cyclic strict $A_\infty$-morphism between collections of cyclic chain complexes and an $I^q\in MCE (\mathcal{F}^{pq}(V_C))$ then 
$$F^*I^q\in MCE (\mathcal{F}^{pq}(V_B)).$$

Indeed this follows as we saw that $F^*$ is compatible with the odd Lie bracket,
thus also with the induced Chevalley Eilenberg differential, as well as the odd co Lie bracket.

Thus we can define a category which we denote 
$$(\text{q-cyc}_dA_\infty cat)_{st},$$
whose objects are pairs of a cyclic $A_\infty$-category $\mathcal{C}$ together with an
$I^q\in MCE (\mathcal{F}^{pq}(V_{Ob\mathcal{C}}))$ such that $p(I^q)=I_\mathcal{C}$.
Its morphisms are cyclic strict $A_\infty$-morphisms $F:\mathcal{C}\rightarrow \mathcal{D}$ such that $F^*I^q_\mathcal{D}=I^q_\mathcal{C}$. 
Note that there is a forgetful functor to $(cyc_dA_\infty Cat)_{st}$ induced by the dequantization map $p$.

\begin{remark}
 In the literature such elements $I^q$ have also been referred to as quantum $A_\infty$-categories. 
\end{remark}
\begin{df}

Given an object of $(\text{q-cyc}_dA_\infty cat)_{st}$ define 
$$\begin{tikzcd}\label{F_q}
\mathcal{F}^q(\mathcal{C}):=\mathcal{F}^{pq}(V_B)^{tw}\end{tikzcd}$$
as the $(d-2)$ twisted BD algebra obtained by twisting $\mathcal{F}^{pq}(V_B)$ by the MC element $I^q$.
The assignment \ref{F_q} defines a functor 
$$\mathcal{F}^q:\ (\text{q-cyc}_dA_\infty cat)_{st}\rightarrow (\text{BD}^{(d-2)tw})^{op},$$
where on morphisms we use $F^*$.
\end{df}
\begin{remark}
Note that this notation for $\mathcal{F}^q(\mathcal{C})$ is ambiguous and we need to have the choice of an $I^q$ in mind.
\end{remark}

In this work we did not examine how the discussion so far can be refined when considering homotopy equivalent solutions to the quantum master equation.
Compare section 6 of \cite{GGHZ21}.

\subsubsection{Essentially finite cyclic $A_\infty$-categories}
Recall that given a cyclic $A_\infty$-category $\mathcal{C}$ we defined the central extension of our previous shifted Lie algebra by 
$$Cyc^*(V_B):=Cyc^*_+(V_B)\times (\prod_{\lambda\in B}\nu_\lambda k[1]).$$
As we will see in section \ref{LQT_r}, under the `quantized' LQT map at level $N$, each $\nu_\lambda$ will be send to the actual number $N$.
However, if we allow infinite sums of $\nu_\lambda$'s this would be ill-defined.
Thus we restrict ourselves as follows:
\begin{df}\label{essfin}
We call a cyclic unital $A_\infty$-category $\mathcal{C}$ that admits a finite skeleton $Sk\mathcal{C}$, that is a subcategory with finitely many objects whose inclusion defines an equivalence, 
an \emph{essentially finite cyclic $A_\infty$-category}.
\end{df} 
\begin{remark}
Note that for an essentially finite cyclic $A_\infty$-category the inclusion 
$$Sk\mathcal{C}\rightarrow\mathcal{C}$$
is strict and cyclic.
\end{remark}

We define a category, denoted
$$(\text{e.f cyc}_dA_\infty cat)_{st},$$
as the full subcategory of $(cyc_dA_\infty cat)_{st}$ given by essentially finite cyclic $A_\infty$-categories.

In practise we will work with a slightly more complicated, though equivalent category:
\begin{df}
We denote by 
 $$(\text{e.f}^{+} \text{cyc}_dA_\infty cat)_{st},$$
 the category  whose objects are essentially finite cyclic unital $A_\infty$-categories together with the choice of a finite skeleton and together with the inclusion 
$$Sk\mathcal{C}\rightarrow\mathcal{C},$$
and whose morphisms are diagrams as follows 
    \begin{equation}\label{fwde}
    \begin{tikzcd}
   & \mathcal{C}\arrow{r}{F} &\mathcal{D}&\\
   Sk\mathcal{C}\arrow{ur}{\simeq}&  &&Sk\mathcal{D}\arrow{ul}[swap]{\simeq},
\end{tikzcd}
\end{equation}
where $F$ is a strict cyclic $A_\infty$-functor.
\end{df}
Note that the obvious functor 
\begin{equation}\label{obvFun}
  (\text{e.f}^{+} \text{cyc}_dA_\infty cat)_{st}\rightarrow (\text{e.f cyc}_dA_\infty cat)_{st}  
\end{equation}
is an equivalence, which is why we will treat the two appearing categories as interchangeable. We will later see \eqref{undf} that different choices of an inverse of \eqref{obvFun}, that is different choices of a finite skeleta for each essentially finite category, are related by natural quasi-isomorphism, once we localized appropriately.

Let us denote by $\mathcal{D}Poiss_{(d-2)}$ the localization of the category of 
$(d-2)$ shifted Poisson algebras with respect to quasi-ismomorphisms.
We (re)define previous functors:
\begin{lemma}\label{nattr1}
    Sending $(\mathcal{C},Sk\mathcal{C})$ an essentially finite cyclic $A_\infty$ category of degree $d$ to $\mathcal{F}^{cl}(Sk\mathcal{C})$ defines a functor 
    $$\mathcal{F}^{cl}_{fin}:\ (\text{e.f cyc}_dA_\infty cat)_{st}\rightarrow (\mathcal{D}Poiss_{(d-2)})^{op}.$$ 
    \end{lemma}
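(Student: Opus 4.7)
The plan is to reduce to the equivalent category $(\text{e.f}^{+} \text{cyc}_dA_\infty cat)_{st}$, where the existence of a distinguished skeleton and its strict cyclic inclusion $i_\mathcal{C}\colon Sk\mathcal{C}\hookrightarrow \mathcal{C}$ is built into the data. Using the equivalence \eqref{obvFun}, we fix, once and for all, a quasi-inverse; any two such choices will be related by natural quasi-isomorphisms (which is precisely what forces us to land in the localization $\mathcal{D}Poiss_{(d-2)}$ rather than in the strict category of shifted Poisson dg algebras).

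On objects we send $(\mathcal{C},Sk\mathcal{C})$ to $\mathcal{F}^{cl}(Sk\mathcal{C})$, which by theorem \ref{fP} and definition \ref{F_cl} is a $(d-2)$-shifted Poisson dg algebra. On morphisms, given a diagram \eqref{fwde}
\[
Sk\mathcal{C}\xrightarrow{i_\mathcal{C}} \mathcal{C}\xrightarrow{F} \mathcal{D}\xleftarrow{i_\mathcal{D}} Sk\mathcal{D},
\]
the composites $F\circ i_\mathcal{C}$ and $i_\mathcal{D}$ are strict cyclic $A_\infty$-morphisms, so applying lemma \ref{cov} yields the zigzag of shifted Poisson dg-algebra maps
\[
\mathcal{F}^{cl}(Sk\mathcal{C})\xleftarrow{(F\circ i_\mathcal{C})^*}\mathcal{F}^{cl}(\mathcal{D})\xrightarrow{i_\mathcal{D}^*}\mathcal{F}^{cl}(Sk\mathcal{D}).
\]
After inverting $i_\mathcal{D}^*$ in $\mathcal{D}Poiss_{(d-2)}$ this represents a morphism $\mathcal{F}^{cl}(Sk\mathcal{D})\to\mathcal{F}^{cl}(Sk\mathcal{C})$ in $\mathcal{D}Poiss_{(d-2)}$, i.e.\ a morphism in the opposite category as required.

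The crux of the argument is to verify that every strict cyclic $A_\infty$-inclusion $i\colon Sk\mathcal{C}\hookrightarrow\mathcal{C}$ coming from a finite skeleton induces a quasi-isomorphism $i^*\colon \mathcal{F}^{cl}(\mathcal{C})\xrightarrow{\sim}\mathcal{F}^{cl}(Sk\mathcal{C})$. Since $i$ is fully faithful on the nose and essentially surjective on homology (definition \ref{equiv}), a standard Morita-type argument for cyclic (co)cohomology of (unital) $A_\infty$-categories shows that the dual pullback $i^*\colon Cyc^*_+(\mathcal{C})\to Cyc^*_+(Sk\mathcal{C})$ is a quasi-isomorphism of cochain complexes; this in turn upgrades to a quasi-isomorphism of the symmetric algebras, and because $i^*$ already intertwines the twisting Maurer-Cartan element $I_\mathcal{C}$ with $I_{Sk\mathcal{C}}$ (functoriality of the hamiltonian, lemma~3.1.10), the induced map on $\mathcal{F}^{cl}=(\mathcal{F}^{fr})^{tw}$ remains a quasi-isomorphism by a standard spectral sequence / filtration comparison. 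This is the main technical obstacle.

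Finally, functoriality: composition of two morphisms of the form \eqref{fwde} produces a longer zigzag
$Sk\mathcal{C}\to \mathcal{C}\to \mathcal{D}\leftarrow Sk\mathcal{D}\to \mathcal{D}\to \mathcal{E}\leftarrow Sk\mathcal{E}$; using that $i_\mathcal{D}^*$ is invertible in $\mathcal{D}Poiss_{(d-2)}$ and that lemma~\ref{cov} is functorial on the pre-localized category, the composite of the two inverted zigzags agrees with the zigzag associated to the composite morphism. Identities go to identities trivially. Independence of the choice of quasi-inverse to \eqref{obvFun} (hence the legitimacy of writing $\mathcal{F}^{cl}_{fin}(\mathcal{C})$ without reference to a skeleton) follows from the fact that any two finite skeleta $Sk\mathcal{C}, Sk'\mathcal{C}\hookrightarrow\mathcal{C}$ give, via the same zigzag construction $\mathcal{F}^{cl}(Sk\mathcal{C})\xleftarrow{\sim}\mathcal{F}^{cl}(\mathcal{C})\xrightarrow{\sim}\mathcal{F}^{cl}(Sk'\mathcal{C})$, a canonical isomorphism in $\mathcal{D}Poiss_{(d-2)}$; this is the content alluded to in the remark following \eqref{obvFun} and is what will produce the natural quasi-isomorphism \eqref{undf} referenced later.
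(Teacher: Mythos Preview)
Your proof is correct and follows essentially the same approach as the paper: reduce to the $(\text{e.f}^{+})$ category where the skeleton inclusion is part of the data, then use that $i_\mathcal{D}^*$ is a quasi-isomorphism (the paper's \eqref{fff}, which is deduced from Morita invariance of cyclic (co)homology) to invert the wrong-way leg of the zigzag in $\mathcal{D}Poiss_{(d-2)}$. The paper's proof is terse to the point of being a citation ("follows directly from definition \ref{fwde} and \eqref{fff}"), whereas you have spelled out the zigzag construction, the functoriality verification, and the independence of skeleton choice; your remark that the spectral-sequence comparison handles the twisted differential is correct but in fact unnecessary here, since twisting $\mathcal{F}^{fr}$ by $I_\mathcal{C}$ simply recovers $Sym$ of the cyclic cochain complex $(Cyc^*_+(\mathcal{C})[-1],d+m_\mathcal{C})$, so $Sym$ of a quasi-isomorphism already suffices in characteristic zero.
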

This follows directly from definition \ref{fwde} and \eqref{fff} below.
Further by \eqref{undf} we will see that this functor is independent of the choice of skeleton, up to isomorphism.
To see these facts we first remark the standard result 
\begin{lemma}
    
Cyclic (co)homology is invariant under $A_\infty$-equivalence.
\end{lemma}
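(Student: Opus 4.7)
The plan is to reduce the statement to the well-known fact that Hochschild homology is invariant under $A_\infty$-equivalence, and then transport this to cyclic (co)homology via the homotopy $S^1$-orbit construction recalled in the paper.

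First, I would prove the Hochschild version: if $F: \mathcal{C} \to \mathcal{D}$ is a unital $A_\infty$-equivalence in the sense of definition \ref{equiv}, then the induced map $F_*: CH_*(\mathcal{C}) \to CH_*(\mathcal{D})$ is a quasi-isomorphism. The conceptually cleanest route is via the derived tensor product description: unital $A_\infty$-equivalence induces a quasi-equivalence of the dg categories of $\mathcal{C}$-bimodules and $\mathcal{D}$-bimodules, under which the diagonal bimodule $\mathcal{C}_\Delta$ goes to an object quasi-isomorphic to $\mathcal{D}_\Delta$. Since a quasi-equivalence of dg categories preserves derived tensor products, we obtain
\[
CH_*(\mathcal{C}) = \mathcal{C}_\Delta \otimes_{\mathcal{C}^e}^{\mathbb{L}} \mathcal{C}_\Delta \;\simeq\; \mathcal{D}_\Delta \otimes_{\mathcal{D}^e}^{\mathbb{L}} \mathcal{D}_\Delta = CH_*(\mathcal{D}).
\]
Alternatively, working with an explicit bar-type model (as in the construction of $Cyc_*^+$), one filters the Hochschild complex by the length of (cyclic) tensor words. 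The associated graded differential is, up to signs, the internal differential on each Hom-complex, and at this level the map induced by $F_1$ is a levelwise quasi-isomorphism because $F$ is an equivalence. A standard spectral-sequence comparison argument then upgrades this to a quasi-isomorphism of the full complexes.

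Second, I would invoke the naturality of the $S^1$-action on Hochschild chains with respect to unital $A_\infty$-functors: Connes' cyclic operator is constructed from cyclic permutations and the $A_\infty$-operations, both of which $F$ intertwines. Therefore the quasi-isomorphism $F_*: CH_*(\mathcal{C}) \to CH_*(\mathcal{D})$ is $S^1$-equivariant. Since homotopy $S^1$-orbits are a homotopy colimit, and in particular preserve quasi-isomorphisms between $S^1$-equivariant complexes, applying $(-)_{hS^1}$ gives a quasi-isomorphism
\[
CC_*(\mathcal{C}) = CH_*(\mathcal{C})_{hS^1} \;\xrightarrow{\sim}\; CH_*(\mathcal{D})_{hS^1} = CC_*(\mathcal{D}),
\]
which by the `Cyclic Words' theorem is computed on the nose by the complex $Cyc_*^+$. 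For the cohomological statement, one dualizes: since $k$ is a field, the exact functor $\mathrm{Hom}(-,k)$ preserves quasi-isomorphisms, so the induced map $F^*: Cyc^*_+(\mathcal{D}) \to Cyc^*_+(\mathcal{C})$ is a quasi-isomorphism as well.

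I expect the main technical obstacle to lie in the Hochschild step. The derived-tensor-product argument is clean conceptually but hides real work in setting up Morita theory for $A_\infty$-bimodule categories (which the paper only refers to via Ganatra and others). The direct spectral-sequence approach avoids that machinery but requires care with convergence and with the unitality assumption, since the equivalence is defined by an equivalence on homology categories rather than by a levelwise quasi-isomorphism of Hom-complexes; passing from the former to the latter uses that an $A_\infty$-equivalence is always homotopy invertible, which is itself a nontrivial result in the unital setting. Once Hochschild invariance is in hand, the descent to cyclic (co)homology via $hS^1$-orbits and $k$-linear duality is formal.
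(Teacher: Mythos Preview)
Your proposal is correct and follows essentially the same two-step strategy as the paper: first establish Hochschild invariance under $A_\infty$-equivalence, then transfer to cyclic (co)homology. The paper's proof is only a sketch, citing Costello for the Hochschild step and Loday (corollary 2.2.4, which is precisely the SBI-sequence argument that a Hochschild quasi-isomorphism yields a cyclic one) for the second; your use of homotopy $S^1$-orbits in place of the SBI argument is an equivalent way to phrase the same passage, and your more detailed discussion of the Hochschild step (including the correct observation that essential surjectivity requires homotopy invertibility of $A_\infty$-equivalences) is exactly the content hidden behind the paper's citation.
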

\begin{proof} 
   As a sketch, by Proposition 7.3.6 and Lemma 7.4.1 of \cite{Cos07a} it follows that Hochschild (co)homology is invariant under $A_\infty$-equivalence.
   By arguments as in Loday, corollary 2.2.4, it follows that also cyclic (co)homology is invariant.
\end{proof}
 Thus given $(\mathcal{C},Sk\mathcal{C})$ an essentially finite cyclic $A_\infty$-category we get a map of shifted Poisson algebras
    \begin{equation}\label{fff}
        \mathcal{F}^{cl}(\mathcal{C})\rightarrow \mathcal{F}^{cl}(Sk\mathcal{C}),
\end{equation}
    whose underlying map of chain complexes is a quasi-isomorphism.
    This follows from the lemma before, as the inclusion $Sk\mathcal{C}\rightarrow\mathcal{C}$ is in particular an $A_\infty$-equivalence.
    
    Furthermore \eqref{fff} defines a natural quasi-isomorphism as follows 

$$\begin{tikzcd}
(\text{e.f cyc}
    _dA_\infty cat)_{st} \arrow[rr, "\mathcal{F}^{cl}_{fin}", bend left=25, ""{name=U, below}]
\arrow[rd," forget ", bend right=10]
&&\text{$(\mathcal{D}Poiss_{(d-2)})^{op}$}
\\
&(cyc_dA_\infty cat)_{st}\arrow[ur, "\mathcal{F}^{cl}", bend right=10]\arrow[Rightarrow, to=U, ""]&
\end{tikzcd}$$
 
    which we remark to justify our redefinition.
Regarding the choice of finite skeleton of a cyclic $A_\infty$-category we remark that
given a cyclic $A_\infty$-category $\mathcal{C}$ with two choices of finite skeleta $(\mathcal{C},Sk\mathcal{C}_1)$, $(\mathcal{C},Sk\mathcal{C}_2)$ we get a zig-zag of shifted Poisson algebras
   \begin{equation}\label{undf}
        \begin{tikzcd}
        &\mathcal{F}^{cl}(\mathcal{C})\arrow{dl}[swap]{\simeq}\arrow{dr}{\simeq}&\\
        \mathcal{F}^{cl}(Sk\mathcal{C}_1)&&\mathcal{F}^{cl}(Sk\mathcal{C}_2),
    \end{tikzcd}
    \end{equation}
    where the underlying maps of chain complexes are quasi-isomorphisms. 
    
    We make similar (re)definitions for the quantized context: 
 \begin{df}\label{qesfq}
 Let $\mathcal{C}$ be an essentially finite cyclic $A_\infty$-category.
 Denote by $I$ the  $A_\infty$-hamiltonian associated to $\mathcal{C}$.
 We say that  
 $$ I^q\in MCE(\mathcal{F}^{pq}(V_{Ob{\mathcal{C}}}))$$
 is \emph{a quantization of $\mathcal{C}$}
if $$ p(I^q)=I,$$ recalling the dequantization maps p from definition \ref{nc_dq}.
\end{df}
\begin{df}
   We define a category, denoted  
   $$(\text{q.e.f. cyc}_dA_\infty cat)_{st}.$$
    Its objects are quantizations of essentially finite cyclic $A_\infty$-categories.
    The morphisms are given by morphisms of the underlying  cyclic $A_\infty$-categories  
    \begin{equation}
        F:\ \mathcal{C}\rightarrow \mathcal{D}
\end{equation}
such that additionally  $F^*I^q=J^q$ for the respective quantizations.
\end{df}
In practise we will work with a slightly more complicated, though equivalent category:

\begin{df}
   We define a category, denoted  
   $$(\text{q.e.f}^+ \text{cyc}_dA_\infty cat)_{st}.$$
    An objects of this category is given by \begin{itemize}
        \item an essentially finite cyclic $A_\infty$-category
        \item together with the choice of a finite skeleton and with the data of inclusion 
$$\Phi: Sk\mathcal{C}\rightarrow\mathcal{C},$$
\item
denoting by $(I_f,I)$ the pair of $A_\infty$-hamiltonian associated to $Sk\mathcal{C}$ and the one associated to $\mathcal{C}$, we further ask for a pair of 
 $$I^q_f\in MCE(\mathcal{F}^{pq}(V_{Ob{Sk\mathcal{C}}}))\ \text{and} \ I^q\in MCE(\mathcal{F}^{pq}(V_{Ob{\mathcal{C}}}))$$
such that $$p(I^q_f)=I_f\ \text{and}\ \ p(I^q)=I,$$ (recalling the dequantization maps p from definition \ref{nc_dq}) and we have that 
$$\Phi^*I^q=I^q_f.$$
\end{itemize}
The morphisms are given by morphisms in $(\text{e.f}^+ \text{cyc}_dA_\infty cat)_{st}$, that is diagrams like 
    \begin{equation}\label{tnc}
        \begin{tikzcd}
   & \mathcal{C}\arrow{r}{F} &\mathcal{D}&\\
   Sk\mathcal{C}\arrow{ur}{\simeq}&  &&Sk\mathcal{D}\arrow{ul}[swap]{\simeq},
\end{tikzcd}
\end{equation}
such that additionally  $F^*I^q=J^q$ for the respective quantizations.
\end{df}
Note that the obvious functor 
\begin{equation}\label{qobvFun}
  (\text{q.e.f}^{+} \text{cyc}_dA_\infty cat)_{st}\rightarrow (\text{q.e.f cyc}_dA_\infty cat)_{st}  
\end{equation}
is an equivalence, which is why we will treat the two appearing categories as interchangeable.

Note that given an element of $(\text{q.e.f}^{+} \text{cyc}_dA_\infty cat)_{st}$ the induced map 
\begin{equation}\label{BDqi}
\Phi^*:\mathcal{F}^q(\mathcal{C})\rightarrow \mathcal{F}^q(Sk\mathcal{C})
\end{equation}
of BD algebras is a quasi-isomorphism.
Indeed, consider the complete and exhaustive filtration of $\mathcal{F}^q(\mathcal{C})$ described in example 6.3 of \cite{GGHZ21}.
The map $\Phi^*$ is by definition a filtered map.
Then the claim follows from the from the Eilenberg-Moore comparison theorem (theorem 5.5.11 of \cite{wei94}) as on the first page of the spectral sequence associated to the filtrations the map $\Phi^*$ induces a quasi-isomorphism, as remarked in \eqref{fff}.

We denote by $\mathcal{D}\text{BD}^{(d-2)tw}$ the category obtained by localizing $\text{BD}^{(d-2)tw}$ with respect to quasi-isomorphisms.
Since \eqref{BDqi} is an quasi-isomorphism and by definition \eqref{tnc} we observe that
 sending a quantization of an essentially finite cyclic $A_\infty$-category to $\mathcal{F}^q(Sk\mathcal{C})$ defines a functor 
    \begin{equation}\label{redef1}
    \mathcal{F}^q_{fin}:\ (\text{q.e.f. cyc}_dA_\infty cat)_{st}\rightarrow (\mathcal{D}\text{BD}^{(d-2)tw})^{op}. 
    \end{equation}

This redefinition makes sense as follows: Given a quantization of an essentially finite cyclic $A_\infty$-category $(\mathcal{C},Sk\mathcal{C})$ we have by \ref{BDqi} a map of BD algebras
    $$\mathcal{F}^{q}(\mathcal{C})\rightarrow \mathcal{F}^{q}(Sk\mathcal{C}),$$
    whose underlying map of chain complexes is a quasi-isomorphism. This defines a natural quasi-isomorphism  
$$\begin{tikzcd}
(\text{q.e.f.cyc}_dA_\infty )_{st} \arrow[rr, "\mathcal{F}^{q}_{fin}", bend left=25, ""{name=U, below}]
\arrow[rd," forget ", bend right=10]
&&(\mathcal{D}\text{BD}^{(d-2)tw})^{op}
\\
&(\text{q.cyc}
    _dA_\infty cat)_{st}\arrow[ur, "\mathcal{F}^{q}", bend right=10]\arrow[Rightarrow, to=U, ""]&
\end{tikzcd}$$
Regarding the dependence on how a quantum $A_\infty$-category is essentially finitely presented we note:
Given a cyclic $A_\infty$-category $\mathcal{C}$ which is finitely presented in two ways $(\mathcal{C},Sk\mathcal{C}_1)$, $(\mathcal{C},Sk\mathcal{C}_2)$ and with respective quantizations that coincide on $\mathcal{C}$ we get a zig-zag of BD algebras
   \begin{equation}\label{undfq}
    \begin{tikzcd}
        &\mathcal{F}^{q}(\mathcal{C})\arrow{dl}[swap]{\simeq}\arrow{dr}{\simeq}&\\
        \mathcal{F}^{q}(Sk\mathcal{C}_1)&&\mathcal{F}^{q}(Sk\mathcal{C}_2),
    \end{tikzcd}
    \end{equation}
    where the underlying maps of chain complexes are quasi-isomorphisms.
    Thus the functor \eqref{redef1} does not depend on the choice of skeleton and compatible quantization, up to isomorphism.
\subsection{Commutative World}\label{cw}
The second class of algebraic objects that feature in our constructions are cyclic $L_\infty$-algebras.
We will first introduce those, then explain various constructions one can perform with them and finally explain a natural source of those.\par
Recall that given $V$ a chain complex $Sym_+V=\bigoplus_{n=1}V^{\otimes n}/S_n$ has the structure of a dg co-algebra.
\begin{df}
    An \emph{$L_\infty$-algebra} is a chain complex $(L,d)$ together with $l$, a co-derivation on $\big(Sym_+(L[1])$ of degree $-1$ that squares to zero and commutes with the differential induced by $d$.\end{df}
    We call the resulting chain complex  $$C_*^+(L):=\big(Sym_+(L[1]),d+l\big)$$ the (reduced) Chevalley-Eilenberg chains of $L$.
    We further adopt the notation ${C_*(L):=\big(Sym(L[1]),d+l\big)},$ the (non-reduced) Chevalley-Eilenberg chains.
 \begin{remark}
The definition of an $L_\infty$-algebra can be entangled as follows:
\begin{itemize}
    \item 
It is equivalently given by maps, for $n\geq1$
$$l^n:L^{\otimes n}\rightarrow L $$
of degree $n-2$ that satisfy certain relations, e.g. $l^1$ is just a differential. 
\item 
In fact we assume that $l^1=0$ since we can just absorb it into the differential of the underlying chain complex.
\end{itemize}
 \end{remark}  
 We define $$C^*(L):=Hom(C_*(L),k),$$ the Chevalley-Eilenberg cochains of the $L_\infty$-algebra $L$ and denote $C^*_+(L):=Hom(C_*^+(L),k).$
\begin{df}[$L_\infty$-morphism]
An $L_\infty$-morphism 
$L_1\rightarrow L_2$ is a morphism of dg co-algebras 
$$\phi: (C_*^+(L_1),d+l_1)\rightarrow (C_*^+(L_2),d+l_2).$$
\end{df}
\begin{remark}
 Again this can be entangled in saying that there are maps 
 $$\phi_n: L_1^{\otimes n}\rightarrow L_2$$
 of certain degree and satisfying relations as encoded in the fact that $\phi$ commuted with the differentials. 
 For instance $\phi_1$ is just a chain map. 
\end{remark}
\begin{df}
    An \emph{$L_\infty$-quasi-isomorphism} is an $L_\infty$-morphism such that $\phi_1$ is a quasi-isomorphism. 

 We say that an $L_\infty$-morphism is strict if $\phi_n=0$ for all $n\geq 2$.     
 \end{df}

\begin{df}
    A \emph{cyclic $L_\infty$-algebra} of dimension $d$ is an $L_\infty$-algebra $L$ together with a symmetric non-degenerate pairing 
    $$\langle\_,\_\rangle:L\otimes L\rightarrow k[-d]$$
    such that $S:=\langle d+l\_,\_\rangle\in C^*(L),$ i.e. such that it is symmetrically invariant. 
    Note that in this case $S$ is an element of degree $3-d$. We denote $I:=\langle (l)\_,\_\rangle.$
\end{df}
We limit ourselves to strict cyclic $L_\infty$-morphisms:
\begin{df}
    A \emph{cyclic strict $L_\infty$-morphism} is a strict $L_\infty$-morphism such that 
    $$\langle F_1(x),F_1(y)\rangle=\langle x,y\rangle.$$ 
\end{df}
\begin{remark}
 Note that a cyclic $L_\infty$-morphism is automatically injective.  
\end{remark}
\begin{remark}
 One should be able to give a more abstract definition of a cyclic $L_\infty$-algebra and $L_\infty$-morphisms, analogous to the $A_\infty$-case.
 Compare for instance section 5.2 of \cite{GG15}, in the context of $L_\infty$-spaces.
\end{remark}
\subsubsection{Shifted Poisson Structure}
A cyclic chain complex $V$ is a cyclic $L_\infty$-algebra for which $l=0$.
Given such denote 
$$\mathcal{O}^{fr}(V):=C^*_+(V).$$
By extending the inverse of the pairing 
$$\langle\_,\_\rangle^{-1}:V^\vee[-1]\otimes V^\vee[-1]\rightarrow k[d-2],$$
    according to the Leibniz rule to the symmetric algebra underlying the Lie algebra cochains we can define a $(d-2)$-shifted Poisson structure.
    The differential is compatible with the shifted Poisson bracket because $\langle\_,\_\rangle$ is closed with respect to the differential.
    Summarizing (for details see section 3.1 of \cite{GGHZ21}) we have
\begin{theorem}
  Let $(V,d,\langle\_,\_\rangle)$ be a cyclic chain complex of degree $d$. Then 
  $$\big(\mathcal{O}^{fr}(V),d,\{\_,\_\}\big)$$
  is a $(d-2)$-shifted Poisson dg-algebra.
  \end{theorem}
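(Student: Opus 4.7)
The plan is to unwind the three structures separately and then check they interact correctly. The underlying graded object $\mathcal{O}^{fr}(V)=C^*_+(V)$ is, by dualization, the reduced (completed) symmetric algebra on $V^\vee[-1]$, with graded commutative product coming from concatenation of symmetric tensors. The Chevalley--Eilenberg differential restricted to the trivial bracket case ($l=0$) is nothing but the derivation extension of the dual differential on $V^\vee[-1]$ induced by $d$. So the first step is routine: identify $\mathcal{O}^{fr}(V)$ as a dg commutative algebra in a standard way.

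Second, I would build the bracket on generators and extend. The inverse pairing
\[
\langle\_,\_\rangle^{-1}:V^\vee[-1]\otimes V^\vee[-1]\longrightarrow k[d-2]
\]
is symmetric of degree $d-2$, hence defines a degree $(d-2)$ bracket on the generators whose output lies in the ground field. Extend this bracket to all of $\mathcal{O}^{fr}(V)$ by requiring the graded (Koszul-signed) Leibniz rule in each slot. A standard polynomial/symmetric-algebra argument shows this extension is well defined and yields the Leibniz identity by construction.

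Third, I would verify the graded Lie axioms. Symmetry on generators is immediate from symmetry of $\langle\_,\_\rangle$; extending by Leibniz preserves graded antisymmetry (with respect to the shifted degree). The graded Jacobi identity need only be checked on triples of generators because both sides are biderivations of each slot; but the bracket of any two generators lies in $k[d-2]$, so all nested brackets of three generators vanish and Jacobi holds trivially. This is the place I expect the \emph{only} real temptation to slip, since one must track the Koszul signs carefully when extending by Leibniz --- but, as in 3.1 of \cite{GGHZ21}, the signs work out because the bracket on generators takes values in scalars.

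Finally, I would check compatibility of the differential with the bracket. The hypothesis that $\langle\_,\_\rangle$ is a chain map $V\otimes V\to k[-d]$ transposes to the statement that the inverse pairing is a cocycle in $(V^\vee[-1]\otimes V^\vee[-1])[d-2]$ with respect to the induced differential. This means precisely that the bracket on generators commutes with $d$, i.e.\ $d\{x,y\}=\{dx,y\}+(-1)^{|x|}\{x,dy\}$ on $V^\vee[-1]$. Since both sides of this identity are biderivations with respect to the commutative product, the identity propagates from generators to all of $\mathcal{O}^{fr}(V)$, giving the desired compatibility. Combined with the previous steps this assembles the $(d-2)$-shifted Poisson dg algebra structure. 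The main obstacle is purely bookkeeping of the degree shift $[-1]$ together with the $(d-2)$-degree bracket and the Koszul signs; once these are aligned, each axiom reduces to a one-line verification on generators.
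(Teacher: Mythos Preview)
Your proposal is correct and follows precisely the approach the paper indicates: the paper does not give an independent argument but simply refers to section~3.1 of \cite{GGHZ21} for the details, and what you have written is exactly the content of that construction (define the bracket on generators $V^\vee[-1]$ via the inverse pairing, extend by Leibniz, observe Jacobi is trivial since brackets of generators are scalars, and deduce compatibility with $d$ from the fact that the pairing is a chain map). There is nothing to add.
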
 
  
\begin{lemma}\label{ham_l}
     A cyclic $L_\infty$-structures on a cyclic chain complex $V$ induces a Maurer-Cartan elements of $\mathcal{O}^{fr}(V)$ of degree $(3-d)$. 
  \end{lemma}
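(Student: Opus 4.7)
The plan is to parallel the proof of Lemma \ref{Ham1}, which handled the non-commutative analog. First, I would unpack the cyclic $L_\infty$-structure. Recall that $l$ is a degree $-1$ coderivation on $Sym_+(V[1])$, hence determined by its components $l_n \colon Sym^n(V[1]) \to V[1]$ for $n\geq 2$ (we have absorbed $l_1$ into the internal differential $d$). Cyclic symmetry — that is, the condition $\langle(d+l)\_,\_\rangle \in C^*(V)$ — says precisely that for each $n$ the map
$$\langle l_n(\_),\_\rangle\colon V[1]^{\otimes(n+1)}\longrightarrow k$$
is graded symmetric in all $n+1$ arguments. Consequently it descends to an element of $Sym^{n+1}(V^\vee[-1])$, and summing over $n$ yields the candidate
$$I=\langle l(\_),\_\rangle\in \mathcal{O}^{fr}(V)=Sym_+(V^\vee[-1]).$$
A direct degree count, using $|l_n|=n-2$ on $V^{\otimes n}\to V$ and $|\langle\_,\_\rangle|=-d$, gives $|I|=3-d$, as desired.

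The key step is the standard translation between coderivations on $Sym_+(V[1])$ and Hamiltonian derivations on $\mathcal{O}^{fr}(V)$. The pairing gives a graded identification $V\cong V^\vee[-d]$, making $V^\vee[-1]$ into the coordinate functions on the formal shifted symplectic manifold with underlying graded vector space $V[1]$. Under this dictionary, an arbitrary degree $-1$ coderivation $c$ on $Sym_+(V[1])$ corresponds bijectively (via $c\mapsto \langle c(\_),\_\rangle$) to an element $H_c\in \mathcal{O}^{fr}(V)$ of degree $3-d$, and the graded commutator of coderivations corresponds to the shifted Poisson bracket of their Hamiltonians:
$$\langle [c_1,c_2](\_),\_\rangle=\{H_{c_1},H_{c_2}\}.$$
This is the essential content of the Hamiltonian formalism reviewed in section 3.1 of \cite{GGHZ21} (in the one-object, cyclic $L_\infty$-algebra setting) and only depends on the extension-by-Leibniz construction of $\{\_,\_\}$ from the inverse pairing.

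Granting this dictionary, the Maurer-Cartan equation follows formally. By definition of the $L_\infty$-structure the total differential $d+l$ squares to zero on $Sym_+(V[1])$, i.e. $[d+l,d+l]=0$. Taking $\langle (\_)(\_),\_\rangle$ of both sides and using that the internal differential $d$ is compatible with the pairing — so that $\langle d\_,\_\rangle$ acts as the derivation induced by $d$ on $\mathcal{O}^{fr}(V)$ — yields
$$dI+\tfrac12\{I,I\}=0,$$
which is the claimed Maurer-Cartan equation in the dg shifted Lie algebra underlying the $(d-2)$-shifted Poisson algebra $\mathcal{O}^{fr}(V)$.

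The main obstacle is purely bookkeeping: verifying that the sign conventions in the identification coderivations $\leftrightarrow$ Hamiltonians are consistent with the Koszul sign rule used to extend $\langle\_,\_\rangle^{-1}$ by Leibniz to define $\{\_,\_\}$. Since the paper has already invoked this correspondence in the $A_\infty$ case (cf. Lemma \ref{Ham1} and reference to \cite{da24}, 4.4.3) and in the one-object cyclic $L_\infty$ case (\cite{GGHZ21}, section 3.1), I would simply cite these rather than re-derive the signs, noting that the generalization to arbitrary (not necessarily finite-dimensional) cyclic chain complexes is formal.
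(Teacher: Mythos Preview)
Your proposal is correct and takes essentially the same approach as the paper: the paper's proof is a one-liner stating that this is a standard result following as in the $A_\infty$-case (Lemma~\ref{Ham1}), with the Maurer-Cartan element given by $I_L:=\langle l\_,\_\rangle$. You have simply spelled out the details of that standard argument (the coderivation/Hamiltonian dictionary and the degree count) that the paper leaves implicit.
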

  \begin{proof}
      This is a standard result and follows as in the $A_\infty$-case.
      The Maurer-Cartan element is given by $I_L:=\langle l\_,\_\rangle.$
  \end{proof}
  \begin{df}
      Given $L$ a cyclic $L_\infty$-algebra with underlying cyclic chain complex of degree d denote by $\mathcal{O}^{cl}(L)$ the $(d-2)$-shifted Poisson algebra obtained by twisting $\mathcal{O}^{fr}(V)$ by the MCE determined by lemma \ref{ham_l}. 
  \end{df}
Denote by $(cyc_dL_\infty alg)_{st}$ the category whose objects are cyclic $L_\infty$-algebras of degree $d$ and whose morphisms are cyclic strict $L_\infty$-morphisms.
\begin{lemma}\label{nattr2}
    We find a functor 
    $$(cyc_dL_\infty Alg)_{st}\rightarrow (Poiss_{(d-2)})^{op}$$
    $$\big(\phi:(L_1\rightarrow L_2)\big)\mapsto \big(\phi^*:\mathcal{O}^{cl}(L_2)\rightarrow \mathcal{O}^{cl}(L_1)\big).$$ 
\end{lemma}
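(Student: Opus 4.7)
The plan is to mimic the proof of Lemma \ref{cov}, which handled the analogous statement in the $A_\infty$-world. A strict cyclic $L_\infty$-morphism $\phi : L_1 \to L_2$ is completely determined by the chain map $\phi_1 : L_1 \to L_2$, which by the remark preceding the lemma is injective. On Chevalley--Eilenberg cochains the induced map $\phi^* : C^*_+(L_2) \to C^*_+(L_1)$ is the symmetric algebra extension of the dual $\phi_1^\vee : L_2^\vee \to L_1^\vee$, so it is automatically a map of graded commutative algebras.

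First I would show that $\phi^*$ intertwines the untwisted differentials. This reduces to the statement that $\phi$ commutes with the $L_\infty$ coderivations, which is how the strict $L_\infty$-morphism condition was unpacked; dualizing gives compatibility of $\phi^*$ with the CE differential on $\mathcal{O}^{fr}$. Second, I would verify that $\phi^*$ respects the shifted Poisson bracket. Here the key point is that the bracket on $\mathcal{O}^{fr}(V)$ is the Leibniz extension of the contraction with $\langle-,-\rangle^{-1}$, so it suffices to check the claim on the generators $L^\vee[-1]$. This follows from the same diagram-chase as in the proof of Lemma \ref{cov}: the cyclicity condition $\langle \phi_1(x),\phi_1(y)\rangle_{L_2} = \langle x,y\rangle_{L_1}$ together with injectivity of $\phi_1$ allows us to invert the natural diagram relating the two pairings and conclude that
$$\langle \phi_1^\vee(a), \phi_1^\vee(b)\rangle_{L_1}^{-1} = \langle a,b\rangle_{L_2}^{-1}$$
for $a,b \in L_2^\vee$.

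The remaining point — and the only one that uses the twist — is to show that $\phi^*$ sends the hamiltonian of $L_2$ to that of $L_1$, i.e.\ $\phi^*(I_{L_2}) = I_{L_1}$, because this is exactly what is needed for $\phi^*$ to intertwine the twisted differentials $d + \{I_{L_i},-\}$ and thus define a map of $\mathcal{O}^{cl}$'s. Using the explicit formula $I_L = \langle l(-),-\rangle$ and the strictness of $\phi$, this boils down to $\phi_1 \circ l_1 = l_2 \circ \phi_1^{\otimes n}$ on each arity combined with the cyclicity identity, essentially parallel to the computation in the unnamed lemma following lemma \ref{cov} for the $A_\infty$ setting. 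The main potential obstacle, which is really only bookkeeping, is tracking the Koszul signs coming from the shift $[1]$ in the symmetric algebra and the degree $d$ of the pairing; these are dealt with exactly as in the $A_\infty$ case.

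Finally, functoriality (identity and composition) is immediate from the corresponding statement for the underlying chain maps, since everything is the symmetric algebra extension of $\phi_1^\vee$. Assembling these pieces gives the desired contravariant functor to $(\text{Poiss}_{d-2})^{op}$.
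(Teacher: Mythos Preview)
Your proposal is correct and follows essentially the same approach as the paper, which simply says ``this can be verified as in the $A_\infty$-case, see Lemma~\ref{cov}.'' You have written out the details of that analogy (algebra map via $\mathrm{Sym}$, bracket preservation from the cyclicity/injectivity diagram-chase, and $\phi^*(I_{L_2})=I_{L_1}$ for the twist), which is exactly what the paper leaves implicit.
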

\begin{proof}
Indeed, this can be verified as in the $A_\infty$-case. See lemma \ref{cov}.    
\end{proof}

\subsubsection{Beilinson-Drinfeld Structure}
From now on let us restrict again to $d$ odd.
We introduce $\hbar$, a formal variable of cohomological degree $3-d$.
For $(V,d,\langle\_,\_\rangle)$ a cyclic chain complex denote 
$$\mathcal{O}^{pq}(V):=C^*(V)\llbracket\hbar\rrbracket.$$
E.g. following \cite{GGHZ21}, we can define a BD differential $d^q$ on $\mathcal{O}^{pq}(V)$ as 
    $$d^q=d+\hbar \Delta.$$
    Indeed, by imposing the BD-relation (\eqref{BD}) and $\hbar$-linearity we only need to specify the values of $d^q$ on $C^2(V)$, which leaves us to specify the value of $\Delta$ on $C^2(V).$
    It is given by  $\Delta(a\cdot b)=\langle a,b\rangle^{-1}$ for $a,b\in L^\vee[-1]$.
    This is compatible with defining $\{\_,\_\}$ by $\hbar$-linearly extending the bracket on $\mathcal{O}^{fr}(V)$.
    Summarizing we have
\begin{theorem}\label{c_pq}
    For $V$ a cyclic chain complex 
    $$\mathcal{O}^{pq}(V):=\big(C^*(V)\llbracket\hbar\rrbracket,d^q,\{\_,\_\}\big)$$
    is a $(d-2)$ twisted BD algebra over $k$.  
\end{theorem}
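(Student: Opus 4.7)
The plan is to verify each clause of the definition of $(d-2)$-twisted BD algebra for $\mathcal{O}^{pq}(V)$, reusing as much as possible from the already-established shifted Poisson structure on $\mathcal{O}^{fr}(V)$.

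First, I would check the formal data. The underlying algebra $C^*(V)\llbracket\hbar\rrbracket = \mathrm{Sym}(V^\vee[-1])\llbracket\hbar\rrbracket$ is graded commutative and unital over $k\llbracket\hbar\rrbracket$, and the degree of $\hbar$ was chosen to be $3-d = 1-(d-2)$, matching the requirement $\deg\mu = 1-r$ with $r=d-2$. The bracket $\{\_,\_\}$, extended $\hbar$-linearly from $\mathcal{O}^{fr}(V)$, is automatically $k\llbracket\hbar\rrbracket$-linear and of degree $d-2$, and Jacobi plus graded antisymmetry are inherited.

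Second, I would make the definition of $\Delta$ precise. Specifying $\Delta(a\cdot b) = \langle a,b\rangle^{-1}$ on $C^2(V)$ together with the stipulated BD relation and $\hbar$-linearity forces $\Delta$ to be the unique second-order differential operator on the symmetric algebra whose principal symbol is the bivector $\langle\_,\_\rangle^{-1}$; explicitly, on a monomial $a_1 \cdots a_n$ one has $\Delta(a_1\cdots a_n) = \sum_{i<j}(-1)^{\epsilon_{ij}}\langle a_i,a_j\rangle^{-1}\, a_1\cdots \hat{a}_i\cdots \hat{a}_j \cdots a_n$. This makes sense for each $n$, and the BD-relation $\Delta(a\cdot b) - \Delta(a)b - (-1)^{|a|}a\Delta(b) = \{a,b\}$ is then an immediate combinatorial identity. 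Hence the fourth axiom of a twisted BD algebra (the BD relation itself) holds by construction.

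Third, I would verify $(d^q)^2 = 0$ by expanding
\[
(d+\hbar\Delta)^2 = d^2 + \hbar\,[d,\Delta] + \hbar^2\,\Delta^2.
\]
Here $d^2=0$ is the Chevalley--Eilenberg differential condition for the trivial $L_\infty$-structure. The bracket $[d,\Delta]=0$ reduces, since both are determined by their values on $C^2$, to the statement that $\langle\_,\_\rangle^{-1}$ is a cycle in $V^\vee\otimes V^\vee$, which is precisely the closedness of the pairing (that $\langle\_,\_\rangle$ is a chain map). Finally $\Delta^2 = 0$ follows from the fact that contracting twice with the symmetric bivector $\langle\_,\_\rangle^{-1}$ produces pairs of indices whose contributions cancel in pairs; this is the standard BV identity for the operator associated to a (graded) symmetric, non-degenerate pairing, and is essentially formal given the symmetry of the pairing and the Koszul sign convention.

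Fourth, I would verify that $d^q$ is a derivation of the bracket. Since $d$ is already a derivation of $\{\_,\_\}$ on $\mathcal{O}^{fr}(V)$ (inherited from closedness of the pairing), it remains to check that $\Delta$ is a derivation of $\{\_,\_\}$, i.e.\ $\Delta\{a,b\} = \{\Delta a,b\} + (-1)^{|a|+r}\{a,\Delta b\}$. This is the well-known 7-term identity for a BV-type operator whose defect from being a derivation of the product is the bracket, and follows formally from associativity and the explicit contraction formula for $\Delta$. The main obstacle I expect is carefully tracking signs in this identity and in $\Delta^2=0$, because we are in the odd-$d$ regime where $V^\vee[-1]$ has shifted degrees and the pairing has odd degree $-d$; once the Koszul signs are fixed, these identities become mechanical. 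After these checks, all axioms of a $(d-2)$-twisted BD algebra are in place.
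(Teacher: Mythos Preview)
Your proposal is correct and is essentially the detailed verification that the paper omits: the paper states this theorem without proof, treating it as immediate from the construction described just before the statement (which explicitly follows \cite{GGHZ21}). Your axiom-by-axiom check---matching the degree of $\hbar$, the explicit contraction formula for $\Delta$, the decomposition of $(d^q)^2=0$ into $d^2=0$, $[d,\Delta]=0$ (closedness of the pairing), and $\Delta^2=0$, together with the seven-term identity for compatibility with the bracket---is precisely the standard argument underlying that reference.
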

Again, analogously to the $A_\infty$-case it follows:
\begin{lemma}
     The assignement 
$$(\phi: V_1\rightarrow V_2)\mapsto (\phi^*:\mathcal{O}^{pq}(V_2)\rightarrow \mathcal{O}^{pq}(V_1))$$
defines a functor from the full subcategory of cyclic chain complexes of degree $d$ of 
$(cyc_dL_\infty alg)_{st}$ to the category of (d-2) twisted BD algebras.
\end{lemma}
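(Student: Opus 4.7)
The plan is to essentially reuse the strategy of lemma \ref{cov}, adapted to the present commutative, $\hbar$-linear setting. Given a cyclic strict $L_\infty$-morphism $\phi\colon V_1\to V_2$, strictness means it consists of a single chain map $\phi_1\colon V_1\to V_2$ satisfying $\langle \phi_1(x),\phi_1(y)\rangle_{V_2}=\langle x,y\rangle_{V_1}$. Define $\phi^*\colon \mathcal{O}^{pq}(V_2)\to\mathcal{O}^{pq}(V_1)$ as the $k\llbracket\hbar\rrbracket$-linear, graded-commutative-algebra map obtained by extending the dual $\phi_1^\vee\colon V_2^\vee\to V_1^\vee$ multiplicatively. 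By construction this is a map of graded commutative $k\llbracket\hbar\rrbracket$-algebras. Functoriality $(\phi\circ\psi)^*=\psi^*\circ\phi^*$ and preservation of identities are automatic from the contravariant functoriality of dualization and the fact that the composition of strict cyclic morphisms is strict and cyclic.

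The substantive points are then two compatibilities. First, I would check that $\phi^*$ intertwines the BD differentials $d^q=d+\hbar\Delta$. Commutation with the internal $d$ is immediate because $\phi_1$ is a chain map (and the Chevalley--Eilenberg piece $l$ vanishes since we start from chain complexes). For $\Delta$, by its defining $\hbar$-linearity and BD--Leibniz rule it suffices to verify equality on a quadratic element $a\cdot b$ with $a,b\in V_2^\vee[-1]$; there one computes
\begin{equation*}
\Delta\bigl(\phi^*(a)\cdot\phi^*(b)\bigr)=\langle\phi^*(a),\phi^*(b)\rangle_{V_1}^{-1},\qquad \phi^*\bigl(\Delta(a\cdot b)\bigr)=\langle a,b\rangle_{V_2}^{-1},
\end{equation*}
so the question reduces to the dual statement $\langle\phi^*(a),\phi^*(b)\rangle_{V_1}^{-1}=\langle a,b\rangle_{V_2}^{-1}$. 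This is exactly the dualization of strict cyclicity carried out in the proof of lemma \ref{cov}: non-degeneracy lets us invert the pairings, and the commutative square of musical isomorphisms transports $\langle\phi_1 x,\phi_1 y\rangle_{V_2}=\langle x,y\rangle_{V_1}$ into the required identity on $V_i^\vee[-1]$.

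Second, I would verify that $\phi^*$ preserves the shifted Poisson bracket. On $\mathcal{O}^{fr}(V_i)$ the bracket is the $\hbar^0$-part of $\Delta$-style contractions via $\langle\_,\_\rangle^{-1}$, extended by Leibniz. Because $\phi^*$ is an algebra map and respects $\langle\_,\_\rangle^{-1}$ by the preceding paragraph, the Leibniz extension automatically commutes with $\phi^*$; $\hbar$-linearity then gives the result on all of $\mathcal{O}^{pq}(V_i)$. The main obstacle, and really the only thing worth checking carefully, is the dual cyclicity identity for $\langle\_,\_\rangle^{-1}$; once this is in hand both the BD differential and the bracket compatibilities fall out, and the functoriality statement is formal.
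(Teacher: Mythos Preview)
Your proposal is correct and follows precisely the route the paper indicates: the paper gives no explicit proof beyond ``analogously to the $A_\infty$-case,'' and what you have written is exactly that analogue of lemma \ref{cov}, reducing everything to the dual cyclicity identity $\langle\phi^*(a),\phi^*(b)\rangle_{V_1}^{-1}=\langle a,b\rangle_{V_2}^{-1}$.
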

\begin{df}\label{fc_dq} 
Define 
    $$p: \mathcal{O}^{pq}(V)\rightarrow \mathcal{O}^{fr}(V)$$ 
    by setting $\hbar$ and the $n=0$ factor of the Chevalley-Eilenberg cochains to zero. 
    It is easy to verify that this defines a dequantization map in the sense of \ref{dq}.
\end{df}
\begin{df} We say that an
    $I^q\in MCE(\mathcal{O}^{pq}(V))$ of degree $(3-d)$ is a \emph{quantization} of $I\in MCE(\mathcal{O}^{fr}(V))$ if $p(I^q)=I$.
\end{df}
If $\phi: V_1\rightarrow V_2$ is a strict cyclic $L_\infty$-morphism of cyclic chain complexes and if $I^q\in MCE(\mathcal{O}^{pq}(V_2))$ then $\phi^*I^q\in MCE(\mathcal{O}^{pq}(V_1))$.
This allows us to define a category, which we denote by $$(\text{q-cyc}_dL_\infty alg)_{st}$$ whose objects are pairs of a cyclic $L_\infty$-algebra $L$ with underlying chain complex V and $I^q\in MCE(\mathcal{O}^{pq}(V))$ such that $p(I^q)=I$.
Its morphisms are cyclic strict $L_\infty$-morphisms 
$$\phi:\ L_1\rightarrow L_2$$
such that $\phi^*I^q_2=I^q_1.$ 
\begin{df}\label{O_q}
    Given a cyclic $L_\infty$ algebra $L$ denote by $V$ its underlying chain complex and by $I\in MCE(\mathcal{O}^{fr}(V))$ its associated $L_\infty$-hamiltonian.
    Assume we are given a quantization $I^q\in MCE(\mathcal{O}^{pq}(V)),$ then we define
    $$\mathcal{O}^{q}(L):=\mathcal{O}^{pq}(V)^{tw}$$
    as the twist of $\mathcal{O}^{pq}(V)$ by the MCE $I^q.$
    This assignement defines a functor 
    $$\mathcal{O}^q: (\text{q-cyc}_dL_\infty Alg)_{st}\rightarrow (\text{BD}^{(d-2)tw})^{op}.$$
    \end{df}
\begin{remark}
    Note that the notation $\mathcal{O}^{q}(L)$ is ambiguous and we implicitly have to keep in mind an $I^q\in MCE(\mathcal{O}^{pq}(V))$.
\end{remark}

\subsubsection{Commutator $L_\infty$-algebras of $A_\infty$-categories}\label{CLS}
We recall the standard definition (e.g definition 2.8 of \cite{GGHZ21})
of the functor 
\begin{equation}\label{Cmt}
[\_,\_]:A_\infty Alg\rightarrow L_\infty Alg,\end{equation}
 called the commutator functor, which generalizes the construction of the commutator Lie algebra out of an associative algebra.

In this section we wish to generalize the domain of this functor to $A_\infty$-categories.
We do so by defining a functor 
$$\ A_\infty cat\rightarrow A_\infty Alg.$$
 $$(F:\mathcal{C}\rightarrow \mathcal{D})\mapsto (\tilde{F}:A_\mathcal{C}\rightarrow A_\mathcal{D}).$$
  On objects the functor is defined as follows:
 \begin{cons}\label{obj}
    Let $\mathcal{C}$ be an $A_\infty$-category.
    Recall that this defines the datum of a degree $-1$ coderivation $m_\mathcal{C}$ on its bar complex $Bar \mathcal{C}$ that squares to zero.
\begin{itemize}
     \item We set
 \begin{equation}\label{sqc}
    A_\mathcal{C}:=\bigoplus_{(\lambda,\mu)\in Ob(\mathcal{C})^{\times2}}Hom_\mathcal{C}(\lambda,\mu). 
    \end{equation}
\item We define an $A_\infty$-algebra structure on $A_\mathcal{C}$ as follows.
Define maps
$$ \begin{tikzcd}
m^n: (A_\mathcal{C}[1]^{\otimes n})\arrow{r}{\iota_n}& Bar \mathcal{C}\arrow{r}{m_\mathcal{C}}&Bar \mathcal{C}[1]\arrow{r}{\pi[1]}& A_\mathcal{C}[2]\end{tikzcd}.$$
 Here $\iota_n$ denotes the projection onto length $n$ tensor factors of the form in the bar complex - that is those with `matching boundary conditions', except for the first and last one.\footnote{See equation~\eqref{bar}.}
 Further 
 $$\pi:Bar \mathcal{C}\rightarrow A_\mathcal{C}[1]$$
 denotes projection to tensors of $Bar\mathcal{C}$ of length one.
 Its image is by definition $A_\mathcal{C}[1]$. 
 \item
 We extend the maps $m^n$ as a coderivation to $Bar(A_\mathcal{C})$, which we denote by $m_{A_\mathcal{C}}$ and which has degree $-1$.
 Thus it remains to show that $m_{A_\mathcal{C}}^2=0$ to prove that this defines an $A_\infty$-algebra structure on ${A_\mathcal{C}}$.
 \item We observe that the $\iota_n$ maps induce a chain map 
 $$\iota:Bar(A_\mathcal{C})\rightarrow Bar\mathcal{C}.$$
 Note that this map has a `canonical' complement $W$ to its kernel, given by tensors with`matching boundary conditions, except the first and last one'.
 By definition $\iota|_W$ is injective.
\item It follows by the definition of $m_{A_\mathcal{C}}$ that $\iota\circ m_{A_\mathcal{C}}=m_\mathcal{C}\circ\iota$.
Thus we have that 
$$\iota \circ m_{A_\mathcal{C}}^2=m_\mathcal{C}^2\circ\iota=0.$$
 It holds that $m_{A_\mathcal{C}}(W)\subseteq W.$ Thus since $\iota$ is injective on $W$, it follows that ${m_{A_\mathcal{C}}^2|_W=0}$. 
 However, by definition also $m_{A_\mathcal{C}}|_{ker(\iota)}=0$, thus also $m_{A_\mathcal{C}}^2|_{ker(\iota)}=0.$ 
 Since $W$ is complementary to ${ker(\iota)}$ this together implies  $$m_{A_\mathcal{C}}^2=0$$ in general.
 \end{itemize}
 \end{cons}
 \begin{df}
  Given $\mathcal{C}$ an $A_\infty$-category denote by 
  $\big(A_\mathcal{C},d+m_{A_\mathcal{C}}\big)$ the thus constructed $A_\infty$-algebra, where $d$ denotes the internal differential.
 \end{df}
 Next we define the  functor on morphisms:
 \begin{cons}\label{func}
  Let  $F: \mathcal{C}\rightarrow \mathcal{D}$ be a functor of $A_\infty$-categories which by definition gives a map of coalgebras
  $$Bar\mathcal{C}\rightarrow  Bar\mathcal{D}.$$
  \begin{itemize}
      \item 
  We define 
  $$\tilde{F}_*: Bar(A_\mathcal{C})\rightarrow Bar(A_\mathcal{D})$$
  by extending 
$$  \begin{tikzcd}    
\tilde{F}_*: Bar(A_\mathcal{C})\arrow{r}{\iota_\mathcal{C}}& Bar \mathcal{C}\arrow{r}{F_*}& Bar \mathcal{D} \arrow{r}{\pi_\mathcal{D}}& A_\mathcal{D}[1]\end{tikzcd}$$
  as a map of coalgebras. 
  It has degree zero. 
  We check that $\tilde{F}_*$ intertwines the coderivation $m_{A_\mathcal{C}}$ with $m_{A_\mathcal{D}}$:
  \item
We have a commutative diagram of chain complexes
 \begin{equation}\label{iota_func}
\begin{tikzcd}
    Bar \mathcal{C}\arrow{r}{F_*}  &Bar \mathcal{D}\\
    Bar(A_\mathcal{C})\arrow{u}{\iota_\mathcal{C}}\arrow{r}{\tilde{F}_*}&Bar(A_\mathcal{D})\arrow{u}{\iota_\mathcal{D}},
 \end{tikzcd}
 \end{equation}
 which follows by construction.
 \item
 Thus we have that 
 $$\iota_\mathcal{D}\circ\tilde{F}_*\circ m_{A_\mathcal{C}}={F}_*\circ\iota_\mathcal{C}\circ m_{A_\mathcal{C}}={F}_*\circ m_\mathcal{C}\circ\iota_\mathcal{C}=m_\mathcal{D}\circ{F}_*\circ\iota_\mathcal{C}=m_\mathcal{D}\circ\iota_\mathcal{D}\circ\tilde{F}_*=\iota_\mathcal{D}\circ m_{A_\mathcal{D}}\circ\tilde{F}_*,$$
 
 where we first used commutativity of above diagram, then that $\iota_\mathcal{C}$ intertwines the respective coderivations.
 Then we used that $F_*$ intertwines the coderivations, next again commutatvity of above diagram and lastly that $\iota_\mathcal{D}$ intertwines the respective coderivations.
 Thus
 $$ \iota_\mathcal{D}\circ\tilde{F}_*\circ m_{A_\mathcal{C}}=\iota_\mathcal{D}\circ m_{A_\mathcal{D}}\circ\tilde{F}_*.$$
\item
We have 
$$\tilde{F}_*\circ m_{A_\mathcal{C}}|_{ker(\iota_\mathcal{C})}=0=m_{A_\mathcal{D}}\circ\tilde{F}_*|_{ker(\iota_\mathcal{C})},$$
which follows by definition. 
On the other hand we have that 
$$\tilde{F}_*\circ m_{A_\mathcal{C}}(W_\mathcal{C})\subseteq W_\mathcal{D}\ \text{and}\ \ m_{A_\mathcal{D}}\circ\tilde{F}_*(W_\mathcal{C})\subseteq W_\mathcal{D}.$$  
Since $\iota_\mathcal{D}|_{W_\mathcal{D}}$ is injective we conclude from the previous line and the previous bullet point that 
$$m_{A_\mathcal{D}}\circ\tilde{F}_*=\tilde{F}_*\circ m_{A_\mathcal{C}}.$$
\item Unwinding the definitions one can also show that $\widetilde{G\circ F}=\tilde{G}\circ \tilde{F}.$
\end{itemize}
\end{cons}
\begin{lemma}\label{Commu}
Constructions \ref{obj} and \ref{func} allow us to define 
$$(\tilde\_): A_\infty cat\rightarrow A_\infty Alg.$$
 $$(F:\mathcal{C}\rightarrow \mathcal{D})\mapsto (\tilde{F}:A_\mathcal{C}\rightarrow A_\mathcal{D}).$$
\end{lemma}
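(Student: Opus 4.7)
The plan is to verify the two functoriality axioms—preservation of identities and of compositions—since constructions \ref{obj} and \ref{func} already establish the object- and morphism-level well-definedness (namely that $(A_\mathcal{C}, d+m_{A_\mathcal{C}})$ is an $A_\infty$-algebra and that $\tilde{F}_*$ intertwines the bar coderivations). In both axioms I would exploit the standard fact that a morphism of (reduced) tensor coalgebras into $Bar(A_\mathcal{D})$ is uniquely determined by its corestriction to the generator space $A_\mathcal{D}[1]$.

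For identities, observe that if $F = \mathrm{id}_\mathcal{C}$ then $F_* = \mathrm{id}_{Bar\mathcal{C}}$, so the corestriction $\pi_\mathcal{D} \circ F_* \circ \iota_\mathcal{C}$ defining $\widetilde{F}_*$ reduces to $\pi_\mathcal{C} \circ \iota_\mathcal{C}$, which is exactly the length-one projection $Bar(A_\mathcal{C}) \to A_\mathcal{C}[1]$. Since this projection is also the corestriction of $\mathrm{id}_{Bar(A_\mathcal{C})}$, uniqueness forces the two coalgebra maps to coincide, so $\widetilde{\mathrm{id}_\mathcal{C}} = \mathrm{id}_{A_\mathcal{C}}$.

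For compositions $\mathcal{C} \xrightarrow{F} \mathcal{D} \xrightarrow{G} \mathcal{E}$, I would compare the corestrictions of both coalgebra maps to $A_\mathcal{E}[1]$. By definition the corestriction of $\widetilde{G \circ F}_*$ is $\pi_\mathcal{E} \circ G_* \circ F_* \circ \iota_\mathcal{C}$, while the corestriction of $\tilde{G}_* \circ \tilde{F}_*$ is $\pi_\mathcal{E} \circ G_* \circ \iota_\mathcal{D} \circ \tilde{F}_*$. Commutativity of the diagram \eqref{iota_func}, i.e.\ $\iota_\mathcal{D} \circ \tilde{F}_* = F_* \circ \iota_\mathcal{C}$, identifies these two expressions, and the uniqueness principle again yields equality of the two coalgebra maps.

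The argument is essentially bookkeeping on the bar coalgebras; the only subtlety is that it relies on the compatibility $\iota_\mathcal{D} \circ \tilde{F}_* = F_* \circ \iota_\mathcal{C}$ from diagram \eqref{iota_func}, which is guaranteed by the very construction of $\tilde{F}_*$ as the coalgebra extension of $\pi_\mathcal{D} \circ F_* \circ \iota_\mathcal{C}$. Hence no genuine obstacle remains, and the lemma follows directly from the two preceding constructions.
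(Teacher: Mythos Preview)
The proposal is correct and takes essentially the same approach as the paper. The paper gives no separate proof for this lemma beyond the constructions themselves; the final bullet of Construction~\ref{func} simply asserts ``Unwinding the definitions one can also show that $\widetilde{G\circ F}=\tilde{G}\circ \tilde{F}$,'' and your argument via corestrictions together with diagram~\eqref{iota_func} is exactly that unwinding made explicit (and you additionally supply the identity axiom, which the paper does not even mention).
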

\begin{remark}
Note that the chain map $\iota$ is not a map of coalgebras.
However, one may ponder whether it is induced from a morphism in a certain bi-module category, see section 2.2.11 of \cite{Lo13}.
\end{remark}

Let us make observations that we will need in the next section.
By restricting $\iota_\mathcal{C}$ to cyclic words we get an induced map:
\begin{equation}\label{iota}
\iota_\mathcal{C}: (Cyc_*^+(A_\mathcal{C})[1],d+m_{A_\mathcal{C}})\rightarrow (Cyc_*^+(\mathcal{C})[1],d+m_{\mathcal{C}}).
\end{equation}
Additionally to the functor \eqref{Cmt} we also get a map 
\begin{equation}\label{f}
(C_*^+(L),d+l)\rightarrow (Cyc_*^+(A)[1],d+m_{A}),
\end{equation}
where $L$ is the commutator $L_\infty$-algebra of $A$,
see equation 2.6 of \cite{GGHZ21}.
Both these maps are functorial with respect to $A_\infty$-functors.

\subsubsection{Cyclic commutator $L_\infty$-algebras}\label{ccla}
Given a cyclic $A_\infty$-category we would like to associate to it a cyclic $A_\infty$-algebra, generalizing the previous section.
The relatively strict notions we use mean that we restrict ourselves to essentially finite cyclic $A_\infty$-categories to do so.
\begin{cons}\label{assgn1}
    Let $\big((\mathcal{C},\langle\_,\_\rangle_{\mathcal{C}}),(Sk\mathcal{C},\langle\_,\_\rangle_{Sk\mathcal{C}})\big)$ be an essentially finite cyclic $A_\infty$-category.
    Then by the previous section ${A_{Sk\mathcal{C}}}$ is an $A_\infty$-algebra.
    Furthermore we define a symmetric non-degenerate chain map
    $$\langle\_,\_\rangle_{A_{Sk\mathcal{C}}}:\ A_{Sk\mathcal{C}}\otimes A_{Sk\mathcal{C}}\rightarrow k[-d]$$
    by 
    recalling that 
    $A_{Sk\mathcal{C}}=\bigoplus_{(\lambda,\mu)\in Ob(Sk\mathcal{C})^{\times2}}Hom_{Sk\mathcal{C}}(\lambda,\mu)$ and by linearly extending
 \begin{equation}\label{sodef}
    \langle x,y\rangle_{A_{Sk\mathcal{C}}}=
    \begin{cases}
      \langle x,y\rangle_{Sk\mathcal{C}}, & \text{if}\ x\in Hom_{Sk\mathcal{C}}(\lambda,\mu) \text{and}\ y \in Hom_{Sk\mathcal{C}}(\mu,\lambda)\ \text{for some}\  \mu,\lambda\in Sk\mathcal{C} \\
      0, & \text{otherwise.}
    \end{cases}
  \end{equation}
 Non-degeneracy follows by non-degeneracy of the original pairing and since the cardinality of $Ob(Sk\mathcal{C})$ is finite.
 By noting that 
 $$\langle m_n(\_,\cdots,\_),\_\rangle_{A_{Sk\mathcal{C}}}=\iota_\mathcal{C}^*(\langle m_n(\_,\cdots,\_),\_\rangle_{Sk\mathcal{C}})\in Cyc^*_+(A_{Sk\mathcal{C}}) $$
 we conclude that we indeed defined a cyclic $A_\infty$-algebra. 
\end{cons}
We explain how to associate a morphism of cyclic $A_\infty$-algebras to a strict cyclic functor of essentially finite $A_\infty$-categories:
\begin{cons}
 Given $F$ a strict cyclic $A_\infty$-functor of essentially finite $A_\infty$-categories
 \begin{equation}\label{haha}
        \begin{tikzcd}
   & \mathcal{C}\arrow{r}{F} &\mathcal{D}&\\
   Sk\mathcal{C}\arrow{ur}{\simeq}&  &&Sk\mathcal{D}\arrow{ul}[swap]{\simeq},
\end{tikzcd}
\end{equation}
we note that by defining $\widebar{\mathcal{D}}$ to be the full subcategory of the union of $Sk\mathcal{D}$ and the image of $Sk\mathcal{C}$ we obtain a commutative diagram

 \begin{equation}\label{hihi}
        \begin{tikzcd}
   & \mathcal{C}\arrow{rr}{F} &&\mathcal{D}&\\ 
   &&\widebar{\mathcal{D}}\arrow{ur} &&\\
   Sk\mathcal{C}\arrow{uur}{\simeq}\arrow{urr}&  &&&Sk\mathcal{D}\arrow{uul}[swap]{\simeq}\arrow{ull},
\end{tikzcd}
\end{equation}
 where all the arrows are strict cyclic $A_\infty$-functors.
 Further $\widebar{\mathcal{D}}$ also has only finitely many objects and the functor
 $$Sk\mathcal{D}\rightarrow \widebar{\mathcal{D}}$$
 is an equivalence.
 This is because the induced homology functor is full and faithful by definition.
 Further it is essentially surjective since every object of $\mathcal{D}$ is isomorphic to an object in $Sk\mathcal{D}$, thus in particular every object of $\widebar{\mathcal{D}}$ as a full subcategory of $\mathcal{D}$.
 Summarizing, given a strict cyclic functor of essentially finite cyclic $A_\infty$-categories as \eqref{hihi}, we obtain a roof
\begin{equation}\label{wbd}
    \begin{tikzcd}
&\widebar{\mathcal{D}} &\\
   Sk\mathcal{C}\arrow{ur}&  &Sk\mathcal{D}\arrow{ul}[swap]{\simeq},
\end{tikzcd}
\end{equation}
where all functors are strict and cyclic and injective on objects.
 \end{cons}
 Let us denote by $W$ the class of morphisms of cyclic $A_\infty$-algebras inducing isomorphisms
 on cyclic cohomology.\footnote{Since we are working over a characteristic zero field we could equivalently consider cyclic homology or hochschild homology.}
 We find
\begin{lemma}\label{iknm}
    The assignment \ref{assgn1} extends to a functor
\begin{equation}\label{cyccomm}
(\tilde\_)_{fin}:\ (\text{e.f. cyc}
    _dA_\infty cat)_{st}\rightarrow (cyc_dA_\infty Alg)_{st}[W^{-1}]
     \end{equation}
     sending equivalences to isomorphisms.  
\end{lemma}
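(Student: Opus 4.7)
The plan is to extend construction \ref{assgn1} to morphisms via the roof \eqref{wbd}. Concretely, given a strict cyclic $A_\infty$-functor $F$ presented as in \eqref{haha}, I would form the full subcategory $\widebar{\mathcal{D}} \subseteq \mathcal{D}$ spanned by $Sk\mathcal{D}$ together with the image of $Sk\mathcal{C}$; as already argued in the text, this produces a roof
$$Sk\mathcal{C} \longrightarrow \widebar{\mathcal{D}} \overset{\simeq}{\longleftarrow} Sk\mathcal{D}$$
of strict cyclic $A_\infty$-functors, each injective on objects, whose right leg is an $A_\infty$-equivalence of finite unital cyclic $A_\infty$-categories. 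Applying the commutator functor of lemma \ref{Commu} yields
$$A_{Sk\mathcal{C}} \overset{\tilde G}{\longrightarrow} A_{\widebar{\mathcal{D}}} \overset{\tilde H}{\longleftarrow} A_{Sk\mathcal{D}},$$
and I would set $(\tilde F)_{fin} := \tilde H^{-1} \circ \tilde G$ as a morphism in $(cyc_d A_\infty Alg)_{st}[W^{-1}]$.

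The next step is to verify that $\tilde G$ and $\tilde H$ are strict cyclic morphisms of cyclic $A_\infty$-algebras. Strictness is immediate from construction \ref{func}: on a strict $A_\infty$-functor only the component $\tilde F_1$ survives, and it is just the direct sum of the individual maps on morphism spaces. Cyclicity follows from the definition \eqref{sodef} combined with injectivity on objects and cyclicity of the original functor, since the pairing on $A_{Sk\mathcal{C}}$ vanishes off matching pairs $\bigl(Hom(\lambda,\mu), Hom(\mu,\lambda)\bigr)$ and such pairs are sent to matching pairs with pairing preserved. The essential point is then to show $\tilde H \in W$: invoking the invariance of cyclic cohomology of $A_\infty$-categories under $A_\infty$-equivalence (already recalled in the excerpt) one has a quasi-isomorphism at the level of $Cyc^*(Sk\mathcal{D}) \leftarrow Cyc^*(\widebar{\mathcal{D}})$, and I would transport this to the commutator $A_\infty$-algebras along the natural map $\iota_\bullet$ of \eqref{iota}.

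For functoriality, given composable $F, F'$ in the source category, I would enlarge both intermediate full subcategories to a common finite full subcategory of $\mathcal{E}$ containing $(F' \circ F)(Sk\mathcal{C})$, $F'(Sk\mathcal{D})$ and $Sk\mathcal{E}$; the resulting commutative diagram of strict cyclic inclusions, after applying the commutator functor and inverting the $W$-legs, realises the composite in the localization. The same manipulation shows independence of the particular choice of $\widebar{\mathcal{D}}$. Finally, if $F$ is itself an $A_\infty$-equivalence then the left leg $Sk\mathcal{C} \to \widebar{\mathcal{D}}$ is also an equivalence: fully faithful as the inclusion of a full subcategory, and essentially surjective because every object of $\widebar{\mathcal{D}}$ either lies in $F(Sk\mathcal{C})$ or, being in $Sk\mathcal{D}$, is isomorphic via $F$ to an element of $F(Sk\mathcal{C})$; hence both legs of the roof are in $W$ and $(\tilde F)_{fin}$ is an isomorphism.

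The step I expect to be the main obstacle is verifying that $\iota_\bullet$ of \eqref{iota} induces a quasi-isomorphism on cyclic complexes in the finite unital setting, so that equivalences at the category level really do descend to $W$-morphisms at the level of commutator $A_\infty$-algebras. The complex underlying $A_\bullet$ contains cyclic tensors with non-matching adjacent factors that have no counterpart in $Cyc^*(\bullet)$; I would handle these by using finiteness of the object set to turn $A_\bullet$ into a unital algebra with orthogonal idempotents $\{1_i\}_{i \in Ob}$ and then use these idempotents to build an explicit contracting homotopy on the non-matching part of $Cyc^*(A_\bullet)$, effectively a Morita reduction from the sum algebra back to the category.
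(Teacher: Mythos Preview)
Your proposal follows essentially the paper's own proof: the roof \eqref{wbd}, the check that strict cyclic functors injective on objects induce strict cyclic algebra maps, the forward reference to the $\iota$ quasi-isomorphism (proved in the paper as theorem~\ref{LQT_conj} precisely via the unit-based contracting homotopy on the non-matching part that you sketch), and the functoriality argument via a common enlarged finite full subcategory all match. One small slip: when $F$ is an equivalence the left leg $Sk\mathcal{C}\to\widebar{\mathcal{D}}$ is \emph{not} an inclusion of a full subcategory but rather the restriction of $F$ to $Sk\mathcal{C}$; its full faithfulness on homology comes from $F$ being an equivalence (composed with the full inclusion $Sk\mathcal{C}\hookrightarrow\mathcal{C}$), not from being a full-subcategory inclusion, though the conclusion is unaffected.
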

In particular, see \eqref{ssvr}, this functor is up to isomorphism independent from the choice of a skeleton.
\begin{proof}
 Given a strict cyclic functor of cyclic $A_\infty$-categories with finitely many objects which is additionally injective on objects we obtain
 a strict cyclic $A_\infty$-algebra morphism of the induced cyclic $A_\infty$-algebras (from construction \ref{assgn1}), as can be deduced easily.
 Furthermore given a functor which is an equivalence, we already saw that it induces an isomorphism on cyclic cohomology.
 We will see later in the proof of theorem \ref{LQT_conj} that the cyclic cohomology of an $A_\infty$-category and its associated cyclic $A_\infty$-algebra are isomorphic.
 Thus from diagram \eqref{wbd} it follows that the assignment on morphisms underlying \eqref{cyccomm} is well defined.
 It remains to verify functoriality.
 Given 
$$\begin{tikzcd}
    \mathcal{C}\arrow{r}{G}&\mathcal{D}\arrow{r}{H}&\mathcal{E}\\
     Sk\mathcal{C}\arrow{u}{\simeq}&Sk\mathcal{D}\arrow{u}{\simeq}&Sk\mathcal{E}\arrow{u}{\simeq}
\end{tikzcd}$$
strict cyclic functors of essentially finite cyclic $A_\infty$-categories. Then we need to compare the images of the diagram

\begin{equation*}
    \begin{tikzcd}
&\widetilde{\mathcal{E}} &\\
   Sk\mathcal{C}\arrow{ur}{G\circ H}&  &Sk\mathcal{E}\arrow{ul}[swap]{\simeq},
\end{tikzcd}\end{equation*}
and the diagram
\begin{equation*}
    \begin{tikzcd}
&\widebar{\mathcal{D}} &&\widebar{\mathcal{E}}&\\
   Sk\mathcal{C}\arrow{ur}&  &Sk\mathcal{D}\arrow{ul}{\simeq}\arrow{ur}&&Sk\mathcal{E}\arrow{ul}[swap]{\simeq},
\end{tikzcd}\end{equation*}
under the assignment \ref{cyccomm}.
Let us denote by $\mathcal{E}^*$ the full subcategory of $\mathcal{E}$ on the union of $Sk\mathcal{E}$ and the images of $Sk\mathcal{D}$ and $Sk\mathcal{C}$.
Then the images of both diagrams before are, in the localized category, equal to 
\begin{equation*}
    \begin{tikzcd}
&\mathcal{E}^* &\\
   Sk\mathcal{C}\arrow{ur}{G\circ H}&  &Sk\mathcal{E}\arrow{ul}[swap]{\simeq},
\end{tikzcd}\end{equation*}
 which proves that they are equal, which proves functoriality
\end{proof}
\begin{remark}
 Further, diagram \eqref{hihi} induces a natural quasi-isomorphism between the functor \ref{cyccomm}, composed with forgetting the cyclic structure,
 and the functor induced by sending an essentially finite $A_\infty$-category $(\mathcal{C},Sk\mathcal{C})$ to $\mathcal{C}$ and then applying the functor \ref{Commu}.
\end{remark}
Lastly, given an cyclic $A_\infty$-category $\mathcal{C}$ which is finitely presented in two ways
$(\mathcal{C},Sk\mathcal{C}_1)$, $(\mathcal{C},Sk\mathcal{C}_2)$ we get a roof of cyclic $A_\infty$-algebras
   \begin{equation}\label{ssvr}
   \begin{tikzcd}
&A_{\widebar{\mathcal{C}}}\arrow{dl}\arrow{dr}&\\
    A_{Sk\mathcal{C}_1}&&A_{Sk\mathcal{C}_2},
    \end{tikzcd}
    \end{equation}
   where the arrows are in $W$, which tells us that the choice of finite presentation in lemma \ref{cyccomm} does not  matter.\par
We will see a bit later in definition \ref{vfnh} that we can also associate to a quantizaiton of an essentially finite cyclic $A_\infty$-category a quantization of a cyclic $A_\infty$-algebra.
If we denote by $U$ the class of morphisms inducing an isomorphism on the BD algebras associated to a cyclic $A_\infty$-algebra, see definition \ref{F_q}, then we find
\begin{lemma}
 The assignment from lemme \ref{iknm} extends to a functor
\begin{equation}\label{qcyccomm}
(\tilde\_)_{fin}:\ (\text{q.e.f. cyc}
    _dA_\infty cat)_{st}\rightarrow (q. cyc_dA_\infty Alg)_{st}[U^{-1}]
    \end{equation}
 sending equivalences of quantum $A_\infty$-categories to isomorphisms.  
\end{lemma}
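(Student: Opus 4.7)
The plan is to parallel the proof of Lemma \ref{iknm} in the quantum setting, using the assignment from the forthcoming Definition \ref{vfnh} to pass from quantizations of essentially finite cyclic $A_\infty$-categories to quantum cyclic $A_\infty$-algebras on a chosen finite skeleton. First I would observe that once a quantization $I^q$ of $(\mathcal{C}, Sk\mathcal{C})$ is fixed, its restriction along the inclusion $Sk\mathcal{C} \hookrightarrow \mathcal{C}$ gives a quantum MCE on $\mathcal{F}^{pq}(V_{Ob\, Sk\mathcal{C}})$; applying the map $\iota^*$ induced by the chain map \eqref{iota} (extended $\hbar$-linearly and compatibly with $\nabla$, $\delta$ and the bracket) transports this to a quantization of the associated cyclic $A_\infty$-algebra $A_{Sk\mathcal{C}}$ of construction \ref{assgn1}. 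This yields the assignment on objects.

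Next, given a morphism in $(\text{q.e.f. cyc}_d A_\infty cat)_{st}$, I would run the roof construction \eqref{haha}--\eqref{wbd} verbatim: defining $\widebar{\mathcal{D}}$ as the full subcategory of $\mathcal{D}$ on the union of $Sk\mathcal{D}$ with the image of $Sk\mathcal{C}$. All functors in that roof are strict, cyclic and injective on objects, so by construction \ref{func} and construction \ref{assgn1} they induce strict cyclic $A_\infty$-algebra morphisms on the associated algebras. The compatibility condition $F^*I^q_\mathcal{D} = I^q_\mathcal{C}$ on the category side pulls back through the functor $(\tilde{\_})$ and the chain map $\iota^*$ to the identity $\widetilde{F}^* \widetilde{I^q_\mathcal{D}} = \widetilde{I^q_\mathcal{C}}$ on the algebra side, so we indeed land in $(q.\,cyc_d A_\infty Alg)_{st}$.

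The main step is verifying that the arrow $A_{Sk\mathcal{D}} \to A_{\widebar{\mathcal{D}}}$ induced by the equivalence $Sk\mathcal{D} \xrightarrow{\simeq} \widebar{\mathcal{D}}$ lies in $U$, i.e.\ induces a quasi-isomorphism on the quantized BD-algebras $\mathcal{F}^q$. For this I would mimic the argument given around \eqref{BDqi}: equip $\mathcal{F}^q(A_{\widebar{\mathcal{D}}})$ and $\mathcal{F}^q(A_{Sk\mathcal{D}})$ with the complete, exhaustive filtration from Example 6.3 of \cite{GGHZ21}, observe that the induced map is filtered, and invoke the Eilenberg--Moore comparison theorem. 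The input needed on $E^1$ is that the map on classical observables induced from an equivalence is a quasi-isomorphism, which itself reduces to cyclic cohomology invariance, already proved in the non-quantized case and to be reproved inside theorem \ref{LQT_conj}. This is really the only nontrivial point; everything else is bookkeeping.

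Finally, functoriality and well-definedness in the localized category follow by the identical argument to Lemma \ref{iknm}: a composition $\mathcal{C} \xrightarrow{G} \mathcal{D} \xrightarrow{H} \mathcal{E}$ produces two candidate roofs $\widebar{\mathcal{D}}, \widebar{\mathcal{E}}$ which both receive a strict cyclic functor from the enlarged full subcategory $\mathcal{E}^*$ of $\mathcal{E}$ generated by $Sk\mathcal{C}, Sk\mathcal{D}, Sk\mathcal{E}$, and these become equal after inverting $U$. That equivalences of quantum $A_\infty$-categories map to isomorphisms in $(q.\,cyc_d A_\infty Alg)_{st}[U^{-1}]$ is then immediate: the underlying classical equivalence produces a map in $W$, which by the BD-filtration argument above is promoted to a map in $U$.
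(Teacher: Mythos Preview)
Your proposal is correct and follows the same approach as the paper's proof: both argue by paralleling Lemma \ref{iknm} verbatim, with the single new ingredient being that a map of the relevant BD algebras which is a quasi-isomorphism after dequantization is already a quasi-isomorphism, established via the complete exhaustive filtration and Eilenberg--Moore comparison (the paper points to the argument around Corollary \ref{qc12}, you to \eqref{BDqi} and Example 6.3 of \cite{GGHZ21}, which is the same filtration). Your write-up is simply more explicit than the paper's two-sentence sketch.
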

Further this functor is  independent up to isomorphism from the choice of a finite presentation and a compatible quantization.
\begin{proof}
    Indeed, we can argue in exactly the same way as in the previous lemma.
    The only additional argument that we need is that if a map of the BD algebras considered here induces a quasi-isomorphism when applying the dequantization functor, see definition \ref{nc_dq},
    then that the map of BD algebras itself is already a quasi-isomorphism.
    This follows by a spectral sequence argument associated to a suitable filtration, compare e.g. the arguments around \ref{qc12}.
\end{proof}
    We finish this subsection by recalling the standard functors
   \begin{equation}\label{cAL}
       (cyc_dA_\infty Alg)_{st}\rightarrow  (cyc_dL_\infty Alg)_{st}
   \end{equation}
   and
   \begin{equation}\label{qcAL}
   (q.cyc_dA_\infty Alg)_{st}\rightarrow  (q.cyc_dL_\infty Alg)_{st}
    \end{equation}
which descend to the localized versions if one the right hand side we invert morphisms inducing a quasi-isomorphism on the associated shifted Poisson algebras (see definition \ref{nattr1}),
the class of those denoted $V$, respectively on the associated shifted BD-algebras (see definition \ref{O_q}), the class of those denoted $P$.
This is a standard result and follows by functoriality e.g of proposition 4.5 of \cite{GGHZ21}:
We are in characteristic zero which means that we can chose a chain level map inducing an inverse in homology to a given quasi-isomorphism.  
\section{Loday-Quillen-Tsygan Map}
We denote by $M_N(k)$ $N\times N$-matrices with values in the ground field $k$. We recall the following standard construction:
\begin{df}
Let $A$ be an $A_\infty$-algebra. One can define another $A_\infty$-algebra $M_NA$ as follows:\begin{itemize}
    \item 
Set $$  M_NA=M_N(k)\otimes A.$$ We define  structure maps by multilinearly extending 
   $$m_n^{M_NA}(M_1u_1,\cdots ,M_nu_n):=(M_1\cdot\ldots\cdot M_n)\otimes m_n^A(u_1,\cdots,u_n).$$
    Using equation \ref{A_inf_re} it is straightforward to verify that this indeed defines an $A_\infty$-algebra.
  \item  
    Given $F:A_1\rightarrow A_2$ an $A_\infty$-morphism define a morphism
    $M_NF:M_NA_1\rightarrow M_NA_2$ by multilinearly extending 
    $$(M_NF)_n(u_1M_1,\cdots u_nM_n):=(M_1\cdot\ldots\cdot M_N)F_n(u_1,\cdots,u_n),$$
    where $F_n$ are the structure maps of $F$.
    Using equation \ref{F_inf_rel} it is easy to verify that this defines an $A_\infty$-morphism.
    \end{itemize}
\end{df} 
\begin{remark}\label{cycM}
    As another comment that we will need in the next section we remark that given $(A,\langle\_,\_\rangle^{A})$ a cyclic $A_\infty$-algebra we have that $(M_NA,\langle\_,\_\rangle^{M_NA})$ is a cyclic $A_\infty$-algebra as well. Here we define 
    $$\langle\_,\_\rangle_{\mu\lambda}^{M_NA}:M_N A\otimes M_n A\rightarrow k[d],$$
    by 
    $$\langle M_1u_1,M_2u_2\rangle^{M_NA}=tr(M_1M_2)\langle u_1,u_2\rangle.$$
    Using the cyclic invariance of the trace it is easy to verify that this defines a cyclic $A_\infty$-algebra, ie condition \ref{cycl} is satisfied.
\end{remark}
\subsection{for $A_\infty$-categories}
We recall the functor from section \ref{CLS} assigning to an $A_\infty$-category $\mathcal{C}$ an $A_\infty$-algebra $A_\mathcal{C}$, see \ref{sqc} for its explicit form.
\begin{df}\label{LQT_d}
Let $\mathcal{C}$ be an $A_\infty$-category. Then we define the map
$$\begin{tikzcd}
LQT_N: Sym( Cyc_+^*(\mathcal{C})[-1])\arrow{r}{\iota_\mathcal{C}^*}& Sym (Cyc_+^*(A_\mathcal{C})[-1])\arrow{r}{LQT_N}& C^*_+(\mathfrak{gl}_NA_\mathcal{C})
\end{tikzcd}$$
as the composition of the map induced by \ref{iota} and the standard LQT map applied to the $A_\infty$-algebra $A_\mathcal{C}$, see eg section 2.4 of \cite{GGHZ21}. In particular $\mathfrak{gl}_NA_\mathcal{C}$ denotes the commutator $L_\infty$-algebra of $M_NA_\mathcal{C}$, the $A_\infty$-algebra of $A_\mathcal{C}$ with values in $N\times N$-matrices.
\end{df}
\begin{theorem}\label{LQT_c}
The LQT map from definition \ref{LQT_d} is a map of dg-algebras, that is we have 
$$LQT_N\circ(d+m_\mathcal{C})=(d+l_N)\circ LQT_N.$$ Further this map is functorial in $\mathcal{C}$ under $A_\infty$-functors.
 \end{theorem}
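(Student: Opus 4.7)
The plan is to reduce everything to two already-established facts: (i) $\iota_{\mathcal{C}}$ is a chain map, and (ii) the classical LQT map for $A_\infty$-\emph{algebras} is a map of dg-algebras, functorial along $A_\infty$-morphisms (as stated in e.g.\ \cite{GGHZ21}, section 2.4, to which the definition explicitly defers).

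First I would verify the chain-map property of $\iota_{\mathcal{C}}^*$. The map $\iota_{\mathcal{C}}:(Cyc^+_*(A_{\mathcal{C}})[1],d+m_{A_{\mathcal{C}}}) \to (Cyc^+_*(\mathcal{C})[1],d+m_{\mathcal{C}})$ from \eqref{iota} was constructed in section~\ref{CLS} by restricting $\iota_{\mathcal{C}}:Bar(A_{\mathcal{C}})\to Bar\,\mathcal{C}$, and the identity $\iota_{\mathcal{C}}\circ m_{A_{\mathcal{C}}}=m_{\mathcal{C}}\circ \iota_{\mathcal{C}}$ used in construction~\ref{obj} immediately implies it is a chain map. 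Dualising gives a chain map $\iota_{\mathcal{C}}^*$ on cyclic cochains, which extends multiplicatively as a map of dg-commutative algebras to $Sym\bigl(Cyc^*_+(\mathcal{C})[-1]\bigr)\to Sym\bigl(Cyc^*_+(A_{\mathcal{C}})[-1]\bigr)$, since the differential on both sides is a derivation extending the differential on cyclic cochains.

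Next, combine this with the classical LQT map. The standard LQT map $Sym\bigl(Cyc^*_+(A_{\mathcal{C}})[-1]\bigr)\to C^*_+(\mathfrak{gl}_NA_{\mathcal{C}})$ is, by the cited result, a chain map of dg-algebras satisfying $LQT_N\circ(d+m_{A_{\mathcal{C}}})=(d+l_N)\circ LQT_N$ (where $l_N$ denotes the Chevalley--Eilenberg differential of $\mathfrak{gl}_N A_{\mathcal{C}}$, twisted by the induced $A_\infty$-hamiltonian). Composing with the dg-algebra map $\iota_{\mathcal{C}}^*$ yields the required dg-algebra identity $LQT_N\circ(d+m_{\mathcal{C}})=(d+l_N)\circ LQT_N$ for the full composite of definition~\ref{LQT_d}.

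For functoriality, given an $A_\infty$-functor $F:\mathcal{C}\to \mathcal{D}$ I would use the commutative square \eqref{iota_func} of construction~\ref{func}, which, upon restricting to cyclic words and dualising, gives commutativity of
\begin{equation*}
\begin{tikzcd}
Sym(Cyc^*_+(\mathcal{D})[-1]) \arrow{r}{F^*} \arrow{d}{\iota_{\mathcal{D}}^*} & Sym(Cyc^*_+(\mathcal{C})[-1]) \arrow{d}{\iota_{\mathcal{C}}^*} \\
Sym(Cyc^*_+(A_{\mathcal{D}})[-1]) \arrow{r}{\tilde F^*} & Sym(Cyc^*_+(A_{\mathcal{C}})[-1]).
\end{tikzcd}
\end{equation*}
Gluing this to the commutative square expressing functoriality of the classical LQT map along $\tilde F: A_{\mathcal{C}}\to A_{\mathcal{D}}$ (applied to the induced matrix-valued $A_\infty$-morphism $M_N\tilde F$) yields the desired naturality square for $LQT_N$.

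The only mildly delicate step is bookkeeping the restriction from $Bar$ to cyclic-word complexes: one must check that $\iota_{\mathcal{C}}$ preserves the subcomplex $\ker(1-t)$ and is compatible with the degree shift, so that \eqref{iota} genuinely restricts to cyclic cochains. This is straightforward from the explicit description of $\iota_{\mathcal{C}}$ as inclusion of tensors with matching boundary conditions, which is clearly invariant under the cyclic permutation $t$, so no real obstacle arises.
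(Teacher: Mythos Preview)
Your proposal is correct and follows essentially the same approach as the paper: both proofs reduce the claim to (i) the chain-map property of $\iota_{\mathcal{C}}$ on cyclic words, established around \eqref{iota}, and (ii) the standard LQT map for $A_\infty$-algebras from \cite{GGHZ21} section~2.4, with functoriality deduced from the square~\eqref{iota_func} together with functoriality of the one-object LQT map. You simply spell out in more detail what the paper compresses into two sentences; the only superfluous remark is the parenthetical about $l_N$ being ``twisted by the induced $A_\infty$-hamiltonian'', since $l_N$ is already the full Chevalley--Eilenberg differential of the $L_\infty$-algebra $\mathfrak{gl}_N A_{\mathcal{C}}$.
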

 \begin{proof}
The fact that this is a map of dg-algebras follows by the results around remark \ref{iota} and the standard results on the LQT map, see eg 2.4 of \cite{GGHZ21}. Similarly it is functorial under $A_\infty$-functors implied by diagram \ref{iota_func} and the standard results on the LQT map, see eg again 2.4 of \cite{GGHZ21}.
\end{proof}
We have a commutative diagram of dg algebras     
$$\begin{tikzcd}
   Sym_+( Cyc_+^*(\mathcal{C})[-1])\arrow[dddrr,"LQT_N",swap]\arrow[dddr,"LQT_{N+1}", swap]\arrow[ddd]\arrow[dashed,dddrrr]&&&\\
   \\ \\
   (\cdots)\arrow[r,dashed]&C^*_+(\mathfrak{gl}_{N+1}A_\mathcal{C})\arrow{r}\arrow{r}&C^*_+(\mathfrak{gl}_NA_\mathcal{C})\arrow[dashed,r]&(\cdots)
\end{tikzcd}$$
Commutativity of the diagram follows from the same diagram for the one object LQT map, see equation 2.8 of \cite{GGHZ21}.
\begin{theorem}\label{LQT_conj}
     If $\mathcal{C}$ is a unital $A_\infty$-category then 
    $$Sym_+( Cyc_+^*(\mathcal{C})[-1])\simeq \varprojlim_{N\to\infty}  C^*_+(\mathfrak{gl}_NA_\mathcal{C}) $$

\end{theorem}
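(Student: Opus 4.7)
The strategy is to reduce the statement to the classical Loday--Quillen--Tsygan theorem applied to the $A_\infty$-algebra $A_\mathcal{C}$. By definition \ref{LQT_d}, the map $LQT_N$ factors as
\[
Sym_+(Cyc^*_+(\mathcal{C})[-1]) \xrightarrow{Sym_+(\iota_\mathcal{C}^*)} Sym_+(Cyc^*_+(A_\mathcal{C})[-1]) \xrightarrow{LQT_N^{A_\mathcal{C}}} C^*_+(\mathfrak{gl}_N A_\mathcal{C}),
\]
so after passing to the inverse limit over $N$ it suffices to establish: (i) that $\iota_\mathcal{C}^*: Cyc^*_+(\mathcal{C}) \to Cyc^*_+(A_\mathcal{C})$ is a quasi-isomorphism of cyclic cochain complexes, and (ii) that the classical LQT quasi-isomorphism holds for $A_\mathcal{C}$ in the form $Sym_+(Cyc^*_+(A_\mathcal{C})[-1]) \xrightarrow{\sim} \varprojlim_N C^*_+(\mathfrak{gl}_N A_\mathcal{C})$. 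Since we work in characteristic zero, $Sym_+$ preserves quasi-isomorphisms, so (i) upgrades to a quasi-isomorphism on the symmetric algebras.

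For (ii), I would invoke the classical theorem of Loday--Quillen and Tsygan for unital $A_\infty$-algebras (as recalled in Section~2.4 of \cite{GGHZ21}) applied to $A_\mathcal{C}$. The unital structure on $\mathcal{C}$ provides pairwise orthogonal idempotents $1_i \in Hom_\mathcal{C}(\lambda_i,\lambda_i) \subset A_\mathcal{C}$; when $Ob(\mathcal{C})$ is finite their sum is a strict unit on $A_\mathcal{C}$ and the classical theorem applies directly, whereas for infinite $Ob(\mathcal{C})$ one works with the locally unital enhancement, for example by filtering $A_\mathcal{C}$ by finite full subcategories of $\mathcal{C}$ and checking that the inverse limit over $N$ commutes with the resulting filtered colimit up to quasi-isomorphism. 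For (i), the content is a Morita-type comparison: dually on chains, the map $\iota_\mathcal{C}$ from \eqref{iota} splits $Cyc_*^+(A_\mathcal{C})$ as a graded vector space into the subcomplex $Cyc_*^+(\mathcal{C})$ of cyclically matched tensors together with a complement spanned by ``mismatched'' cyclic tensors (cyclic invariants whose boundary conditions fail to close up). One verifies that the $A_\infty$-differential on $A_\mathcal{C}$ preserves this splitting because $m_{A_\mathcal{C}}$ vanishes on inputs with non-matching boundaries. The key claim is that the mismatched subcomplex is acyclic, which I would prove by exhibiting an explicit contracting homotopy built from insertion of the units $1_i$ at mismatched positions, mirroring the classical Morita invariance of Hochschild and cyclic (co)homology for $k$-linear categories versus their total algebras.

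The principal obstacle is implementing this contracting homotopy in the $A_\infty$-setting, where the higher operations $m_n$ for $n \geq 3$ also contribute to the differential and must be cancelled by higher-order corrections to the naive homotopy. A clean way to proceed is to introduce a filtration of the mismatched subcomplex (for instance by total word length or by a ``number of mismatches'' statistic) whose associated spectral sequence reduces the acyclicity to the case in which only $d$ and $m_2$ act, where the unit-insertion argument is essentially classical. A secondary difficulty is ensuring in step (ii) that the locally unital passage when $|Ob(\mathcal{C})|$ is infinite is compatible with the $N \to \infty$ inverse limit; this should follow from naturality of (i) with respect to inclusions of full subcategories, so I expect it to reduce to formal manipulations once the Morita-type step is in hand.
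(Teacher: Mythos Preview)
Your overall strategy coincides with the paper's: factor through $A_\mathcal{C}$, invoke the one-object LQT theorem for (ii), and prove (i) by a unit-insertion contracting homotopy on the complement of composable tensors. There are two differences worth noting.

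First, the paper does not carry out the Morita step directly on cyclic chains. Instead it passes to Hochschild chains via Loday's comparison (corollary 2.2.3 of \cite{Lo13}), so that it suffices to show $\iota: CH_*(A_\mathcal{C})\to CH_*(\mathcal{C})$ is a quasi-isomorphism; this avoids having to control how the cyclic averaging interacts with the homotopy. Second, and more importantly, your ``principal obstacle'' is not an obstacle at all: the category is assumed \emph{strictly} unital, so $m_n(\ldots,1_i,\ldots)=0$ for all $n\geq 3$. Hence applying the Hochschild differential to a tensor with $e_\mu$ inserted at the first mismatch produces only the $m_2$ terms touching $e_\mu$, one of which vanishes by the mismatch and the other of which removes the inserted unit. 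No higher corrections and no auxiliary spectral sequence are needed; the naive homotopy already works on the nose.

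Your secondary concern about infinite $Ob(\mathcal{C})$ is legitimate: when $Ob(\mathcal{C})$ is infinite, $A_\mathcal{C}$ is only locally unital, and the paper simply cites the unital one-object LQT theorem without addressing this. Your proposed fix via filtered colimits over finite full subcategories is the natural way to close that gap.
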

    

\begin{proof}
The one object LQT theorem applied to the unital $A_\infty$-algebra $A_\mathcal{C}$ implies that the second map in the definition of the LQT map for categories is a quasi-isomorphism for $N\rightarrow \infty$ (see thm 2.11 of \cite{GGHZ21} for the LQT theorem for $A_\infty$-algebras). Thus it remains to show that $\iota_\mathcal{C}^*$ is a quasi-isomorphism for unital categories. Since we are in characteristic zero latter statement is equivalent to showing that the map \ref{iota} on cyclic chains induces a quasi-isomorphism. Note that this map is induced from a map on Hochschild chains $$\begin{tikzcd}\label{qwert}
\iota: CH_*(A_\mathcal{C})\rightarrow CH_*(\mathcal{C})\end{tikzcd}$$ and by corollary 2.2.3 of \cite{Lo13} the previous statement is equivalent\footnote{To be precise corollary 2.2.3 is only valid for maps on Hochschild chains induced by algebra maps, which $\iota$ is not. However it is straightforward to verify that $\iota$ still satisfies the required functoriality properties to make the argument there work.} to showing that this map \ref{qwert} on Hochschild chains is a quasi-isomorphism. Indeed, this seems to be known to experts under the slogan that both composable and non-composable Hochschild chains of a category compute its Hochschild homology. One way to prove this should be theorem 1.2.13 of \cite{Lo13} applied to $A_\mathcal{C}$ and the separable algebra $E$ spanned as a k-vector space by the original units of $\mathcal{C}$. As this statement is only provided for actual algebras we give a short alternative proof of above slogan, basically using similar ideas: We first note that we have a decomposition of chain complexes
$$\big(CH_*(A_\mathcal{C}),d_{Hoch}\big)=(ker(\iota),\partial)\oplus (ker(\iota)^\bot,\partial),$$
where $ker(\iota)^\bot$ denotes composable Hochschild chains, that is linear combinations of tensors with matching boundary conditions. Since $\iota$ is surjective it suffices to show that $H_*(ker(\iota),\partial)=0$ to prove the statement. Thus let $x\in ker(\iota)$ s.t. $\partial x=0$. Denote by $V_{\lambda\mu}^k$ the subspace of Hochschild chains of $A_\mathcal{C}$ such that at the k-th position for the first time the boundary conditions do not match and that those are $\lambda\neq\mu\in Ob(\mathcal{C})$. As a k-vector spaces we have
$$ker(\iota)=\oplus_{k\in \mathbb{N}}\bigoplus_{\lambda\neq\mu}V_{\lambda\mu}^k.$$

Thus we deduce that also 
$
\partial x|_{V_{\lambda\mu}^k}=0.$
Let us denote by $ e_\mu\otimes^k:V_{\lambda\mu}^k\rightarrow V_{\lambda\mu}^k$ the map given by inserting the unit of $Hom_\mathcal{C}(\mu,\mu)$ into the k-th position. We then have also 
\begin{equation}\label{123}e_\mu\otimes^k\partial x|_{V_{\lambda\mu}^k}=0.
\end{equation}
Consider $$\Tilde{x}:=\sum_k\sum_{\lambda\neq\mu}e_\mu\otimes^k x|_{V_{\lambda\mu}^k}.$$ We then have 
$$\partial \Tilde{x}=\sum_k\sum_{\lambda\neq\mu}e_\mu\otimes^k\partial x|_{V_{\lambda\mu}^k}+x.$$
Indeed, the summands of the Hochschild differential\footnote{See eg equation 7 of \cite{She19} for the formula.} always give zero when a unit is involved, except for the $m^2$ term, which produces the second summand in above equality when applied to the inserted unit and the tensor factor to its right. Further the signs work out since $|e_\mu|=0.$ By equation \ref{123} we conclude
$$\partial \Tilde{x}=x,$$
which proves that $H_*(ker(\iota),\partial)=0$ and thus $$HH_*(A_\mathcal{C})=HH_*(\mathcal{C}).$$ \end{proof}
\subsection{for essentially finite cyclic $A_\infty$-categories}\label{slqtq}
In this section we explain that the LQT map connects the `free and classical observables' from the commutative and non-commutative world that we saw in the previous sections. Further, slight modifications of the LQT map also connect the `prequantum and quantum observables' from the previous sections.

To start, given $V_B$ a collection of cyclic chain complexes of degree d where $B$ is finite and denoting $\mathfrak{gl}_NV_B$ its associated commutator cyclic chain complex we have
\begin{df}
Denote by $$LQT^{fr}_N:\ \mathcal{F}^{fr}(V_B)\rightarrow \mathcal{O}^{fr}(\mathfrak{gl}_NV_B)$$ the  LQT map from definition \ref{LQT_d} applied to $\widetilde{(V_B)}$, the cyclic chain complex under the functor \ref{cyccomm}.
\end{df}
\begin{theorem}\label{LQT_f} Given $V_B$ a collection of cyclic chain complexes of degree d where $B$ is finite
    $$LQT^{fr}_N:\ \mathcal{F}^{fr}(V_B)\rightarrow \mathcal{O}^{fr}(\mathfrak{gl}_NV_B),$$ 
     is a map of (d-2) shifted Poisson dg algebras, functorial with respect to strict cyclic $A_\infty$-morphisms. 
\end{theorem}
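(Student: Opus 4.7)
The plan is to factor the proof along the definition of $LQT^{fr}_N$ as the composition of $\iota_{V_B}^*$ with the single-object LQT map applied to the cyclic $A_\infty$-algebra $A_{V_B} = \widetilde{V_B}$ produced by construction \ref{assgn1}. For each of the three structures (commutative product, differential, shifted Poisson bracket), I would verify that both factors intertwine it, and then compose. Compatibility with the product is immediate on both sides, since $\iota_{V_B}^*$ is multiplicative by construction on the free symmetric algebras, and the classical LQT map for an $A_\infty$-algebra is known to be an algebra map (see section 2.4 of \cite{GGHZ21}). Compatibility with the differential is precisely theorem \ref{LQT_c} applied to the $A_\infty$-category underlying $V_B$ (with $m_\mathcal{C} = 0$).

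The substantive content lies in the Poisson bracket compatibility. First I would show that $\iota_{V_B}^*$ is a map of $(d-2)$-shifted Poisson algebras from $\mathcal{F}^{fr}(V_B)$ to $\mathcal{F}^{fr}(A_{V_B})$, where the latter is built out of the pairing \eqref{sodef} on $A_{V_B}$. This is a direct combinatorial comparison of the bracket formula \eqref{bracket} on both sides: the key observation is that the pairing on $A_{V_B}$ is \emph{block-diagonal}, vanishing on non-matching pairs of Hom spaces and restricting to the original pairing on matching ones. Hence, contracting two letters in a cyclic word over $A_{V_B}$ yields a nonzero term exactly when the two letters come from composable Hom blocks, which is precisely the case singled out in the defining formula of $\{\_,\_\}$ on $Cyc^*_+(V_B)[-1]$. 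Finiteness of $B$ enters here to guarantee non-degeneracy of the pairing on $A_{V_B}$, without which $\mathcal{F}^{fr}(A_{V_B})$ would not even be defined in the above sense. Second, I would invoke the standard fact that the one-object LQT map
\[
LQT_N \colon \mathcal{F}^{fr}(A_{V_B}) \longrightarrow \mathcal{O}^{fr}(\mathfrak{gl}_N A_{V_B})
\]
is a map of $(d-2)$-shifted Poisson algebras, as proved e.g.\ around proposition 2.10 of \cite{GGHZ21}.

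Functoriality with respect to strict cyclic $A_\infty$-morphisms $F \colon V_B \to V_C$ follows by composing two functorialities: via construction \ref{func}, together with the observation that the block-diagonal pairing \eqref{sodef} is preserved by the induced map on $A_{V_B} \to A_{V_C}$, one obtains a strict cyclic $A_\infty$-morphism $\widetilde{F}$ of the associated $A_\infty$-algebras; the pull-back along $\widetilde{F}$ is compatible with $\iota^*$ by diagram \eqref{iota_func}, and the one-object LQT map is functorial in $A_\infty$-algebras by theorem \ref{LQT_c}. The main technical obstacle is the bracket check: one must track Koszul signs carefully in \eqref{bracket} and its counterpart on $\mathcal{F}^{fr}(A_{V_B})$, and confirm that cyclic words in $A_{V_B}$ of the form \emph{not} in the image of $\iota_{V_B}$ (i.e.\ with non-matching boundary conditions between successive letters) contribute nothing when paired against something in the image of $\iota_{V_B}^*$, so that the bracket identity on $\mathcal{F}^{fr}(V_B)$ is faithfully recovered.
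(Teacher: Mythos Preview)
Your proposal is correct and follows essentially the same route as the paper: factor $LQT^{fr}_N$ through the one-object case via $\iota^*$, verify that $\iota^*$ respects the shifted Poisson bracket by exploiting the block-diagonal form of the pairing \eqref{sodef} on $A_{V_B}$, and then invoke the known Poisson compatibility of the single-object LQT map from \cite{GGHZ21}. The paper packages the $\iota^*$ step as a forward reference to corollary~\ref{ibc} (resting on lemma~\ref{asgh}, which in fact proves the stronger involutive bi-Lie statement), and cites theorem~4.7 rather than proposition~2.10 of \cite{GGHZ21} for the one-object Poisson map; apart from these packaging and citation details the argument is the same.
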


\begin{proof}
 This follows directly from corollary \ref{ibc} further below and  thm. 4.7 of \cite{GGHZ21}, the previously studied LQT map for cyclic $A_\infty$-algebras.  
\end{proof}
\begin{cons}
We want to define a map, which we call the pre-quantum LQT map, as follows
\begin{equation}
\label{LQT_r}
LQT^{pq}_N:\ Sym\big({Cyc}^{*+d-4}(V_B)\big)\llbracket\gamma\rrbracket[3-d]\rightarrow C^*(\mathfrak{gl}_NV_B)\llbracket\hbar\rrbracket
\end{equation} 
$$
(a_{1_1}\cdots a_{n_1})\dots (a_{1_m}\cdots a_{n_m}) \nu_{\lambda_1}^{b_1}\dots \nu_{\lambda_r}^{b_r} \gamma^s\mapsto Tr\big(a_{1_1}(\_)\cdots a_{n_1}(\_)\big)\dots Tr\big(a_{1_m}(\_)\cdots a_{n_m}(\_)\big)\hbar^{m+b-1+2s}N^{b} 
$$
Here $\lambda_i\in B$ and $b=\sum^r_{i=1}b_i\in \mathbb{N}$, further we understand the right hand side as polynomials on $\mathfrak{gl}_NV_B[1]$, adjointed $\hbar$. 
\end{cons}
\begin{remark}Note that this assignment does not define a map of algebras: We have that $$LQT^{pq}_N(\nu^2)=\hbar N^2\ \  \text{but}\ \ LQT^{pq}_N(\nu)\cdot LQT^{pq}_N(\nu)= N^2.$$
However, we will see that it is a weighted map of BD algebras\footnote{to be precise it is a weighted BD map on $Sym^{>0}$, but we ignore this subtlety.} in the sense of definition \ref{weimap}.
\end{remark}
More formally, we may understand/define the $LQT^{pq}_N$ as the composition of various maps:

$$LQT^{pq}_N:=m^{pq}\circ f^{pq}\circ tr^{pq}\circ Sym(\iota_\mathcal{C}^*),$$
We introduce these in the following, but first state the central, though immediate result:
\begin{lemma}\label{asgh}
For $\mathcal{C}$ an essentially finite cyclic $A_\infty$-category the map
    $${\iota_\mathcal{C}}^*:(Cyc^*(Sk\mathcal{C}),d+m_{Sk\mathcal{C}})\rightarrow (Cyc^*(A_{Sk\mathcal{C}}),d+m_{A_{Sk\mathcal{C}}}),$$
    induced by \ref{iota} and where we send $\nu_\lambda\mapsto \nu$ for all $\lambda\in Ob(Sk\mathcal{C}),$
    is a quasi-isomorphism of involutive shifted Lie bialgebras.
\end{lemma}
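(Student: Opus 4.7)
The plan is to verify four things about $\iota_\mathcal{C}^*$: (a) it is a chain map, (b) it is compatible with the shifted Lie bracket $\{-,-\}$, (c) it is compatible with the co-Lie bracket $\nabla$, and (d) the underlying map of chain complexes is a quasi-isomorphism. Throughout, the guiding principle is that by definition \eqref{sodef} the pairing on $A_{Sk\mathcal{C}}$ is the direct sum of the pairings on the hom-spaces of $Sk\mathcal{C}$, and by construction \ref{obj} $m_{A_{Sk\mathcal{C}}}$ is designed so that $\iota_\mathcal{C}\circ m_{A_{Sk\mathcal{C}}}=m_{Sk\mathcal{C}}\circ\iota_\mathcal{C}$. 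So the entire argument amounts to translating these two compatibilities into statements about cyclic cochains.

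For (a), dualizing $\iota_\mathcal{C}\circ(d+m_{A_{Sk\mathcal{C}}})=(d+m_{Sk\mathcal{C}})\circ\iota_\mathcal{C}$ from construction \ref{obj} and noting that $\iota_\mathcal{C}$ restricts to cyclic chains (see the discussion around \eqref{iota}) gives that $\iota_\mathcal{C}^*$ intertwines the two cyclic cochain differentials. For (b) and (c), I would unwind the explicit formulas \eqref{bracket} and construction \ref{nabla} for $\{-,-\}$ and $\nabla$. Both depend on $\iota_\mathcal{C}^*$-words only through the inverse pairings $\langle a_i,a_j\rangle^{-1}$. The key observation is that $\iota_\mathcal{C}^*$ takes values precisely in the subspace of $Cyc^*(A_{Sk\mathcal{C}})$ consisting of cyclic words with matching boundary conditions, which is the only subspace on which these pairings are nonzero by \eqref{sodef}. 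Hence on this subspace the pairings on the two sides agree termwise, and the bracket/cobracket formulas transport verbatim. For the length-one case producing a central element, the choice $\nu_\lambda\mapsto\nu$ ensures $\{(a),(b)\}$ maps correctly: the bracket of two cyclic words of length one in $\mathrm{Hom}_{Sk\mathcal{C}}(\lambda,\lambda)^\vee$ lands in $\nu_\lambda$, whose image under $\iota_\mathcal{C}^*$ is $\nu$, matching the bracket computed directly in $Cyc^*(A_{Sk\mathcal{C}})$.

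For (d), the restriction of $\iota_\mathcal{C}^*$ to $Cyc^*_+$ is exactly the dual of the cyclic chain map induced by $\iota$ of \eqref{iota}. In the proof of Theorem \ref{LQT_conj} it was shown by an explicit contracting homotopy, using the units $e_\mu$ of $Sk\mathcal{C}$ and the kernel decomposition $CH_*(A_{Sk\mathcal{C}})=\ker(\iota)\oplus\ker(\iota)^\bot$, that $\iota$ is a quasi-isomorphism on Hochschild chains; passing to the $S^1$-invariants (or equivalently to cyclic chains, using that we are in characteristic zero) preserves this, and then dualizing proves that $\iota_\mathcal{C}^*$ is a quasi-isomorphism on the $Cyc^*_+$-parts. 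On the central part, the map $\prod_{\lambda\in Ob(Sk\mathcal{C})}\nu_\lambda k\to \nu k$ by $\nu_\lambda\mapsto\nu$ is a map of finite dimensional vector spaces (here is where essential finiteness is used) with trivial differentials on both sides.

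The main obstacle is the central piece in (d): for $|Ob(Sk\mathcal{C})|>1$ the map $\nu_\lambda\mapsto\nu$ is surjective but has nontrivial kernel, so strict quasi-isomorphism of underlying complexes requires interpreting "quasi-isomorphism of involutive Lie bialgebras" with care. I would handle this either by verifying that the induced map on homology of the full Chevalley--Eilenberg complex $\mathcal{F}^{pq}$ is an iso (where the central kernel can become exact after incorporating $\delta$ and $\nabla$), or by restricting attention to the bialgebra quotient in which all $\nu_\lambda$ are identified, which is the natural target needed for the downstream compatibility of the classical and quantum LQT diagrams in Section~\ref{slqtq}. Verifying this precise form of quasi-isomorphism is where the essential-finiteness hypothesis really earns its keep.
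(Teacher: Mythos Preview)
Your plan for (a)--(d) is essentially the paper's own argument, only spelled out in more detail: the paper simply says that the quasi-isomorphism on $Cyc^*_+$ was established in the proof of Theorem~\ref{LQT_conj}, and that compatibility with the bialgebra structure is immediate because $\iota_\mathcal{C}^*$ is the inclusion of composable cyclic words into all cyclic words, together with the definition~\eqref{sodef} of the pairing on $A_{Sk\mathcal{C}}$. Your unpacking of this via the explicit formulas~\eqref{bracket} and Construction~\ref{nabla} is exactly the content behind the paper's one-line claim.

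Where you go beyond the paper is your final paragraph, and you have put your finger on a genuine gap. The paper's proof does not address the central summand at all: the map $\prod_{\lambda\in Ob(Sk\mathcal{C})}\nu_\lambda k\to\nu k$ sending every $\nu_\lambda\mapsto\nu$ has a $(|Ob(Sk\mathcal{C})|-1)$-dimensional kernel with zero differential, so the underlying chain map on $Cyc^*$ is \emph{not} a quasi-isomorphism when $|Ob(Sk\mathcal{C})|>1$. The paper's statement, read literally, is therefore too strong; what the downstream Corollary~\ref{qc12} actually needs is a quasi-isomorphism at the level of the BD algebras $\mathcal{F}^{pq}$, and the spectral-sequence argument there tacitly assumes the conclusion of this lemma on the full $Cyc^*$. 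Your proposed resolution---either to check that the central kernel becomes acyclic once one incorporates $\nabla$ and $\gamma\delta$ in the full $\mathcal{F}^{pq}$-complex, or to reformulate the lemma so that only the structure needed for Corollaries~\ref{ibc} and~\ref{qc12} is claimed---is the right instinct, and is a point on which the paper is silent.
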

\begin{remark}Note that here it is essential that the cardinality of $ObSk\mathcal{C}$ is finite.
\end{remark}
\begin{proof}
We saw already in the proof of \ref{LQT_conj} that $\iota_\mathcal{C}^*$ is a quasi-isomorphism. Since $\iota_\mathcal{C}^*$ is just the inclusion of composable cyclic words into general cyclic words it follows immediately that it is a map of involutive shifted Lie bialgebras, recalling how we defined the cyclic structure on $A_{Sk\mathcal{C}}$, see definition \ref{sodef}.    
\end{proof}
\begin{corollary}\label{ibc}
   For $\mathcal{C}$ an essentially finite cyclic $A_\infty$-category we find a quasi-isomorphism of (d-2) shifted Poisson algebras
   $$\begin{tikzcd}
    \mathcal{F}^{cl}(\mathcal{C})\arrow{r}{\simeq}&\mathcal{F}^{cl}(Sk\mathcal{C})\arrow{r}{\iota^*}&\mathcal{F}^{cl}(A_{Sk\mathcal{C}}),\end{tikzcd}$$
   induced by the map from lemma \ref{asgh} and \ref{fff}.
\end{corollary}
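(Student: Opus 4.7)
The leftmost arrow is the quasi-isomorphism of shifted Poisson algebras from \eqref{fff}; indeed, the inclusion $Sk\mathcal{C}\hookrightarrow\mathcal{C}$ is a strict cyclic $A_\infty$-functor by the remark following definition \ref{essfin}, so lemma \ref{cov} applies. The plan is therefore to show that $\iota^*$ is also a quasi-isomorphism of shifted Poisson dg algebras.

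Lemma \ref{asgh} gives a quasi-isomorphism $\iota_\mathcal{C}^*\colon Cyc^*(Sk\mathcal{C})\to Cyc^*(A_{Sk\mathcal{C}})$ of involutive shifted Lie bialgebras. Restricting to the plus-parts (which are shifted Lie subalgebras, since the $\nu$-summands are central) yields a quasi-isomorphism of $(d-2)$-shifted dg Lie algebras. Multiplicatively extending this map gives
$$
\iota^*\colon \mathcal{F}^{fr}(V_{Ob Sk\mathcal{C}})\longrightarrow \mathcal{F}^{fr}(V_{A_{Sk\mathcal{C}}}),
$$
which is a map of shifted Poisson dg algebras since the brackets on both sides are the unique biderivation extensions of the Lie brackets. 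Moreover this extension is itself a quasi-isomorphism: in characteristic zero, $Sym$ preserves quasi-isomorphisms of chain complexes of $k$-vector spaces.

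By construction \ref{assgn1}, the cyclic structure on $A_{Sk\mathcal{C}}$ satisfies $\langle m_n(-,\ldots,-),-\rangle_{A_{Sk\mathcal{C}}}=\iota_\mathcal{C}^*\langle m_n(-,\ldots,-),-\rangle_{Sk\mathcal{C}}$, so $\iota^*(I_{Sk\mathcal{C}})=I_{A_{Sk\mathcal{C}}}$. Therefore $\iota^*$ descends to a map of Maurer-Cartan twists $\mathcal{F}^{cl}(Sk\mathcal{C})\to \mathcal{F}^{cl}(A_{Sk\mathcal{C}})$, which is a map of shifted Poisson dg algebras in the sense of definition \ref{F_cl}.

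The main obstacle is upgrading the untwisted quasi-isomorphism to one between the twisted complexes; the plan is a spectral-sequence argument analogous to the one invoked around \eqref{BDqi}. Filter both $\mathcal{F}^{fr}$'s by \emph{total cyclic-word length}, $F^p=\{\text{elements of total length }\geq p\}$. The internal differential $d$ preserves total length, while each summand $I_k=\langle m_k(-,\ldots,-),-\rangle$ ($k\geq 2$) of the hamiltonian is a cyclic word of length $k+1$, so by the bracket formula \eqref{bracket} the derivation $\{I_k,-\}$ strictly increases total length by $k-1\geq 1$. Since $\iota$ is length-preserving by construction \ref{obj}, so is $\iota^*$, which therefore respects the filtration. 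On the associated graded, the twisted differential reduces to just $d$, for which $\iota^*$ is already a quasi-isomorphism. The Eilenberg-Moore comparison theorem (theorem 5.5.11 of \cite{wei94}) applied to this complete exhaustive filtration then yields the desired quasi-isomorphism on twisted complexes; composing with \eqref{fff} completes the proof.
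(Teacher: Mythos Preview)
Your argument has a genuine gap at the spectral--sequence step, and the fix makes the whole detour unnecessary.

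The quasi-isomorphism supplied by lemma~\ref{asgh} (via the proof of theorem~\ref{LQT_conj}) is a quasi-isomorphism of $(Cyc^*_+,d+m)$, i.e.\ with the \emph{full} cyclic differential, not with the internal differential $d$ alone. Your step~2, however, concludes that the multiplicative extension $\iota^*\colon\mathcal{F}^{fr}(V_{Ob Sk\mathcal{C}})\to\mathcal{F}^{fr}(V_{A_{Sk\mathcal{C}}})$ is a quasi-isomorphism for the \emph{free} differential $d$, and your spectral-sequence argument then relies on this when you write ``on the associated graded, the twisted differential reduces to just $d$, for which $\iota^*$ is already a quasi-isomorphism.'' This is false in general: already on length-one cyclic cochains, $\iota^*$ is the inclusion of $\bigoplus_\lambda V_{\lambda\lambda}^\vee$ into $\bigoplus_{\lambda,\mu} V_{\lambda\mu}^\vee$, and the cokernel $\bigoplus_{\lambda\neq\mu}V_{\lambda\mu}^\vee$ is typically not acyclic for the internal differential alone. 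The contracting homotopy in the proof of theorem~\ref{LQT_conj} genuinely uses the units and $m_2$, i.e.\ the $m$-part of the differential.

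The correct argument is shorter and is why the paper states this as an immediate corollary. The twisted differential on $\mathcal{F}^{cl}$ is the derivation $d+\{I,-\}$; restricted to $Sym^1=Cyc^*_+[-1]$ this equals $d+m_\mathcal{C}$ (this is exactly the identification underlying lemma~\ref{Ham1}), and a derivation on $Sym$ is determined by its restriction to generators. Hence as a chain complex $\mathcal{F}^{cl}(\mathcal{C})=Sym\big((Cyc^*_+(\mathcal{C})[-1],\,d+m_\mathcal{C})\big)$, and $\iota^*$ on $\mathcal{F}^{cl}$ is literally $Sym$ applied to the quasi-isomorphism $\iota^*\colon(Cyc^*_+(Sk\mathcal{C}),d+m)\to(Cyc^*_+(A_{Sk\mathcal{C}}),d+m)$ of lemma~\ref{asgh}. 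Since $Sym$ preserves quasi-isomorphisms in characteristic zero, you are done; no filtration or spectral sequence is needed. Your verification that $\iota^*$ respects the Poisson bracket and sends $I_{Sk\mathcal{C}}$ to $I_{A_{Sk\mathcal{C}}}$ is correct and is all that remains to check.
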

\begin{df}\label{vfnh}
Given a quantization $I^q$ of an essentially finite cyclic $A_\infty$-category $\mathcal{C}$ denote by $(A_{Sk\mathcal{C}},I^q_f)$ the quantization of the cyclic $A_\infty$-algebra under \ref{cyccomm}, determined by $I^q_f:=\iota^*I^q,$ where $\iota^*$ is the map from corollary \ref{ibc}.
\end{df}
\begin{corollary}\label{qc12}
  For $\mathcal{C}$ a quantization of an essentially finite cyclic $A_\infty$-category we have a quasi-isomorphism of (d-2) twisted BD algebras
   $$\begin{tikzcd}
    \mathcal{F}^{q}(\mathcal{C})\arrow{r}{\simeq}&\mathcal{F}^{q}(Sk\mathcal{C})\arrow{r}{\iota^*}&\mathcal{F}^{q}(A_{Sk\mathcal{C}}),\end{tikzcd}$$
   induced by the map from lemma \ref{asgh} and the \ref{BDqi}.
\end{corollary}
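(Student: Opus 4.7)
The plan is twofold: first verify that the quasi-isomorphism $\iota^*$ of Lemma \ref{asgh} lifts to a well-defined map of $(d-2)$-twisted BD algebras $\mathcal{F}^q(Sk\mathcal{C}) \to \mathcal{F}^q(A_{Sk\mathcal{C}})$, and then upgrade it to a quasi-isomorphism. Composed with the already-known quasi-isomorphism \eqref{BDqi}, this yields the assertion.

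For the lift, recall that $\mathcal{F}^{pq}(V_B)$ was built out of the involutive shifted Lie bialgebra structure on $Cyc^*(V_B)$ by taking symmetric powers, adjoining $\gamma$, and equipping them with the differential $d+\nabla+\gamma\delta$ together with the Leibniz-extended bracket. Lemma \ref{asgh} provides a map of involutive shifted Lie bialgebras at the level of cyclic cochains (the map $\nu_\lambda\mapsto \nu$ being well-defined precisely because $Ob(Sk\mathcal{C})$ is finite). Extending as a map of commutative algebras $\gamma$-linearly therefore yields a map of untwisted BD algebras $\mathcal{F}^{pq}(Sk\mathcal{C})\to \mathcal{F}^{pq}(A_{Sk\mathcal{C}})$: the bracket is respected by Leibniz, while compatibility with $\nabla$ and the Chevalley--Eilenberg differential $\delta$ follows from $\iota_\mathcal{C}^*$ intertwining both co-bracket and bracket. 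By Definition~\ref{vfnh}, the chosen Maurer--Cartan elements satisfy $I^q_f = \iota^* I^q$, so twisting both sides produces a map $\iota^*\colon \mathcal{F}^q(Sk\mathcal{C})\to \mathcal{F}^q(A_{Sk\mathcal{C}})$ of $(d-2)$-twisted BD algebras.

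To show this map is a quasi-isomorphism, I would mimic the strategy already used in the paragraph following \eqref{BDqi}. Equip both sides with the complete and exhaustive filtration described in example 6.3 of \cite{GGHZ21}, for which $\iota^*$ is manifestly a filtered map; on the $E_1$-page the differentials implementing the BD structure, the $\nabla$-term, and the Maurer--Cartan twist are all killed, leaving only the symmetric-algebra (and $\gamma$-adic) extension of the underlying map on $Cyc^*$. By Lemma~\ref{asgh} that underlying map is a quasi-isomorphism, hence so is its extension to symmetric powers (K\"unneth) adjoined $\gamma$. The Eilenberg--Moore comparison theorem (Theorem 5.5.11 of \cite{wei94}) then promotes this to a quasi-isomorphism on the total complexes.

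The main obstacle is technical: one must check that the filtration of example 6.3 of \cite{GGHZ21}, formulated for the cyclic $A_\infty$-algebra setting, transports cleanly to the present category framework with the central extension by the $\nu_\lambda$'s intact, and that the induced filtered map has the claimed $E_1$-page. Once this bookkeeping is in place everything is parallel to the argument establishing \eqref{BDqi}; there are no additional conceptual ingredients beyond Lemma~\ref{asgh} and the spectral-sequence comparison.
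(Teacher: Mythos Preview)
Your proposal is correct and follows essentially the same approach as the paper: establish that $\iota^*$ is a map of twisted BD algebras via the involutive Lie bialgebra compatibility of Lemma~\ref{asgh} together with the Maurer--Cartan compatibility of Definition~\ref{vfnh}, and then deduce the quasi-isomorphism by the filtration spectral-sequence argument (example~6.3 of \cite{GGHZ21}) and the Eilenberg--Moore comparison theorem. The paper's proof is terser but structurally identical.
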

\begin{proof}
Lemma \ref{ibc} implies that we have a map of twisted BD algebras. It remains to argue why this map is a quasi-isomorphism. This follows by considering the spectral sequence associated to the filtration by powers of $\gamma$, $\nu$ and length of symmetric words, which is complete and exhaustive (see example 6.3 of \cite{GGHZ21}). Note that here it is essential that we consider formal power series in $\gamma$. The map from this corollary induces an isomorphism on the first page of the spectral sequence, eg by lemma \ref{asgh}. By the Eilenberg-Moore comparison theorem, eg theorem 5.5.11 of \cite{wei94}, the result follows.   
\end{proof}
Let us introduce the other maps in defining the prequantum LQT map, which are just slight variants of the ones in \cite{GGHZ21}, taking care of the shifts needed for the $\mathbb{Z}$-grading we included:
\begin{itemize}
\item
We recall the trace map from \cite{GGHZ21} section 4.2, inducing for every cyclic chain complex $V$ a map of chain complexes
$$ (Cyc^*(V),d)\rightarrow (Cyc^*(M_NV),d),$$
where we further sent $\nu$ to $N\nu$. By suitably shifting and extending $\gamma$-linearly this further induces a map denoted
\begin{equation}\label{tracemap}
tr^{pq}:\ Sym\big(Cyc^{*+d-4}(V)\big)\llbracket\gamma\rrbracket[3-d]\rightarrow Sym\big(Cyc^{*+d-4}(M_NV)\big)\llbracket\gamma\rrbracket[3-d].\end{equation} 
   \item 
We recall the map from our remark \ref{f} which allows us to define \begin{equation*}
\big(Cyc^*(M_NV)[-1],d\big)\rightarrow \big(C^*(\mathfrak{gl}_NV),d\big),\end{equation*}
by further sending $\nu\mapsto 1$.  Shifting both sides by $(d-3)$, applying the symmetric algebra functor, extending $\gamma$-linearly and then shifting by $(3-d)$ this induces a map of chain complexes
$$f^{pq}:\ (Sym\big(Cyc^{*+d-4}(M_NV)\big)\llbracket\gamma\rrbracket[3-d],d)\rightarrow \big(Sym\big(C^*(\mathfrak{gl}_NV)[d-3]\big)\llbracket\gamma\rrbracket[3-d],d\big).$$
 \item  
 
There is also a map of degree zero
$$ \big(Sym\big(C^{*}(\mathfrak{gl}_NV)[d-3])\llbracket\gamma\rrbracket,d\big)\rightarrow \big(C^*(\mathfrak{gl}_NV)[d-3]\llbracket\hbar\rrbracket,d\big).
$$ First it sends $\gamma$ to $\hbar^2$, it applies multiplication, ie. sends a symmetric word in n letters of the algebra $C^*(\mathfrak{gl}_NV)[d-3]$ whose multiplication has degree $(d-3)$ to the product of these words, but multiplied by $\hbar^{n-1}$. Note that this map thus has degree 0. Again shifting by $(3-d)$ induces the map, denoted
\end{itemize}
$$m^{pq}: \big(Sym\big(C^*(\mathfrak{gl}_NV)[d-3])\big)\llbracket\gamma\rrbracket[3-d],d\big)\rightarrow \big(C^*(\mathfrak{gl}_NV)[d-3]\llbracket\hbar\rrbracket[3-d],d\big)
\cong \big(C^*(\mathfrak{gl}_NV)\llbracket\hbar\rrbracket,d\big).$$

These maps preserves the underlying algebraic structure as follows:
\begin{lemma}\label{tr}
For every cyclic chain complex $V$ of degree d
$$m^{pq}\circ f^{pq}\circ tr^{pq}:\mathcal{F}^{pq}(V)\rightarrow \mathcal{O}^{pq}(\mathfrak{gl}_NV)$$ is a weighted map of (d-2) twisted BD algebras and which is functorial with respect to strict cyclic morphisms. 
\end{lemma}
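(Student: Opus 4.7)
The plan is to verify the three requirements of Definition \ref{weimap} for the composition $\Phi := m^{pq}\circ f^{pq}\circ tr^{pq}$ together with the chain map condition, all with the substitution $g(\gamma)=\hbar^{2}$ (so the weight is $t=2$). Functoriality in strict cyclic morphisms follows componentwise from the functoriality of each constituent: $tr^{pq}$ is natural in $V$ because $tr$ on $M_N$ is functorial; $f^{pq}$ is natural by the observations after \eqref{f}; $m^{pq}$ is universal.

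I begin with the weighted multiplicativity condition \eqref{mult}. Each of $tr^{pq}$ and $f^{pq}$ is obtained by applying the $Sym$ functor to an underlying cochain map (and then extending $\gamma$-linearly), so strictly preserves the symmetric product. The map $m^{pq}$ sends a symmetric tensor of length $n$ to the ordinary product of its letters weighted by $\hbar^{n-1}$. For pure symmetric tensors $a_i$ of symmetric lengths $n_i$ and $n:=\sum_i n_i$, the identity $n-1=(k-1)+\sum_i(n_i-1)$ immediately gives $\Phi(a_1\cdots a_k)=\hbar^{k-1}\Phi(a_1)\cdots\Phi(a_k)$; $k\llbracket\gamma\rrbracket$-linearity extends this to arbitrary elements.

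Next, the chain map property $\Phi\circ(d+\nabla+\gamma\delta)=(d+\hbar\Delta)\circ\Phi$. The internal differential $d$ commutes with each constituent. For the remaining operators, the strategy is induction on symmetric length: using weighted multiplicativity together with the BD identity $\Delta(ab)=\Delta(a)b\pm a\Delta(b)\pm\{a,b\}$ on the target, we reduce to the case of a single symmetric letter, i.e., a single cyclic cochain. There $\gamma\delta$ vanishes while $\nabla$ splits one cyclic word into two pieces via the inverse pairing $\langle\_,\_\rangle^{-1}$. On the target, $\hbar\Delta$ applied to a single trace $tr(a_1\cdots a_m)$ produces the analogous internal contractions using the $\mathfrak{gl}_N$-pairing $\langle M_1u_1,M_2u_2\rangle=tr(M_1M_2)\langle u_1,u_2\rangle$ from Remark \ref{cycM}. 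The two agree after accounting for the combinatorics of cyclic positions: the substitution $\nu\mapsto 1$ in $f^{pq}$ absorbs the scalar-valued contractions of a trace with itself, which on the target carry a factor of $tr(\mathrm{id})=N$, precisely balancing the $\nu\mapsto N\nu$ rescaling built into $tr^{pq}$. The $\hbar$-power bookkeeping matches because one application of $\gamma\delta$ contributes $\hbar^2$ from $\gamma$ and reduces the symmetric length by one, so $m^{pq}$ afterwards contributes $\hbar^{n-2}$ rather than $\hbar^{n-1}$, leaving the net $\hbar$ that matches $\hbar\Delta$.

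The bracket preservation $\Phi(\{\_,\_\})=\{\Phi(\_),\Phi(\_)\}$ is the classical LQT statement adapted from Theorem \ref{LQT_f} (equivalently Theorem 4.7 of \cite{GGHZ21}): by the Leibniz rule it reduces to the fact that $tr^{pq}$ and $f^{pq}$ intertwine the cyclic bracket \eqref{bracket} on $Cyc^*(V)$ with the Lie bracket on $C^*(\mathfrak{gl}_NV)$, after which $m^{pq}$ preserves the bracket because it is $\hbar$-linear. The main obstacle is the sign and weight bookkeeping in the chain map check, specifically matching the three sources of contractions produced by $\nabla+\gamma\delta$ (splitting a single trace, bracketing two distinct traces, and the $\nu$-valued self-contractions) against the contractions of $\hbar\Delta$. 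This matching is precisely the content of the quantum LQT theorem of \cite{GGHZ21}, and the present lemma essentially recasts it as a weighted map of twisted BD algebras.
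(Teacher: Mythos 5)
Your proposal is correct and follows essentially the same route as the paper: the paper's proof simply cites Lemma 4.6 and Proposition 4.5 of \cite{GGHZ21} (the trace map is a map of twisted BD algebras; $m^{pq}\circ f^{pq}$ is a $2$-weighted map), noting the extra shifts change nothing, and your more explicit bookkeeping of the weight $t=2$, the $\hbar$-power counting, and the $N$ versus $\nu\mapsto N\nu$ cancellation is exactly the content of those cited results, to which you also ultimately defer for the contraction-matching.
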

\begin{proof}
    This follows in exactly the same way as in \cite{GGHZ21}, noting that the shifts made do not change the argument in any way. More precisely the trace map is a map of (d-2) twisted BD algebras by lemma 4.6 of loc.cit. The composition of the other two maps are a weighted map of (d-2) twisted BD algebras by prop. 4.5 of loc.cit. To be precise it is a 2-weighted map, recalling the definition \ref{weimap} of a weighted map of BD algebras and examining the definition of the multiplication map above.
\end{proof}
\begin{df}
For $V_B$ a collection of cyclic chain complexes of degree d and $B$ finite denote 
$$LQT^{pq}_N:=m^{pq}\circ f^{pq}\circ tr^{pq}\circ Sym\iota^*:\ \mathcal{F}^{pq}(V_B)\rightarrow \mathcal{O}^{pq}(\mathfrak{gl}_NV_B).$$

\end{df}
It is a simple check to verify that this coincides with the explicit definition \ref{LQT_r}.
\begin{theorem}\label{LQT_pq}
Given $V_B$ a collection of cyclic chain complexes of degree d and $B$ finite the map
    $$LQT^{pq}_N:\ \mathcal{F}^{pq}(V_B)\rightarrow \mathcal{O}^{pq}(\mathfrak{gl}_NV_B),$$ 
    is a weighted map of (d-2) twisted BD algebras, functorial with respect to strict cyclic $A_\infty$-morphisms of essentially finite collections of cyclic cochain complexes.
\end{theorem}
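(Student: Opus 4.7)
The plan is to factor the map as in its definition,
$$LQT^{pq}_N \;=\; m^{pq}\circ f^{pq}\circ tr^{pq}\circ Sym(\iota^*),$$
and verify that each factor has the structure-preservation property required, then invoke the fact (see the remark following Definition \ref{weimap}) that the composition of a $1$-weighted and a $t$-weighted map of BD algebras is again $t$-weighted.

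First I would check that $Sym(\iota^*):\mathcal{F}^{pq}(V_B)\to\mathcal{F}^{pq}(A_{V_B})$ is an (unweighted, i.e.\ $1$-weighted) map of $(d-2)$-twisted BD algebras. Since $V_B$ is a collection of cyclic chain complexes with $B$ finite, it is an essentially finite cyclic $A_\infty$-category (with trivial $m$) and Lemma \ref{asgh} applies: the map $\iota^*$, with the convention $\nu_\lambda\mapsto\nu$, is a quasi-isomorphism of involutive shifted Lie bialgebras. Extending symmetrically (which intertwines the Chevalley--Eilenberg differentials $\delta$) and $\gamma$-linearly, and recalling how $\nabla$ is extended as a derivation through $Sym$, we get compatibility with the shifted Poisson bracket $\{\_,\_\}$, with the total differential $d+\nabla+\gamma\delta$, and with the multiplication; hence $Sym(\iota^*)$ is a map of $(d-2)$-twisted BD algebras.

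Second, Lemma \ref{tr} applied to the single cyclic chain complex $A_{V_B}$ furnishes the remaining composite $m^{pq}\circ f^{pq}\circ tr^{pq}:\mathcal{F}^{pq}(A_{V_B})\to \mathcal{O}^{pq}(\mathfrak{gl}_N A_{V_B})$ as a $2$-weighted map of $(d-2)$-twisted BD algebras. Composing with the $1$-weighted $Sym(\iota^*)$ yields a $2$-weighted map of $(d-2)$-twisted BD algebras, proving the main claim. Functoriality with respect to strict cyclic $A_\infty$-morphisms of collections of cyclic chain complexes then follows factor by factor: $Sym(\iota^*)$ is functorial by the construction of $\iota_\mathcal{C}$ in Section \ref{CLS} (see diagram \eqref{iota_func}) together with the assignment $\nu_\lambda\mapsto\nu$ being obviously natural, while the composite $m^{pq}\circ f^{pq}\circ tr^{pq}$ is functorial by the functoriality statement in Lemma \ref{tr}.

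The only delicate point is the first step: making sure $\iota^*$, with the identification $\nu_\lambda\mapsto\nu$, is genuinely compatible with the co-bracket $\nabla$ on length-two words where $\nabla$ produces $\nu_\lambda\otimes\nu_\mu$ terms (Construction \ref{nabla}). Here finiteness of $B$ is essential in two ways — it ensures the pairing \eqref{sodef} on $A_{V_B}$ is non-degenerate, and it keeps the sums $\sum_{\lambda\in B}\nu$ that arise from this collapse finite — but the compatibility itself is already packaged in the assertion of Lemma \ref{asgh} that $\iota^*$ is a map of involutive shifted Lie bialgebras, so no further combinatorial check is needed.
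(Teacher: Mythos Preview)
Your proposal is correct and follows essentially the same strategy as the paper: factor $LQT^{pq}_N$ as $Sym(\iota^*)$ followed by the one-object prequantum LQT map, use Lemma~\ref{asgh} to see that $Sym(\iota^*)$ is a ($1$-weighted) map of $(d-2)$-twisted BD algebras, and invoke Lemma~\ref{tr} for the remaining composite as a $2$-weighted map. The paper's proof is a one-line citation of Corollary~\ref{qc12} and Lemma~\ref{tr}; your version unpacks this slightly, correctly identifying Lemma~\ref{asgh} (which underlies Corollary~\ref{qc12}) as the relevant input for the untwisted prequantum statement and adding the observation about why finiteness of $B$ matters for the $\nu_\lambda\mapsto\nu$ collapse.
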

\begin{proof}
Indeed this follows directly from corollary \ref{qc12} and lemma \ref{tr}. 
\end{proof}
Furthermore given $B$ finite and a collection of cyclic chain complexes $V_B$  the prequantum and free LQT map fit with the dequantization maps together in following commutative diagram
\begin{equation}\label{pqdfr}
\begin{tikzcd}
\mathcal{F}^{pq}(V_B)\arrow{r}{ LQT^{pq}_N}\arrow{d}{p}& \mathcal{O}^{pq}(\mathfrak{gl}_NV_B)\arrow{d}{p}\\
\mathcal{F}^{fr}(V_B)\arrow{r}{ LQT^{fr}_N}& \mathcal{O}^{fr}(\mathfrak{gl}_NV_B), \end{tikzcd}\end{equation}
which is a straightforward check.
As a corollary of theorems \ref{LQT_f} and \ref{LQT_pq} we have following two theorems:

\begin{theorem}\label{LQT_cl}
    Given an essentially finite cyclic $A_\infty$-category we denote by $\mathfrak{gl}_NA_{Sk\mathcal{C}}$ the associated cyclic $L_\infty$-algebra. The LQT map is a map of (d-2)-shifted Poisson algebras
    $$LQT^{cl}: F^{cl}(\mathcal{C})\rightarrow \mathcal{O}^{cl}(\mathfrak{gl}_NA_{Sk\mathcal{C}}).$$
    Further this is functorial with respect to  strict cyclic $A_\infty$-morphisms of essentially finite  $A_\infty$-categories. That is, it defines a natural transformation between the functor
    $$ (\text{e.f cyc}
    _dA_\infty cat)_{st}\rightarrow (\mathcal{D}Poiss_{(d-2)})^{op},$$
     given by
    \ref{nattr1} and the composite functor  $$ (\text{e.f cyc}
    _dA_\infty cat)_{st}\rightarrow(cyc_dA_\infty Alg)_{st}[W^{-1}]\rightarrow(cyc_dA_\infty Alg)_{st}[W^{-1}]\rightarrow\cdots$$
    $$\cdots\rightarrow(cyc_dL_\infty Alg)_{st}[V^{-1}] \rightarrow (\mathcal{D}Poiss_{(d-2)})^{op},$$
    determined by
    \ref{cyccomm}, the assignment \ref{cycM}\footnote{The fact that the functor defined by this assignment descends to the localized versions follows from Morita invariance of cyclic cohomology, ie thm 2.10 of \cite{GGHZ21}.} and \ref{cAL}\footnote{See there for the definition of V.} and \ref{nattr2}. These functors and the natural transformation are in fact independent up to isomorphism from the choice of a skeleton.
\end{theorem}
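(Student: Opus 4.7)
The plan is to construct $LQT^{cl}$ as the composition
$$\mathcal{F}^{cl}(\mathcal{C}) \xrightarrow{\simeq} \mathcal{F}^{cl}(Sk\mathcal{C}) \xrightarrow{\iota^*} \mathcal{F}^{cl}(A_{Sk\mathcal{C}}) \longrightarrow \mathcal{O}^{cl}(\mathfrak{gl}_N A_{Sk\mathcal{C}}),$$
where the first two arrows assemble the quasi-isomorphism of $(d-2)$-shifted Poisson dg algebras produced by Corollary \ref{ibc}, and the last arrow is the classical LQT map for the cyclic $A_\infty$-algebra $A_{Sk\mathcal{C}}$ from Construction \ref{assgn1} (i.e.\ Theorem 4.7 of \cite{GGHZ21}). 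The essential point making this a map of shifted Poisson algebras on the nose is that each arrow intertwines the relevant Maurer-Cartan elements: $\iota^*$ sends $I_{Sk\mathcal{C}}$ to $I_{A_{Sk\mathcal{C}}}$ by the definition of the cyclic structure \eqref{sodef} and since $\iota$ intertwines the bar-coderivations, and the algebra-level LQT sends $I_{A_{Sk\mathcal{C}}}$ to the cyclic $L_\infty$-hamiltonian of $\mathfrak{gl}_N A_{Sk\mathcal{C}}$ by \cite{GGHZ21}. Twisting by compatible MCEs is then a functorial operation on shifted Poisson dg algebras, so the composite is well defined in $(\mathcal{D}Poiss_{(d-2)})^{op}$.

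Functoriality I would handle piecewise. Given a strict cyclic $A_\infty$-functor $F\colon\mathcal{C}\to\mathcal{D}$ between essentially finite categories, I would insert the intermediate roof $Sk\mathcal{C}\to\widebar{\mathcal{D}}\leftarrow Sk\mathcal{D}$ from \eqref{wbd} and invoke three naturality results in sequence: the square coming from naturality of \eqref{fff} under Lemma \ref{nattr1}; the square coming from functoriality of Construction \ref{assgn1} promoted to $(cyc_dA_\infty Alg)_{st}[W^{-1}]$ as in Lemma \ref{iknm}; and the square coming from naturality of the algebra-level LQT map with respect to strict cyclic $A_\infty$-morphisms, again from Theorem 4.7 of \cite{GGHZ21}. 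Post-composing with \eqref{cAL} and Lemma \ref{nattr2} identifies the resulting target functor with the composite announced in the theorem. Skeleton independence is then formal: applying $LQT^{cl}$ vertex-by-vertex to the zig-zag \eqref{undf} and to its algebra-side analogue built from \eqref{ssvr} yields a commuting diagram of quasi-isomorphisms of $(d-2)$-shifted Poisson algebras comparing the two natural transformations obtained from different choices of skeleton.

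The main obstacle I foresee sits in the middle square of the functoriality argument: a strict cyclic $A_\infty$-functor $F$ does not directly induce a morphism $A_{Sk\mathcal{C}}\to A_{Sk\mathcal{D}}$ of cyclic $A_\infty$-algebras, and one is forced to pass through the auxiliary $\widebar{\mathcal{D}}$, so that naturality holds only after inverting the class $W$. Carefully verifying that the resulting roofs compose correctly under LQT -- and in particular that the twists by the Maurer-Cartan elements survive this roof construction in a way compatible with the shifted Poisson brackets on both sides -- is the substantive book-keeping step. Once it is in place, the remaining components of the theorem reduce to the functoriality statements already recorded earlier in the paper.
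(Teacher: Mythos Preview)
Your proposal is correct and follows essentially the same route as the paper. The paper packages your first two arrows plus the algebra-level LQT into the single map $LQT^{fr}_N$ of Theorem \ref{LQT_f} (whose proof already invokes Corollary \ref{ibc} and \cite{GGHZ21}, Theorem 4.7), then obtains $LQT^{cl}$ by twisting $LQT^{fr}_N$ by the compatible Maurer-Cartan elements, exactly as you propose; functoriality and skeleton independence are likewise reduced to the $\iota$-naturality \eqref{iota_func}, the one-object LQT functoriality, and the zig-zags \eqref{undf} and \eqref{ssvr}, matching your piecewise argument.
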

Because of this independence we sometimes omit the notation for the choice of skeleton, which is what we also did for the doamin of the $LQT^{cl}$ map.
\begin{proof}
    An essentially finite cyclic $A_\infty$-category has underlying  collection of cyclic chain complexes over a finite set $B$. Thus we have the free LQT map $$LQT^{fr}_N:\ \mathcal{F}^{fr}(V_B)\rightarrow \mathcal{O}^{fr}(\mathfrak{gl}_NV_B).$$ The cyclic $A_\infty$-category structure induces a Maurer-Cartan element in $\mathcal{F}^{fr}(V_B)$, further the commutator cyclic $L_\infty$-algebra structure induces a Maurer-Cartan element in $\mathcal{O}^{fr}(\mathfrak{gl}_NV_B)$ and $LQT^{fr}$ maps the prior one to the latter one. Twisting the $LQT^{fr}$ map by these compatible MCE is again a map of (d-2)-shifted Poisson algebras by general properties of the twisting procedure. When forgetting the shifted Poisson structures this map reduces to the standard LQT map applied to the underlying $A_\infty$-category, by construction.

    The functoriality statement follows by functoriality of the one object LQT map and functoriality of the map induced by \ref{iota}.

    The independence up to isomorphism for the choice of a skeleton follows for the first functor from \ref{undf}. The independence up to isomorphism for the choice of a skeleton for the second functor is induced from \ref{cyccomm}.
\end{proof}
\begin{theorem}\label{LQT_q}
   i) A quantization of an essentially finite cyclic $A_\infty$-category induces a quantization of the associated cyclic $L_\infty$-algebra with values in $N\times N$-matrices, for each $N\in\mathbb{N}$. There is a weighted map of (d-2) twisted BD algebras
    $$LQT^{q}: \mathcal{F}^{q}(\mathcal{C})\rightarrow \mathcal{O}^{q}(\mathfrak{gl}_NA_{Sk\mathcal{C}}).$$
  ii)  Further this is functorial with respect to strict cyclic $A_\infty$-morphisms of quantum essentially finite  $A_\infty$-categories in the sense that it defines a natural transformation between the functor
    $$ (\text{q.e.f. cyc}_dA_\infty cat)_{st}\rightarrow (\mathcal{D}\text{BD}^{(d-2)tw})^{op},$$ 
     given by
    \ref{F_q} and the composite functor  $$ (\text{q.e.f cyc}
    _dA_\infty cat)_{st}\rightarrow(q.cyc_dA_\infty Alg)_{st}[U^{-1}]\rightarrow(q.cyc_dA_\infty Alg)_{st}[U^{-1}]\rightarrow\cdots$$
$$\cdots\rightarrow (q. cyc_dL_\infty Alg)_{st}[P^{-1}] \rightarrow (\mathcal{D}BD^{(d-2)tw})^{op}$$
     determined by \ref{qcyccomm}, by \ref{cycM}\footnote{The fact that this functor descend to quantized version follows from the trace map of lemma \ref{tr}. Further that this functor descends to the localized version follows from Morita invariance of cyclic cohomology, thm. 2.10 of \cite{GGHZ21} and the filtration spectral sequence, see proof of part iii) of  theorem \ref{LQT_q}.} and by  \ref{qcAL}\footnote{See there for the definition of P.} and \ref{O_q}. These functors and the natural transformation are in fact independent from the choice of a skeleton, up to isomorphism.

  iii)    We have that 
    
\begin{equation}\label{lNq}
\mathcal{F}^{q}(\mathcal{C})\simeq \varprojlim_{N\to\infty}\mathcal{O}^{q}(\mathfrak{gl}_NA_{Sk\mathcal{C}})    .\end{equation}
\end{theorem}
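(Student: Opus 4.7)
The plan is parallel to Theorem \ref{LQT_cl}: assemble $LQT^q_N$ by twisting the prequantum LQT map (Theorem \ref{LQT_pq}) by a compatible pair of Maurer--Cartan elements, then derive functoriality from the naturality already established for the untwisted pieces, and finally reduce the large-$N$ claim, via a filtration spectral sequence, to the classical large-$N$ statement Theorem \ref{LQT_conj}.

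For part (i), first I would transport the quantization $I^q$ of $\mathcal{C}$ to $I^q_f := \iota^*I^q \in \mathcal{F}^{pq}(A_{Sk\mathcal{C}})$ via the quasi-isomorphism of $(d-2)$-twisted BD algebras from Corollary \ref{qc12} (this is Definition \ref{vfnh}). Because $LQT^{pq}_N$ is a map of dg shifted Lie algebras (sending $\gamma \mapsto \hbar^2$), the image $J^q_N := LQT^{pq}_N(I^q_f)$ is a Maurer--Cartan element in $\mathcal{O}^{pq}(\mathfrak{gl}_N A_{Sk\mathcal{C}})$. The commuting square \eqref{pqdfr} of dequantization maps, combined with Theorem \ref{LQT_cl}, shows that $p(J^q_N)$ coincides with the classical hamiltonian of the commutator cyclic $L_\infty$-algebra $\mathfrak{gl}_N A_{Sk\mathcal{C}}$, so $J^q_N$ is a bona fide quantization. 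Twisting the weighted BD map $LQT^{pq}_N$ by the compatible pair $(I^q_f, J^q_N)$ produces $LQT^q_N$; naturality of the twisting procedure preserves the weighted BD algebra structure.

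For part (ii), each ingredient is already functorial: $\iota^*$ by Lemma \ref{iknm} and the discussion around \eqref{qcyccomm}, $LQT^{pq}_N$ by Theorem \ref{LQT_pq}, and twisting by its naturality. Given a strict cyclic morphism $F\colon(\mathcal{C},I^q_\mathcal{C}) \to (\mathcal{D},I^q_\mathcal{D})$ of quantum essentially finite categories, the defining relation $F^* I^q_\mathcal{D} = I^q_\mathcal{C}$ together with naturality of $LQT^{pq}_N$ forces the matrix-level MC elements to be intertwined as well, so twisting commutes with pullback. Independence from the choice of skeleton and compatible quantization then follows from the zig-zags \eqref{undfq} and \eqref{ssvr}.

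Part (iii) is where I expect the real work. First I would establish the prequantum analogue $\mathcal{F}^{pq}(V_B) \simeq \varprojlim_N \mathcal{O}^{pq}(\mathfrak{gl}_N V_B)$ for $V_B := V_{Ob(Sk\mathcal{C})}$ by filtering both sides by the combined degree in powers of $\gamma$, $\nu$ and symmetric-word length, as in Example 6.3 of \cite{GGHZ21} and in the proof of Corollary \ref{qc12}. This filtration is complete and exhaustive (we work with formal power series), and on the $E_0$ page the extra differentials $\nabla$ and $\gamma\delta$ drop out, leaving only the internal differential together with the symmetric extension of the free LQT map at each weight. Theorem \ref{LQT_conj} applied to the unital $A_\infty$-algebra $A_{Sk\mathcal{C}}$ (unital because $Sk\mathcal{C}$ is) then yields a quasi-isomorphism on each graded piece after passing to the large-$N$ limit, and the Eilenberg--Moore comparison theorem (Theorem 5.5.11 of \cite{wei94}) upgrades this to the prequantum large-$N$ equivalence. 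Finally I twist by the compatible pair $(I^q_f, J^q_N)$; since twisting by compatible MC elements sends filtered quasi-isomorphisms to filtered quasi-isomorphisms, \eqref{lNq} follows by re-running Eilenberg--Moore on the twisted filtration. The subtlety to watch for is compatibility of the inverse limit with the spectral sequence, which should be handled bidegree-wise because at each fixed $\gamma$-weight only finitely many symmetric-word lengths contribute to a given cohomological degree.
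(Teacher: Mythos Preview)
Your proposal is correct and follows essentially the same approach as the paper: parts (i) and (ii) match the paper's argument almost verbatim (push forward the MC element along the weighted BD map $LQT^{pq}_N$, verify via diagram \eqref{pqdfr} that the image is a quantization, then twist), and for part (iii) both you and the paper reduce the large-$N$ statement to Theorem \ref{LQT_conj} via the complete exhaustive filtration of Corollary \ref{qc12} together with the $\hbar$-filtration on the target, invoking the Eilenberg--Moore comparison theorem. The only cosmetic difference is that you organize (iii) as ``prequantum large-$N$ first, then twist,'' whereas the paper runs the spectral sequence directly on the twisted complexes; since twisting by compatible MC elements is filtered, these are the same argument.
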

Because of the independence from the choice of skeleton we sometimes omit the notation for the choice of such, which is what we also did for the doamin of the $LQT^{q}$ map.
\begin{proof}
A quantization of an essentially finite cyclic $A_\infty$-category has underlying collection of cyclic chain complexes over a finite set $B$. Thus we have the prequantum LQT map $$LQT^{pq}_N:\ \mathcal{F}^{pq}(V_B)\rightarrow \mathcal{O}^{pq}(\mathfrak{gl}_NV_B).$$
The datum of the quantization induces a Maurer-Cartan element in $\mathcal{F}^{pq}(V_B)$. Since the prequantum LQT map is a weighted map of BD algebras it sends a Maurer-Cartan element to a Maurer Cartan element. By diagram \ref{pqdfr} it follows that the so determined Maurer-Cartan element is a quantization of the cyclic commutator $L_\infty$-algebra. Twisting the $LQT^{pq}$ map by these compatible MCE is again a weighted map of (d-2) twisted BD algebras, which we denote by $LQT^q$. 

The independence up to isomorphism for the choice of a skeleton follows for the first functor from \ref{undfq}. The independence up to isomorphism for the choice of a skeleton for the second functor is induced from \ref{qcyccomm}.

The large $N$-statement follows again by considering the compete and exhaustive filtration from the proof of corollary \ref{qc12} and the one given by powers of $\hbar$. The LQT maps at each level $N$, which are filtered, see diagram 7.1 of \cite{GGHZ21}, induce the map \ref{lNq}. On the first page of the spectral sequence associated to the filtrations this map induces a quasi-isomorphism, by \ref{LQT_conj}. By the Eilenberg-Moore comparison theorem (theorem 5.5.11 of \cite{wei94}) the claim follows.
\end{proof}

\section{Various Graph Complexes}
 \subsection{Ribbon Graph Complexes}\label{rbgs}
We begin by introducing stable ribbon graphs, originally introduced in \cite{Kon92a}, their simpler cousins ribbon graphs and ribbon trees and by giving names to some of their features. 
Afterwards we equip complexes built from these graphs with shifted Poisson and Beilinson-Drinfeld structures, induced by gluing leafs. 
Lastly we identify certain Maurer-Cartan elements for these structures.
\begin{df}
A \emph{stable ribbon graph} $\Gamma$ is given by the data of
\begin{itemize}
\item a finite set $H$, called the set of half edges,
\item $V(\Gamma)$ a partition  of $H$, called the set of vertices. We denote the cardinality of a vertex $v$ by $|v|$ and the total number of vertices by $v(\Gamma)$.
\item an involution $\sigma_1$ acting on $H$ whose fixed points are called leafs, the set of those denoted $L(\Gamma)$ and whose 2-cycles are called internal edges, the set of those denoted $E(\Gamma)$.
We further denote the total number of internal edges by $e(\Gamma)$ and the total number of leafs by $l(\Gamma)$.
\item for each vertex $v$, a partition $C(v)$ of its half edges. We denote the  cardinality of such a partition at a vertex $v$ by $c(v)$ and by $c(\Gamma)$ the sum of those over all vertices.
\item a cyclic ordering for the half edges in each such partition,
\item a function $(g,b):V(\Gamma)\rightarrow \mathbb{N}_{\geq 0}\times \mathbb{N}_{\geq 0}$ called the genus and boundary defect
such that for any vertex $v$ if $g(v)=b(v)=0$ and $c(v)=1$ we have $|v|\geq 3$.
\end{itemize}

\end{df}
\begin{df}\label{faces}
Given a stable ribbon graph $\Gamma$ denote $\sigma_0: H\rightarrow H$ the permutation which defines the cyclic ordering of the elements in the partitions $C(v)$ at each vertex $v$.
Furthermore recall the permutation $\sigma_1: H \rightarrow H$, whose cycles are the interior edges respectively leaves.
We define $$\sigma_b:=\sigma_0^{-1}\circ\sigma_1.$$ The cycles of $\sigma_b$ are called the \emph{faces} of $\Gamma$, we denote the number of faces of a graph by $f(\Gamma)$.
    \end{df}
 Following remark 4.9 of \cite{Ha07} we make   
    \begin{df}\label{armg}
 The \emph{arithemtic genus} of a stable ribbon graph is defined by 
 $$g(\Gamma)=1-v(\Gamma)+\frac{1}{2}(e(\Gamma)+c(\Gamma)-f(\Gamma))+\sum_{v\in V(\Gamma)}g_v$$
    \end{df}  
We call the number of leafs in a given cycle defining a face the number of open boundaries in that face.

        A \emph{ribbon graph} is a stable ribbon graph such that $g(v)=b(v)=0$ for all vertices and thus $c(v)$=1 for all vertices.
        That is, we  recover graphs, possibly with leaves, together with a cyclic ordering on the half edges at each vertex. 
   
    \begin{remark}
        Note that for ribbon graphs we have the standard $2-2g(\Gamma)=
        v(\Gamma)-e(\Gamma)+f(\Gamma)).$
    \end{remark}
    \begin{ex}
  The ribbon graph given by the figure $8$ has 4 half edges, 1 vertex,  and 3 faces.     
    \end{ex}
    
        A \emph{ribbon tree} is a ribbon graph that has $f(\Gamma)=1$ and $g(\Gamma)=0$.
    
    \begin{df}\label{fBd}
Note that the half edges belonging to a given face of a stable ribbon graph $\Gamma$ which are leafs inherit a cyclic ordering.
We call a tuple of consecutive leafs in that cyclic ordering $(l_{i},l_{i+1})$ a \emph{free boundary} of that face.
Denote by $\partial(\Gamma)$ the set of free boundaries of a given stable ribbon graph. 
    \end{df}
\begin{ex}
    The ribbon tree given by the figure $+$ has four free boundaries.
\end{ex}
\begin{df}\label{Dec}
Let $B$ be a set, which we call the set of boundary conditions.
 A $B$-colored stable ribbon graph is a stable ribbon graph $\Gamma$ together with a map $c:\partial(\Gamma)\rightarrow B$. Furthermore we actually ask for a map $(g,b):V(H)\rightarrow \mathbb{N}_{\geq 0}\times \mathbb{N}_{\geq 0}^B$, that is a boundary defect $b_\lambda(v)\in\mathbb{N}_{\geq 0}$ for each element $\lambda$ of $B$ at each vertex $v$.
\end{df}  

        Denote the set of faces with zero open boundaries by $F_0(\Gamma)$ and its cardinality by $f_0(\Gamma).$ We call 
        $$f_{tot}(\Gamma):=f_0+(\Gamma)\sum_{\lambda\in B,v\in V(\Gamma)}b_\lambda(v)$$
        the total number of empty faces.

    An orientation of a (B-clolored) stable ribbon graph is an ordering of its internal edges.
   
\begin{remark}
    In the literature this may be better known as a twisted orientation.
    An orientation traditionally refers to an ordering of the edges and vertices.
    Since we only consider the latter we stick to the simpler name of orientation and hope that it does not cause confusion.
\end{remark}

    Two oriented B-colored stable ribbon graphs are called isomorphic if there is a bijection between their set of half edges that respects all other structures.

    The notions of a $B$-coloring, orientation and isomorphism extend to ribbon graphs and ribbon trees.

We define various chain complexes using the previous definitions. For that we introduce the following rule - slightly generalizing definition 4.3 of \cite{Ha07} to B-colored stable ribbon graphs, which we will use to define a differential.
\begin{df}[Contracting an Edge] \label{cont}
Let $\Gamma$ be an oriented $B$-colored stable ribbon graph and let $e\in E(\Gamma)$ be an edge.
We define the oriented B-colored stable ribbon graph $\Gamma/e$ to be the graph obtained by contracting this edge according to the following rules:
\begin{enumerate}
\item \label{item_contractedge}
Suppose that $e$ is not a loop, that is it joins distinct vertices $v_1,v_2\in V(\Gamma)$.
The halfedges of these vertices are partitioned into cycles, so the endpoints of $e$ lie in distinct cycles $c_1\subset v_1$ and $c_2\subset v_2$.
By contracting the edge $e$, the vertices $v_1$ and $v_2$ become joined, and the cycles $c_1$ and $c_2$ coalesce to form a new cycle with a naturally defined cyclic ordering.
The genus and boundary defects for the vertices $v_1$ and $v_2$ are added to give the defects for the new vertex created by joining $v_1$ and $v_2$.
The orientation is defined in an obvious way.
All the other combinatorial structures elsewhere on the graph are left alone.

Note that when both $c_1$ and $c_2$ each consist of a single half-edge, they define a face with one free boundary on it, which by definition is colored by an element of $B$.
In this case $c_1$ and $c_2$ do not coalesce when contracting the relevant edge, but instead are deleted.
The boundary defect at the new vertex is defined to be the sum of the boundary defects of $v_1$ and $v_2$ plus one, with respect to the element by which the face was colored.
If, furthermore, $c_1$ and $c_2$ are the only cycles of $v_1$ and $v_2$, then the edge $e$ cannot be contracted.

\item \label{item_contractloop1}
Now suppose that $e$ is a loop, in which case both its endpoints lie in a single vertex $v$.
Suppose furthermore, that they join distinct cycles $c_1,c_2\subset v$.
By contracting $e$, these cycles coalesce to form a single cycle as before.
In so doing the genus defect of $v$ increases by one.
No other combinatorial structures are changed.

As before, care must be taken when both $c_1$ and $c_2$ consist of a single half-edge.
Again these half-edges together define a face with one free boundary on it, which is by definition colored by an element of $B$.
In this case $c_1$ and $c_2$ are annihilated and \emph{both} the genus \emph{and} the boundary defect corresponding to that element are increased by one.

\item \label{item_contractloop2}
Finally, suppose that $e$ is again a loop, but that now both of its endpoints lie in the same cycle $c$ contained in some vertex $v$.
By contracting this loop the cycle $c$ splits up into two cycles $c_1$ and $c_2$, with naturally defined cyclic orderings.
All the other combinatorial structures remain unchanged.

Again, care must be taken with this definition when the endpoints of $e$ lie next to each other in the cyclic ordering- in this case they again define at least one face with one free boundary on it, which is colored by an element of $B$.
In this case, the cycle $c$ does not split up, but the boundary defect corresponding to that element is increased by one.
Furthermore, if the cycle $c$ consists of just the two half-edges of $e$ (and thus actually defines two faces, colored by an element each), then the cycle $c$ is annihilated and the boundary defect actually increases by one for each of these elements.
Finally, if in the lattermost situation the vertex has no other cycles than $c$, then the loop $e$ actually cannot be contracted at all.
\end{enumerate}
\end{df}
\begin{remark}\label{totfac}
    Note that the property being a ribbon tree is preserved under this differential, whereas being a ribbon graph is not.
    The differential preserves the arithmetic genus of a stable ribbon graph.
    It further preserves the 'total number of empty faces' $f_{tot}(\Gamma)$ of a stable ribbon graph, in sense of definition \ref{faces}.
\end{remark}

\begin{df}
Denote by $(s\mathcal{RG}_{c,\bullet}^B,\partial)$ the complex whose underlying vector space is freely generated by
isomorphism classes of connected B-colored oriented stable ribbon graphs, modulo the relation that reversing the orientation on a stable ribbon graph is equivalent to multiplying by $(-1)$.
This complex is graded by the number of internal edges.
The differential $\partial$ is given by summing over all possible contractions of the edges:
\[ \partial(\Gamma):=\sum_{e\in E(\Gamma)} \Gamma/e.\]
Note that some edges cannot be contracted, in which case the corresponding term in the sum is defined to be zero.

  Denote by $(\mathcal{RG}_{c,\bullet}^B,\partial)$ the complex given by the subspace (which is not subcomplex) of connected ribbon graphs, which uniquely becomes a chain complex by requiring that the projection map
  $$s\mathcal{RG}_{c,\bullet}^B\rightarrow \mathcal{RG}_{c,\bullet}^B$$
  is a map of chain complexes.

    Denote by $(\mathcal{RT}_{c,\bullet}^B,\partial)$ the subcomplex of $s\mathcal{RG}_{c,\bullet}^B$ given by ribbon trees.
\end{df}
We could have equivalently defined $\mathcal{RT}_{c,\bullet}^B$ as the subcomplex of $\mathcal{RG}_{c,\bullet}^B$
given by ribbon trees, which can be verified directly.
That is, following diagram of chain complexes commutes
   \begin{equation}\label{inc_proj}
   \begin{tikzcd}
(\mathcal{RT}_{c,\bullet}^B,\partial)\arrow[bend right=25]{rr}\arrow{r}&(\mathcal{RG}_{c,\bullet}^B,\partial)&
(s\mathcal{RG}_{c,\bullet}^B,\partial)\arrow{l}.
    \end{tikzcd}
    \end{equation}
Let us now further fix an odd integer $d$ and $\gamma$, a formal variable of degree $6-2d$.
By taking the dual of the previous complexes we arrive at the definitions we will mainly work with:
 \begin{df}\label{sRG}
The \emph{d-shifted stable ribbon graph complex colored by $B$} is defined to be the cochain complex 
$$s\mathcal{RG}^d_B\llbracket\gamma\rrbracket:=\Big(Sym\big( {s\mathcal{RG}^{B}_{c,\bullet}}^\vee[2d-6]\big)\llbracket \gamma\rrbracket[3-d],d\Big).$$ 
Its differential $d$ is given by the one induced from $(s\mathcal{RG}_{c,\bullet}^B,\partial)$. 
 \end{df}   
 \begin{df} The \emph{d-shifted ribbon graph complex colored by $B$} is defined to be the cochain complex
 $$\mathcal{RG}^d_B\llbracket\gamma\rrbracket:=\Big(Sym\big( {\mathcal{RG}^{B}_{c,\bullet}}^\vee[2d-6]\big)\llbracket \gamma\rrbracket[3-d],d\Big),$$
 its differential $d$ induced from  $(\mathcal{RG}^{B}_{c,\bullet},\partial)$
 \end{df}
 \begin{df}\label{rtc} The \emph{d-shifted ribbon tree complex colored by $B$} is defined to be the cochain complex 
 $$\mathcal{RT}^d_B:=\Big(Sym\big( {\mathcal{RT}^{B}_{c,\bullet}}^\vee[d-3]\big),d\Big),$$
 its differential induced from  $(\mathcal{RT}^{B}_{c,\bullet},\partial)$. 
 \end{df}
 \begin{remark}
   We refer to an element of eg $s\mathcal{RG}^d_B\llbracket\gamma\rrbracket$ as a not necessarily connected stable ribbon graph.
   Indeed, note that above three graph complexes have a basis given by symmetric powers of $B$-colored connected stable ribbon graphs, ribbon graphs, respectively ribbon trees.
   In this basis the respective differential $d$ expands edges in all possible ways,
   thereby possibly also changing the other properties of the graph, ie joining and splitting cycles and changing boundary and genus defect in the case of stable ribbon graphs.
 \end{remark}
 In the following section we will make extensive use of these bases:
 \subsubsection{Shifted Poisson Structure}
 We equip the graph complexes from the previous section with shifted Poisson structures:
\begin{cons}
    Define 
    $$\{\_,\_\}:{s\mathcal{RG}^{B}_{c,\bullet}}^\vee\otimes {s\mathcal{RG}^{B}_{c,\bullet}}^\vee\rightarrow {s\mathcal{RG}^{B}_{c,\bullet}}^\vee$$
    by 
    $$\{\Gamma_1,\Gamma_2\}=\sum_{ (*)}{\Gamma_1\cup_{l_1,l_2} \Gamma_2}.$$
    \begin{itemize}
        \item 
    Here $(*)$ indicates that we are summing over all isomorphism classes of stable ribbon graphs, suggestively denoted ${\Gamma_1\cup_{l_1,l_2} \Gamma_2}$, whose (unoriented) underlying stable ribbon graph is obtained by gluing $\Gamma_1$ and $\Gamma_2$ together along one of their respective leafs, thus creating a new internal edge.

  \item The orientation of $\Gamma_1\cup_{l_1,l_2} \Gamma_2$ is defined by taking first the given ordering of internal edges of $\Gamma_1$, then the newly created internal edge, and then the given ordering of internal edges of $\Gamma_2$.
    \end{itemize}
\end{cons} 
Note that
  $\Gamma_1\cup_{l_1,l_2} \Gamma_2$ has $f(\Gamma_1) +f(\Gamma_2)-1$ faces.  

\begin{lemma}
Given $B$ a set 
$\big((s\mathcal{RG}^{B}_{c,\bullet})^\vee,d,\{\_,\_\}\big) $
is a 1-shifted dg Lie algebra.
\end{lemma}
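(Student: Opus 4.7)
The plan is to verify the three defining properties of a $1$-shifted dg Lie algebra on $(s\mathcal{RG}^B_{c,\bullet})^\vee$: correct degree of the bracket, graded antisymmetry, graded Jacobi, and the Leibniz compatibility between the bracket and the contraction differential.

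For degree and antisymmetry, I would first observe that each summand of $\{\Gamma_1, \Gamma_2\}$ has $e(\Gamma_1)+e(\Gamma_2)+1$ internal edges, so the bracket is a map of degree $+1$ with respect to the edge-number grading on the dual, as required. Graded antisymmetry then follows from a direct sign count: the graph underlying $\Gamma_2\cup_{l_2,l_1}\Gamma_1$ agrees with that of $\Gamma_1\cup_{l_1,l_2}\Gamma_2$, but its orientation lists the edges of $\Gamma_2$, then the new edge, then the edges of $\Gamma_1$, so differs from the natural orientation of $\Gamma_1\cup_{l_1,l_2}\Gamma_2$ by the signed permutation moving the $\Gamma_1$-block past the new edge and the $\Gamma_2$-block.

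For the Jacobi identity, I would expand $\{\{\Gamma_1,\Gamma_2\},\Gamma_3\}$ as a double sum, indexed by a choice of $(l_1,l_2)$ with $l_i$ a leaf of $\Gamma_i$ (for $i=1,2$) and then a choice of $(l,l_3)$ with $l$ a leaf of $\Gamma_1\cup_{l_1,l_2}\Gamma_2$ and $l_3$ a leaf of $\Gamma_3$. Splitting according to whether $l$ was originally a leaf of $\Gamma_1$ or of $\Gamma_2$ yields exactly the combinatorial types of diagrams that appear in the two other cyclic permutations of the iterated bracket, and the three cyclic sums then cancel termwise after carefully comparing the orientations, which is a standard argument in the spirit of \cite{Ha07}, Proposition~2.8 and \cite{Che10}, Definition--Lemma~2.

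The main step, and the one I expect to be the principal obstacle, is the Leibniz rule $d\{\Gamma_1,\Gamma_2\}=\{d\Gamma_1,\Gamma_2\}+(-1)^{|\Gamma_1|}\{\Gamma_1,d\Gamma_2\}$. Expanding the left-hand side via the definition of $d$ produces three families: contractions of edges inherited from $\Gamma_1$, contractions of edges inherited from $\Gamma_2$, and contractions of the new gluing edge. Because edge contractions at vertices of $\Gamma_i$ commute with gluing at a leaf not incident to the contracted edge, the first two families biject with the summands of $\{d\Gamma_1,\Gamma_2\}$ and $\pm\{\Gamma_1,d\Gamma_2\}$, with the sign $(-1)^{e(\Gamma_1)}$ for the second family arising from the position of the new edge in the ordering. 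The delicate point is to show that the sum of the remaining terms, where one contracts the new gluing edge itself, vanishes. I would analyze this via the special cases of Definition~\ref{cont}: in the generic situation the gluing edge is not a loop and joins distinct cycles, so contracting it produces a wedge-like merging of $\Gamma_1$ and $\Gamma_2$ at $v_1$ and $v_2$; I expect that the contribution summed over all leaf pairs $(l_1,l_2)$ pairs up either with the analogous sum obtained by swapping the roles of the two sides (giving cancellation via the antisymmetry sign established above) or is killed by the ``single-half-edge deletion'' and ``cannot be contracted'' clauses of Definition~\ref{cont}. Tracking these signs carefully is where the real work lies and is the analogue, in the ribbon-graph setting, of the proof in \cite{Ha07}, Proposition~2.8.
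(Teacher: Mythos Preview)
Your treatment of the degree, the graded antisymmetry, and the Jacobi identity is fine and matches the paper's (very terse) argument. The genuine problem is in your Leibniz analysis: you have the differential on $(s\mathcal{RG}^B_{c,\bullet})^\vee$ backwards. On the original complex $s\mathcal{RG}^B_{c,\bullet}$ the differential $\partial$ contracts edges, so on the \emph{dual} the induced differential $d$ is its adjoint, which in the dual basis sends a graph $\Gamma$ to the sum of all $\Gamma'$ admitting an edge $e'$ with $\Gamma'/e'\cong\Gamma$; that is, $d$ \emph{expands} edges (the paper states this explicitly in the remark following Definition~\ref{rtc}). In particular $d$ has degree $+1$, matching the degree of the bracket.

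With the correct description of $d$, the extra term you are worried about simply does not exist. In $d\{\Gamma_1,\Gamma_2\}$ one looks at all $\Gamma'$ that contract along some edge $e'$ to $\Gamma_1\cup_{l_1,l_2}\Gamma_2$. The gluing edge is a bridge in the latter, and since contracting $e'$ neither creates nor destroys bridges disjoint from $e'$, its preimage $\tilde e$ is still a bridge in $\Gamma'$ and is distinct from $e'$. Cutting $\tilde e$ therefore splits $\Gamma'$ into $\Gamma_1'\sqcup\Gamma_2$ (or $\Gamma_1\sqcup\Gamma_2'$) with $e'$ lying in exactly one factor, giving a clean bijection with the summands of $\{d\Gamma_1,\Gamma_2\}+(-1)^{|\Gamma_1|}\{\Gamma_1,d\Gamma_2\}$. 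There is no residual ``contract the new edge'' family to cancel, and your proposed cancellation mechanism (pairing via antisymmetry or invoking the special clauses of Definition~\ref{cont}) would not have worked had such a family been present.
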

\begin{proof}
$\{\_,\_\}$ is of degree 1 since we are creating a new internal edge.
The graded anti-symmetry follows from the fact that we quotient out by the relation that reversing the orientation is equivalent to multiplying by $(-1)$ and the definition of the bracket.
The shifted Jacobi-identity, ie the fact that the bracket is a derivation with respect to the bracket can be verified easily.
Furthermore it is straightforward to see that the bracket and the differential together define a shifted dg Lie algebra.    
\end{proof}
\begin{remark}\label{nütz}
    In the same way we can define a shifted Lie bracket on ribbon graphs and ribbon trees such that the maps induced by diagram \ref{inc_proj} respect these brackets.
\end{remark}
  By first shifting the Lie bracket and then extending it according to the Leibniz rule to the symmetric algebra we obtain
  a bracket $\{\_,\_\}$ on $\mathcal{RT}^d_{B}$. Then, considering the previous lemma, it follows by standard arguments that: 
\begin{theorem}\label{RS}
For every set $B$ the ribbon tree graph complex from definition \ref{rtc} 
$$\big(\mathcal{RT}^d_{B},-d,\{\_,\_\},\cdot\big)$$
 is a $(d-2)$-shifted Poisson dg algebra. 
 \end{theorem}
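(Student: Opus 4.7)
The plan is to derive this from the earlier lemma asserting that $\big((s\mathcal{RG}^B_{c,\bullet})^\vee, d, \{\_,\_\}\big)$ is a $1$-shifted dg Lie algebra, together with the standard functor from shifted dg Lie algebras to shifted Poisson dg algebras via the free graded symmetric algebra. Concretely, the structure on $\mathcal{RT}^d_B$ will be extracted by first restricting both $d$ and $\{\_,\_\}$ to the subcomplex of ribbon trees, then shifting by $[d-3]$, and finally extending the bracket as a biderivation and the differential as a derivation along the commutative product of $Sym$.

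The first point to check is that the subspace generated by duals of ribbon trees is preserved under both $d$ and $\{\_,\_\}$. For the differential, contracting any edge of a ribbon tree yields a ribbon tree: connectedness is preserved, loop contractions are not available (a tree has no loops), and the face count remains $f=1$, so by the Euler relation the arithmetic genus stays $0$. For the bracket, gluing ribbon trees $\Gamma_1$ and $\Gamma_2$ along a pair of leaves produces a connected graph with $v_1+v_2$ vertices, $e_1+e_2+1$ edges, and $f_1+f_2-1=1$ faces; the Euler relation $v-e+f = 2-2g$ then forces $g=0$, so the result is again a ribbon tree. Thus the restriction $\big((\mathcal{RT}^B_{c,\bullet})^\vee, d, \{\_,\_\}\big)$ is itself a $1$-shifted dg Lie algebra, in line with Remark \ref{nütz}.

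Next, shifting a $1$-shifted bracket by $[d-3]$ produces a bracket of degree $(d-2)$, so $(\mathcal{RT}^B_{c,\bullet})^\vee[d-3]$ is a $(d-2)$-shifted dg Lie algebra. Applying the standard construction that turns an $r$-shifted dg Lie algebra $(\mathfrak{g}, d_\mathfrak{g}, [\_,\_])$ into the $r$-shifted Poisson dg algebra $\big(Sym(\mathfrak{g}), d_\mathfrak{g}, [\_,\_], \cdot\big)$, by Leibniz-extending both the bracket and the differential, yields the desired $(d-2)$-shifted Poisson dg algebra structure on $\mathcal{RT}^d_B$. The multiplication and the bracket are then automatically compatible via the Leibniz rule, and the graded Jacobi identity propagates from the generators.

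The only nontrivial point to confirm is that the differential remains a derivation of the bracket after extension. The potential subtlety is that $d$ may contract precisely the new internal edge created by a gluing $\Gamma_1 \cup_{l_1,l_2}\Gamma_2$; one must check that such contributions cancel against the orientation-sign contributions that arise when $d$ acts inside either factor, so that $d\{\Gamma_1,\Gamma_2\} = \{d\Gamma_1,\Gamma_2\} \pm \{\Gamma_1, d\Gamma_2\}$. This verification is exactly the one already needed for the lemma on $(s\mathcal{RG}^B_{c,\bullet})^\vee$ and restricts cleanly to ribbon trees, so no new combinatorial input is required. I expect this sign/edge-contraction bookkeeping to be the most delicate part of the argument, but entirely routine given the conventions set up in Definition \ref{cont}.
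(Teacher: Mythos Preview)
Your approach matches the paper's: restrict the $1$-shifted dg Lie structure to ribbon trees (this is Remark~\ref{nütz}), shift by $[d-3]$, and then extend freely to $Sym$ via the Leibniz rule---exactly what the paper means by ``considering the previous lemma, it follows by standard arguments.''

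There is one directional slip worth fixing. To show that the span of dual ribbon trees inside $(s\mathcal{RG}^B_{c,\bullet})^\vee$ is closed under $d$, you need the statement \emph{dual} to the one you checked: in the dual basis $d\Gamma^\vee=\sum_{\Gamma':\Gamma'/e=\Gamma}(\Gamma')^\vee$, so closure means that any stable ribbon graph contracting to a ribbon tree is itself a ribbon tree, not that contracting an edge of a tree yields a tree. This is still true (only rule~(\ref{item_contractedge}) of Definition~\ref{cont} can land in a graph with all defects zero and $c(v)=1$ everywhere), and the paper does not spell it out either, but your stated justification points the wrong way. The same confusion reappears in your final paragraph: on the cochain side $d$ \emph{expands} edges, so there is no ``contract the new edge'' term in $d\{\Gamma_1,\Gamma_2\}$; the derivation identity holds simply because expanding a vertex of $\Gamma_1\cup_{l_1,l_2}\Gamma_2$ is the same as expanding a vertex of $\Gamma_1$ or of $\Gamma_2$, with the orientation convention producing the correct sign.
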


 By abuse of notation we will denote this $(d-2)$-shifted Poisson dg algebra by $\mathcal{RT}^d_{B}$.

Define
 $$D:=\sum_{n>2}\sum_{(\#)}{D_{n,C}},$$
where $(\#)$ indicates that we are summing over all isomorphism classes of B-colored ribbon trees with one vertex, no internal edges and n leaves.
We denote  by ${D_{n,C}}$ a such determined isomorphism class. 
 \begin{theorem}\label{RS_mce}
 The corresponding element
$$D \in \mathcal{RT}^d_B$$
is a Maurer-Cartan element.
\end{theorem}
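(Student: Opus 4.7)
The plan is to verify the Maurer--Cartan equation $-dD + \tfrac12\{D,D\} = 0$ directly, accounting for the sign $-d$ in the differential from theorem \ref{RS}. First I would observe that $D$ lies in the generating subspace ${\mathcal{RT}^{B}_{c,\bullet}}^\vee[d-3]$ of $\mathcal{RT}^d_B$, being a sum of duals of single connected ribbon trees. Both $d$ and $\{-,-\}$ preserve this subspace: the leaf-gluing bracket of two connected graphs is a single connected graph with one new internal edge, and the cochain differential (dual to edge contraction) applied to a generator expands a single vertex, producing a sum of generators. Hence $dD$ and $\{D,D\}$ both reduce to linear combinations of two-vertex, one-edge connected $B$-colored ribbon trees, and I can verify the equation by pairing against the basis dual to such trees.

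Next I would compute $\tfrac12\{D,D\}$ explicitly: each bracket $\{D_{m,C_1}, D_{n,C_2}\}$ is, up to sign, the sum over all pairs (one leaf from each graph) glued together to form a new internal edge. The result is a two-vertex ribbon tree with one internal edge and $m+n-2$ leaves. Every two-vertex, one-edge $B$-colored ribbon tree arises in exactly one such gluing by un-gluing along its unique internal edge, and the stability condition $|v|\geq 3$ from the definition of ribbon trees guarantees that both un-glued pieces have at least three leaves and therefore lie in $D$. The factor $\tfrac12$ accounts for the ambiguity in ordering the un-gluing. So $\tfrac12\{D,D\}$ is, up to overall sign, exactly the sum over all isomorphism classes of two-vertex, one-edge $B$-colored ribbon trees.

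Third I would compute $dD$: by duality, $\langle dD, \Gamma\rangle = \langle D, \partial\Gamma\rangle$ is nonzero only when $\Gamma$ has exactly one internal edge whose contraction lies in $D$, which forces $\Gamma$ to be a two-vertex one-edge ribbon tree. Each such $\Gamma$ contributes exactly once to $dD$, so $dD$ is, up to sign, the same sum as $\tfrac12\{D,D\}$.

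The main obstacle will be matching the signs. A two-vertex one-edge tree carries essentially a trivial orientation (only one internal edge to order), so the sign comparison reduces to tracking the Koszul signs from the shift $[d-3]$, the bracket convention placing the newly created edge between the (empty) edge orderings of the two glued one-vertex pieces, the sign converting chain contraction to its cochain dual, and the $-d$ appearing in theorem \ref{RS}. A direct unwinding of these conventions should produce a term-by-term cancellation between $dD$ and $\tfrac12\{D,D\}$, establishing the Maurer--Cartan equation.
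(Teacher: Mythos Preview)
Your direct verification is correct in outline and would work, but it is not the route the paper takes. The paper defers this computation: it observes (remark~\ref{prak_1}) that $D = p(S)$ where $p: s\mathcal{RG}^d_B\llbracket\gamma\rrbracket \to \mathcal{RT}^d_B$ is the dequantization map, which is in particular a map of dg shifted Lie algebras, and then invokes theorem~\ref{sRG_mce} to conclude that $S$ is a Maurer--Cartan element, whence so is its image $D$. The proof of theorem~\ref{sRG_mce} in turn reduces to theorem~\ref{sG_mce} via the map $\pi$, and only there, for the element $G$ in the stable graph complex, does the paper carry out a direct verification of the Maurer--Cartan equation. That direct verification (matching edge-expansion against self-bracket by regrouping over isomorphism classes of one-edge graphs) is structurally identical to what you propose.

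What each approach buys: your argument is self-contained and avoids introducing the stable graph and stable ribbon graph machinery just to establish this one fact; it is the natural thing to do if one only cares about $\mathcal{RT}^d_B$. The paper's approach is more economical in context, since it needs all four Maurer--Cartan elements $T, D, G, S$ anyway and the maps $\pi$ and $p$ relating them; a single direct computation (for $G$) then propagates to the other three. Your sign-matching step is genuine but no harder here than in the paper's proof of theorem~\ref{sG_mce}; the one-edge trees carry the trivial orientation, so the comparison is between the Koszul sign of the shifted bracket and the sign in dualising $\partial$, exactly as you say.
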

\begin{proof}
This will follow from remark \ref{prak_1} and theorem \ref{sRG_mce}.
Indeed, we wil see that $D$ is the image of a Maurer-Cartan element under a map of dg shifted Lie algebras.
\end{proof}

    We denote by ${\mathcal{RT}^{d,tw}_{B}}$ the $(d-2)$-shifted Poisson dg algebra
    obtained by twisting the $(d-2)$-shifted Poisson dg algebra ${RT}^d_{B}$ by the MCE $D$.

\subsubsection{Beilinson Drinfeld Structure}
\begin{cons}
   Again making use of our basis, define a map 
   $$\nabla:{s\mathcal{RG}^{B}_{c,\bullet}}^\vee\rightarrow {s\mathcal{RG}^{B}_{c,\bullet}}^\vee$$
by 
$$\nabla\Gamma=\sum_{ (\#)}\circ_{l_1,l_2}\Gamma.$$
    \begin{itemize}
        \item 
    Here $(\#)$ indicates that we are summing over all isomorphism classes of stable ribbon graphs whose underlying (unoriented) stable ribbon graph is obtained by gluing  together two leafs of $\Gamma$ which lie in the same face.
   
 \item The orientation of $\circ_{l_1,l_2}\Gamma$ is defined by taking first the new internal edge and then the given ordering of internal edges of $\Gamma$.
    \end{itemize}
\end{cons}

    Note that in case $(\#)$ the stable ribbon graph $\circ_{l_1,l_2}\Gamma$ has one more face as compared to $\Gamma$, the genus stays the same, however.

One easily verifies:
\begin{lemma}
    We have $[d,\nabla]=0$.
\end{lemma}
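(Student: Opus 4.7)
The plan is to verify the graded commutator identity $d\nabla+\nabla d=0$ by direct combinatorial matching. First, I would unfold each composition on a basis element $\Gamma$. Both $d\nabla\Gamma$ and $\nabla d\Gamma$ decompose as signed sums over stable ribbon graphs $\hat\Gamma$ equipped with an ordered pair of distinguished internal edges: the ``glue edge'' $e_g$ coming from $\nabla$ and the ``expansion edge'' $e'$ coming from $d$. In both sums, the constraint on the pair is that cutting $e_g$ (the inverse of the gluing operation) and contracting $e'$, performed in the respective order, yields $\Gamma$. On the left side this reads $(\hat\Gamma/e')$ cut at $e_g$ equals $\Gamma$ with the two resulting leaves in the same face; on the right side this reads $(\hat\Gamma-e_g)/e'$ equals $\Gamma$ after a gluing in the same face.

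Second, I would match the two index sets. Since $e_g\ne e'$ (they enter as the two newly created edges along either sequence), cutting $e_g$ and contracting $e'$ are local moves acting on disjoint half-edge data: the contraction of $e'$ modifies $\sigma_0$ only at the endpoints of $e'$ and deletes the half-edges of $e'$ from $H$; the cut of $e_g$ only changes $\sigma_1$ at the two half-edges of $e_g$. Therefore these operations commute on $\hat\Gamma$, so $(\hat\Gamma/e')$ cut at $e_g = (\hat\Gamma-e_g)/e'$ as oriented stable ribbon graphs (orientations apart), and the resulting graph equals $\Gamma$ in either order.

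Third, I would check that the face-adjacency validity conditions match on the two sides. The left side requires $e_g$ to have two distinct adjacent faces in $\hat\Gamma/e'$, while the right side requires the same in $\hat\Gamma$. Because $\sigma_b=\sigma_0^{-1}\sigma_1$ governs faces and because contracting $e'$ does not alter the cycle of $\sigma_b$ through the two half-edges of $e_g$ (they are preserved by the operations on $\sigma_0$ affecting only half-edges at endpoints of $e'$), one verifies by case analysis through the subcases of Definition~\ref{cont}, including the loop subcases and the special subcases where cycles of cardinality one are deleted and boundary defects are incremented, that the ``distinct vs.\ same face'' status of $e_g$ is preserved under contraction of $e'$. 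Thus the two index sets are in canonical bijection via $(e_g,e')\leftrightarrow(e',e_g)$.

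Finally, I would compare signs. The orientation convention for $\nabla$ places the glue edge first in the ordering of internal edges, while the orientation on $d\Gamma'$ is the one dual to edge contraction of $\partial$. Applying the operations in the opposite order introduces exactly the transposition of $e_g$ and $e'$ in the final ordering of internal edges of $\hat\Gamma$, contributing a relative factor of $-1$. Combining with the matching multiplicities, the contributions of $d\nabla$ and $\nabla d$ cancel term by term. The main obstacle is the case analysis in step three, since the face-adjacency property could a priori be altered when $e'$ is incident to a vertex of $e_g$ or is a loop, and in the stable setting one must also track the genus and boundary defects; each subcase in Definition~\ref{cont} must be checked individually, but the distinctness of $e_g$ and $e'$ always localizes the interaction to disjoint half-edge data.
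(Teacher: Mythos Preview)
Your proposal is correct and spells out exactly the direct combinatorial verification the paper has in mind; the paper's own proof consists only of the phrase ``one easily verifies''. One small simplification: step three does not really require a case-by-case analysis through Definition~\ref{cont}, since for every contraction rule the face permutation $\sigma_b$ on the surviving half-edges is obtained from the old one simply by deleting the two half-edges of $e'$ from their $\sigma_b$-cycles, an operation that can neither split nor merge cycles, so the face-adjacency status of $e_g$ is preserved uniformly.
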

We extend $\nabla$ to a degree 1 map by first shifting it, then as a derivation and then shifting again: 
$$\nabla: s\mathcal{RG}_B^d\rightarrow s\mathcal{RG}_B^d
.$$
\begin{cons}
    Define a map $$\delta_{i}:{s\mathcal{RG}^{B}_{c,\bullet}}^\vee\rightarrow {s\mathcal{RG}^{B}_{c,\bullet}}^\vee$$
   by $$\delta_{i}\Gamma=\sum_{  (\%)}\circ_{l_1,l_2}\Gamma.$$
    \begin{itemize}
        \item 
    Here $(\%)$ indicates that we sum over all isomorphism classes of graphs $\circ_{l_1,l_2}\Gamma$ whose
    underlying (unoriented) stable ribbon graph is obtained by gluing together two leafs of $\Gamma$ that lie in different faces. 
    
    \item The orientation of $\circ_{l_1,l_2}\Gamma$ is defined by taking first the new internal edge and then the given ordering of internal edges of $\Gamma$.
    \end{itemize}
\end{cons}

    Note that the stable ribbon graph $\circ_{l_1,l_2}\Gamma$ obtained
    by gluing together two leafs of $\Gamma$ that lie in the same face has one less face compared to $\Gamma$ and the genus increased by one.

It is easily verified that:
\begin{lemma}
    We have $[d,\delta_{i}]=0$.
\end{lemma}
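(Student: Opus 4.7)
The plan is to verify $[d,\delta_i] = 0$ by a direct combinatorial argument at the level of the dual complex, exhibiting a sign-respecting bijection between the terms of $d\delta_i\Gamma$ and those of $\delta_i d\Gamma$. Both maps have odd degree $+1$, so the graded commutator is $[d,\delta_i] = d\delta_i + \delta_i d$, and both compositions produce sums of stable ribbon graphs with $e(\Gamma)+2$ internal edges: one edge created by the leaf-gluing (in different faces) and one edge created by expanding a vertex (since $d$ is dual to the contraction differential $\partial$).

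First I would make the duality explicit: dually to Definition \ref{cont}, the differential $d$ acts on a graph $\Gamma^\vee$ as a sum over all $(\tilde{\Gamma},e)$ with $\tilde{\Gamma}/e = \Gamma$, i.e.\ as a sum of vertex-expansions, compatible loops, and boundary/genus defect moves, with signs coming from the orientation convention. Next, for each term $\Gamma''$ appearing on either side, identify it as being built from $\Gamma$ by two moves: one "leaf-pair glue in distinct faces" $\gamma$ and one "edge expansion" $\epsilon$. The observation is that $\gamma$ and $\epsilon$ act on essentially disjoint combinatorial data — $\gamma$ couples two leaves (and the faces containing them) while $\epsilon$ modifies the local structure at one vertex — so performing $\epsilon$ first or $\gamma$ first on the same pair of data yields the same underlying stable ribbon graph. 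This gives a canonical bijection between terms of $d\delta_i\Gamma$ and $\delta_i d\Gamma$.

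The subtle case is when the vertex at which the expansion $\epsilon$ is performed is incident to one of the two half-edges being glued by $\gamma$. Here one must check that the two faces in which the leaves lie before/after expansion remain distinct in the correct way; the rules in Definition \ref{cont} (splitting and joining cycles based on whether both endpoints of the contracted edge lie in the same or distinct cycles) mirror exactly the face structure after gluing, so "distinct face" is preserved under the swap. The other genuinely interacting case — where the expansion would be along the very edge created by the glue — does not contribute a term to $d\delta_i$ that is absent in $\delta_i d$, because contracting that new edge simply returns $\Gamma$ (with the two leaves back), and this term corresponds to no operation of the correct shape on the other side; one checks directly that such "undoing" terms cancel between the two compositions, not within one.

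The main obstacle is the sign bookkeeping. The orientation of $\delta_i\Gamma$ places the new gluing edge first, while $d$ introduces the expansion edge at a position determined by its rule for dualizing the edge ordering. I expect the calculation to show that swapping the order of $\delta_i$ and $d$ produces precisely a sign of $-1$, so that $d\delta_i\Gamma = -\delta_i d\Gamma$, which is exactly $[d,\delta_i]=0$ for two odd-degree operators. This sign analysis is identical in structure to the one implicit in the preceding lemma $[d,\nabla]=0$, the only difference being that $\delta_i$ glues leaves in \emph{different} faces rather than the same face, and the face/cycle bookkeeping under edge contraction treats these cases symmetrically up to the opposite effect on arithmetic genus (recorded by Remark \ref{totfac}).
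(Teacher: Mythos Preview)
The paper itself gives no proof beyond the phrase ``It is easily verified that'', so your approach is exactly what the paper has in mind: set up a bijection between the terms of $d\delta_i\Gamma$ and $\delta_i d\Gamma$ by noting that the edge-expansion move (dual to contraction) and the leaf-gluing move act on disjoint combinatorial data and hence commute up to the sign $-1$ coming from swapping the two newly created edges in the orientation.

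One remark: your paragraph about the ``undoing'' case is confused and can be dropped. There is no term in either composition in which the expansion $d$ acts on the edge $e_0$ just created by the gluing; the operation $d$ always \emph{adds} a further edge $e_1$ distinct from $e_0$ and never removes or contracts anything, so no such interaction occurs. The only genuine verification is the one you flag first: that the condition ``$l_1,l_2$ lie in different faces'' is stable under expansion. This is the standard fact that the contraction rules of Definition~\ref{cont} preserve the boundary-cycle structure $\sigma_b$ on the surviving half-edges (in particular on leaves). Once this is granted, the bijection is evident and the sign is as you say.
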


We extend $\delta_{i}$ to a degree 1 map by first shifting it, then as a derivation and then shifting again: 
$$\delta_{i}: s\mathcal{RG}_B^d\rightarrow s\mathcal{RG}_B^d.$$  

Denote by $\delta_e$ the Chevalley-Eilenberg differential on 
$Sym\big( (s\mathcal{RG}^{B}_{c,\bullet})^\vee[(2d-6)]\big)$
associated to the (2d-5)-shifted Lie bracket $\{\_,\_\}$ on 
$(s\mathcal{RG}^{B}_{c,\bullet})^\vee[(2d-6)]$.
Denote by the same latter the induced degree $(2d-5)$ map  
$$\delta_e: s\mathcal{RG}^d_{B}\llbracket\gamma\rrbracket\rightarrow s\mathcal{RG}^d_{B}\llbracket\gamma\rrbracket.$$

\begin{lemma}\label{rbc}
    We have $[d,\delta_e]=(\nabla+\delta_i)^2=[\nabla+\delta_i,\delta_e]=0$ as maps on $s\mathcal{RG}^d_{B}\llbracket\gamma\rrbracket.$
\end{lemma}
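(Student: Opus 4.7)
The plan is to exploit the fact that $d$, $\nabla$, $\delta_i$ and $\delta_e$ are all graded derivations on $s\mathcal{RG}^d_B\llbracket\gamma\rrbracket = Sym\bigl((s\mathcal{RG}^B_{c,\bullet})^\vee[2d-6]\bigr)\llbracket\gamma\rrbracket[3-d]$: the first three by the construction immediately preceding this lemma, and $\delta_e$ because it is the Chevalley--Eilenberg differential of a shifted Lie bracket. Consequently each of the three commutators $[d,\delta_e]$, $(\nabla+\delta_i)^2$ and $[\nabla+\delta_i,\delta_e]$ is itself a derivation, so to verify that it vanishes it suffices to check the identity on generators, namely on a single connected stable ribbon graph (for operators whose commutator preserves $Sym^1$) or on a product of two connected graphs (for those involving $\delta_e$, which lowers symmetric degree).

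For $[d,\delta_e]=0$ I would expand, for connected $\Gamma_1,\Gamma_2$, the difference $d\{\Gamma_1,\Gamma_2\} - \{d\Gamma_1,\Gamma_2\} - (-1)^{|\Gamma_1|}\{\Gamma_1,d\Gamma_2\}$ as a sum indexed by triples: a pair of leaves $(l_1,l_2) \in L(\Gamma_1)\times L(\Gamma_2)$ at which to glue, together with an edge $e$ of the glued graph to contract. Sorting according to whether $e$ is an edge of $\Gamma_1$, an edge of $\Gamma_2$, or the newly glued edge, the first two classes cancel against the right-hand terms, because edge contraction at a site disjoint from the gluing commutes with the gluing. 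The new-edge contribution is the delicate part: it must cancel on its own, which I would establish by pairing each resulting graph with its partner obtained by swapping the roles of $(\Gamma_1, l_1) \leftrightarrow (\Gamma_2, l_2)$; the convention that the newly created edge comes first in the edge ordering ensures the two copies appear with opposite signs.

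For $(\nabla+\delta_i)^2=0$ on a connected $\Gamma$, the expansion is indexed by ordered pairs of unordered pairs of distinct leaves $(\{l_1,l_2\},\{l_3,l_4\})$. Each such datum appears twice, corresponding to the two orders in which the two self-gluings can be performed, and exchanging the order flips the ordering of the two newly created edges in the overall orientation; the orientation convention then yields opposite signs. Crucially, this cancellation is uniform over whether each individual gluing lies in $\nabla$ (same face) or $\delta_i$ (different faces), so one does not need to separate $\nabla^2$, $\nabla\delta_i + \delta_i\nabla$ and $\delta_i^2$. The argument for $[\nabla+\delta_i,\delta_e]=0$ is strictly analogous: on $\Gamma_1\cdot\Gamma_2$ a self-gluing and a cross-gluing can be performed in either order, producing the same graph with the two new edges in reversed order and hence opposite signs. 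The main obstacle in all three checks is the consistent tracking of orientation signs, particularly in $[d,\delta_e]=0$ where the vanishing of the new-edge contractions is not manifest from the definitions; I would manage this by fixing once and for all the ``new edge first'' orientation convention and then reading off the compatibility squares directly.
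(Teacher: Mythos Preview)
Your arguments for $(\nabla+\delta_i)^2=0$ and $[\nabla+\delta_i,\delta_e]=0$ are correct and are exactly what the paper does: pair the two successive gluings and observe that swapping their order reverses the ordering of the two newly created edges, hence flips the sign.

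For $[d,\delta_e]=0$ your approach diverges from the paper's and contains a genuine gap. The paper's argument is one line: $\delta_e$ is by definition the Chevalley--Eilenberg differential associated to the bracket $\{\_,\_\}$, and the compatibility of $d$ with this bracket was already established in the lemma showing that $\big((s\mathcal{RG}^B_{c,\bullet})^\vee,d,\{\_,\_\}\big)$ is a shifted dg Lie algebra. It is then a standard fact about CE complexes that the internal differential commutes with the CE differential. You are instead re-verifying the Leibniz rule $d\{\Gamma_1,\Gamma_2\}=\{d\Gamma_1,\Gamma_2\}+(-1)^{|\Gamma_1|}\{\Gamma_1,d\Gamma_2\}$ directly, which is fine but redundant.

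The gap is that you describe $d$ as edge \emph{contraction}. In the complex $s\mathcal{RG}^d_B\llbracket\gamma\rrbracket$, built from the \emph{dual} of $s\mathcal{RG}^B_{c,\bullet}$, the differential $d$ is dual to contraction, i.e.\ edge \emph{expansion}. With the wrong direction you manufacture a spurious ``new-edge contribution'' (contracting the freshly glued edge), and your proposed cancellation by swapping $(\Gamma_1,l_1)\leftrightarrow(\Gamma_2,l_2)$ does not work: for a fixed ordered pair there is a single such term in $d\{\Gamma_1,\Gamma_2\}$, and the swapped term lives in $d\{\Gamma_2,\Gamma_1\}$, not in the same expression. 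With the correct direction for $d$ (expansion at a vertex), every expansion in the glued graph occurs at a vertex belonging either to $\Gamma_1$ or to $\Gamma_2$, and matches term-by-term with $\{d\Gamma_1,\Gamma_2\}$ or $\{\Gamma_1,d\Gamma_2\}$; there is no residual term at all.
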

\begin{proof}
 The fact that the first commutator is zero follows since $\delta_e$ is the Chevalley-Eilenberg differential to $\{\_,\_\}$, which was compatible with $d$.
 
Note that $\nabla+\delta_i$ is given by the sum over all possible self-gluing - independent of a condition on the endpoints of the leafs.
Then the fact that the second term is zero follows since for $\Gamma$ a stable ribbon graph $(\delta_i+\nabla)^2(\Gamma)$ is given by a sum of connected stable ribbon graphs in which each two summands have the same underlying unoriented stable ribbon graph, but their orientation differs by exactly swapping the first two internal edges.
Since we mod out by the relation that reversing the multiplication is equivalent to multiplying by (-1) the claim follows.

The fact that the third term is zero follows by similar arguments.
\end{proof}
Thus we can infer 
\begin{theorem}\label{sRGt}
The B-colored stable ribbon graph complex defined as 
$$\big(s\mathcal{RG}^d_{B}\llbracket\gamma\rrbracket,\cdot, -d+\nabla+\delta_i+\gamma\delta_e,\{\_,\_\}\big)$$
is a $(d-2)$-twisted BD algebra over $k\llbracket\gamma\rrbracket$ whose multiplication has even degree $(3-d)$. 
\end{theorem}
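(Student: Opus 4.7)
The plan is to assemble the result from the pieces already in place, with the main content being a degree bookkeeping and a verification of the BD relation for the formal variable $\gamma$.

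First, I would record the algebra and Poisson data. By construction $s\mathcal{RG}^d_{B}\llbracket\gamma\rrbracket$ is free commutative on ${s\mathcal{RG}^{B}_{c,\bullet}}^\vee[2d-6]$, extended by $\gamma$-power series, so it is graded commutative and unital over $k\llbracket\gamma\rrbracket$, with multiplication of even degree $(3-d)$ coming from the shift by $[3-d]$. The shifted Lie bracket on generators is as in Theorem \ref{RS} (compare remark \ref{nütz}); extending it by the graded Leibniz rule to the full symmetric algebra and $\gamma$-linearly to $s\mathcal{RG}^d_{B}\llbracket\gamma\rrbracket$ yields a bracket of degree $(d-2)$ that is a biderivation for the product. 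The degree of $\gamma$ equals $6-2d=(1-(d-2))+(3-d)$, as required by Definition \ref{BD} in the even multiplication variant.

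Next I would verify that the total differential $D:=-d+\nabla+\delta_i+\gamma\delta_e$ has square zero. One computes
\[
D^2 = d^2 - [d,\nabla+\delta_i] - \gamma[d,\delta_e] + (\nabla+\delta_i)^2 + \gamma[\nabla+\delta_i,\delta_e] + \gamma^2\delta_e^2.
\]
Here $d^2=0$ on the underlying stable ribbon graph complex (this is remark \ref{totfac} together with the standard check that contracting two edges in either order gives cancelling orientations). The brackets $[d,\nabla]$ and $[d,\delta_i]$ vanish by the two elementary lemmas preceding this theorem. The vanishing of $(\nabla+\delta_i)^2$, $[d,\delta_e]$ and $[\nabla+\delta_i,\delta_e]$ is precisely Lemma \ref{rbc}. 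Finally $\delta_e^2=0$ because $\delta_e$ is the Chevalley-Eilenberg differential of a genuine shifted Lie algebra structure. Each of these vanishes for the same structural reason: summing over two successive graph operations produces pairs of configurations with opposite orientations, which cancel modulo the orientation relation.

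For the BD relation I would treat the four summands of $D$ separately. The operators $-d$, $\nabla$ and $\delta_i$ were extended from ${s\mathcal{RG}^{B}_{c,\bullet}}^\vee$ to $Sym(\cdots)$ by the graded Leibniz rule, hence they are derivations of the commutative product and contribute no bracket term. The remaining term is $\gamma\delta_e$; since $\delta_e$ is the Chevalley-Eilenberg differential associated to the shifted Lie bracket $\{\_,\_\}$ on ${s\mathcal{RG}^{B}_{c,\bullet}}^\vee[2d-6]$, it satisfies the standard identity $\delta_e(ab)=\delta_e(a)\,b+(-1)^{|a|}a\,\delta_e(b)+\{a,b\}$ on the symmetric algebra. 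Multiplying by $\gamma$ yields exactly the BD relation of Definition \ref{BD}.

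The only real obstacle is orientation- and sign-tracking in the verification $D^2=0$, particularly in the cross terms $[\nabla+\delta_i,\delta_e]$ where one must match a self-gluing of a graph produced by an $\{-,-\}$-bracket against a bracket of two graphs obtained by successive contractions; the cancellations are genuinely combinatorial and rely on the orientation convention for gluings adopted in the construction of $\{\_,\_\}$ and $\nabla,\delta_i$. Everything else reduces to repackaging earlier lemmas, so once Lemma \ref{rbc} is in hand the theorem follows essentially formally.
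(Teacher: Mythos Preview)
Your proposal is correct and follows essentially the same approach as the paper's proof: assemble the square-zero property of $-d+\nabla+\delta_i+\gamma\delta_e$ from the preceding lemmas (in particular Lemma~\ref{rbc}) together with $\delta_e^2=0$, and derive the BD relation from the fact that $d,\nabla,\delta_i$ are derivations while $\gamma\delta_e$ is the Chevalley--Eilenberg differential for $\{\_,\_\}$. The paper's argument is simply a terser version of yours; your additional degree bookkeeping for $\gamma$ and the explicit expansion of $D^2$ are helpful elaborations but not a different route.
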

\begin{proof}
 Indeed the fact that $-d+\nabla+\delta_i+\gamma\delta_e$ defines a map of degree 1 that squares to zero follows from the previous two lemmas.
 The BD relation follows since $\nabla$, $\delta_i$ and $d$ are derivations and by the definition of $\delta_e$ as the Chevalley Eilenberg differential associated to $\{\_,\_\}$. 
\end{proof}

    By abuse of notation we also denote by $s\mathcal{RG}^d_{B}\llbracket\gamma\rrbracket$ above $(d-2)$-twisted BD algebra. 

\begin{df}\label{deq_rib}
Denote by 
\begin{equation}\label{ncgdq}
p:s\mathcal{RG}^d_{B}\llbracket\gamma\rrbracket\rightarrow {\mathcal{RT}^{d}_{B}}\end{equation}
the composite 
$$Sym\big( {s\mathcal{RG}^{B}_{c,\bullet}}^\vee[2d-6]\big)\llbracket \gamma\rrbracket[3-d]\rightarrow Sym^1\big( {s\mathcal{RG}^{B}_{c,\bullet}}^\vee[2d-6]\big)[3-d]\cong {s\mathcal{RG}^{B}_{c,\bullet}}^\vee[d-3]$$
$${s\mathcal{RG}^{B}_{c,\bullet}}^\vee[d-3]\rightarrow {\mathcal{RT}^{B}_{c,\bullet}}^\vee[d-3]\rightarrow Sym\big({\mathcal{RT}^{B}_{c,\bullet}}^\vee[d-3]\big).$$
Here the first map in the first line is the projection on symmetric words of length one and the $\gamma=0$ part.
The first map in the second line is the one induced by the curved map of \ref{inc_proj} and the last map is just the inclusion. 
\end{df}
\begin{lemma}
    Given a set B
$$p:s\mathcal{RG}^d_{B}\llbracket\gamma\rrbracket\rightarrow {\mathcal{RT}^{d}_{B}} $$
    is a dequantization map in the sense of definition \ref{dq}.
\end{lemma}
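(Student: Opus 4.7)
The plan is to check the three conditions of Definition~\ref{dq} in turn: $p$ has degree zero, it commutes with the differentials, and it respects the shifted Lie brackets. Preservation of the commutative product is explicitly not required. Degree zero is immediate from the definition.

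First I would decompose $p$ as $\iota \circ \pi_2 \circ \pi_1$, where $\pi_1$ is the projection onto the $\gamma^0$, $Sym^1$ component, $\pi_2 \colon (s\mathcal{RG}^B_{c,\bullet})^\vee \to (\mathcal{RT}^B_{c,\bullet})^\vee$ is dual to the subcomplex inclusion in diagram~\eqref{inc_proj}, and $\iota$ embeds an element as a symmetric word of length one. This lets me treat each piece of the structure separately.

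For the chain map property, I would dissect the domain differential $-d+\nabla+\delta_i+\gamma\delta_e$ piece by piece. The term $\gamma\delta_e$ raises $\gamma$-degree and is killed by $\pi_1$. Since $\nabla$ and $\delta_i$ are extended as derivations of $Sym$, they preserve $Sym^n$; on a basis element $\Gamma^\vee \in Sym^1$, every summand of $\nabla\Gamma^\vee$ is dual to a graph with $f(\Gamma)+1 \geq 2$ faces, while every summand of $\delta_i\Gamma^\vee$ is dual to a graph of arithmetic genus at least one. Neither can be a tree, so $\pi_2$ annihilates both. The remaining piece $-d$ is intertwined by $\pi_2$ because $\mathcal{RT}$ is a subcomplex of $s\mathcal{RG}$, and $-d$ extends as a derivation on either side.

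For the Lie bracket compatibility, I would use that both brackets are obtained from their $Sym^1$ brackets by the graded Leibniz rule and $\gamma$-linearity; hence $\{x,y\} \in Sym^{m+n-1}$ for $x\in Sym^m$ and $y\in Sym^n$, and its $Sym^1$, $\gamma^0$ component can be nonzero only when $m=n=1$ and both arguments live in $\gamma$-degree zero. On this restricted domain the claim reduces to $\pi_2$ being a map of shifted Lie algebras, which is exactly the content of Remark~\ref{nütz}: gluing two ribbon trees along a leaf each gives a ribbon tree, while any gluing involving a non-tree stable ribbon graph yields a non-tree. The only calculational point is the elementary combinatorial effect of leaf-gluing on face count and genus used above; I expect no serious obstacle beyond careful bookkeeping.
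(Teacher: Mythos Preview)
Your proposal is correct and follows essentially the same approach as the paper's proof: both kill $\gamma\delta_e$ via the $\gamma=0$ projection, kill $\nabla$ and $\delta_i$ because their outputs have too many faces respectively positive genus and hence cannot be ribbon trees, and invoke Remark~\ref{nütz} for the bracket compatibility. Your write-up supplies more explicit bookkeeping (the decomposition $p=\iota\circ\pi_2\circ\pi_1$ and the reduction to $Sym^1$, $\gamma^0$), but the underlying argument is the same.
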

\begin{proof}
The fact that $p$ intertwines the shifted Poisson bracket follows from remark \ref{nütz}.
The fact that $p$ intertwines the BD differential with the differential of ${\mathcal{RT}^{d}_{B}}$ can be verified directly:
$\nabla$ and $\delta_i$ get send to zero under the projection to ribbon trees since the prior produces a graph with more than one face and since the latter produces a graph with positive genus.
Lastly $\gamma\delta_e$ gets send to zero since it involved a positive power of $\gamma$.
\end{proof}
Define
 $${S}:=\sum_{n>0}\sum_{(\#)}{S}_{n,C},$$
where the second sum is over isomorphism classes of a B-colored stable ribbon graphs with no edges, one vertex and $n$ leaves and we denote a such determined class by ${S}_{n,C}$. 
\begin{theorem}\label{sRG_mce}
The corresponding element $${S}\in s\mathcal{RG}^d_{B}\llbracket\gamma\rrbracket$$
is a Maurer-Cartan element.
\end{theorem}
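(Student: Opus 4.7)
The plan is to verify the Maurer--Cartan equation
\begin{equation*}
(-d + \nabla + \delta_i + \gamma\delta_e)\, S + \tfrac{1}{2}\{S, S\} = 0
\end{equation*}
directly, by comparing coefficients on isomorphism classes of $B$-colored connected stable ribbon graphs with exactly one internal edge. First, the summand $\gamma\delta_e S$ vanishes: $S$ lies in symmetric word length one, and the Chevalley--Eilenberg chain differential $\delta_e$, being associated to the Lie bracket $\{\_,\_\}$ on $(s\mathcal{RG}^B_{c,\bullet})^\vee[2d-6]$, strictly decreases symmetric word length and hence kills $\mathrm{Sym}^1$. The remaining equation is
\begin{equation*}
-dS + \nabla S + \delta_i S + \tfrac{1}{2}\{S, S\} = 0 \qquad (\star)
\end{equation*}
in the symmetric-length-one subspace spanned by one-edge stable ribbon graphs.

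Second, I would organize $(\star)$ by the topological type of the unique edge $e$ of such a graph $\Gamma$, following the three cases of the contraction operation in Definition \ref{cont}: (i) $e$ is a bridge between two distinct vertices of $\Gamma$; (ii) $e$ is a loop at a single vertex whose half-edges lie in distinct cycles at that vertex; (iii) $e$ is a loop at a single vertex whose half-edges lie in the same cycle. By construction, $\{S, S\}$ produces only type (i) graphs (gluing two corollas along a leaf from each), $\delta_i S$ produces only type (ii), and $\nabla S$ produces only type (iii). Likewise, the dual graph differential $d$ applied to $S$ decomposes according to these three cases of contraction, producing type (i) graphs whose bridge-contraction yields a corolla in $S$, type (ii) graphs whose loop-contraction coalesces two cycles of a corolla, and type (iii) graphs whose loop-contraction splits one cycle of a corolla into two.

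Thus $(\star)$ splits into three independent cancellations, one in each of the above three subspaces, namely $-dS + \tfrac{1}{2}\{S,S\} = 0$ in type (i), $-dS + \delta_i S = 0$ in type (ii), and $-dS + \nabla S = 0$ in type (iii). Each cancellation is an identity of signed multiplicities: the dual edge-contraction differential places the contracted edge according to the ordering convention on $\Gamma$, while the three gluing operations $\{\_,\_\}$, $\nabla$, $\delta_i$ place the new edge first in the ordering of internal edges; the factor $\tfrac{1}{2}$ in $\tfrac{1}{2}\{S,S\}$ is absorbed by the graded (anti)symmetry of the bracket together with the summation over ordered pairs of generators in $S\otimes S$.

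The main obstacle is the precise sign bookkeeping and the handling of orbits under the automorphism group of $\Gamma$, which acts on the pair consisting of the distinguished edge of $\Gamma$ together with the corresponding leaf-pairs in the source corolla(s). Once these orientation signs are matched in each of the three cases, $(\star)$ follows, establishing that $S$ is a Maurer--Cartan element; the analogous statement for $D$ in Theorem \ref{RS_mce} will then be recovered by applying the dequantization map $p$ of Definition \ref{deq_rib}, which (as will be verified) is a map of dg shifted Lie algebras sending $S$ to $D$.
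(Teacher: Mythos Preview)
Your direct verification is correct in outline and would succeed with the bookkeeping you flag; the three-way decomposition according to the contraction rules of Definition~\ref{cont} is exactly right, and the matching of $\tfrac{1}{2}\{S,S\}$, $\delta_i S$, $\nabla S$ with the three parts of $dS$ proceeds as in the paper's own direct proof of Theorem~\ref{sG_mce} for the stable graph element $G$, only with one more case and the additional degenerate subcases of Definition~\ref{cont} (adjacent half-edges, two-element cycles) to track.

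However, this is not how the paper argues. The paper instead establishes the map $\pi\colon s\mathcal{G}^d\llbracket\hbar\rrbracket \to s\mathcal{RG}^d_B\llbracket\gamma\rrbracket$ of dg shifted Lie algebras (Theorem~\ref{GAL}), checks $\pi(G)=S$, and invokes the already-proved Theorem~\ref{sG_mce} that $G$ is Maurer--Cartan. Thus the direct computation is done \emph{once}, in the structurally simpler stable graph setting (two contraction rules, no $B$-coloring, no boundary defects), and the ribbon case is obtained by transport. Your approach trades this reduction for self-containment: it avoids constructing and verifying $\pi$, and it works uniformly for any set $B$ rather than only finite $B$ (the hypothesis under which $\pi$ is set up). The paper's approach is more economical and makes the parallelism between the two hierarchies explicit, at the cost of depending on Section~\ref{vsgzg}. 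Your final remark on recovering Theorem~\ref{RS_mce} via the dequantization map $p$ matches the paper's Remark~\ref{prak_1} exactly.
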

\begin{proof}
This will follow from theorem \ref{GALt} and theorem \ref{sG_mce}.
Indeed we will see that $S$ is the image of a Maurer-Cartan element under a map of dg-shifted Lie algebras.\end{proof}
\begin{remark}\label{prak_1}
    We have that $p(S)=D$, where $p$ is the dequantizaiton map from \ref{ncgdq}.
    Thus indeed theorem \ref{sRG_mce} implies theorem \ref{RS_mce}.
\end{remark}
\begin{df}\label{sRG_tw}    
Denote by $s\mathcal{RG}^{d,tw}_{B}\llbracket\gamma\rrbracket$ the $(d-2)$-twisted BD algebra obtained by twisting $s\mathcal{RG}^{d}_{B}\llbracket\gamma\rrbracket$ by the Maurer-Cartan element S.
\end{df}
Twisting the map \ref{ncgdq} by the compatible Maurer-Cartan elements $D$ respectively $S$ gives an induced map 
$$p:s\mathcal{RG}^{d,tw}_{B}\llbracket\gamma\rrbracket\rightarrow \mathcal{RT}^{d,tw}_{B}\llbracket\gamma\rrbracket.$$
 Note that we can restrict the operators $\nabla$, $\delta_i$ and $\delta_e$ to ribbon graphs. Exactly as in theorem \ref{sRGt} we can deduce that this defines a BD algebra on the ribbon graph complex. Furthermore diagram \ref{inc_proj} induces a map of BD algebras
$$v:\mathcal{RG}^d_{B}\llbracket\gamma\rrbracket\rightarrow s\mathcal{RG}^d_{B}\llbracket\gamma\rrbracket$$
and a dequantization map, defined exactly as in \ref{deq_rib}
$$p:\mathcal{RG}^d_{B}\llbracket\gamma\rrbracket\rightarrow \mathcal{RT}^d_{B}.$$
Summarizing, we have 
\begin{theorem}\label{rbg_comp}
The ribbon graph complex $\big(\mathcal{RG}^d_{B}\llbracket\gamma\rrbracket,\cdot,-d+\nabla+\delta_i+\gamma\delta_e,\{\_,\_\}\big)$ is a $(d-2)$-twisted BD algebra. Furthermore diagram \ref{inc_proj} induces a diagram

 \begin{equation} \begin{tikzcd}
\mathcal{RT}^d_{B}&\mathcal{RG}^d_{B}\llbracket\gamma\rrbracket\arrow[swap]{l}{p}\arrow{r}{v}&
s\mathcal{RG}^d_{B}\llbracket\gamma\rrbracket\arrow[bend left=25]{ll}{p}
    \end{tikzcd},
    \end{equation}
    where the arrows pointing to $\mathcal{RT}^d_{B}$ denote dequantization maps and the third one denotes a map of $(d-2)$-twisted BD algebras.
\end{theorem}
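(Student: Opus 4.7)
The plan is to proceed in three stages: establish the BD structure on $\mathcal{RG}^d_B\llbracket\gamma\rrbracket$, verify that the inclusion $v$ of ribbon graphs into stable ribbon graphs is a map of BD algebras, and check that the dequantization map $p$ is well-defined together with commutativity of the diagram.

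First, I would show that the operators $\nabla$, $\delta_i$, and $\delta_e$, initially defined on the stable ribbon graph complex, restrict to $\mathcal{RG}^d_B\llbracket\gamma\rrbracket$. The key point is that gluing two leaves of a ribbon graph $\Gamma$ — whether they lie in the same face or different faces — only creates a new internal edge and modifies the face structure; unlike edge contraction (Definition \ref{cont}), it does not introduce any genus or boundary defects at vertices. Hence the output remains a ribbon graph rather than becoming merely stable. Likewise, the bracket $\{\_,\_\}$ from Remark \ref{nütz} glues leaves of different graphs and so preserves the subspace of ribbon graphs. Once closure is established, the proof of Theorem \ref{sRGt} transfers verbatim: the commutator identities of Lemma \ref{rbc} are purely combinatorial and only invoke orientation conventions, not the stable decorations at vertices.

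Next, I would verify that $v : \mathcal{RG}^d_B\llbracket\gamma\rrbracket \to s\mathcal{RG}^d_B\llbracket\gamma\rrbracket$ is a map of $(d-2)$-twisted BD algebras. This is essentially tautological: $v$ is the $\mathrm{Sym}$-extension of the inclusion of ribbon graphs (viewed as stable ribbon graphs with zero genus and boundary defects at every vertex) into stable ribbon graphs. The commutative product is respected by construction. Each of $d$, $\nabla$, $\delta_i$, $\delta_e$ on $s\mathcal{RG}^d_B\llbracket\gamma\rrbracket$ is defined by the same combinatorial recipe as its counterpart on $\mathcal{RG}^d_B\llbracket\gamma\rrbracket$, so applying $v$ before or after any of them yields the same element. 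Compatibility with the bracket is precisely the content of Remark \ref{nütz}.

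Finally, for $p : \mathcal{RG}^d_B\llbracket\gamma\rrbracket \to \mathcal{RT}^d_B$, I would mimic Definition \ref{deq_rib}: project onto $\mathrm{Sym}^1$ and the $\gamma=0$ summand, push down to ribbon trees via the projection of diagram \ref{inc_proj}, and include as length-one symmetric words. The verification that $p$ is a dequantization map in the sense of Definition \ref{dq} follows the same three observations as in the stable case: $\nabla$ produces graphs with more than one face and is killed by the projection to trees; $\delta_i$ produces graphs of positive genus and is killed; and $\gamma\delta_e$ carries a positive power of $\gamma$ and is killed. Compatibility of the bracket is again Remark \ref{nütz}. Commutativity of the outer triangle $p\circ v = p$ is immediate since both composites arise from the factorization $s\mathcal{RG}^{B}_{c,\bullet} \to \mathcal{RG}^{B}_{c,\bullet} \to \mathcal{RT}^{B}_{c,\bullet}$ of diagram \ref{inc_proj}. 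The only minor bookkeeping hurdle is checking closure of $\nabla$ and $\delta_i$ on the ribbon subspace; once that is in hand, the theorem is a straightforward transfer of Theorems \ref{sRGt} and the discussion surrounding Definition \ref{deq_rib}.
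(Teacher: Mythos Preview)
Your proposal is correct and follows essentially the same approach as the paper. The paper's own argument is terse --- it simply notes that $\nabla$, $\delta_i$, $\delta_e$ restrict to ribbon graphs, invokes the proof of Theorem~\ref{sRGt} verbatim, and asserts that diagram~\ref{inc_proj} induces the map $v$ of BD algebras and the dequantization map $p$ defined exactly as in Definition~\ref{deq_rib} --- and your three-stage outline is precisely an unpacking of that sketch.
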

\begin{remark}
    Note that the Maurer-Cartan element $S\in s\mathcal{RG}^d_{B}\llbracket\gamma\rrbracket$ does not come from an element in $\mathcal{RG}^d_{B}\llbracket\gamma\rrbracket$ under the map $v$.
\end{remark}
\subsection{(Ordinary) Graph Complexes}\label{grs}
In this section we basically repeat the constructions from the previous section, but for stable graphs, introduced in \cite{geKa96} section 2.8 , their simpler cousins graphs and trees. That is we equip complexes built from these graphs with shifted Poisson and Beilinson-Drinfeld structures, induced by gluing leafs. Lastly we identify certain Maurer-Cartan elements for these structures.
\begin{df}\label{sgrdf} A \emph{stable graph} is given by the datum of
    \begin{itemize}
    \item a finite set $H$, called the set of half edges,
    \item $V(\Gamma)$ a partition  of $H$, called the set of vertices; we denote the cardinality of a vertex v by $|v|$ and the total number of vertices of a graph by $v(\Gamma)$,
    \item an involution $\sigma_1$ acting on $H$ whose fixed points are called leafs, the set of those denoted $L(\Gamma)$ and whose 2-cycles are called internal edges, the set of those denoted $E(\Gamma)$. We further denote the total number of internal edges by $e(\Gamma)$ and the total number of leafs by $l(\Gamma)$.
    \item a function $o:V(\Gamma)\rightarrow \mathbb{Z}_{\geq 0}$
    such that for $o(v)=0$ we have $|v|\geq 3$ and for $o(v)=1$ we have $|v|\geq1$ called loop defect .
    \end{itemize}
\end{df}
\begin{df}\label{betti}
    The \emph{first Betti number} $bt(\Gamma)$ of a connected stable graph $\Gamma$ is defined by  $$bt(\Gamma)=1-v(\Gamma)+e(\Gamma)+\sum_{v\in V(\Gamma)}o(v).$$
\end{df}

A \emph{graph} is a stable graph for which $o(v)=0$ for all vertices, ie in particular all vertices are at least three-valent.
A connected \emph{tree} is a connected graph $\Gamma$ for which $bt(\Gamma)=0$, that is, it has no loops.
An orientation of a stable graph is an ordering of its edges.
Two oriented stable graphs are isomorphic if there is a bijection between their set of half-edges that respects all the other structures.
\begin{df}[Contracting an Edge]\label{cont_g}
Let $\Gamma$ be an oriented stable graph and let $e\in E(\Gamma)$ be an  edge.
We define the oriented stable graph $\Gamma/e$ to be the graph obtained by contracting this edge according to the following rules:
\begin{enumerate}
    \item Suppose that e is not a loop, that is it connects distinct vertices $v_1,v_2\in V(\Gamma)$. By contracting the edge the vertices $v_1$ and $v_2$ become joined.
    The loop defect for the vertices $v_1$ and $v_2$ are added to give the loop defect for the new vertex created by joining $v_1$ and $v_2$.
    The orientation is defined in the obvious way.
    All the other combinatorial structures elsewhere in the graph are left alone. 
    \item
    Now suppose that $e$ is a loop, that is both its endpoints lie in the same vertex $v$.
    Then $\Gamma/e$ is defined by just deleting this loop, but by that increasing the loop defect of $v$ by one.
\end{enumerate}
\end{df}
Note that the property being a tree is preserved under this differential, whereas being a graph is not.

Denote by $(s\mathcal{G}_{c,\bullet},\partial)$ the complex whose underlying vector space is freely generated by isomorphism classes of connected stable graphs, modulo the relation that reversing the orientation on a stable graph is equivalent to multiplying by (-1).
It is graded by the number of internal edges.
The differential $\partial$ is given by summing over all possible contractions of edges:
    $$\partial(\Gamma):=\sum_{e\in E(\Gamma)}\Gamma/e.$$
    Denote by $(\mathcal{G}_{c,\bullet},\partial)$ the complex given by the subspace (which is not a subcomplex) of connected graphs, which uniquely becomes a chain complex by requiring that the projection map
$$s\mathcal{G}_{c,\bullet}\rightarrow \mathcal{G}_{c,\bullet}$$
is a map of chain complexes.   

Denote by $(\mathcal{T}_{c,\bullet},\partial)$ the subcomplex of $s\mathcal{G}_{c,\bullet}$  given by connected trees.

We could have equivalently defined $(\mathcal{T}_{c,\bullet},\partial)$ as the subcomplex of $(\mathcal{G}_{c,\bullet},\partial)$ given by  trees, which can be verified directly. That is, following diagram of chain complexes commutes
   \begin{equation}\label{inc_proj_2} \begin{tikzcd}
(\mathcal{T}_{c,\bullet},\partial)\arrow{r}\arrow[ bend right=25]{rr}&(\mathcal{G}_{c,\bullet},\partial)&
(s\mathcal{G}_{c,\bullet},\partial)\arrow{l}.
    \end{tikzcd}
    \end{equation}

Let $d$ be an odd integer and $\hbar$ be a formal variable of degree $(3-d)$.
\begin{df}\label{sG}
    The \emph{d-shifted stable graph complex} is defined to be the cochain complex 
$$s\mathcal{G}^d\llbracket\hbar\rrbracket:=\Big(Sym\big( s\mathcal{G}_{c,\bullet}^\vee[d-3]\big)\llbracket \hbar\rrbracket,d\Big).$$ 
Its differential $d$ is given by the one induced from $(s\mathcal{G}_{c,\bullet},\partial)$.  
\end{df}
 \begin{df} The \emph{d-shifted graph complex} is defined to be the cochain complex $$\mathcal{G}^d\llbracket\hbar\rrbracket:=\Big(Sym\big( \mathcal{G}_{c,\bullet}^\vee[d-3]\big)\llbracket \hbar\rrbracket,d\Big),$$
 its differential $d$ induced from  $(\mathcal{G}_{c,\bullet},\partial)$
 \end{df}
 \begin{df}\label{tc} The \emph{d-shifted tree complex} is defined to be the cochain complex 
 $$\mathcal{T}^d:=\Big(Sym\big( \mathcal{T}_{c,\bullet}^\vee[d-3]\big),d\Big),$$
 its differential induced from  $(\mathcal{T}_{c,\bullet},\partial)$. 
 \end{df}
 We refer to an element of eg $s\mathcal{G}^d\llbracket\hbar\rrbracket$ as a not necessarily connected stable graph.
 Indeed, note that above three graph complexes have a basis given by symmetric powers of connected stable graphs, graphs, respectively trees.
 In this basis the differential expands edges, thereby possibly also changing the other properties of the graph, ie changing the loop defect in the case of stable graphs.

 As before we will make extensive use of these 'dual' bases to define further algebraic structures:
  \subsubsection{Shifted Poisson Structure}
\begin{cons}\label{iwjs}
    Define 
    $$\{\_,\_\}:s\mathcal{G}_{c,\bullet}^\vee\otimes s\mathcal{G}_{c,\bullet}^\vee\rightarrow s\mathcal{G}_{c,\bullet}^\vee$$
    by 
    $$\{\Gamma_1,\Gamma_2\}=\sum_{(*)}\Gamma_1\cup_{l_1,l_2} \Gamma_2.$$
    \begin{itemize}

\item Here $(*)$ indicates that we are summing over all isomorphism classes of stable graphs, suggestively denoted ${\Gamma_1\cup_{l_1,l_2} \Gamma_2}$, whose underlying (unoriented) stable graph is obtained by gluing together two leafs of $\Gamma_1$ and $\Gamma_2$, thus creating a new internal edge\footnote{or by gluing together two leafs of $\Gamma_2$ and $\Gamma_1$}.

\item The orientation of $\Gamma_1\cup_{l_1,l_2} \Gamma_2$ is defined by taking first the given ordering of internal edges of $\Gamma_1$,
then the new internal edge, and then the given ordering of internal edges of $\Gamma_2$.
    \end{itemize}
\end{cons} 
\begin{lemma}
    $\big(s\mathcal{G}_{c,\bullet}^\vee,d,\{\_,\_\}\big)$ is a 1-shifted dg Lie algebra.
\end{lemma}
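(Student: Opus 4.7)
The proof is essentially a verbatim copy of the analogous lemma for stable ribbon graphs proved earlier in the paper, so the plan is to follow that argument and flag only the (very minor) places where ribbon structure was actually used (it was not). Concretely, I would argue as follows.

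First, I would check the degree. The grading on $s\mathcal{G}_{c,\bullet}^\vee$ is by number of internal edges. The construction $\Gamma_1 \cup_{l_1,l_2} \Gamma_2$ takes two leaves of the inputs and identifies them into a single new internal edge, so it raises the total edge count by exactly one. Dualizing, the bracket $\{\_,\_\}$ is a map of degree $+1$, as required for a $1$-shifted Lie bracket on a cochain complex.

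Next, graded antisymmetry. By construction, the orientation on $\Gamma_1 \cup_{l_1,l_2}\Gamma_2$ lists first the ordered edges of $\Gamma_1$, then the new glued edge, then the ordered edges of $\Gamma_2$. Computing $\{\Gamma_2,\Gamma_1\}$ instead produces the same underlying unoriented stable graph but with the block of $\Gamma_2$-edges permuted past the block of $\Gamma_1$-edges. Since the relation that reversing the orientation equals multiplication by $-1$ is exactly the Koszul sign rule on the edge ordering, this gives the $1$-shifted graded symmetry $\{\Gamma_1,\Gamma_2\}=(-1)^{|\Gamma_1||\Gamma_2|+1}\{\Gamma_2,\Gamma_1\}$ after accounting for the suspension.

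Now the shifted Jacobi identity. One expands $\{\{\Gamma_1,\Gamma_2\},\Gamma_3\}$ and its cyclic permutations into a sum over pairs of gluings, each term being a stable graph built from $\Gamma_1,\Gamma_2,\Gamma_3$ by identifying one leaf in one component with one leaf in another, twice. Every triply-glued graph appearing in one of the three iterated brackets also appears in another, and the orientations differ only by a transposition of the two newly created edges; by the orientation-reversal relation these contributions cancel in pairs, giving the identity. This is exactly the argument implicit in the ribbon case, and the face/cycle bookkeeping plays no role here.

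Finally, I would verify compatibility of $d$ with $\{\_,\_\}$, i.e.\ that $d$ is a graded derivation for the bracket. Gluing two leaves and contracting an internal edge act on disjoint half-edges (the newly created edge from gluing is not one of the pre-existing edges whose contraction contributes to $d\{\Gamma_1,\Gamma_2\}$, while contracting that new edge itself is forbidden when it would delete a univalent vertex, and otherwise reproduces either a term from $\Gamma_1$ or a term from $\Gamma_2$ after gluing; cf.\ definition \ref{cont_g}). Tracking the edge orderings one sees the signs match those of the graded Leibniz rule. The main obstacle is purely combinatorial sign-bookkeeping; since the ribbon-graph analogue was already handled in the paper and the stable-graph contraction rule (definition \ref{cont_g}) is a strictly simpler version of definition \ref{cont}, I would invoke that earlier argument rather than redo it.
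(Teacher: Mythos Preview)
Your proposal is correct and matches the paper's approach exactly: the paper's proof is the single sentence ``This follows analogously to the stable ribbon graph case,'' and you have faithfully unpacked that reference. One minor quibble: your remark that ``contracting that new edge itself is forbidden when it would delete a univalent vertex'' is not quite right in the stable graph setting, since Definition~\ref{cont_g} imposes no such restriction (unlike the ribbon case); but this does not affect the argument, as the compatibility of $d$ with the bracket follows simply from the fact that edge expansion and leaf gluing are independent operations on disjoint sets of half-edges.
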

\begin{proof}
This follows analogously to the stable ribbon graph case.  
\end{proof}
\begin{remark}\label{nütz_2}
    In the same way we can define a shifted Lie bracket on  graphs and  trees such that the maps induced by \ref{inc_proj_2} respect these brackets.
\end{remark}
  In the same way as for ribbon graphs (theorem \ref{RS}) we deduce
\begin{theorem}\label{S}
The tree graph complex from definition \ref{tc} 
$$\big(\mathcal{T}^d,-d,\{\_,\_\},\cdot\big)$$
 is a $(d-2)$-shifted Poisson dg algebra. 
 \end{theorem}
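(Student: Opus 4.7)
The plan is to mirror the proof of Theorem \ref{RS} from the ribbon graph section, since the combinatorics of gluing leaves for trees is essentially parallel to the ribbon graph case, just without the extra cyclic ordering data at each vertex.

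First, I would establish the analog of the lemma preceding Theorem \ref{RS}: that $(\mathcal{T}_{c,\bullet}^\vee, d, \{\_,\_\})$ is a $1$-shifted dg Lie algebra. The bracket from Construction \ref{iwjs} a priori lives on $s\mathcal{G}_{c,\bullet}^\vee$, but it restricts to trees by Remark \ref{nütz_2} together with diagram \eqref{inc_proj_2}: gluing two connected trees along a pair of leaves produces a connected graph that is still acyclic (since combining the two trees along a single edge cannot create a cycle) and has no loop defect introduced, hence is a tree. The bracket has degree $+1$ because gluing produces one new internal edge. Graded anti-symmetry follows from the orientation relation; the shifted Jacobi identity can be verified directly by summing over the three ways of arranging a triple gluing and noting that the extra terms cancel in pairs whose orientations differ by a transposition of new edges; compatibility with $d$ follows because expanding an edge and gluing two leaves commute up to sign, since both are operations only on the combinatorial structure of edges.

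Next, I would shift the bracket by $[d-3]$ to obtain a bracket of degree $(d-2)$ on $\mathcal{T}_{c,\bullet}^\vee[d-3]$, and extend it to the symmetric algebra $\mathcal{T}^d = Sym(\mathcal{T}_{c,\bullet}^\vee[d-3])$ via the graded Leibniz rule; the differential $d$ extends as a derivation. This is a standard construction: given any dg shifted Lie algebra $L$ of degree $r$, the symmetric algebra $Sym(L)$ becomes an $r$-shifted Poisson dg algebra. Together with the multiplication $\cdot$ of the symmetric algebra (graded commutative and associative), this gives the claimed $(d-2)$-shifted Poisson dg algebra structure on $\mathcal{T}^d$.

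The main potential obstacle is the sign bookkeeping in the shifted Jacobi identity, but since the combinatorial argument is entirely parallel to the ribbon graph case (Theorem \ref{RS}) and the absence of cyclic orderings at vertices only simplifies matters, there is nothing genuinely new to verify. Indeed, one could also deduce the result by restriction: the inclusion $\mathcal{T}_{c,\bullet}^\vee \hookrightarrow s\mathcal{G}_{c,\bullet}^\vee$ dual to projection in \eqref{inc_proj_2} intertwines the brackets by Remark \ref{nütz_2}, so the shifted Lie structure on $\mathcal{T}_{c,\bullet}^\vee$ is inherited from that on $s\mathcal{G}_{c,\bullet}^\vee$ (once the latter is verified, which will follow by the same argument used for $s\mathcal{RG}_{c,\bullet}^\vee$).
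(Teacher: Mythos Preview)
Your proposal is correct and follows essentially the same approach as the paper: the paper simply says ``In the same way as for ribbon graphs (theorem \ref{RS}) we deduce'' the result, invoking the preceding lemma (that $(s\mathcal{G}_{c,\bullet}^\vee,d,\{\_,\_\})$ is a $1$-shifted dg Lie algebra) together with Remark \ref{nütz_2} and the standard passage from a shifted dg Lie algebra to a shifted Poisson dg algebra on its symmetric algebra. Your explicit check that gluing two trees along leaves yields a tree and your alternative argument via restriction along \eqref{inc_proj_2} are both fine elaborations of what the paper leaves implicit.
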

 Recall that $\{\_,\_\}$ is given by first shifting the Lie bracket and then extending it as a according to the Leibniz rule to the symmetric algebra.

 We will by abuse of notation also denote this dg $(d-2)$-shifted Poisson algebra by $\mathcal{T}^d$.

Denote by $T_n$ the class of the tree with one vertex, no internal edges and $n$ leaves. We define 
 $$T:=\sum_{n>2}T_n$$
 \begin{theorem}\label{S_mce}
 The corresponding element
$$T \in \mathcal{T}^d$$
is a Maurer-Cartan element.
\end{theorem}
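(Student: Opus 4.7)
My plan follows the same strategy sketched for theorem \ref{RS_mce}: I will exhibit $T$ as the image under a dequantization map of the Maurer--Cartan element provided by the forthcoming theorem \ref{sG_mce}. Since dequantization maps (definition \ref{dq}) are in particular morphisms of dg shifted Lie algebras, this forces $T$ to be Maurer--Cartan, and the combinatorial content of the statement is delegated to that later theorem.

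First I would construct the ordinary--graph analogue of the dequantization map of definition \ref{deq_rib}, namely
$$p: s\mathcal{G}^d\llbracket\hbar\rrbracket \longrightarrow \mathcal{T}^d,$$
as the composition of the projection onto $Sym^1\big(s\mathcal{G}_{c,\bullet}^\vee[d-3]\big)$ at $\hbar = 0$, the map $s\mathcal{G}_{c,\bullet}^\vee \to \mathcal{T}_{c,\bullet}^\vee$ dual to the composite of diagram \ref{inc_proj_2}, and the inclusion into $Sym\big(\mathcal{T}_{c,\bullet}^\vee[d-3]\big)$. Compatibility of $p$ with the shifted Lie brackets is immediate from remark \ref{nütz_2}. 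Compatibility with the differentials amounts to checking, on the basis of connected stable graphs, that the BD corrections $\nabla$, $\delta_i$, and $\hbar\delta_e$ all vanish on the tree, $\hbar = 0$ summand: each such correction either produces a graph with positive first Betti number (and thus not a tree) or carries an explicit factor of $\hbar$.

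Next I would identify $p(S)$, where the element $S$ of theorem \ref{sG_mce} is, in analogy with theorem \ref{sRG_mce}, a sum over isomorphism classes of connected one-vertex stable graphs with no internal edges, indexed by the loop defect and the number of leaves. Summands with positive loop defect are killed by $p$, while the summand with loop defect $0$ and $n \geq 3$ leaves is exactly $T_n$. Hence $p(S) = \sum_{n>2} T_n = T$, and theorem \ref{sG_mce} concludes the proof. The main obstacle is really passed on to theorem \ref{sG_mce}; the only delicate step here is the chain-map verification for $p$, which is purely combinatorial. A hands-on alternative would verify $-dT + \tfrac{1}{2}\{T,T\} = 0$ by matching, on each two-vertex dumbbell tree $\Gamma_{p,q}$, the coefficient coming from $d T_{p+q}$ (dual to edge contraction) with the two ordered gluings contributing to $\tfrac{1}{2}\{T,T\}$; however, the reductive strategy above is cleaner and parallel to the ribbon case.
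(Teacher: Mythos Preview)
Your approach is essentially identical to the paper's: the proof there also defers to theorem \ref{sG_mce} together with remark \ref{prak_2}, which records exactly the computation $p(G)=T$ under the dequantization map of definition \ref{deq_sg} that you reconstruct. The only slips are notational---in the ordinary graph setting the BD corrections are $\Delta_i$ and $\hbar\Delta_e$ (there is no separate $\nabla$), and the Maurer--Cartan element of theorem \ref{sG_mce} is denoted $G$ rather than $S$---but the argument is correct.
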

\begin{proof}
   This will follow from theorem \ref{sG_mce} and remark \ref{prak_2}. 
   Again, we will be able to express $T$ as the image of a Maurer-Cartan element under a map of dg-shifted Lie algebras.
\end{proof}

    Denote by $\mathcal{T}^{d,tw}$ the $(d-2)$-shifted Poisson dg algebra obtained by twisting the $(d-2)$-shifted Poisson dg algebra $\mathcal{T}^d$ by the Maurer-Cartan element $T\in\mathcal{T}^d$.

\subsubsection{Beilinson Drinfeld Structure}
\begin{cons}
   Define a map 
   $$\Delta_i:s\mathcal{G}_{c,\bullet}^\vee\rightarrow s\mathcal{G}_{c,\bullet}^\vee$$
by 
$$\Delta_i(\Gamma)=\sum_{(\#)}\circ_{l_1,l_2}\Gamma.$$
    \begin{itemize}

 \item Here $(\#)$ indicates that we sum over all isomorphism classes of stable graphs $\circ_{l_1,l_2}\Gamma$ whose
 underlying (unoriented) stable graph is obtained by gluing two leafs of $\Gamma$ together, thus creating a new internal edge.
    
   \item The orientation of $\circ_{l_1,l_2}\Gamma$ is defined by taking first the new internal edge and then the given ordering of internal edges of $\Gamma$.
    \end{itemize}
\end{cons}
    Note that the betti number changes as $bt(\circ_{l_1,l_2}\Gamma)=bt(\Gamma)+1$.

We easily verify
\begin{lemma}
    We have $[d,\Delta_i]=0$.
\end{lemma}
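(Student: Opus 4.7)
The plan is to work on the chain complex $s\mathcal{G}_{c,\bullet}$ and verify the adjoint statement $[\partial, \Delta_i^*] = 0$, which by duality yields $[d, \Delta_i] = 0$. The adjoint operator $\Delta_i^*$ acts on an oriented connected stable graph $\Gamma$ by summing over the internal edges $e \in E(\Gamma)$ of the stable graph $\Gamma_{\operatorname{cut} e}$ obtained by cutting $e$ into a pair of leaves, all other combinatorial data (including loop defects) being preserved; the induced orientation places the cut edge in first position, in accordance with the convention used to define $\Delta_i$.

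To verify that the commutator vanishes I would expand both composites as double sums indexed by ordered pairs $(e_1, e_2)$ of \emph{distinct} internal edges of $\Gamma$: in $\partial \Delta_i^*(\Gamma)$ the edge $e_1$ is cut first and then $e_2$ is contracted, while in $\Delta_i^* \partial(\Gamma)$ the edge $e_2$ is contracted first and then $e_1$ is cut. A diagonal term with $e_1 = e_2$ never occurs in either composite, since a cut edge is no longer internal and a contracted edge is no longer available for cutting. For each off-diagonal pair the two elementary operations act on disjoint halves of the graph, hence commute as combinatorial procedures and produce the same underlying unoriented stable graph. Comparing the induced orientations, the two composites differ by a single transposition of $e_1$ and $e_2$ in the ordered list of internal edges, which supplies precisely the sign $-1$ needed for pairwise cancellation.

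The step that requires the most careful bookkeeping is the sign analysis in the degenerate subcases of Definition~\ref{cont_g}, namely when the contracted edge is a loop (so contraction alters a loop defect rather than merging distinct vertices) or when cutting $e_1$ disconnects a component. In each such subcase both operations still depend on the two affected edges only through their positions in the orientation, so the transposition sign is unchanged, and the loop-defect adjustments are additive and therefore identical in either order of operations. This replicates the combinatorial mechanism already used for the stable ribbon graph complex, so no conceptually new input is needed beyond careful case analysis.
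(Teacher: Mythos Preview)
Your approach is correct and is the standard way to verify such identities on graph complexes; the paper itself records no proof, only the phrase ``We easily verify'' before the statement, so there is nothing substantive to compare against.

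One small point of care: since $\Delta_i$ is defined as a map ${s\mathcal{G}_{c,\bullet}}^\vee \to {s\mathcal{G}_{c,\bullet}}^\vee$ on \emph{connected} stable graphs, its adjoint $\Delta_i^*$ must land in connected graphs as well. Hence $\Delta_i^*$ only cuts edges whose removal does not disconnect the graph (non-bridges), rather than summing over all internal edges as you write. This does not damage your argument, because an edge $e_1$ is a bridge in $\Gamma$ if and only if it is a bridge in $\Gamma/e_2$ for any other edge $e_2$ (contracting an edge never changes the bridge status of the remaining edges, whether the contracted edge is a loop or not). So the bridge terms are simply absent from both $\partial\,\Delta_i^*$ and $\Delta_i^*\,\partial$, and your pairwise cancellation via the orientation transposition goes through on the remaining terms. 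Your treatment of the ``disconnecting'' subcase in the final paragraph should be replaced by this observation rather than by a sign argument.
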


We extend $\Delta_i$ to a degree 1 map by first shifting it and then as a derivation to
$$\Delta_i: s\mathcal{G}^d\llbracket\hbar\rrbracket\rightarrow s\mathcal{G}^d\llbracket\hbar\rrbracket
.$$
Denote by $\Delta_e$ the Chevalley-Eilenberg differential on $Sym\big( s\mathcal{G}_{c,\bullet}^\vee[d-3]\big)$
associated to the odd Lie bracket $\{\_,\_\}$ on $s\mathcal{G}_{c,\bullet}^\vee[d-3]$.
Denote by the same latter the induced degree (d-2) map  
$$\Delta_e: s\mathcal{G}^d\llbracket\hbar\rrbracket\rightarrow s\mathcal{G}^d\llbracket\hbar\rrbracket.$$
\begin{lemma}
    We have $[d,\Delta_e]=(\Delta_i)^2=[\Delta_i,\Delta_e]=0$ on $s\mathcal{G}^d\llbracket\hbar\rrbracket.$
\end{lemma}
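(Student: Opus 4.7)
The plan is to follow the pattern of Lemma \ref{rbc} verbatim: the three identities decouple and each reduces to either the Chevalley--Eilenberg formalism or a sign cancellation coming from the orientation-reversal relation. The main (in fact only) technical point is careful bookkeeping of orientations when the operators are extended from $s\mathcal{G}_{c,\bullet}^\vee$ to the full symmetric algebra $s\mathcal{G}^d\llbracket\hbar\rrbracket$.

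For $[d,\Delta_e]=0$ I would simply invoke the construction of $\Delta_e$ as the Chevalley--Eilenberg differential associated to the shifted Lie bracket $\{-,-\}$ on $s\mathcal{G}_{c,\bullet}^\vee[d-3]$. The preceding lemma established that $\{-,-\}$ is compatible with the contraction differential $d$ (so $(s\mathcal{G}_{c,\bullet}^\vee,d,\{-,-\})$ is a shifted dg Lie algebra), and it is a standard fact that the CE differential built from a dg (shifted) Lie algebra commutes with the internal differential. Extending $\hbar$-linearly to $s\mathcal{G}^d\llbracket\hbar\rrbracket$ preserves this commutator, finishing this step.

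For $(\Delta_i)^2=0$ I would expand on a connected stable graph $\Gamma$: $\Delta_i^2(\Gamma)$ is a sum over ordered pairs of leaf-pairs of $\Gamma$ that are glued to form two new internal edges, orientedly inserted in the order they were produced. For every such contribution, performing the two gluings in the opposite order yields the same underlying unoriented stable graph but with the first two internal edges swapped in the orientation. Since reversing orientation is equivalent to multiplying by $-1$, the two contributions cancel pairwise. Extending $\Delta_i$ as a derivation through the Leibniz rule introduces only Koszul signs that do not affect this pairing, so the cancellation persists on all of $s\mathcal{G}^d\llbracket\hbar\rrbracket$.

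The identity $[\Delta_i,\Delta_e]=0$ uses exactly the same cancellation principle. Modulo derivation contributions, $\Delta_e$ glues two leaves belonging to two distinct factors of a symmetric monomial (creating one new edge), while $\Delta_i$ glues two leaves within a single connected factor; the commutator groups the terms where both operations are performed on a common choice of two leaf-pairs, and swapping the order in which they are performed gives the same unoriented graph with the first two new internal edges transposed in the ordering. The orientation-reversal relation then cancels these in pairs. The hardest part, insofar as there is one, is precisely the sign bookkeeping in this last step when both operators are promoted to derivations on $\mathrm{Sym}$: one must check that the Koszul signs produced in reordering tensor factors of the symmetric algebra combine with the edge-ordering sign in exactly the way needed for the pairwise cancellation, in direct analogy with Lemma \ref{rbc}.
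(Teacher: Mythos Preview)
Your proposal is correct and follows exactly the approach the paper takes: the paper's proof simply refers back to Lemma \ref{rbc} and says the argument is analogous, and you have written out precisely that analogue (CE formalism for $[d,\Delta_e]=0$, pairwise cancellation via the orientation-reversal relation for $(\Delta_i)^2=0$ and $[\Delta_i,\Delta_e]=0$). If anything, your version is more detailed than what the paper provides.
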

\begin{proof}
 These facts follows analogously to the stable ribbon graph case, see lemma \ref{rbc}.
\end{proof}
Thus we can infer 
\begin{theorem}\label{BD_G}
The stable graph complex from definition \ref{sG} 
$$\big(s\mathcal{G}^d\llbracket\hbar\rrbracket,\cdot, -d+\Delta_i+\hbar\Delta_e,\{\_,\_\}\big)$$
is a $(d-2)$-twisted BD algebra. 
\end{theorem}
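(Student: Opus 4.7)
The proof is a direct transcription of Theorem~\ref{sRGt} to the setting without ribbon structure, so I would follow the same four-step strategy. First, the graded commutative unital $k\llbracket\hbar\rrbracket$-algebra structure is the free symmetric algebra on $s\mathcal{G}^\vee_{c,\bullet}[d-3]$ adjoined $\hbar$, and the $(d-2)$-shifted Poisson bracket is obtained by taking the degree $(d-2)$ Lie bracket on the generators (inherited from Construction~\ref{iwjs} after the shift) and extending it $\hbar$-linearly and by the Leibniz rule across the symmetric algebra. That the underlying bracket is a dg Lie bracket is the graph analogue of the lemma following Construction~\ref{iwjs}, and it passes to the symmetric algebra by the same standard argument used for the tree complex $\mathcal{T}^d$ in Theorem~\ref{S}.

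Second, I would check that $D:=-d+\Delta_i+\hbar\Delta_e$ is square-zero of degree $+1$. Since all three summands are odd, cross terms in $D^2$ are anticommutators:
$$D^2=d^2-[d,\Delta_i]-\hbar[d,\Delta_e]+\Delta_i^2+\hbar[\Delta_i,\Delta_e]+\hbar^2\Delta_e^2.$$
The vanishing of $d^2$ is built into $s\mathcal{G}_{c,\bullet}$, the four middle terms vanish by the two lemmas immediately above the statement, and $\Delta_e^2=0$ is the standard consequence of the graded Jacobi identity: $\Delta_e$ is the Chevalley--Eilenberg differential attached to the shifted Lie bracket, whose square on the symmetric algebra reproduces the Jacobiator.

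Third, I would verify the BD relation $D(ab)=D(a)b+(-1)^{|a|}aD(b)+\hbar\{a,b\}$ summand by summand. By construction, $-d$ and $\Delta_i$ are extended from $s\mathcal{G}_{c,\bullet}^\vee$ as derivations of the symmetric algebra, so they contribute only Leibniz terms. The operator $\Delta_e$, as the Chevalley--Eilenberg differential associated to the degree $(d-2)$ Lie bracket on the generators, satisfies
$$\Delta_e(ab)=\Delta_e(a)b+(-1)^{|a|}a\Delta_e(b)+\{a,b\}$$
by its very definition; multiplying by $\hbar$ produces exactly the $\hbar\{a,b\}$ term required in the BD axiom. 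The $k\llbracket\hbar\rrbracket$-linearity of bracket and differential is transparent from the definitions.

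The only genuinely combinatorial input required is the anticommutation of $d$, $\Delta_i$, and $\Delta_e$ with one another, recorded in the two lemmas preceding the statement. These identities are established by the same sign-cancellation mechanism as in Lemma~\ref{rbc}: for instance, $\Delta_i^2$ applied to a stable graph $\Gamma$ produces a sum of graphs in which each pair of summands shares the same underlying unoriented graph but differs in the ordering of the first two freshly created edges, and these pairs cancel modulo the orientation relation. Since stable graphs carry no face data or cyclic orderings at vertices, these cancellations are strictly simpler than the ribbon analogues of Theorem~\ref{sRGt}, and no new ingredient is required.
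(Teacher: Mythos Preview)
Your proof is correct and follows essentially the same approach as the paper's own proof, which also deduces square-zero from the preceding lemmas and the BD relation from the fact that $-d,\Delta_i$ are derivations while $\Delta_e$ is the Chevalley--Eilenberg differential attached to $\{\_,\_\}$. Your version simply makes the expansion of $D^2$ and the role of each lemma more explicit.
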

\begin{proof}
 Indeed the fact that $-d+\Delta_i+\hbar\Delta_e$ defines a differential of degree 1 that squares to zero follows from the previous lemmas.
 The BD relation follows since $\Delta_i$ and $d$ are derivations and by the definition of $\Delta_e$ as the Chevalley Eilenberg differential associated to $\{\_,\_\}$. 
\end{proof}

    By abuse of notation we also denote by $s\mathcal{G}^d\llbracket\hbar\rrbracket$ this (d-2 twisted) BD algebra. 

\begin{df}\label{deq_sg}
Denote by 
\begin{equation}\label{cgdq}
p:s\mathcal{G}^d_{B}\llbracket\hbar\rrbracket\rightarrow {\mathcal{T}^{d}_{B}}
\end{equation}
the composite 
$$Sym\big( s\mathcal{G}_{c,\bullet}^\vee[d-3]\big)\llbracket \hbar\rrbracket\rightarrow Sym^1\big( s\mathcal{G}_{c,\bullet}^\vee[d-3]\big)\cong s\mathcal{G}_{c,\bullet}^\vee[d-3]\rightarrow \mathcal{T}_{c,\bullet}^\vee[d-3]\rightarrow Sym\big(\mathcal{T}_{c,\bullet}^\vee[d-3]\big).$$
Here the first map is the projection on symmetric words of length one and the $\hbar=0$ part.
The third map in the second line is the one induced by the curved map of diagram \ref{inc_proj_2} and the last map is just the inclusion. 
\end{df}
\begin{lemma}
    For every set $B$ 
$$p:s\mathcal{G}^d_{B}\llbracket\hbar\rrbracket\rightarrow {\mathcal{T}^{d}_{B}}$$
defines a dequantization map in the sense of definition \ref{dq}.
\end{lemma}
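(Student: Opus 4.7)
The plan is to verify the three conditions of Definition \ref{dq}: that $p$ is degree-zero, is a chain map with respect to the BD differential on the source and the bracket differential on the target, and intertwines the shifted Lie brackets. Degree zero is immediate from the shifts built into the construction (the $Sym^1 \to s\mathcal{G}^\vee_{c,\bullet}[d-3]$ isomorphism is the identity, and the restriction $s\mathcal{G}^\vee_{c,\bullet}[d-3] \to \mathcal{T}^\vee_{c,\bullet}[d-3]$ has degree zero by construction). The nontrivial work is in the last two conditions, and the argument is a direct analogue of the ribbon graph case handled after Definition \ref{deq_rib}.

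For the chain map condition, I would argue componentwise on the bigrading by symmetric word length and powers of $\hbar$. The map $p$ kills everything outside $Sym^1 \cdot \hbar^0$, so it suffices to check that the $Sym^1 \cdot \hbar^0$ component of $(-d + \Delta_i + \hbar \Delta_e)(x)$ is mapped to $-d(p(x))$. The piece $\hbar \Delta_e$ increases the power of $\hbar$ by one, so its image is killed by the $\hbar=0$ projection. The operator $\Delta_i$ acts as a derivation on $Sym$, so preserves word length; on a generator $\Gamma^\vee$ it produces a sum of $(\circ_{l_1,l_2}\Gamma)^\vee$ each with Betti number $bt(\Gamma)+1 \geq 1$, hence not supported on trees, and thus killed by the restriction map $s\mathcal{G}^\vee_{c,\bullet} \to \mathcal{T}^\vee_{c,\bullet}$. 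Finally, on $Sym^1\cdot \hbar^0$ the differential $-d$ is (the shift of) the dual of the edge contraction $\partial$, and the commutativity with the restriction map is precisely the statement that $\mathcal{T}_{c,\bullet} \hookrightarrow s\mathcal{G}_{c,\bullet}$ is a chain map, which is the content of diagram \eqref{inc_proj_2}.

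For the shifted Lie bracket compatibility, the bracket on both sides is extended from the generator-level bracket via the Leibniz rule, so $\{Sym^n, Sym^m\} \subseteq Sym^{n+m-1}$ and the bracket preserves the $\hbar$-grading. If either argument lies outside $Sym^1 \cdot \hbar^0$, then $p$ vanishes on it, so the right-hand side vanishes; correspondingly, the only way $\{a,b\}$ can land in $Sym^1 \cdot \hbar^0$ is if $n = m = 1$ and both factors have $\hbar$-weight zero, so the left-hand side vanishes outside this case as well. On $Sym^1 \otimes Sym^1$ at $\hbar = 0$, compatibility reduces to the statement that the restriction $s\mathcal{G}_{c,\bullet}^\vee \to \mathcal{T}_{c,\bullet}^\vee$ intertwines the generator-level brackets, which is exactly Remark \ref{nütz_2}.

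No single step is really a main obstacle — all of it is bookkeeping once one has Remark \ref{nütz_2} and the chain map property of the inclusion $\mathcal{T}_{c,\bullet} \hookrightarrow s\mathcal{G}_{c,\bullet}$ in hand. The only point that deserves care is checking that $\Delta_i$ and $\Delta_e$ never sneak a non-tree into the tree complex after projection: the Betti-number calculation for $\Delta_i$ and the $\hbar$-weight shift for $\Delta_e$ handle these two cases, respectively, and together they guarantee that the only surviving part of the BD differential under $p$ is the edge-contraction piece that matches the differential on $\mathcal{T}^d_B$.
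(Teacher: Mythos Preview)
Your proof is correct and follows essentially the same approach as the paper's own proof: the bracket compatibility is reduced to Remark \ref{nütz_2}, $\hbar\Delta_e$ is killed by the $\hbar=0$ projection, and $\Delta_i$ is killed because it raises the Betti number and hence lands outside the tree part. Your write-up is simply more explicit about the $Sym$-grading and degree bookkeeping than the paper's terse verification.
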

\begin{proof}
The fact that $p$ intertwines the shifted Poisson bracket follows from remark \ref{nütz_2}.
The fact that $p$ intertwines the BD differential with the differential of ${\mathcal{T}^{d}}$ can be verified directly:
$\Delta_i$ gets send to zero under the projection to ribbon trees since it produces a graph with betti number bigger or equal to one.
Trivially $\hbar\Delta_e$ gets send to zero since it involved a positive power of $\hbar$.
\end{proof}

 Denote by ${G}_{n,g}$ the isomorphism class of the stable ribbon graph with one vertex of loop defect $g$, no internal edges and $n$ leafs.
 Define
 $${G}:=\sum_{n>0}\sum_{g\geq0}G_{n,g}.$$
\begin{theorem}\label{sG_mce}
The corresponding element 
$${G}\in s\mathcal{G}^d\llbracket\hbar\rrbracket$$
is a Maurer-Cartan element.
\end{theorem}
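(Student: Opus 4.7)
The plan is to verify the Maurer-Cartan equation
\[(-d + \Delta_i + \hbar\Delta_e)\,G + \tfrac{1}{2}\{G,G\} = 0\]
in $s\mathcal{G}^d\llbracket\hbar\rrbracket$, by first reducing it to a chain-level identity on one-edge stable graphs, then checking that identity combinatorially.

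First I would make two elementary reductions. Each corolla $G_{n,g}$ has no internal edges, so the edge-contracting differential $d$ annihilates $G$. The Chevalley-Eilenberg derivation $\Delta_e$ associated to the odd Lie bracket $\{\_,\_\}$ on $s\mathcal{G}_{c,\bullet}^\vee[d-3]$ lowers symmetric word length by one, and hence kills the length-one symmetric word $G=\sum_{n,g}G_{n,g}$. The Maurer-Cartan equation therefore reduces to
\[\Delta_i\,G + \tfrac{1}{2}\{G,G\} = 0,\]
and every surviving term is a length-one symmetric word whose underlying connected stable graph has exactly one internal edge.

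The main step is a careful combinatorial enumeration of isomorphism classes of one-edge connected stable graphs and a matching of the coefficients contributed by each side. Contributions split along the structural type of the one-edge output: single-vertex graphs with a self-loop, hit by $\Delta_i$ applied to a single corolla via self-gluing of two leaves; and two-vertex graphs with a bridge, hit by $\tfrac{1}{2}\{G_{n,g},G_{m,h}\}$ via cross-gluing of one leaf from each of two corollas. For each output type one must track the orientation convention ($\Delta_i$ places the new edge first, whereas $\{\_,\_\}$ places it between the two old orderings), the shifted anti-symmetry of the bracket, the stability constraints on vertices, and the automorphism multiplicities of both the inputs $G_{n,g}$ and the one-edge output, and check that everything assembles to zero.

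The main obstacle is precisely this bookkeeping of signs and automorphism factors, especially because the two structural regimes (self-loop versus bridge) a priori live in disjoint subspaces of one-edge graphs; one must identify how they communicate through contraction of the new edge, which by Definition~\ref{cont_g} relates a bridge to the corolla obtained by merging its two endpoints and a self-loop to the corolla with its loop-defect raised by one. An alternative and more conceptual route, sketched in Subsubsection~1.3.2, is via Barannikov's Theorem~1 of \cite{Ba10}: Maurer-Cartan elements in a dg shifted Lie algebra of the form induced from a modular operad correspond bijectively to algebras over its Feynman transform, so identifying $s\mathcal{G}$ (up to totalization) as the Feynman transform of a modular extension of $Com$ would present $G$ as the universal such Maurer-Cartan element, making the Maurer-Cartan property automatic.
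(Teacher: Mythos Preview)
Your reduction $\Delta_eG=0$ is fine, but the claim that $dG=0$ is wrong and fatal. The differential $d$ on $s\mathcal{G}^d\llbracket\hbar\rrbracket$ is defined by dualising the edge-contracting differential $\partial$ on $s\mathcal{G}_{c,\bullet}$; in the dual basis of graphs this is the edge-\emph{expanding} operator, as the paper states explicitly just after Definition~\ref{tc}. Thus $dG_{n,g}$ is not zero but the sum over all isomorphism classes of one-edge stable graphs whose unique edge contracts (via Definition~\ref{cont_g}) to $G_{n,g}$: the bridge-type graphs $X$ (rule~(1)) and the self-loop graphs $Y$ (rule~(2)).

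Once you drop the $-dG$ term, the identity you are left with, $\Delta_iG+\tfrac12\{G,G\}=0$, is simply false: $\Delta_iG$ is a sum of one-vertex self-loop graphs and $\tfrac12\{G,G\}$ is a sum of two-vertex bridge graphs, and these live in disjoint subspaces, so nothing cancels. The correct argument, which is what the paper does, is to show
\[
dG=\tfrac12\{G,G\}+\Delta_iG,
\]
by observing that both sides enumerate the same set of isomorphism classes of one-edge stable graphs: the $X$-summands of $dG$ (edge-expansion producing a bridge) match $\tfrac12\{G,G\}$ (leaf-gluing two corollas), and the $Y$-summands of $dG$ (edge-expansion producing a loop) match $\Delta_iG$ (self-gluing one corolla). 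Your final paragraph in fact contains the relevant contraction/expansion dictionary; you just failed to recognise that this dictionary \emph{is} the action of $d$.
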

\begin{proof}
Let us denote by $X$ a representative of a stable graph with two vertices and one internal edge connecting these and
by $Y$ a representative of a stable graph that has one vertices and one internal edge (that is thus a loop).
 We have by definition that $$dG_{n,g}=\sum_{(i)} X+\sum_{(ii)} Y. $$   
 Here $(i)$ denotes the sum over all isomorphism classes of stable  graphs $X$ such that when contracting that internal edge according to rule (\ref{cont_g}-1.) the resulting graph is isomorphic to $G_{n,g}$.
 Further $(ii)$ denotes the sum over all isomorphism classes of stable graphs $Y$ such that when contracting that loop according to rule (\ref{cont_g}-2.) we recover $G_{n,g}$.
 On the other hand we have, also by definition \ref{iwjs}, and if $G_{n_1,g_1}\neq G_{n_2,g_2}$ that $$\{G_{n_1,g_1},G_{n_2,g_2}\}=\sum_{(a)}  X $$ 
and the same if we swap the entries of the bracket.
Here $(a)$ indicates that the sum is over all isomorphism classes of graphs $X$ such that when cutting its one edge the resulting stable graph is the disjoint union of $G_{n_1,g_1}$ and $G_{n_2,g_2}$.
Further we have  
$$\{G_{n_1,g_1},G_{n_1,g_1}\}=2\sum_{(a)}  X $$
where $(a)$ stands for the analogous indexing set from before. Then we have that 
$$\Delta_i(G_{n,g})=\sum_{(b)}Y$$
where $(b)$ indicates that the sum is over all isomorphism classes of stable graphs $Y$ such that when cutting its one loop the resulting stable graph is $G_{n,g}$. Trivially it follows that $$\Delta_e(G_{n,g})=0.$$ Next we observe, with the same notation, that 
$$\sum_{n>0} \sum_{g\geq 0}\ \sum_{(i)}X=\sum_{Iso.Cls(X)}X=\frac{1}{2} \sum_{n_1,n_2>0} \sum_{g_1,g_2\geq0}\ 
\sum_{(a)}  X$$
and that
$$\sum_{n>0}\ \sum_{g\geq0}\ \sum_{(ii)}Y=\sum_{Iso.Cls(Y)}Y= \sum_{n>0} \sum_{g\geq0}\ \sum_{(b)}  Y.$$
Here the sum in the middle is over all isomorphism classes of type $X$ and $Y$ respectively, for all $n>0$ and $g\geq0$.
We can regroup these sums in the middle in two different ways, respectively (namely $(i)$ and $(a)$ respectively $(ii)$ and $(b)$).
Summing over the indexing set of this regrouping of the terms in that group thus equals the original sum.

Taking the previous arguments together it follows that
$$dG=\frac{1}{2}\{G,G\}+\Delta_iG+\hbar\Delta_eG,$$
which is what we wanted to show.
\end{proof}
\begin{remark}\label{prak_2}
Under the dequantization map from definition \ref{deq_sg} we have that $p(G)=T$.   
\end{remark}
    Denote by  $s\mathcal{G}^{d,tw}\llbracket\hbar\rrbracket$ the 
    $(d-2)$-twisted BD algebra obtained by twisting $s\mathcal{G}^{d}\llbracket\hbar\rrbracket$ by the Maurer-Cartan element $G$.

Thus by the previous remark we also get a dequantization map
$$p: s\mathcal{G}^{d,tw}\llbracket\hbar\rrbracket\rightarrow \mathcal{T}^{d,tw}. $$
 Note that we can restrict the maps $\Delta_i$ and $\Delta_e$ to graphs.
 Exactly as in theorem \ref{BD_G} we can deduce that this defines a $(d-2)$-twisted BD algebra on the graph complex.
 Furthermore diagram \ref{inc_proj_2} induces a map of BD algebras
$$v:\mathcal{G}^d\llbracket\hbar\rrbracket\rightarrow s\mathcal{G}^d\llbracket\hbar\rrbracket$$
and a dequantization map, defined exactly as in \ref{cgdq}
$$p:\mathcal{G}^d\llbracket\hbar\rrbracket\rightarrow \mathcal{T}^d.$$
Summarizing, we have 
\begin{theorem}
The graph complex $\big(\mathcal{G}^d\llbracket\hbar\rrbracket,\cdot,-d+\Delta_i+\hbar\Delta_e,\{\_,\_\}\big)$
is a $(d-2)$-twisted BD algebra. Furthermore diagram \ref{inc_proj_2} induces a diagram

 \begin{equation}\label{ag_dq} \begin{tikzcd}
\mathcal{T}^d&\mathcal{G}^d\llbracket\hbar\rrbracket\arrow[swap]{l}{p}\arrow{r}{v}&
s\mathcal{G}^d\llbracket\hbar\rrbracket\arrow[bend left=25]{ll}{p}
    \end{tikzcd},
    \end{equation}
    where the arrows pointing to $\mathcal{T}^d$ denote dequantization maps and the third one denotes a map of BD algebras.
\end{theorem}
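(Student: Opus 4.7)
The plan is to bootstrap everything from the stable graph case (theorem \ref{BD_G}) via the chain-level maps of diagram \eqref{inc_proj_2}.

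For the first assertion, that $\mathcal{G}^d\llbracket\hbar\rrbracket$ is a $(d-2)$-twisted BD algebra, I would observe that each of the operators $\Delta_i$, $\Delta_e$, $\{\_,\_\}$, and the multiplication are described purely by half-edge manipulations (leaf-gluing and disjoint union) and hence preserve the condition of having identically zero loop-defect. Concretely, gluing two leaves of ordinary graphs always produces an ordinary graph: the result may acquire new edges, even self-loops of honest vertices, but never a formal loop-defect counter. The differential $d$ on $\mathcal{G}_{c,\bullet}$ is by construction the $\pi$-projection of $\partial$ on $s\mathcal{G}_{c,\bullet}$, and dually $d$ on $\mathcal{G}^d\llbracket\hbar\rrbracket$ is the restriction of the stable-graph differential. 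The identities $[d,\Delta_i]=[d,\Delta_e]=\Delta_i^2=[\Delta_i,\Delta_e]=0$ and the BD Leibniz relation then follow by verbatim repetition of the proofs used for theorem~\ref{BD_G}.

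For the second assertion, I would first define the two maps. Dualizing $\pi:s\mathcal{G}_{c,\bullet}\to\mathcal{G}_{c,\bullet}$ and extending $\mathrm{Sym}$- and $\hbar$-linearly gives $v:\mathcal{G}^d\llbracket\hbar\rrbracket\to s\mathcal{G}^d\llbracket\hbar\rrbracket$. Dualizing the inclusion $\mathcal{T}_{c,\bullet}\hookrightarrow\mathcal{G}_{c,\bullet}$, composing with projection onto the $\mathrm{Sym}^1$, $\hbar=0$ summand and with the inclusion into the symmetric algebra, gives $p:\mathcal{G}^d\llbracket\hbar\rrbracket\to\mathcal{T}^d$, precisely mirroring definition~\ref{deq_sg}. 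The map $v$ is multiplicative and $\hbar$-linear by construction; it is a chain map since $\pi$ is a chain map by definition of the differential on $\mathcal{G}_{c,\bullet}$; it respects the bracket by remark~\ref{nütz_2}; it respects $\Delta_i$ since gluing two leaves of a single ordinary graph yields an ordinary graph; and consequently it respects $\Delta_e$. Hence $v$ is a map of $(d-2)$-twisted BD algebras. For the dequantization map $p$, chain-map-ness follows from $\mathcal{T}_{c,\bullet}\subset\mathcal{G}_{c,\bullet}$ being a subcomplex (the contraction of an edge of a tree is again a tree), degree-zero-ness is immediate, and compatibility with the shifted Lie bracket follows because any leaf-gluing involving a non-tree factor inherits that factor's cycles, so the bracket of a tree with a non-tree is supported on non-trees and lies in $\ker p$. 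The triangle commutes: the identity $p=p_{s\mathcal{G}}\circ v$ is literally dual to $\pi\circ\iota_{s\mathcal{G}}=\iota_{\mathcal{G}}$, which is the commutativity asserted in diagram~\eqref{inc_proj_2}.

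No step is genuinely difficult once the stable case is in hand; the one point to keep straight throughout is the distinction between honest loop edges (freely allowed in $\mathcal{G}_{c,\bullet}$) and the formal loop-defect counter $o(v)$ (identically zero for graphs), since this is precisely what controls which contractions survive the projection $\pi$ and hence which terms of $d$ act on $\mathcal{G}^d\llbracket\hbar\rrbracket$.
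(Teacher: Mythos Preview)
Your proof is correct and follows essentially the same approach as the paper, which itself only sketches the argument by saying ``exactly as in theorem \ref{BD_G}'' and ``defined exactly as in \ref{cgdq}''. You have supplied more detail than the paper does, particularly in verifying that $v$ respects each piece of the BD structure and in spelling out the commutativity of the triangle via duality of diagram \eqref{inc_proj_2}; the one small omission is that you do not explicitly say why $\Delta_i$ and $\hbar\Delta_e$ vanish under $p$ (the former because self-gluing a tree produces positive Betti number, the latter because of the $\hbar$-power), but this is immediate from the definitions you invoke.
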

\begin{remark}
    Note that the Maurer-Cartan element G does not come from an element in $\mathcal{G}^d\llbracket\hbar\rrbracket$ under the map $v$.
\end{remark}
\subsection{`Comm to Assoc'}\label{vsgzg}
Here we explain how the objects from the previous two subsections are connected.
For this we need that $B$, the set decorating the stable ribbon graphs, is finite.
We denote by
\begin{equation}\label{sRGsG}pr: s\mathcal{RG}^B_{c,\bullet}\rightarrow s\mathcal{G}_{c,\bullet} \end{equation}
the map given by sending an oriented B-colored stable ribbon graph to the underlying oriented graph, which we endow with the structure of a stable graph
by defining the loop defect at a vertex $v$ by
$$o(v)=2g(v)+\sum_{\lambda\in B}b_\lambda(v)+c(v)-1.$$
\begin{lemma}\label{fal}
For every finite set $B$
$$pr: s\mathcal{RG}^B_{c,\bullet}\rightarrow s\mathcal{G}_{c,\bullet}$$
is a map of chain complexes.
\end{lemma}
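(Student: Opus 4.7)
The plan is to verify the chain-map identity $pr\circ\partial=\partial\circ pr$ on the basis of isomorphism classes of $B$-colored oriented stable ribbon graphs. Because $pr$ preserves all the combinatorial data that enters the graph differential (the half-edge set, the involution $\sigma_1$, the vertex partition, and the ordering of internal edges that defines the orientation), the identity reduces, edge by edge, to a comparison at the level of the loop defect of the new vertex. Concretely, after fixing a stable ribbon graph $\Gamma$ and an internal edge $e$, both $pr(\Gamma/e)$ and $(pr\,\Gamma)/e$ have the same underlying graph with the same edge ordering, so the content of the identity is that the value of $o$ on the new vertex(-ices), computed via $o(v)=2g(v)+\sum_\lambda b_\lambda(v)+c(v)-1$ from the ribbon data of $\Gamma/e$, agrees with the rule of Definition~\ref{cont_g}: additivity of loop defects for a non-loop edge, and a $+1$-shift for a loop.

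I would then work through the three items of Definition~\ref{cont}. In each main sub-case (cycles coalescing for a non-loop edge, with data $(g_1{+}g_2,\ b_{\lambda,1}{+}b_{\lambda,2},\ c(v_1){+}c(v_2){-}1)$; genus incremented for a loop between distinct cycles, giving $(g{+}1,\ b_\lambda,\ c{-}1)$; cycle split for a loop within one cycle, giving $(g,\ b_\lambda,\ c{+}1)$) the formula immediately returns $o(v_1)+o(v_2)$ or $o(v)+1$ as required. The degenerate ``empty face'' sub-cases, where a single-half-edge cycle is annihilated and a boundary defect $b_{\lambda_0}$ is incremented by one (or by two), require a little more care but collapse under the same formula: each $+1$ in a boundary defect exactly compensates the extra $-1$ in $c$ coming from the deletion of a cycle, so the bookkeeping still yields $o_{\mathrm{new}}=o(v_1)+o(v_2)$ or $o_{\mathrm{new}}=o(v)+1$. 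The amount of case work here is unavoidable but mechanical, and the matching of orientations is automatic because in both complexes the edge ordering on $\Gamma/e$ is the restriction of that on $\Gamma$, and the sign convention (reversal $=$ multiplication by $-1$) is the same on both sides.

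The one step that requires genuine care is matching the ``cannot be contracted'' clauses of Definition~\ref{cont}: the final clause of item~1 (where $c_1,c_2$ are single half-edges and are the only cycles of $v_1,v_2$) and the final clause of item~3 (where the cycle $c$ consists of just the two half-edges of the loop and is the only cycle at its vertex). In both situations the ribbon differential contributes zero by definition, while a naive application of the graph contraction rule of Definition~\ref{cont_g} would produce a vertex with an empty half-edge set. Since $V(\Gamma)$ is, by Definition~\ref{sgrdf}, a partition of the finite set $H$ of half-edges, a block of size zero is not admissible, so the would-be contracted stable graph does not exist and the corresponding term of $\partial(pr\,\Gamma)$ is also zero. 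Once this compatibility is pinned down, the chain-map identity holds edge by edge on every basis element, which is the claim.
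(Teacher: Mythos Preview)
Your proof is correct and takes essentially the same approach as the paper's, which merely notes that this generalizes the standard fact for ordinary ribbon graphs and that compatibility with edge contraction follows by a ``straightforward case-by-case analysis.'' You have actually carried out that analysis in detail, including the careful treatment of the uncontractible-edge clauses (via the observation that empty partition blocks are disallowed), which the paper leaves implicit.
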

\begin{proof}
This is a slight generalization of the standard fact that the  map \begin{equation}\label{rib_g}
\mathcal{RG}_{\bullet,c}\rightarrow \mathcal{G}_{\bullet,c}\end{equation}  
sending a ribbon graph to its underlying graph is a map of chain complexes. The lemma is proven in the same way by noting that the differential both on $s\mathcal{RG}^B_{c,\bullet}$ and on $s\mathcal{G}_{c,\bullet} $ is given by all ways of contracting an edge, independent of whether it is considered as part of a stable ribbon graph or a stable graph. The fact that the differential is compatible with the other combinatorial structures can be verified by a straightforward case-by-case analysis.
\end{proof}
Let us consider the induced map, which we denote by$$pr^\vee:s\mathcal{G}_{c,\bullet}^\vee\rightarrow {s\mathcal{RG}^B_{c,\bullet}}^\vee.$$
In the dual basis it is given by
\begin{equation}\label{ebieb}pr^\vee(\Gamma)=\sum_{Q}\Bar{\Gamma}
\end{equation}
where $Q$ indicates that we sum over all isomorphism classes of stable ribbon graphs whose underlying stable graph is $\Gamma$ and $\Bar{\Gamma}$ denotes the corresponding class.
\begin{lemma}\label{kId}
We have that  
$$\{pr^\vee(\_),pr^\vee(\_)\}=pr^\vee(\{\_,\_\})\ \
\text{and}\ \  pr^\vee\circ\Delta_i=(\delta_i+\nabla)\circ pr^\vee.$$
\end{lemma}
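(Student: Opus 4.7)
The plan is to verify both identities by constructing an explicit bijection between the stable ribbon graphs appearing on each side of the equation, and then checking that the combinatorial data (genus defects, boundary defects, cyclic orderings, orientations) match under this bijection.

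First I would establish the following general principle: the dual map $pr^\vee$ sends a stable graph $\Gamma$ to the sum over all stable ribbon graph lifts (as in \eqref{ebieb}), and both $\{\_,\_\}$ and $\Delta_i$/$(\delta_i+\nabla)$ act by creating a single new internal edge via gluing two leafs. Therefore a term on either side of an identity is indexed by a pair (lift data, gluing data) and I only need to check that the two indexing sets biject and produce equal summands.

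For the first identity, I would show that there is a bijection between \emph{(a)} pairs consisting of a stable ribbon graph lift $\bar{\Gamma}_i$ of $\Gamma_i$ together with a choice of leafs $(l_1,l_2)$, one from each, glued via the bracket; and \emph{(b)} pairs consisting of a gluing $\Gamma_1\cup_{l_1,l_2}\Gamma_2$ together with a stable ribbon graph lift of it. The bijection is the obvious one: a lift of the glued graph restricts to lifts of each factor, and the cyclic/face data at the glued vertex is determined by the cyclic orderings at the two glued vertices plus the location of the new half-edges. Since $B$ is finite the coloring data on free boundaries is preserved under this correspondence. The orientation conventions agree because in both cases the newly-created edge is inserted in the third position of the edge ordering: ordering of $\bar{\Gamma}_1$, new edge, ordering of $\bar{\Gamma}_2$ (and similarly on the right).

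For the second identity, I would observe that a leaf-gluing of a \emph{stable ribbon graph} lift $\bar{\Gamma}$ of $\Gamma$ either joins two leafs in the same face (contributing to $\nabla\bar{\Gamma}$) or in distinct faces (contributing to $\delta_i\bar{\Gamma}$); together these exhaust all leaf-pair gluings, which matches the unrestricted gluing in $\Delta_i\Gamma$. Hence the bijection is between \emph{(a)} a lift $\bar{\Gamma}$ of $\Gamma$ together with a gluing of two of its leafs; and \emph{(b)} a gluing of two leafs of $\Gamma$ together with a lift of the resulting stable graph. Here the key computation is that the formula $o(v)=2g(v)+\sum_\lambda b_\lambda(v)+c(v)-1$ is compatible with contraction/gluing rules in Definitions \ref{cont} and \ref{cont_g}: one must case-check what happens when the two glued leafs are the unique half-edges of their cycles (triggering the face annihilation rule that bumps $b_\lambda$ or $g$) versus when they are not (triggering cycle coalescence or splitting, changing $c(v)$ or $g(v)$). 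In every case the change to $o(v)$ computed via the stable graph rule equals the change induced from the stable ribbon graph rule via the formula relating $o$ to $(g,b,c)$. Orientations again agree since both gluings prepend the new edge to the existing edge ordering.

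The main obstacle is precisely this case-by-case bookkeeping for the second identity: the stable graph rule for contracting/gluing a loop is uniform, while the stable ribbon graph rule has several subcases depending on whether half-edges are the sole members of their cycle and whether the relevant face has coloring data. The crux is therefore the verification that, summing over all stable ribbon graph lifts of a single stable graph $\Delta_i\Gamma/e$, one recovers exactly the collection of graphs produced by first lifting $\Gamma$ and then applying $\delta_i+\nabla$; this is ultimately a combinatorial identity reflecting that the loop defect $o(v)$ of the stable graph quotient precisely encodes all possible arrangements of genus, boundary, and cycle data at the vertex, weighted with multiplicity one in both pictures.
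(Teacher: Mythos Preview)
Your overall strategy matches the paper's: both sides are sums over isomorphism classes of stable ribbon graphs, and the bijection you describe between ``lift then glue'' and ``glue then lift'' is exactly the content of the paper's one-line observation that cutting an edge and projecting to the underlying stable graph commute. Your treatment of the first identity is correct and essentially identical to the paper's.

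For the second identity, however, you have misidentified what needs to be checked. The operations $\nabla$ and $\delta_i$ on stable ribbon graphs (and $\Delta_i$ on stable graphs) simply take two leafs and declare them to be the two half-edges of a new internal edge; they do \emph{not} alter any vertex data. No cycles coalesce or split, no genus or boundary defects change, and no faces are annihilated --- those phenomena belong to the edge-contraction differential of Definitions~\ref{cont} and~\ref{cont_g}, not to leaf-gluing. Consequently the compatibility of the formula $o(v)=2g(v)+\sum_\lambda b_\lambda(v)+c(v)-1$ with contraction is irrelevant here (it is what one checks for Lemma~\ref{fal}, not for the present lemma). The only point that matters is the one you do state correctly: every pair of leafs of a stable ribbon graph lies either in the same face or in distinct faces, so $\nabla+\delta_i$ accounts for all self-gluings, matching the unrestricted self-gluing $\Delta_i$ on the stable graph side. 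Once that is said, the bijection is immediate and the case analysis you propose is unnecessary.
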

\begin{proof}
Given $\Gamma_1$ and $\Gamma_2$ stable graphs we have that $\{pr^\vee(\Gamma_1),pr^\vee(\Gamma_2)\}$ is the sum over all stable ribbon graphs such that
when cutting one edge the result is the disjoint union of stable ribbon graph whose underlying stable graphs (in the sense of lemma \ref{fal}) are $\Gamma_1$ respectively $\Gamma_2$.
But this is the same as the sum over all stable ribbon graphs such that when cutting one internal edge of their underlying stable graphs the result is the disjoint union of $\Gamma_1$ respectively $\Gamma_2$.
In other words `cutting an edge and projecting to the underlying stable graph commutes´.
This is the first identity and the second one follows analogously. 
\end{proof}

\begin{cons}
Let us consider an auxiliary map of degree $(d-3)$ induced by the map $pr^\vee$ and suitable shifts 
$$s\mathcal{G}_{c,\bullet}^\vee[d-3]\rightarrow {s\mathcal{RG}^B_{c,\bullet}}^\vee[2d-6].$$
It induces a map 
$$aux:Sym(s\mathcal{G}_{c,\bullet}^\vee[d-3])\rightarrow Sym({s\mathcal{RG}^B_{c,\bullet}}^\vee[2d-6])$$
which has degree $n(d-3)$ on the subspace $Sym^n(s\mathcal{G}_{c,\bullet}^\vee[d-3])$.
\end{cons}
\begin{df}
We define a map of degree zero
\begin{equation}\label{isjk}
\pi: Sym(s\mathcal{G}_{c,\bullet}^\vee[d-3])\llbracket\hbar\rrbracket\rightarrow Sym({s\mathcal{RG}^B_{c,\bullet}}^\vee[2d-6])\llbracket\gamma\rrbracket[3-d]
\end{equation}
by sending 
$$(\Gamma_1\cdot\ldots \cdot\Gamma_n){\hbar}^{n+2k-1}\mapsto aux(\Gamma_1\cdot\ldots \cdot\Gamma_n)\gamma^k,$$
composed by a shift of $(3-d)$ and zero on elements not of this form.
\end{df}
\begin{theorem}\label{GAL}
For every finite set $B$ the map from definition \ref{isjk} is a map of dg shifted Lie algebras
 $$\pi:\Big(s\mathcal{G}^d\llbracket\hbar\rrbracket,-d+\Delta_i+\hbar\Delta_e,\{\_,\_\}\Big)\rightarrow \Big( s\mathcal{RG}^d_B\llbracket\gamma\rrbracket,-d+\nabla+\delta_i+\gamma\delta_e,\{\_,\_\}\Big).$$   
   \end{theorem}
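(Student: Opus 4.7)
The plan is to reduce the whole statement to the two linear identities packaged in Lemma \ref{kId}, together with the fact that $pr^\vee$ is a chain map (Lemma \ref{fal}), by exploiting that all the relevant operators on either side are either $\hbar$- (resp.\ $\gamma$-)linear derivations of the underlying symmetric algebra (the differentials $d$, $\Delta_i$, $\nabla+\delta_i$, and the Poisson brackets) or are Chevalley--Eilenberg differentials ($\Delta_e$, $\delta_e$). First I would verify degree counting: with $|\hbar|=3-d$ and $|\gamma|=6-2d$, the exponent $n+2k-1$ is exactly what is needed to balance the shift $[d-3]$ on the source against the shifts $[2d-6]$ and $[3-d]$ on the target over an $n$-fold symmetric product. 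Viewing $\pi$ as $aux$ on $Sym^n$ combined with the substitution $\hbar^{n+2k-1}\mapsto\gamma^k$, this yields a well-defined degree zero $k$-linear map.

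For the differential compatibility I would check the three terms separately. The edge-contraction differential preserves both $n$ and the power of the formal variable, so $\pi\circ d=d\circ\pi$ is the extension along symmetric products of the chain-map property of $pr^\vee$. Similarly $\Delta_i$ and $\nabla+\delta_i$ preserve both $n$ and the power of the formal variable and are derivations, so $\pi\circ\Delta_i=(\nabla+\delta_i)\circ\pi$ follows from extending the second identity of Lemma \ref{kId} via the Leibniz rule. For the Chevalley--Eilenberg terms, $\Delta_e$ decreases $n$ by one and $\hbar\Delta_e$ raises the power of $\hbar$ by one, sending a term $(\Gamma_1\cdots\Gamma_n)\hbar^{n+2k-1}$ into terms of shape $(\cdots)\hbar^{(n-1)+2(k+1)-1}$; under $\pi$ this becomes a sum weighted by $\gamma^{k+1}$, matching $\gamma\delta_e\circ\pi$ provided $\delta_e\circ aux=aux\circ\Delta_e$. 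This last identity holds because both $\delta_e$ and $\Delta_e$ are Chevalley--Eilenberg differentials attached to the Lie brackets on single connected (stable, ribbon) graphs, and those brackets intertwine via $pr^\vee$ by the first identity of Lemma \ref{kId}.

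For the bracket compatibility, both shifted Poisson brackets are the $\hbar$-/$\gamma$-linear Leibniz extensions of the brackets on single connected graphs. On two generators $x=(\Gamma_1\cdots\Gamma_n)\hbar^{n+2k-1}$ and $y=(\Delta_1\cdots\Delta_m)\hbar^{m+2\ell-1}$ the Leibniz rule produces a sum over pairs $(i,j)$ of terms with $n+m-1$ factors and $\hbar$-power $n+m+2(k+\ell)-2=(n+m-1)+2(k+\ell)-1$, which is precisely the shape sent by $\pi$ to $\gamma^{k+\ell}$ times the corresponding sum of products of $pr^\vee(\cdot)$. Comparing with $\{\pi(x),\pi(y)\}=\gamma^{k+\ell}\{aux(\Gamma_1\cdots\Gamma_n),aux(\Delta_1\cdots\Delta_m)\}$ and applying Leibniz on the target reduces the claim to $\{pr^\vee(\Gamma_i),pr^\vee(\Delta_j)\}=pr^\vee(\{\Gamma_i,\Delta_j\})$, which is again the first identity of Lemma \ref{kId}.

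The main obstacle I anticipate is the careful bookkeeping of Koszul signs arising from the unequal shifts $[d-3]$ on the source versus $[2d-6]$ and $[3-d]$ on the target, together with the orientation signs dictated by the edge-ordering conventions used to define the operators $\nabla,\delta_i,\Delta_i$ and the two brackets. The sign conventions are coordinated between the ribbon and non-ribbon sides (new edges placed first under the codifferentials, placed between the two pieces when gluing two graphs), so the signs should match in the end, but verifying this in every case is the step that requires the most care.
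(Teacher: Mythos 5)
Your proposal follows essentially the same route as the paper's proof: reduce everything to Lemmas \ref{fal} and \ref{kId} via the Leibniz/Chevalley--Eilenberg extensions, and check that the exponent bookkeeping $n_1+2k_1-1+n_2+2k_2-1=(n_1+n_2-1)+2(k_1+k_2)-1$ (and its analogue for $\hbar\Delta_e$ versus $\gamma\delta_e$) is consistent with the case distinction in the definition of $\pi$. The argument is correct, and your treatment of the Chevalley--Eilenberg terms and the sign caveat simply spells out details the paper leaves implicit.
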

\begin{proof}
 The fact that $\pi$ is compatible with the graph complex differentials and intertwines $\Delta_i$ with $\nabla+\delta_i$ follows directly from the lemma \ref{fal} respectively \ref{kId} and by definitions.
 The fact that $\pi$ is compatible with the brackets follows as they are defined by extending the Lie brackets as a derivation to the symmetric algebra- further by noting that $n_1+2k_1-1+n_2+2k_2-1=(n_1+n_2-1)+2(k_1+k_2)-1,$
 ie the case distinction for when $\pi$ is or is not zero checks out as well.
 Similarly it follows that $\pi$ intertwines $\hbar\Delta_e$ with $\gamma\delta_e$. 
\end{proof}
Further the map $\pi$ is the predual of a 2-weighted map of algebras, in the following sense.

 \begin{proof}

 Let $\tilde{\gamma}^k$ be a variable of degree $k(2d-6)$ and $\tilde{\hbar}^l$ a variable of degree $l(d-3)$, for all $k,l>0$ which we will understand as the dual of $\gamma^k$ respectively $\hbar^l$.
 Define a map of degree zero
$$\pi^{n,k}:Sym^n\big(s\mathcal{RG}^B_{c,\bullet}[6-2d]\big)\tilde{\gamma}^k[d-3] \rightarrow Sym^n(s\mathcal{G}_{c,\bullet}[3-d])\cdot \tilde{\hbar}^{n+2k-1} $$
by $$\pi^{n,k}(\Gamma_1\cdot\ldots \cdot \Gamma_n\tilde{\gamma}^k)=proj(\Gamma_1\cdot\ldots \cdot\Gamma_n)\widetilde{\hbar}^{n+2k-1}$$
composed by a shift of $(d-3)$.
Here $proj$ is defined as the symmetric algebra functor of the map $pr$ composed by suitable shifts.
Thus we have that
\begin{equation}\label{p2w}
\pi^{n,k}(\Gamma_1\cdot\ldots \cdot \Gamma_n\tilde{\gamma}^k)=\pi^{1,0}(\Gamma_1)\cdot\ldots \cdot \pi^{1,0}(\Gamma_n)\widetilde{\hbar}^{n+2k-1}.
\end{equation}
Considering the dual maps and their direct sum over all $n,k$ this induces a map
$$s\mathcal{G}^d\llbracket\hbar\rrbracket\rightarrow s\mathcal{RG}^d_B\llbracket\gamma\rrbracket,$$
which coincides with the map $\pi$ from theorem \ref{GAL}. We say $\pi$ is predual to a 2-weighted map of algebras (recalling definition \ref{weimap}) in the sense that it is
induced by maps satisfying \ref{p2w} via taking duals and then direct sum.
 \end{proof}
 By using the map \ref{rib_g} $$\mathcal{RG}_{\bullet,c}\rightarrow \mathcal{G}_{\bullet,c}$$ we can prove in the same way as in the previous theorem that it
 induces a map (by abuse of notation also denoted $\pi$) as follows:
\begin{theorem}\label{GALt}
For every finite set $B$
    $$\pi: \Big(\mathcal{G}^d\llbracket\hbar\rrbracket,-d+\Delta_i+\hbar\Delta_e,\{\_,\_\}\Big)\rightarrow \Big(\mathcal{RG}^d_B\llbracket\gamma\rrbracket-d+\nabla+\delta_i+\gamma\delta_e,\{\_,\_\}\Big)$$
    is a map of dg $(d-2)$-shifted Lie algebras and the predual to a map of weighted algebras in the sense of the previous theorem.  
\end{theorem}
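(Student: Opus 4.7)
The plan is to mimic the proof of Theorem \ref{GAL} step by step, replacing $s\mathcal{RG}^B_{c,\bullet}$ and $s\mathcal{G}_{c,\bullet}$ by their non-stable cousins $\mathcal{RG}^B_{c,\bullet}$ and $\mathcal{G}_{c,\bullet}$. The starting point is the map \eqref{rib_g}, namely $pr \colon \mathcal{RG}^B_{c,\bullet} \to \mathcal{G}_{c,\bullet}$ sending a $B$-colored ribbon graph to its underlying (ordinary) graph. That this is a chain map is the exact analog of Lemma \ref{fal}: the differential on either side is defined by summing over all contractions of internal edges, and contracting an edge commutes with forgetting the cyclic ordering of half-edges at each vertex (and with forgetting the $B$-coloring); in the non-stable setting one has no loop/boundary defects to track, so the check is slightly easier than in lemma \ref{fal}.

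Next I would prove the analog of Lemma \ref{kId} for the dual map $pr^\vee\colon \mathcal{G}_{c,\bullet}^\vee \to {\mathcal{RG}^B_{c,\bullet}}^\vee$ defined by the formula \eqref{ebieb} (now summing only over ribbon structures on an ordinary graph $\Gamma$, with no summation over genus/boundary defects). Concretely,
\[
\{pr^\vee(\_),pr^\vee(\_)\}=pr^\vee(\{\_,\_\})\quad\text{and}\quad pr^\vee\circ\Delta_i=(\delta_i+\nabla)\circ pr^\vee.
\]
The first identity holds because cutting an edge commutes with remembering/forgetting cyclic orderings at vertices. The second identity is the observation that gluing two leaves of a ribbon graph into an internal edge either keeps the leaves in the same face ($\nabla$) or merges two faces into one ($\delta_i$), but after projecting to the underlying graph both cases produce the same operation $\Delta_i$; summing over all ribbon structures on the outcome is the same as first applying $\Delta_i$ and then summing over ribbon structures.

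I would then define $\pi\colon \mathcal{G}^d\llbracket\hbar\rrbracket \to \mathcal{RG}^d_B\llbracket\gamma\rrbracket$ by the same recipe as in definition \eqref{isjk}: build the auxiliary map $aux\colon Sym(\mathcal{G}_{c,\bullet}^\vee[d-3])\to Sym({\mathcal{RG}^B_{c,\bullet}}^\vee[2d-6])$ from $pr^\vee$ via the symmetric algebra functor with suitable shifts, and set
\[
(\Gamma_1\cdots\Gamma_n)\hbar^{n+2k-1}\ \longmapsto\ aux(\Gamma_1\cdots\Gamma_n)\,\gamma^k,
\]
zero on elements of $\mathcal{G}^d\llbracket\hbar\rrbracket$ not of this form, and composed with the overall $(3-d)$-shift. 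Compatibility with the graph differentials follows from the non-stable analog of Lemma \ref{fal}; compatibility with the Lie brackets and with the BV-type operators follows from the identities above, combined with the degree-bookkeeping $n_1+2k_1-1+n_2+2k_2-1=(n_1+n_2-1)+2(k_1+k_2)-1$ that ensures the case distinction for $\pi$ is preserved by the bracket. Compatibility with $\hbar\Delta_e$ vs.\ $\gamma\delta_e$ is automatic since $\Delta_e,\delta_e$ are the Chevalley--Eilenberg differentials associated to the respective brackets that $\pi$ already intertwines.

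Finally, the predual statement is obtained verbatim as in the proof of Theorem \ref{GAL}: define chain-level maps $\pi^{n,k}$ using formal duals $\tilde\gamma^k,\tilde\hbar^l$ and the symmetric algebra functor applied to $pr$ (restricted to ordinary graphs), and observe that they satisfy the multiplicativity property \eqref{p2w}. Taking direct sums of duals then recovers $\pi$, exhibiting it as the predual of a $2$-weighted algebra map in the sense of definition \ref{weimap}. There is no real obstacle here beyond careful sign- and degree-bookkeeping; the whole argument is a strict specialization/variant of Theorem \ref{GAL} made possible by the fact that $pr$ of \eqref{rib_g} is already a chain map and that the non-stable differentials are exactly the restrictions of the stable ones.
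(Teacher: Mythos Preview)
Your proposal is correct and follows essentially the same approach as the paper: the paper's proof of Theorem~\ref{GALt} simply states that, using the map \eqref{rib_g}, one proves it ``in the same way as in the previous theorem'' (i.e., Theorem~\ref{GAL}), which is precisely the step-by-step specialization you have written out. If anything, your write-up is more detailed than the paper's own one-line reference back to Theorem~\ref{GAL}.
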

Further we note that the map \ref{rib_g} restricts to $$\mathcal{RT}_{\bullet,c}\rightarrow \mathcal{T}_{\bullet,c}.$$
We can prove in a similar way (just omitting some previous shifts) that it induces a map (by abuse of notation also denoted $\pi$) as follows:
\begin{theorem}
For every finite set $B$
    $$\pi: \Big(\mathcal{T}^d,\cdot,-d,\{\_,\_\}\Big)\rightarrow \Big(\mathcal{RT}^d_B,\cdot,-d,\{\_,\_\}\Big)$$
     is a map of $(d-2)$-shifted Poisson dg algebras.
\end{theorem}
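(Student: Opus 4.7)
The plan is to reduce this statement to the analogous Lie-algebra-level statement proven in Theorem~\ref{GALt}, restricted to the subcomplex of trees, and then observe that the simpler (non-weighted) shifts present here automatically promote $\pi$ from a Lie-algebra map to a genuine shifted Poisson algebra map. I would proceed in four steps.

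First I would establish the tree-level analogue of Lemma~\ref{fal}. The projection $pr\colon \mathcal{RT}_{c,\bullet}\to \mathcal{T}_{c,\bullet}$ sending a $B$-colored ribbon tree to its underlying unoriented graph together with its natural orientation is a chain map: contracting an edge is defined in both $\mathcal{RT}_{c,\bullet}$ and $\mathcal{T}_{c,\bullet}$ via the same combinatorial rule on half-edges, and the additional ribbon / coloring / face data plays no role in the compatibility with edge contraction (note in particular that a contracted edge in a ribbon tree never creates boundary defect, since trees have a single face; hence contraction of ribbon trees coincides with the graph-level contraction). Dualizing gives $pr^\vee\colon \mathcal{T}_{c,\bullet}^\vee\to \mathcal{RT}^B_{c,\bullet}{}^\vee$, which in the dual basis sends a tree $T$ to the finite sum of all $B$-colored ribbon tree structures on $T$ (finiteness uses that $T$ has finitely many free boundaries and $B$ is finite).

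Second I would prove the tree-level analogue of Lemma~\ref{kId}, namely $pr^\vee\{T_1,T_2\}=\{pr^\vee T_1,pr^\vee T_2\}$. Both sides equal the sum, over isomorphism classes of $B$-colored ribbon trees $R$ whose underlying tree is obtained by joining a leaf of $T_1$ to a leaf of $T_2$, of the term $R$ weighted by the orientation prescribed in Construction~\ref{iwjs}. On the right side one first chooses ribbon tree / coloring structures on each $T_i$ and then glues; on the left one glues the underlying trees first and then puts ribbon / coloring data on the result. These two orderings produce the same set of $B$-colored ribbon trees because a ribbon structure on $T_1\cup_{l_1,l_2}T_2$ is the same data as a pair of ribbon structures on $T_1,T_2$ together with the (unique) way of interleaving the cyclic orderings at the glued vertex, and similarly for the $B$-coloring of free boundaries.

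Third I would observe the multiplicativity of $\pi$. Because neither side carries an $\hbar$ or $\gamma$ formal variable, the composite
\[
\pi\colon Sym\bigl(\mathcal{T}_{c,\bullet}^\vee[d-3]\bigr)\longrightarrow Sym\bigl(\mathcal{RT}^B_{c,\bullet}{}^\vee[d-3]\bigr)
\]
is literally the symmetric algebra functor applied to the degree-zero map $pr^\vee[d-3]$; no weighting factors $\hbar^{n+2k-1}$ appear and thus the genuine Leibniz-like multiplicativity that failed in Theorems~\ref{GAL} and~\ref{GALt} holds here. Hence $\pi$ is a map of graded commutative algebras.

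Finally I would assemble the pieces. Step one gives compatibility with $d$, step two together with the fact that $\{\_,\_\}$ on each side is the Leibniz extension of its restriction to connected (ribbon) trees gives compatibility with $\{\_,\_\}$ on all of $Sym$, and step three gives compatibility with the product; these together are the definition of a map of $(d-2)$-shifted Poisson dg algebras. The main obstacle is really just the sign/orientation bookkeeping in step two, but this is handled exactly as in the proof of Lemma~\ref{kId} by noting that the ordering of internal edges in $R$ introduced by the gluing operation depends only on the underlying graph, not on the ribbon/coloring refinements, so the signs on both sides of $pr^\vee\{T_1,T_2\}=\{pr^\vee T_1,pr^\vee T_2\}$ agree term by term.
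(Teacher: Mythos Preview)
Your proposal is correct and follows essentially the same approach as the paper, which simply remarks that the map restricts to trees and that ``we can prove in a similar way (just omitting some previous shifts)'' the analogue of Theorems~\ref{GAL} and~\ref{GALt}; you have unwound precisely what that sentence means. One minor point: in step one your justification that contraction in a ribbon tree never creates boundary defect is slightly misstated---the relevant reason is the valence condition $|v|\geq 3$ for ribbon graph vertices (so the single-half-edge cycles of rule~\ref{item_contractedge} never arise), not the single-face condition---but the conclusion and the rest of the argument are unaffected.
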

The diagrams \ref{rbg_comp} and \ref{ag_dq} fit with the maps provided by the three previous theorems together as follows:
    \begin{equation}\label{comp_g_rg}
\begin{tikzcd}
\mathcal{T}^d\arrow{d}{\pi}&\mathcal{G}^d\llbracket\hbar\rrbracket\arrow{d}{\pi}\arrow[swap]{l}\arrow{r}&
s\mathcal{G}^d\llbracket\hbar\rrbracket\arrow{d}{\pi}\arrow[bend right=20]{ll}\\
\mathcal{RT}^d_{B}&\mathcal{RG}^d_{B}\llbracket\gamma\rrbracket\arrow[swap]{l}\arrow{r}&
s\mathcal{RG}^d_{B}\llbracket\gamma\rrbracket\arrow[bend left=20]{ll}.
    \end{tikzcd}
    \end{equation}
    This can be verified by noting that the diagrams \ref{inc_proj} and \ref{inc_proj_2}
    fit together with the maps induced by \ref{sRGsG} into a commutative diagram and by construction. 

Using the explicit description of the maps $\pi$ from equation \ref{ebieb}
it is immediate to see that for the Maurer-Cartan elements from the previous sections we have that 
 $$\pi(G)=S\ \text{and}\ \pi(T)=D.$$

Since with the previous remark and remarks \ref{nütz} and \ref{nütz_2} we have thus seen that
all the respective Maurer-Cartan elements are compatible we can twist the outer square of \ref{comp_g_rg} by those to obtain following
commutative diagram where the horizontal maps are dequantization maps 
     $$\begin{tikzcd}
     s\mathcal{G}^{d,tw}\llbracket\hbar\rrbracket\arrow{d}{\pi}\arrow{r}{p}&\mathcal{T}^{d,tw}
\arrow{d}{\pi}\\
    s\mathcal{RG}^{d,tw}_{B}\llbracket\gamma\rrbracket\arrow{r}{p} &\mathcal{RT}^{d,tw}_{B}.    
     \end{tikzcd}$$

\section{Generalized Kontsevich Cocycle Map}
In this section we explain how to connect the ribbon graph world, section \ref{rbgs}, with the world of cyclic $A_\infty$-categroies and their quantizations, section \ref{ncw}. We do so using a slight generalization of Kontsevich's cocycle construction \cite{Kon92b}, see also \cite{AmTu15}.
\subsection{Non-Commutative World}\label{ksc}

    Given $V_B$ a collection of cyclic chain complexes of odd degree $d$ together with $I^q\in MCE (\mathcal{F}^{pq}(V_B))$ we construct a map
    $$\rho_{I^q}: s\mathcal{RG}_B^d\llbracket\gamma\rrbracket\rightarrow \mathcal{F}^{pq}(V_B).$$

We explain how to to so in multiple steps, partly inspired by \cite{AmTu15}.
We begin by associating certain tensors, denoted $I^q|_{v_i}$, to each vertex $v_i$ of a connected B-colored stable ribbon graph.
\begin{cons}
  Let $\Gamma$ be a connected B-colored stable ribbon graph and $v_i$ one of its vertices:\begin{itemize}
      \item 
 To such a vertex $v_i$ of $\Gamma$ is associated a genus defect $g_i\in\mathbb{N}_{\geq 0}$, a boundary defect $b_\lambda(v_i)\in\mathbb{N}_{\geq 0}$ for each $\lambda\in B$.
 Further the half-edges of that vertex are partitioned into cycles $c_1,\cdots, c_{n}$.
 Two consecutive half-edges in a cycle together belong to a unique free boundary of $\Gamma$ and thus are associated an element of $B$.
 Considering all the half-edges in a cycle thus defines a cyclic word in elements of $B$ and all the cycles taken together a `symmetric' word in cyclic word in letters from $B$. 
 \item  Given a vertex $v_i$ of $\Gamma$ we define a map (which is not graded), denoted
  \begin{equation}\label{iso_sc}
(\_)|_{v_i}: Sym(Cyc^*(V_B)[d-4])\llbracket\gamma\rrbracket[3-d]\rightarrow Sym(Cyc^*_+(V_B)[-1]).\end{equation}
It is given by first a shift by $(3-d)$, composed with the map  
$$Sym(Cyc^*(V_B)[d-4])\llbracket\gamma\rrbracket\rightarrow Sym(Cyc^*(V_B)[-1])\llbracket\gamma\rrbracket$$
induced by {$Cyc^*(V_B)[d-4]\rightarrow Cyc^*(V_B)[-1]$}.
Lastly we project to the subspace of $Sym(Cyc^*_+(V_B)[-1])$ determined by the symmetric word in cyclic words in letters of $B$ and 
restrict to the power $\gamma^{g_i}$ and the power of $\Pi_{\lambda\in B}\nu_\lambda^{b_\lambda(v_i)}$.
\item Note that for $I^q\in MCE(\mathcal{F}^{pq}(V_B))$, which has degree $(3-d)$, we have that 
$${|{I}^q|_{v_i}|=(3-d)(2-2g_i-c(v_i)-\sum_{\lambda\in B}b_\lambda(v_i))}.$$
\end{itemize}
\end{cons}
Next we explain how the datum of an oriented stable ribbon graph together with some auxiliary choices
defines a permutation of the half-edges of such a stable ribbon graph:
\begin{cons}\label{perm}
 Assume that $\Gamma$  has $n$ vertices, each with $c(v_i)$ cycles.
 \begin{itemize}
     \item 
Chose an ordering of its vertices, at each vertex $v_i$  an ordering of the cycles and for each such cycle an half-edge.
This gives an ordering of all the half edges at that vertex (using the cyclic orderings of the half edges within each cycle) denoted 
$$h_{v_i,1}\cdots h_{v_i,|v_i|}$$
and with the ordering of the vertices together defines an ordering of all half edges of $\Gamma$
$$h_{v_1,1}\cdots h_{v_1,|v_1|}h_{v_2,1}\cdots h_{v_r,|v_r|}.$$
\end{itemize}
Next assume that $\Gamma$ is orientated, ie we have an ordering $(e_1\cdots e_n)$ of the internal edges.
\begin{itemize}
    \item 
Chose an ordering of the half edges belonging to a given internal edge.
Together with the orientation this gives an ordering of the half edges belonging to the internal edges
$$h_{1_x}h_{1_y}\cdots h_{n_x} h_{n_y}.$$ 
\end{itemize}
Given the choices made and given the orientation we can define a permutation of all the half-edges as follows
$$
\sigma: H(\Gamma)\rightarrow H(\Gamma) $$
$$
h_{v_1,1}\cdots h_{v_1,|v_1|}h_{v_2,1}\cdots h_{v_r,|v_r|}\mapsto h_{1_x}h_{1_y}\cdots h_{n_x} h_{n_y}l_{1,1}\cdots l_{1,m_1}l_{2,1}\cdots l_{b,m_{f_{>0}}} $$
\begin{itemize}
    \item 

We begin the ordering of the leaves on the right hand side by the first leave that appears on the left hand side.
This leafs belongs to a face. We  use the cyclic ordering within that face to order the remaining half edges that belong to it.
Then we go to the first half edge that is a leaf and does not lie in that first face.
Again we use the cyclic ordering within that face to order the remaining half edges that belong to it.
And so on till we come to the $f_{>0}$-th face with non-zero open boundaries.
\end{itemize}
\end{cons}
Recall the definition of $\widetilde{(V_B)}$, the cyclic chain complex associated to the collection of cyclic chain complexes $V_B$, see \ref{cyccomm}. Denote $W:=\widetilde{(V_B)}^\vee[-1]$.
We still fix the same oriented stable ribbon graph $\Gamma$ and the choices of orderings made.
We next explain how to define, given these choices, a map $$W^{\otimes h(\Gamma)}\rightarrow Sym(Cyc^*_+({V_B})[d-4])$$ of degree $e(\Gamma)(d-2)+f_{>0}(\Gamma)(d-3)$:

\begin{cons}\label{con_proj}
The permutation $\sigma$ induces a map which we denote by abuse of notaion by the same letter
$${\sigma}:W^{\otimes h(\Gamma)}\rightarrow W^{\otimes h(\Gamma)}\cong W^{\otimes 2e(\Gamma)}\otimes W^{\otimes l(\Gamma)}.$$
We say an element in $W^{\otimes n}$ has `matching boundary' conditions if it is the sum of tensors having matching boundary conditions. 
\begin{itemize}
    \item 
To any edge $e$ of $\Gamma$ are associated two elements of $B$, say $i$ and $j$. Denote by 
$\langle\_,\_\rangle^{-1}_{e}: W^{\otimes 2}\rightarrow k[d-2]$
the map given by sending elements to zero which do not have the matching boundary condition {ij} and
by sending elements which do have the matching boundary condition {ij} to the number given by applying $\langle\_,\_\rangle^{-1}_{ij}$ to them\footnote{The ordering of ${ij}$ does not matter since $\langle\_,\_\rangle^{-1}_{ij}$ is symmetric.}.
\item A stable ribbon graph $\Gamma$ has $f$ faces, $f_{>0}$ of those with non-zero open boundaries lying in that face. Further say that the i-th such face 
(determined by the ordering of all half-edges given made choices, as before) has $m_i$ open boundaries. Denote by 
$$proj: W^{\otimes l(\Gamma)}\rightarrow Sym(Cyc^*_+({V_B})[d-4])$$
the map described as follows:
It is zero on elements not of following matching boundary conditions: We ask that the first $m_1$ elements have the matching boundary condition determined by 
the first face of the graph with nonzero leafs, the next $m_2$ elements have the matching boundary condition determined by the second face of the graph with nonzero leafs etc till the last $m_{f_{>0}}$ elements. 
Then the map is given by
 $$proj=sym^{f_{>0}}(cyc_{m_1}\otimes\cdots\otimes cyc_{m_{f_{>0}}}).$$
Here $cyc_{m_i}$ denotes the projection to cyclic words of length $m_i$ with boundary conditions determined by the i-th face as above,
shifted by $(d-3)$ and $sym$ denoted the projection to symmetric words in cyclic words. Counting together $proj$ has degree $f_{>0}(\Gamma)(3-d)$.
\end{itemize}
Thus we can define a map of degree $e(\Gamma)(d-2)-f_{>0}(\Gamma)(d-3)$
$$\bigotimes_{e\in E(\Gamma)}\langle\_,\_\rangle^{-1}_e\otimes proj:\ W^{\otimes 2e(\Gamma)}\otimes W^{\otimes l(\Gamma)}\rightarrow Sym(Cyc^*_+({V_B})[d-4]).$$
\end{cons}
\begin{df}
Fixing the same oriented stable ribbon graph $\Gamma$ and $I^q\in MCE (\mathcal{F}^{pq}(V_B))$ as in the previous constructions and the choices made we define 
\begin{equation}\label{main_constr_nc}
\rho_{I^q}(\Gamma)=(\bigotimes_{e\in E(\Gamma)}\langle\_,\_\rangle^{-1}_e\otimes proj)\circ{\sigma}\big(I^q|_{v_1}\otimes\ldots\otimes I^q|_{v_n}\big)\nu^{f_{tot}(\Gamma)}\gamma^{g(\Gamma)}\in Sym(Cyc^*({V_B})[d-4])\llbracket\gamma\rrbracket.
\end{equation}
Here 
$$\nu^{f_{tot}(\Gamma)}:=\prod_{\lambda\in B,v\in V(\Gamma)}\nu^{b_\lambda(v)}\prod_{l\in F_0(\Gamma)}\nu_{\lambda(l)},$$
where we recall that $F_0(\Gamma)$ denotes the set of faces without open boundaries,
each of which is thus colored by one element of $B$.
Further we used the standard isomorphism between invariants and coinvariants in characteristic zero to view $I^q|_{v_i}$ as an invariant tensors, which we can thus see as an element of $W^{\otimes r},$
for $r=H(v_i)$, the valency of the vertex $v_i$.
The order of these tensors is determined by the choice of ordering of vertices, the order of the $\nu$ does not matter since they have even degree.
\end{df}
\begin{lemma}
   Given $I^q\in MCE (\mathcal{F}^{pq}(V_B))$ the assignment \ref{main_constr_nc} defines a map
\begin{equation}\label{Kon_gen}
\begin{split}
\rho_{I^q}:{s\mathcal{RG}^B_{c,\bullet}}^\vee[2d-6]&\rightarrow Sym(Cyc^*(V_B)[4-d])\llbracket\gamma\rrbracket,\\
\Gamma&\mapsto \rho_{I^q}(\Gamma).
\end{split}
\end{equation}
In particular, it is independent of the choices made.
\end{lemma}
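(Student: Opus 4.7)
The plan is to verify independence of $\rho_{I^q}(\Gamma)$ from all auxiliary choices by separating those choices into four families and checking each, matching Koszul signs from permuting tensor factors with signs coming from rearranging the blocks in the half-edge permutation $\sigma$. Concretely the choices are: (a) the global ordering of vertices, (b) at each vertex, the ordering of cycles and the distinguished starting half-edge in each cycle, (c) for each internal edge, which of its two half-edges is listed first, and (d) the ordering of the internal edges (i.e., the orientation of $\Gamma$, modulo the relation $-\Gamma = \Gamma^{\mathrm{op}}$).

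First I would collect the degree information. By the computation in the construction, each tensor $I^q|_{v_i}$ has degree $(3-d)\bigl(2-2g_i-c(v_i)-\sum_{\lambda}b_\lambda(v_i)\bigr)$, and a single letter in $W=\widetilde{(V_B)}^{\vee}[-1]$ is, by this inclusion into $I^q|_{v_i}$, of the appropriate shifted parity. The pairings $\langle-,-\rangle^{-1}_e$ have degree $d-2$ and the cyclic/symmetric projection $proj$ has degree $f_{>0}(\Gamma)(3-d)$. For (a) the claim is that swapping $v_i$ and $v_{i+1}$ in the global order produces a Koszul sign $(-1)^{|I^q|_{v_i}|\cdot|I^q|_{v_{i+1}}|}$ in the tensor product, and precisely the same sign when $\sigma$ is modified by the corresponding block swap of the half-edge sequences $h_{v_i,1}\cdots h_{v_i,|v_i|}$ and $h_{v_{i+1},1}\cdots h_{v_{i+1},|v_{i+1}|}$; so the two cancel.

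Next, for (b), the key point is that by construction $I^q|_{v_i}$ lies in the $Sym$ of cyclic words subspace, so it is graded-symmetric under reordering of its cycles and graded cyclically invariant within each cycle. A change of cycle ordering at $v_i$, or a change of starting half-edge within one of its cycles, alters $\sigma$ by a permutation of the corresponding half-edge block inside $v_i$, and the Koszul sign this produces is exactly the sign by which $I^q|_{v_i}$ is invariant. For (c), swapping the two half-edges attached to a given internal edge $e$ changes $\sigma$ by an adjacent transposition in the edge block, while $\langle-,-\rangle^{-1}_e\colon W^{\otimes 2}\to k[d-2]$ is the graded symmetric pairing dual to a symmetric non-degenerate chain map; the two signs again cancel. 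The $proj$ factor is independent of which face in $\Gamma$ is listed first and of which half-edge within a face is chosen first, precisely because it is the symmetric projection of a product of cyclic projections.

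Finally, for (d), reversing the ordering of the $n=e(\Gamma)$ internal edges permutes the $n$ blocks $h_{k_x}h_{k_y}$ in $\sigma$; each pair has degree $2\cdot(d-2)$ under the pairing, but the global Koszul sign coming from the reversal, combined with the shift by $2d-6$ in the source ${s\mathcal{RG}^B_{c,\bullet}}^\vee[2d-6]$, produces exactly the sign $(-1)^{e(\Gamma)(e(\Gamma)-1)/2+?}$ that matches the relation $\Gamma^{\mathrm{op}}=-\Gamma$; this is a standard check and relies on the specific shift chosen in the definition of $s\mathcal{RG}^d_B\llbracket\gamma\rrbracket$. The main obstacle is really (d) together with the bookkeeping that makes the oddness of $d$ and the shifts $[2d-6]$, $[d-4]$, $[-1]$ fit: each individual check is elementary but one has to keep track of signs coming simultaneously from the Koszul rule on $W^{\otimes h(\Gamma)}$, from the cyclic/symmetric structure of $I^q|_{v_i}$, and from the graph-theoretic shifts, and verify that the specific numerical constants in the construction were chosen precisely so that all sign ambiguities cancel. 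Once (a)--(d) are established, $\rho_{I^q}$ is well defined on isomorphism classes of oriented $B$-colored stable ribbon graphs modulo orientation reversal, so equation \eqref{main_constr_nc} descends to the claimed map \eqref{Kon_gen}.
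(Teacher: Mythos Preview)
Your case analysis (a)--(d) is the same decomposition the paper uses, and your conclusions are correct, but you miss the simplification that drives the paper's argument and leaves your own version heavier than it needs to be.

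The key observation you overlook is that each vertex tensor $I^q|_{v_i}$ has \emph{even} degree: it is $(3-d)\bigl(2-2g_i-c(v_i)-\sum_\lambda b_\lambda(v_i)\bigr)$, and $d$ is odd, so $(3-d)$ is even. Hence in (a) there is simply no Koszul sign when permuting the $I^q|_{v_i}$'s, and nothing to cancel. The paper then packages (a) and (b) together: after reordering the vertex tensors (no sign), the old and new half-edge permutations differ by a $\tau$ which is the identity on the internal-edge positions and is a permutation on the $f_{>0}(\Gamma)$ faces composed with cyclic permutations within each face on the leaf positions; but $proj$ is exactly the symmetric projection of cyclic projections, so $\tau$ is absorbed. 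Your version of (b) is not wrong, but the explicit Koszul-sign bookkeeping is unnecessary once you know the vertex tensors are even.

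Your (c) matches the paper's. Your (d) is where you leave a genuine loose end: the ``$+?$'' and ``standard check'' are exactly the place that needs the argument. The paper's reasoning is one line: the pairing $\langle-,-\rangle^{-1}_e$ has degree $d-2$, which is odd, so transposing two internal edges in the orientation transposes two odd-degree outputs and produces a factor $(-1)$; this matches the relation that reversing orientation equals multiplying by $-1$. No combinatorics with $e(\Gamma)(e(\Gamma)-1)/2$ or the shift $[2d-6]$ is needed here.

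Finally, you announce but do not carry out the degree count. The paper verifies directly, using the arithmetic-genus formula, that the total degree of $\rho_{I^q}(\Gamma)$ equals $6-2d$; you should include that computation.
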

\begin{proof}
The fact that the degrees work out follows by counting together \begin{equation*} 
\begin{split}&-e+(d-2)e-f_{>0}(d-3)+(d-3)(-2v+c+\sum_{\lambda\in B,v\in V}b_\lambda(v)+2\sum_{v_i}g_i-2g-\sum_{\lambda\in B,v\in V}b_\lambda(v)-f_{0})\\
&=(d-3)(-2g-2v+e+c-f+2\sum_{v_i}g_i)=6-2d
\end{split}\end{equation*}
and definition \ref{armg} of the arithmetic genus. 
This map is independent of the choices made as follows:
\begin{itemize}
    \item Let $\Gamma$ be a stable ribbon graph as in the previous construction. Assume that we chose a different ordering of vertices, of the cycles belonging to each vertex and of a half-edge in each cycle,
    but for now take the same orientation datum and ordering of half-edges belonging to each internal edge.
    Let us call the initial permutation as in construction \ref{perm} $\sigma_1:H(\Gamma)\rightarrow H(\Gamma)$ and the new one $\sigma_2:H(\Gamma)\rightarrow H(\Gamma)$.
    Then we have that 
    $${\sigma_1}\big(I^q|_{v_1}\otimes\ldots\otimes I^q|_{v_n}\big)=\tau\circ{\sigma_2}\big(I^q|_{v_{\Tilde{1}}}\otimes\ldots\otimes I^q|_{v_{\Tilde{n}}}\big),$$
    where on the right-hand-side we use the new ordering of the vertices to order the input tensors.
    Further $\tau$ is the map determined by the following permutation: It is the identity on the half-edges belonging to internal edges.
    On the half-edges belonging to leafs it can be written as a permutation on $f_{> 0}(\Gamma)$ letters applied to $f_{> 0}(\Gamma)$ cyclic permutations (with regard to the cyclic ordering of the leafs in each face of the graph).
    Further there is no sign showing up because all the $|I^q(v_i)|$ are even.
    
    Since in the definition of $\rho_{I^q}$, described in the second bullet point of construction \ref{con_proj},
    we project to the quotient determined by exactly such permutations $\tau$ we have that $\rho_{I^q}$ is independent of 
    a different ordering of vertices, of the cycles belonging to each vertex and of a half-edge in each cycle.
    \item Now assume that we chose a different ordering of the half-edges belonging to an internal edge respectively.
    Since the pairings $\langle\_,\_\rangle^{-1}_{ij}$ are symmetric it follows that this gives the same result $\rho_{I^q}$.
  \end{itemize}  
Lastly this map is compatible with the orientation: Since the pairings have odd degree if
we change the orientation by an odd permutation the result changes by a factor of $(-1)$.
\end{proof}
\begin{lemma}\label{bra_comp}
    Given $I^q\in MCE (\mathcal{F}^{pq}(V_B))$ we have that $\rho_{I^q}(\{\_,\_\})=\{\rho_{I^q}(\_)\rho_{I^q}(\_)\}.$

\end{lemma}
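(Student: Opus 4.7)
The plan is to verify the identity directly, by unfolding both sides into explicit sums indexed by combinatorial data and establishing a term-by-term bijection. First I would reduce to the case of two connected stable ribbon graphs $\Gamma_1$ and $\Gamma_2$: both the bracket on $s\mathcal{RG}^d_B\llbracket\gamma\rrbracket$ and the bracket on $\mathcal{F}^{pq}(V_B)$ are obtained from brackets on the respective ``primitives'' (connected ribbon graphs on one side, cyclic words together with their central extension \eqref{ce} on the other) by extending via the graded Leibniz rule to the symmetric algebra. Since $\rho_{I^q}$ is defined on connected ribbon graphs and extended multiplicatively, it is enough to verify the identity on a pair of connected generators.

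Next I would expand $\rho_{I^q}(\{\Gamma_1,\Gamma_2\})$ as a sum, indexed by pairs $(l_1,l_2)\in L(\Gamma_1)\times L(\Gamma_2)$, of the tensors \eqref{main_constr_nc} associated to the glued graph $\Gamma_1\cup_{l_1,l_2}\Gamma_2$. Each such term is a contraction of the vertex decorations $I^q|_{v_i}$ (for vertices of both $\Gamma_1$ and $\Gamma_2$) by the inverse pairings $\langle-,-\rangle^{-1}_e$ indexed by internal edges (those of $\Gamma_1$, the new edge created by the gluing, then those of $\Gamma_2$), followed by $proj$ recording the cyclic word structure imposed by the faces of the glued graph. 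In parallel I would expand $\{\rho_{I^q}(\Gamma_1),\rho_{I^q}(\Gamma_2)\}$ using the bracket formula \eqref{bracket}: since $\rho_{I^q}(\Gamma_i)$ is a symmetric product of cyclic words (one per face of $\Gamma_i$ with open boundaries), together with factors $\nu_\lambda$ (one per empty face or boundary defect) and $\gamma^{g(\Gamma_i)}$, the Leibniz-extended bracket picks one cyclic word from each side and contracts a pair of letters via $\langle-,-\rangle^{-1}$, concatenating the remainders cyclically.

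The central bijection is then that the projection $proj$ of construction \ref{con_proj} places the leaves of $\Gamma_i$ lying in a given face into one cyclic word, in exactly the cyclic order inherited from $\sigma_b$ (definition \ref{faces}). Thus choosing a pair of letters $(a_j,b_i)$ in the $\mathcal{F}^{pq}$-bracket is precisely the same as choosing a pair of leaves $(l_1,l_2)$ on which to glue, the pairing $\langle-,-\rangle^{-1}$ in \eqref{bracket} matches the pairing attached to the newly created internal edge in \eqref{main_constr_nc}, and the resulting concatenated cyclic word coincides with the cyclic word of the merged face of $\Gamma_1\cup_{l_1,l_2}\Gamma_2$. The degenerate case in which $l_1$ and $l_2$ are each alone in their respective faces produces a face without open boundaries in the glued graph colored by the common label $\lambda$, contributing a factor $\nu_\lambda$; this matches exactly the clause of the central extension whereby $\{(a),(b)\}=\langle a,b\rangle^{-1}\nu_\lambda$ on words of length one. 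The $\gamma$- and $\nu$-powers are also compatible since $g(\Gamma_1\cup_{l_1,l_2}\Gamma_2)=g(\Gamma_1)+g(\Gamma_2)$ and the total empty-face count is preserved except in the degenerate case just discussed, where it increases by one, matching the $\nu_\lambda$ produced by the bracket.

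The main obstacle is the sign bookkeeping. The orientation of $\Gamma_1\cup_{l_1,l_2}\Gamma_2$ is chosen so that the new edge is placed between the orderings of edges of $\Gamma_1$ and $\Gamma_2$, which is compatible with reading the contraction tensors from left to right. One must then verify that, for consistent choices of vertex, cycle, and half-edge orderings on $\Gamma_1\cup_{l_1,l_2}\Gamma_2$ that restrict to those of $\Gamma_1$ and $\Gamma_2$, the permutation $\sigma$ of construction \ref{perm} differs from the one used to compute the product $\rho_{I^q}(\Gamma_1)\rho_{I^q}(\Gamma_2)$ only by a cyclic rotation within each of the merged faces, so that the Koszul sign $(-1)^\ast$ in \eqref{bracket} matches the sign produced by inserting the pairing at the new edge position. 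Since all vertex decorations $I^q|_{v_i}$ have even total degree, cyclic rotations on them are sign-free, and the only non-trivial signs come from transposing $a_j$ and $b_i$ past the remaining letters in their cyclic words, which is exactly the sign encoded in \eqref{bracket}.
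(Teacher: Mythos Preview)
Your approach coincides with the paper's: both reduce to connected generators, unfold the two brackets into explicit sums, and match them term by term. Your treatment of the degenerate length-one case producing $\nu_\lambda$ and of the $\gamma$-powers is correct, and your sign discussion is in fact more careful than the paper's (which does not discuss signs in this proof at all).

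There is, however, one genuine gap. The graph bracket $\{\Gamma_1,\Gamma_2\}$ is defined as a sum over \emph{isomorphism classes} of glued graphs, not over pairs of leaves. When $\Gamma_i$ has automorphisms permuting its leaves, distinct pairs $(l_1,l_2)$ can yield isomorphic glued graphs, so your expansion ``as a sum, indexed by pairs $(l_1,l_2)\in L(\Gamma_1)\times L(\Gamma_2)$'' is an overcount. The paper makes this explicit by rewriting
\[
\{\Gamma_1,\Gamma_2\}=\sum_{l_1\in L(\Gamma_1),\,l_2\in L(\Gamma_2)} \frac{\Gamma_1\cup_{l_1,l_2} \Gamma_2}{c_{l_1}(\Gamma_1)\,c_{l_2}(\Gamma_2)},
\]
where $c_{l_i}(\Gamma_i)$ is the size of the automorphism orbit of $l_i$, and then observing that the same factor arises on the algebraic side: since $\rho_{I^q}(\Gamma_i)$ is produced via the invariants/coinvariants identification and then symmetrised by $proj$, summing over all letter-pairings when computing $\{\rho_{I^q}(\Gamma_1),\rho_{I^q}(\Gamma_2)\}$ also overcounts by exactly $c_{l_1}(\Gamma_1)\,c_{l_2}(\Gamma_2)$, and the two factors cancel. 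Your ``bijection'' between leaf-pairs and letter-pairs is the correct heuristic, but it is only a bijection up to these orbit sizes; you need to track them on both sides and check that they agree in order to close the argument.
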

\begin{proof}
    Recall that the bracket on $Sym(Cyc^*({V_B})[d-4])\llbracket\gamma\rrbracket$ is defined by pairing each letter of a symmetric word in cyclic words with a letter of a symmetric word in cyclic words exactly once,
    by that joining two cyclic words and taking into account the Koszul rule. Furthermore, if two cyclic words consists of just one letter respectively we obtain
    a $\nu_\lambda$ weighted by the pairing of the two letters (which have matching boundary condition $(\lambda,\lambda)$, otherwise we get zero).

    On the other hand we can write 
    $$\{\Gamma_1,\Gamma_2\}=\sum_{l_1\in L(\Gamma_1),l_2\in L(\Gamma_1)} \frac{\Gamma_1\cup_{l_1,l_2} \Gamma_2}{c_{l_1}(\Gamma_1)c_{l_2}(\Gamma_2)},$$
    the sum over all leafs of the graphs and gluing the two graphs together along those (if they have matching boundary condition). Here $c_{l_1}(\Gamma_i)=n\in \mathbb{N}$ if there are automorphisms of $\Gamma_i$ that send $l_1$ to $n$ other leafs. 

    By definition \ref{main_constr_nc} we see that $\sum_{l_1\in L(\Gamma_1),l_2\in L(\Gamma_1)} \rho(\Gamma_1\cup_{l_1,l_2} \Gamma_2)$
    is given by pairing the tensor $(\bigotimes_{e\in E(\Gamma_1)}\langle\_,\_\rangle^{-1}_e)\circ{\sigma}\big(I^q|_{v_1}\otimes\ldots\otimes I^q|_{v_n}\big)$ and 
    the tensor $(\bigotimes_{e\in E(\Gamma_2)}\langle\_,\_\rangle^{-1}_e)\circ{\sigma}\big(I^q|_{v_1}\otimes\ldots\otimes I^q|_{v_n}\big)$
    together in all possible ways, using a letter $l_1$ and $l_2$ from each word and then projecting to symmetric words in cyclic words.
    However, compared to $\{\rho(\Gamma_1),\rho(\Gamma_2\}$ we are overcounting exactly $c_{l_1}(\Gamma_1)c_{l_2}(\Gamma_2)$ terms.
    Further, if both leafs that we glue come from a face with only one leaf respectively, we increase the number
    of empty faces by one and thus get one additional $\nu$ according to formula \ref{main_constr_nc}. This proves the result.
\end{proof}
\begin{lemma}\label{diff_comp}
Given $I^q\in MCE (\mathcal{F}^{pq}(V_B))$ we have that 
$\rho_{I^q}\circ \nabla=\nabla\circ\rho_{I^q}$ and
$\rho_{I^q}\circ \delta_i=\gamma\delta_i\rho_{I^q}.$
\end{lemma}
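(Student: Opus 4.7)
The plan is to expand both sides of each equation using the formula \eqref{main_constr_nc} for $\rho_{I^q}$ and to exploit the following conceptual match: the operation $\nabla$ from construction \ref{nabla} and the Chevalley--Eilenberg differential $\delta$ induced by the shifted Lie bracket \eqref{bracket} on $Cyc^*(V_B)$ are, at the level of letters, precisely the act of inserting one extra $\langle\_,\_\rangle^{-1}$ contraction between two letters of a single cyclic word (for $\nabla$), respectively between letters of two different cyclic words (for the bracket underlying $\delta$), and then splitting or merging the ambient cyclic words accordingly. This is also exactly what happens to the leaf-cyclic-words of $\rho_{I^q}(\Gamma)$ when one pair of leaves of $\Gamma$ is glued into a new internal edge.

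For $\rho_{I^q}\circ\nabla = \nabla\circ\rho_{I^q}$, I would fix a connected $\Gamma$, a face $\phi$ of $\Gamma$, and read off the cyclic word of leaves $w_\phi = (a_{l_1}\cdots a_{l_{m_\phi}})$ contributed by $\phi$ to $\rho_{I^q}(\Gamma)$. For each unordered pair $\{l_i,l_j\}\subset\phi$ with matching colour, the graph $\circ_{l_i,l_j}\Gamma$ has $\phi$ split into two faces whose leaf-cyclic-words are the two sub-words of $w_\phi$ obtained by cutting at positions $i,j$, weighted by the new pairing $\langle a_{l_i},a_{l_j}\rangle^{-1}$ from the new edge. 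Comparing with construction \ref{nabla} term-by-term, this is exactly one summand of $\nabla(w_\phi)$. Summing over all $\phi$ and all such pairs, and using that $\nabla$ is extended as a derivation on $Sym$, yields the claim. The boundary cases $j=i+1$ and $m_\phi=2$ in construction \ref{nabla} output $\nu_\lambda$ factors; these match the extra $\nu$'s in the $\nu^{f_{tot}}$ prefactor of \eqref{main_constr_nc}, which are produced whenever the cut creates empty faces. Arithmetic genus is preserved by this type of gluing, so no extra $\gamma$ appears.

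For $\rho_{I^q}\circ\delta_i = \gamma\,\delta\circ\rho_{I^q}$, where $\delta$ is the Chevalley--Eilenberg differential on $\mathcal{F}^{pq}(V_B)$ from theorem \ref{nc_pq}, I would pick leaves $l_1\in\phi_1$, $l_2\in\phi_2$ in distinct faces of $\Gamma$ with matching colour. Gluing them merges $\phi_1$ and $\phi_2$ into a single face whose leaf-cyclic-word is obtained by cutting $w_{\phi_1}, w_{\phi_2}$ at $l_1, l_2$ and interleaving; by \eqref{bracket} this is precisely one summand of $\{w_{\phi_1}, w_{\phi_2}\}$ weighted by $\langle a_{l_1}, a_{l_2}\rangle^{-1}$. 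Since $\delta$ is the CE differential of the odd Lie bracket $\{\_,\_\}$, it acts on $Sym^n$ by applying $\{\_,\_\}$ to each unordered pair of factors, so summing over all valid $(l_1, l_2)$ reproduces $\delta\,\rho_{I^q}(\Gamma)$ on the affected factors. Merging two distinct faces increases the arithmetic genus of $\Gamma$ by one (a direct check from definition \ref{armg} with $e\mapsto e+1$, $f\mapsto f-1$, all other invariants fixed), so \eqref{main_constr_nc} inserts an extra factor of $\gamma$, giving the claim. The $\nu$-bookkeeping in the small-cyclic-word special cases of \eqref{bracket} matches that in \eqref{main_constr_nc} analogously.

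The main technical obstacle will be sign tracking: the orientation of $\circ_{l_1,l_2}\Gamma$ is prescribed with the new edge first, and this must be reconciled with the Koszul signs produced by permuting the half-edges $l_1, l_2$ to the front of $\sigma(I^q|_{v_1}\otimes\cdots\otimes I^q|_{v_n})$ before pairing them, and by re-indexing $proj$ along the new face structure. A related, routine point is the combinatorial over-counting factor coming from automorphisms of $\Gamma$ identifying different leaf pairs, handled exactly as in the proof of Lemma \ref{bra_comp}.
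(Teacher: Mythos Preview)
Your proposal is correct and follows exactly the approach the paper intends: the paper's own proof says only ``This follows similarly to above lemma. In the second equality the additional $\gamma$ comes from the fact that $\delta_i$ increases the genus by one,'' and you have spelled out precisely that analogy with Lemma~\ref{bra_comp}, including the face/cyclic-word bijection, the $\nu$-bookkeeping in the degenerate cases, and the genus count from Definition~\ref{armg}. You also correctly read the right-hand side of the second identity as $\gamma\delta\circ\rho_{I^q}$ with $\delta$ the Chevalley--Eilenberg differential on $\mathcal{F}^{pq}(V_B)$ (the printed ``$\delta_i$'' there is a typo, as is clear from how the lemma is used in the proof of Theorem~\ref{Kc2}).
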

\begin{proof}
This follows similarly to above lemma.
In the second equality the additional $\gamma$ comes from the fact that $\delta_i$ increases the genus by one.
\end{proof}
\begin{lemma}\label{diff_comp2r}
 Given $I^q\in MCE (\mathcal{F}^{pq}(V_B))$ we have that $\rho_{I^q}\circ (-d)=d\circ \rho_{I^q}$.
\end{lemma}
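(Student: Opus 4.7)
The proof will combine three ingredients. First, the pairings $\langle\_,\_\rangle_{ij}$ are chain maps on the cyclic chain complexes $V_B$, so the inverse-pairing contractions $\langle\_,\_\rangle^{-1}_e$ and the projection $proj$ appearing in construction \ref{con_proj} commute with the internal differential $d$ acting on $Sym(Cyc^*(V_B)[d-4])\llbracket\gamma\rrbracket$. Second, by the Leibniz rule the only contribution to $d\rho_{I^q}(\Gamma)$ comes from $d$ acting on one of the tensor factors $I^q|_{v_i}$. Third, since $I^q \in MCE(\mathcal{F}^{pq}(V_B))$ satisfies
\begin{equation*}
dI^q = -\nabla I^q - \gamma\delta I^q - \tfrac{1}{2}\{I^q, I^q\},
\end{equation*}
projecting to the combinatorial type at $v_i$ rewrites $dI^q|_{v_i}$ as three terms.

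Combining these yields
\begin{equation*}
-d\rho_{I^q}(\Gamma) \;=\; \sum_{i=1}^{v(\Gamma)} \rho_{I^q}^{(i)}\bigl(\Gamma;\,\nabla I^q|_{v_i} + \gamma\delta I^q|_{v_i} + \tfrac{1}{2}\{I^q, I^q\}|_{v_i}\bigr),
\end{equation*}
where $\rho_{I^q}^{(i)}(\Gamma;\,x)$ denotes the expression \eqref{main_constr_nc} with $I^q|_{v_i}$ replaced by $x$. The remaining task is to identify this sum, term by term, with $\rho_{I^q}(d\Gamma)$, where $d\Gamma$ is the dual of edge contraction and therefore enumerates the three cases of edge expansion described in definition \ref{cont}.

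The matching is the heart of the argument. The term $\frac{1}{2}\{I^q, I^q\}|_{v_i}$ introduces a new pairing between two independent inputs, corresponding exactly to expansions where a non-loop edge is inserted joining two (possibly new) vertices with matching boundary data, as in case \ref{item_contractedge}; the factor $\tfrac{1}{2}$ is absorbed by the choice of which endpoint is labelled first, exactly as in the proof of lemma \ref{bra_comp}. The term $\nabla I^q|_{v_i}$ splits one cyclic word at $v_i$ into two, matching expansions where a new loop at $v_i$ has its two endpoints in a single cycle that is thereby split (case \ref{item_contractloop2}); when the endpoints are adjacent in the cyclic ordering, the $\nu_\lambda$ factor produced by $\nabla$ encodes the appearance of an extra empty face, as for lemma \ref{diff_comp}. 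Finally, the term $\gamma\delta I^q|_{v_i}$ fuses two cyclic words at $v_i$ into one and multiplies by $\gamma$, matching the creation of a loop whose endpoints lie in different cycles (case \ref{item_contractloop1}), the $\gamma$ accounting for the increase of $g(v_i)$ by one inside $\gamma^{g(\Gamma)}$. The subcases involving boundary defects are handled by the same mechanism as in lemma \ref{bra_comp}: whenever a gluing annihilates a one-edge cycle, the bracket or $\delta$ produces an extra $\nu_\lambda$ in $Cyc^*(V_B)$ that bookkeeps the new empty face.

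The main obstacle is sign bookkeeping. The overall sign $-d$ on the graph side must be reconciled with (a) the Koszul signs produced by the permutation $\sigma$ of construction \ref{perm}, (b) the shifts built into the target $Sym(Cyc^*(V_B)[d-4])\llbracket\gamma\rrbracket[3-d]$, (c) the anti-symmetry convention on graph orientations, and (d) the sign incurred when $d$ hops past the pairings and projections in the Leibniz expansion. All of these parities are already recorded in the degree count verifying lemma \ref{bra_comp}, so the sign verification will be a strict extension of the analyses carried out there and in lemma \ref{diff_comp}, rather than a qualitatively new computation. Once the three matchings and signs are in place the equality $\rho_{I^q}\circ(-d) = d\circ\rho_{I^q}$ follows.
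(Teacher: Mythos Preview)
Your proposal is correct and follows essentially the same route as the paper's own proof: both use the Maurer--Cartan equation to rewrite $dI^q|_{v_i}$ as the three terms $\tfrac{1}{2}\{I^q,I^q\}|_{v_i}$, $\gamma\delta I^q|_{v_i}$, $\nabla I^q|_{v_i}$, and then match these one-to-one with the three edge-expansion cases \ref{item_contractedge}, \ref{item_contractloop1}, \ref{item_contractloop2} of definition \ref{cont}, noting that the $\nu_\lambda$'s from the central extension encode the boundary-defect subcases. The only cosmetic difference is direction: the paper begins by expanding $\rho_{I^q}(-d\Gamma)$ graph-theoretically and ends with $d\rho_{I^q}(\Gamma)$, whereas you start from $d\rho_{I^q}(\Gamma)$ via the Leibniz rule and end by identifying the result with $\rho_{I^q}(d\Gamma)$.
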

\begin{proof}
    We have 
    $$\rho_{I^q}(-d\Gamma)=-\rho_{I^q}(\sum_{(*)}\Gamma')=\sum_{(*)}(\bigotimes_{e\in E(\Gamma')}\langle\_,\_\rangle^{-1}_e\otimes proj)\circ\sigma\circ'(I^q|_{v_1}\otimes\ldots \otimes X\otimes\ldots I^q|_{v_n})\nu^{f_{tot}(\Gamma)}\gamma^{g(\Gamma)}.$$
    Here $(*)$ indicates that the sum is over isomorphism classes of stable ribbon graphs $\Gamma'$ such that when contracting one of their legs according to rule \ref{cont} we recover $\Gamma$.
    Say $\Gamma$ has $n$ vertices.
    Note that a graph $\Gamma'$ as before has at most n+1 vertices and $n-1$ of those actually `coincide' with $n-1$ ones of $\Gamma$; let us call the vertex of $\Gamma$ which is not matched $v_i$.
    That is how the second equality above follows, where we denote by $X$ the tensor that gets associated to the at most two vertices which are different (and which we assume wlog to be numbered consecutively).
    Note further that $d$ preserves the arithmetic genus and the total number of empty colored faces.
    
    According to the rule how to contract \ref{cont} the sum over $(*)$ splits up into three summands: 
    \begin{enumerate}
        \item 
    Similarly to lemma \ref{bra_comp} we have that for the first summand, corresponding to rule \ref{item_contractedge} of definition \ref{cont}
    $$\sum_{(*.1)}(\bigotimes_{e\in E(\Gamma')}\langle\_,\_\rangle^{-1}_e\otimes proj)\circ\sigma'(\cdots\otimes X\otimes\cdots )=(
    \bigotimes_{e\in E(\Gamma)}\langle\_,\_\rangle^{-1}_e\otimes proj)\circ\sigma(\cdots\otimes  \frac{1}{2}\{I^q,I^q\}|_{v_i}\otimes\cdots).$$
 Note that here it is important that we had redefined the bracket on words of length one, see \ref{ce}, which corresponds to rule \ref{item_contractedge}, second case distinction.
 \item Again similarly to lemma \ref{diff_comp} we have that for the second summand, corresponding to rule \ref{item_contractloop1} of definition \ref{cont} 
 $$\sum_{(*.2)}(\bigotimes_{e\in E(\Gamma')}\langle\_,\_\rangle^{-1}_e\otimes proj)\circ\sigma'(\cdots\otimes X\otimes\cdots )=(
    \bigotimes_{e\in E(\Gamma)}\langle\_,\_\rangle^{-1}_e\otimes proj)\circ\sigma(\cdots\otimes  (\gamma\delta I^q)|_{v_i}\otimes\cdots).$$
Here we need to include a $\gamma$ since for this summand we have decreased the genus defect by one.
\item Further we have that for the third summand, corresponding to rule \ref{item_contractloop2} of definition \ref{cont} that 
$$\sum_{(*.3)}(\bigotimes_{e\in E(\Gamma')}\langle\_,\_\rangle^{-1}_e\otimes proj)\circ\sigma'(\cdots\otimes X\otimes\cdots )=(
    \bigotimes_{e\in E(\Gamma)}\langle\_,\_\rangle^{-1}_e\otimes proj)\circ\sigma(\cdots\otimes  \nabla I^q|_{v_i}\otimes\cdots).$$

 Again it is crucial how we defined $\nabla$ in definition \ref{nabla} with regards to the central extension
 when contracting consecutive letters and when a word consists of just two letters, which matches the second and third case distinction of rule \ref{item_contractloop2}.
 \end{enumerate}
 Since we have that $I^q$ is a Maurer-Cartan element, ie in particular
 $$dI^q|_{v_i}=-(\frac{1}{2}\{I^q,I^q\}|_{v_i}+\nabla I^q|_{v_i}+\gamma\delta I^q|_{v_i})$$
 it follows that above sum is equal to
 $$=\sum_{i=1}^n(\bigotimes_{e\in E(\Gamma)}\langle\_,\_\rangle^{-1}_e\otimes proj)\circ\sigma\circ(I^q|_{v_1}\otimes\ldots dI^q|_{v_i}\cdots I^q|_{v_n})\gamma^{g(\Gamma)},$$
 which we can rewrite, since both $proj$ and $(\langle\_,\_\rangle^{-1}$ are chain maps, as 
 
 $$=d\circ\big(\bigotimes_{e\in E(\Gamma)}\langle\_,\_\rangle^{-1}_e\otimes proj\big)\circ\sigma\circ(I^q|_{v_1}\otimes\cdots \otimes I^q|_{v_n})\gamma^{g(\Gamma)}=d\rho(\Gamma).$$ \end{proof}
\begin{cons}\label{ext_sym}
    We extend the map \ref{Kon_gen} as a map of algebras to 
$$Sym({s\mathcal{RG}^B_{c,\bullet}}^\vee[2d-6])\rightarrow Sym(Cyc^*(V_B)[d-2])\llbracket\gamma\rrbracket,$$
and finally shift both sides to obtain the map
\begin{equation}\label{sdt}
Sym({s\mathcal{RG}^B_{c,\bullet}}^\vee[2d-6])[3-d]\rightarrow Sym(Cyc^*(V_B)[d-2])\llbracket\gamma\rrbracket[3-d].\end{equation}
which by abuse of notation we denote by the same letter.
\end{cons}
\begin{theorem}\label{Kc2}
    For every set $B$ the map \ref{sdt} defines a map of $(d-2)$-twisted BD algebras
    $$\rho_{I^q}: s\mathcal{RG}_B^d\llbracket\gamma\rrbracket\rightarrow \mathcal{F}^{pq}(V_B),$$
    which is functorial with respect to strict cyclic $A_\infty$-morphisms of quantum $A_\infty$-categories. 
\end{theorem}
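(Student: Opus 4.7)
The plan is to assemble the theorem from the component-wise compatibility lemmas already proved, extend them from generators to the full symmetric algebra by the derivation/multiplicativity properties of both the BD structures involved, and finally verify naturality separately.

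First, by construction \ref{ext_sym} the map $\rho_{I^q}$ is defined to be multiplicative, so the only condition on the commutative product that remains is to check it lands in the correct subspaces; this is immediate from counting degrees as in the degree check within the proof of \eqref{Kon_gen}. Next I would address the BD differential. The differential on the source is $-d+\nabla+\delta_i+\gamma\delta_e$ and on the target is $d+\nabla+\gamma\delta$. On generators (symmetric words of length one in connected graphs), Lemma \ref{diff_comp2r} handles the $-d$ piece, while Lemma \ref{diff_comp} takes care of $\nabla$ and $\delta_i$ (the latter acquiring the factor of $\gamma$ because of the genus increase). For the Chevalley--Eilenberg piece $\gamma\delta_e$, I would argue that it is not an independent check: by definition $\delta_e$ on the source and $\delta$ on the target are the Chevalley--Eilenberg differentials extended from the respective odd Lie brackets via the standard coderivation/Leibniz construction, and hence any map of dg shifted Lie algebras extends to a chain map of the associated Chevalley--Eilenberg complexes. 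Thus, granted the bracket compatibility on generators, compatibility of $\gamma\delta_e$ with $\gamma\delta$ on arbitrary symmetric words is forced.

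Second, to get the Poisson bracket compatibility on the full symmetric algebra, I would use that both brackets satisfy the Leibniz rule with respect to the commutative products: on the graph side by the very definition of the bracket on $s\mathcal{RG}_B^d\llbracket\gamma\rrbracket$ (theorem \ref{sRGt}), and on the $\mathcal{F}^{pq}$ side by theorem \ref{nc_pq}. Since $\rho_{I^q}$ is multiplicative and Lemma \ref{bra_comp} provides the equality $\rho_{I^q}(\{\Gamma_1,\Gamma_2\})=\{\rho_{I^q}(\Gamma_1),\rho_{I^q}(\Gamma_2)\}$ for connected generators, the Leibniz rule propagates this equality to arbitrary symmetric words by induction on the length. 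Once the differential and bracket match, the BD relation is automatic on the target by theorem \ref{nc_pq} and on the source by theorem \ref{sRGt}, so no new compatibility is required.

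Finally, functoriality: given a morphism $F$ in $(\text{q-cyc}_d A_\infty\text{cat})_{st}$ satisfying $F^*I^q_{\mathcal{D}}=I^q_{\mathcal{C}}$, I would show that $F^*\circ\rho_{I^q_{\mathcal{D}}}=\rho_{I^q_{\mathcal{C}}}$ on each connected graph $\Gamma$ (and then extend multiplicatively). Both sides are obtained from the formula \eqref{main_constr_nc}, and the only data entering that formula are the tensors $I^q|_{v_i}$ and the inverse pairings $\langle\_,\_\rangle_{ij}^{-1}$. The naturality of the assignment $I^q\mapsto I^q|_{v_i}$ with respect to $F^*$ is immediate from its description in \eqref{iso_sc}, and the compatibility $\langle F_1(x),F_1(y)\rangle_{\mathcal{D}}^{-1}=\langle x,y\rangle_{\mathcal{C}}^{-1}$ on the inverse pairings was already established in the proof of lemma \ref{cov}. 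Combining these, the two sides agree tensor-by-tensor on each vertex and on each internal edge.

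The step I expect to require the most care is the bracket/Chevalley--Eilenberg reduction: one must be genuinely careful that the shifts ensuring the source is a $(d-2)$-twisted BD algebra with \emph{even-degree} product match those on $\mathcal{F}^{pq}(V_B)$, so that the universal argument extending bracket compatibility on generators to $\delta_e$-compatibility on all symmetric words produces the correct signs. Everything else is a bookkeeping exercise built on lemmas \ref{bra_comp}, \ref{diff_comp} and \ref{diff_comp2r}.
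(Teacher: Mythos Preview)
Your proposal is essentially the same strategy as the paper's proof: multiplicativity by construction, bracket compatibility on generators via Lemma \ref{bra_comp} extended by Leibniz, differential compatibility from Lemmas \ref{diff_comp} and \ref{diff_comp2r}, and the Chevalley--Eilenberg piece $\gamma\delta_e\leftrightarrow\gamma\delta$ forced automatically once the bracket is matched. That part is fine.

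There is a small but genuine gap in your functoriality argument. You write $F^*\circ\rho_{I^q_{\mathcal{D}}}=\rho_{I^q_{\mathcal{C}}}$, but this equation does not type-check: the source of $\rho_{I^q_{\mathcal{D}}}$ is $s\mathcal{RG}^d_{Ob\mathcal{D}}\llbracket\gamma\rrbracket$ whereas the source of $\rho_{I^q_{\mathcal{C}}}$ is $s\mathcal{RG}^d_{Ob\mathcal{C}}\llbracket\gamma\rrbracket$, and these are different graph complexes because the coloring sets differ. What the paper actually shows is that the map of sets $Ob\mathcal{C}\to Ob\mathcal{D}$ underlying $F$ induces a map $s\mathcal{RG}^d_{Ob\mathcal{D}}\llbracket\gamma\rrbracket\to s\mathcal{RG}^d_{Ob\mathcal{C}}\llbracket\gamma\rrbracket$, and functoriality is the commutativity of the resulting \emph{square}
\[
\begin{tikzcd}
 \mathcal{F}^{pq}(V_{Ob\mathcal{D}})\arrow{r}{F^*}&   \mathcal{F}^{pq}(V_{Ob\mathcal{C}})\\
 s\mathcal{RG}_{Ob\mathcal{D}}^d\llbracket\gamma\rrbracket\arrow{r}\arrow{u}{\rho_{I^q_{\mathcal{D}}}}& s\mathcal{RG}_{Ob\mathcal{C}}^d\llbracket\gamma\rrbracket\arrow{u}[swap]{\rho_{I^q_{\mathcal{C}}}}.
\end{tikzcd}
\]
Once you insert this missing horizontal map on the graph side, your tensor-by-tensor verification (naturality of $I^q|_{v_i}$ and of the inverse pairings) goes through exactly as you describe.
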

\begin{proof}  \begin{itemize}
    \item 
  It is a map of algebras (whose multiplication have non-zero even degree) by construction.
\item By the first part of lemma \ref{diff_comp} and construction $\rho_{I^q}$ intertwines
the maps on both sides that we called suggestively $\nabla$, further by lemma \ref{diff_comp2r} it intertwines the internal differentials.
\item We proved that map \ref{Kon_gen} is a map of shifted Lie algebras, in fact it is a map of a shifted Lie algebra to a shifted Poisson algebra.
Any such map determined a map of shifted Poisson algebras from the free algebra on the domain to the target and this is indeed how we defined the shifted Poisson structure on 
$s\mathcal{RG}_B^d\llbracket\gamma\rrbracket$ (up to a total shift).
\item Furthermore this implies that $\rho_{I^q}$ intertwines $\delta_e$ with the associated Chevalley-Eilenberg differential of the 
Lie algebra $\mathcal{F}^{pq}(V_B)$. This, together with the arguments in bullet point two imply
$$\rho_{I^q}\big((-d+\gamma\delta_i+\delta_e+\nabla)(\Gamma_1\cdot\ldots\cdot \Gamma_n)\big)=(d+\gamma\delta+\nabla)\rho_{I^q}(\Gamma_1\cdot\ldots\cdot \Gamma_n),$$
which finishes the first part of the proof.
\item Given $F:V_B\rightarrow V_C$ a strict cyclic $A_\infty$-morphisms of quantum  $A_\infty$-categories we have in particular a map of sets $B\rightarrow C$, which induces a map 
$$s\mathcal{RG}_C^d\llbracket\gamma\rrbracket\rightarrow s\mathcal{RG}_B^d\llbracket\gamma\rrbracket.$$
Since $F$ is strict and cyclic and we have by definition that $F^*I^q=J^q$ for the respective quantizations one shows that
$$\begin{tikzcd}
 \mathcal{F}^{pq}(V_C)\arrow{r}{F^*}&   \mathcal{F}^{pq}(V_B)\\
 s\mathcal{RG}_C^d\llbracket\gamma\rrbracket\arrow{r}\arrow{u}& s\mathcal{RG}_B^d\llbracket\gamma\rrbracket\arrow{u}
\end{tikzcd}$$
commutes, using the definition \ref{main_constr_nc} of the vertical maps.
\end{itemize}
\end{proof}
Given an $I\in MCE(\mathcal{F}^{fr}(V_B))$, we can construct a map
$$\rho_{I}:{\mathcal{RG}^B_{c,\bullet}}^\vee[2d-6]\rightarrow Sym(Cyc^*(V_B)[d-2)\llbracket\gamma\rrbracket.$$
It is given by almost the exactly the same construction, just that we modify the map \ref{iso_sc} by omitting all shifts. 
Furthermore, in the same way as in lemma \ref{diff_comp} and \ref{diff_comp2r} we can prove that this maps intertwines the algebraic structures.
In the same way as in construction \ref{ext_sym} we get a map, also denoted
\begin{equation}\label{sigs}
\rho_{I}:Sym({\mathcal{RG}^B_{c,\bullet}}^\vee[2d-6])\llbracket\gamma\rrbracket[3-d]\rightarrow Sym(Cyc^*(V_B)[d-4])\llbracket\gamma\rrbracket[3-d]\end{equation}
for which we can argue exactly the same as in theorem \ref{Kc2} that we have: 
\begin{theorem}\label{Kc2.1}
Given $I\in MCE(\mathcal{F}^{fr}(V_B))$ the map \ref{sigs} is a map of $(d-2)$-twisted BD algebras
    $$\rho_{I}: \mathcal{RG}_B^d\llbracket\gamma\rrbracket\rightarrow \mathcal{F}^{pq}(V_B),$$
    functorial with respect to strict cyclic $A_\infty$-morphisms.
\end{theorem}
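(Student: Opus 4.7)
The plan is to mirror the proof of Theorem \ref{Kc2} essentially verbatim, specialised to the ribbon-graph setting where $I$ has no $\gamma$ or $\nu$ content. First I would verify the degree count for $\rho_I$ on a connected ribbon graph $\Gamma$: since $\Gamma$ has all $g(v)=b(v)=0$ and $c(v)=1$, the Euler identity $v-e+f=2-2g$ replaces the arithmetic-genus formula of Definition \ref{armg} and produces exactly the shift $(6-2d)$ per $\gamma^{g(\Gamma)}$ factor needed in the target. Independence of the auxiliary choices from Construction \ref{perm} follows word-for-word from the argument given for $\rho_{I^q}$: the pairings $\langle\_,\_\rangle^{-1}_e$ are symmetric and $proj$ identifies precisely those reorderings of cycles, cycles within vertices, and vertices that the different choices introduce. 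Multiplicativity with respect to the commutative product is built into Construction \ref{ext_sym}.

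The core of the proof is the intertwining of the BD differentials and brackets. The compatibility $\rho_I(\{\Gamma_1,\Gamma_2\})=\{\rho_I(\Gamma_1),\rho_I(\Gamma_2)\}$ reproduces Lemma \ref{bra_comp}: gluing two leaves of separate ribbon graphs corresponds exactly to applying $\langle\_,\_\rangle^{-1}$ to one letter in each of two cyclic words, with the automorphism count $c_{l_1}(\Gamma_1)c_{l_2}(\Gamma_2)$ absorbing the symmetry overcount. Similarly, $\rho_I\circ\nabla=\nabla\circ\rho_I$ and $\rho_I\circ\delta_i=\gamma\delta_i\circ\rho_I$ follow as in Lemma \ref{diff_comp}, the extra $\gamma$ in the second identity accounting for the fact that $\delta_i$ fuses two distinct faces, which raises the genus by one. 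The fact that $I$ has no central-extension component is exactly compatible with the requirement that vertex-local tensors be drawn from $Sym(Cyc^*_+(V_B)[-1])$, i.e.\ from the $\nu=0,\gamma=0$ part of $\mathcal{F}^{pq}(V_B)$.

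The main step, and the one requiring the most care, is the chain-map property $\rho_I\circ(-d)=d\circ\rho_I$, the analogue of Lemma \ref{diff_comp2r}. The key observation is that for a ribbon graph $\Gamma$ only Rule \ref{item_contractedge} of Definition \ref{cont} (contracting a non-loop edge) can yield another ribbon graph; both loop-contraction rules (Rule \ref{item_contractloop1} is actually vacuous for $c(v)=1$, and Rule \ref{item_contractloop2} splits the single cycle into two or introduces a boundary defect) leave the ribbon graph world and are therefore killed by the projection $s\mathcal{RG}_{c,\bullet}\rightarrow\mathcal{RG}_{c,\bullet}$. On the target side this matches perfectly the Maurer-Cartan equation $dI+\frac{1}{2}\{I,I\}=0$ in $\mathcal{F}^{fr}(V_B)$, since $I$ has no $\nabla I$ or $\delta I$ contributions to compensate for the absent loop-contraction terms. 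The vertex-local rewriting sketched in the proof of Lemma \ref{diff_comp2r} (summing contractions whose result is $\Gamma$ and regrouping the pre-images into a sum of $\frac{1}{2}\{I,I\}|_{v_i}$ at each vertex) then gives the chain-map identity once summed over vertices and composed with the edge pairings and projection.

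Functoriality under a strict cyclic $A_\infty$-morphism $F\colon V_B\rightarrow V_C$ reduces to checking commutativity of the obvious square; by strictness $F^*J_C=I_B$ and $F^*$ intertwines the pairings by construction \eqref{nfn}, so equation \eqref{main_constr_nc} for $\rho_I$ is preserved. The main obstacle is really just the careful bookkeeping of the vertex-local MCE identity inside the combinatorial expression for $d\Gamma$; once one accepts that Rules \ref{item_contractloop1}-\ref{item_contractloop2} are killed by the projection to ribbon graphs, the argument is a transcription of the stable case and no new ideas are needed beyond those already deployed for Theorem \ref{Kc2}.
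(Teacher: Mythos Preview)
Your proposal is correct and follows precisely the approach indicated in the paper, which simply states that one argues ``exactly the same as in theorem~\ref{Kc2}''. Your explicit identification of which contraction rules survive the projection to ribbon graphs, and the corresponding reduction of the vertex-local MCE equation to $dI+\tfrac{1}{2}\{I,I\}=0$, is exactly the specialisation the paper leaves implicit.
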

Again using the same construction, but omitting some shifts, given $I\in MCE(\mathcal{F}^{fr}(V_B))$, we can construct a map
$$\rho_{I}:{\mathcal{RT}^B_{c,\bullet}}^\vee[d-3]\rightarrow (Cyc^*_+(V_B)[-1]).$$
Indeed we land in $Cyc^*_+(V_B)[-1]$ since ribbon trees have by definition genus zero and only one face, which needs to have a non-zero number of open boundaries. 
In the same way as in lemmas \ref{bra_comp} and \ref{diff_comp2r} we can argue that this map is compatible with the algebraic structures. 
As in construction \ref{ext_sym} we get a map, also denoted
\begin{equation}\label{www}
\rho_{I}:Sym({\mathcal{RT}^B_{c,\bullet}}^\vee[3-d])\rightarrow Sym(Cyc^*_+(V_B)[-1])
\end{equation}
for which we can argue analogously to same as in theorem \ref{Kc2} that we have:
\begin{theorem}\label{Kc1}
Given $I\in MCE(\mathcal{F}^{fr}(V_B))$ the map \ref{www} is a map of $(d-2)$-shifted Poisson algebras
    $$\rho_{I}: \mathcal{RT}_B^d\rightarrow \mathcal{F}^{fr}(V_B),$$
    functorial with respect to strict cyclic $A_\infty$-morphisms.
\end{theorem}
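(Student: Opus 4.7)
The approach is to mirror the proof of Theorem \ref{Kc2} (for stable ribbon graphs) verbatim, with the simplifications afforded by restricting to ribbon trees: the genus defect and all boundary defects at vertices are zero, and there is a unique face, which has a strictly positive number of open boundaries. First I construct the degree-zero map
$$\rho_I(\Gamma) = \Big(\bigotimes_{e \in E(\Gamma)} \langle\_,\_\rangle^{-1}_e \otimes proj\Big) \circ \sigma\big(I|_{v_1} \otimes \cdots \otimes I|_{v_n}\big)$$
on oriented $B$-colored ribbon trees, using the analog of Definition \ref{main_constr_nc} but with the projection \ref{iso_sc} replaced by its unshifted variant and with no factors of $\gamma$ or $\nu$ (these are zero on ribbon trees). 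The degree count reduces to $-e + (d-2)e - (d-3) + (d-3)(-2v + 1) = 0$ after shifting by $(d-3)$, using $v - e = 1$ for a connected tree and $f = 1$. Independence of the auxiliary choices (ordering of vertices/cycles/half-edges, choice of endpoints in each edge) follows from the same symmetry arguments as in the ribbon-graph case, namely that the relevant permutations lie in the kernel of $proj$ or act trivially via the symmetry of $\langle\_,\_\rangle^{-1}$.

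Second, I verify compatibility with the shifted Lie bracket, $\rho_I(\{\Gamma_1, \Gamma_2\}) = \{\rho_I(\Gamma_1), \rho_I(\Gamma_2)\}$, exactly as in Lemma \ref{bra_comp}; here the bracket on ribbon trees glues one leaf of $\Gamma_1$ to one leaf of $\Gamma_2$, producing again a ribbon tree, and matches the pairing $\langle\_,\_\rangle^{-1}_{ij}$ of the corresponding tensor factors in $\rho_I(\Gamma_1) \otimes \rho_I(\Gamma_2)$. Compatibility with the internal differential and the graph differential, $\rho_I((-d)\Gamma) = d\,\rho_I(\Gamma)$, follows as in Lemma \ref{diff_comp2r}, but only rule \ref{item_contractedge} of Definition \ref{cont} can occur since ribbon trees have no loops. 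Consequently the only part of the Maurer-Cartan equation that is invoked is
$$dI + \tfrac{1}{2}\{I, I\} = 0 \quad \text{in } \mathcal{F}^{fr}(V_B),$$
which is exactly the structure available for $I \in MCE(\mathcal{F}^{fr}(V_B))$; the $\nabla$ and $\delta$ terms that appeared in the stable ribbon graph argument simply do not arise.

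Third, I extend $\rho_I$ as an algebra map to $Sym\big({\mathcal{RT}^B_{c,\bullet}}^\vee[d-3]\big)$, analogous to Construction \ref{ext_sym}. Since a Lie algebra map into a shifted Poisson algebra extends uniquely to a map of shifted Poisson algebras out of the free graded commutative algebra on the domain, and since by Theorem \ref{RS} this free algebra (up to the overall shift) carries exactly the shifted Poisson structure on $\mathcal{RT}^d_B$, the resulting map is automatically a map of $(d-2)$-shifted Poisson dg algebras.

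Finally, functoriality with respect to strict cyclic $A_\infty$-morphisms follows as in the last bullet of the proof of Theorem \ref{Kc2}: a strict cyclic morphism $F: V_B \to V_C$ induces a map $B \to C$ of sets and hence a map $\mathcal{RT}^d_C \to \mathcal{RT}^d_B$, and the identity $F^* I_C = I_B$ (established earlier for strict cyclic morphisms) yields commutativity of the relevant square by the explicit formula for $\rho_I$. I expect no serious obstacle beyond bookkeeping: the main delicacy is simply to check that the degree shifts match correctly once $\gamma$ and $\nu$ are set to zero, i.e.\ that the target indeed lands in $Cyc^*_+(V_B)[-1]$ rather than the full central extension $Cyc^*(V_B)$, which is guaranteed by the fact that ribbon trees have $g(\Gamma) = 0$, $f_{tot}(\Gamma) = 0$, and $f_{>0}(\Gamma) = 1$.
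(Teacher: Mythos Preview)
Your proposal is correct and follows exactly the paper's approach: the paper itself proves Theorem~\ref{Kc1} by remarking that the construction is the same as for stable ribbon graphs ``omitting some shifts,'' that the target lands in $Cyc^*_+(V_B)[-1]$ because ribbon trees have genus zero and a single non-empty face, and that one argues ``analogously to theorem~\ref{Kc2}.'' You have spelled out precisely these analogies, including the useful observation that only rule~\ref{item_contractedge} of Definition~\ref{cont} can occur (so only the classical Maurer--Cartan equation $dI+\tfrac12\{I,I\}=0$ is invoked), which the paper leaves implicit. The only quibble is that your displayed degree count $-e+(d-2)e-(d-3)+(d-3)(-2v+1)$ does not quite vanish under $v-e=1$; the correct bookkeeping (tracking that each $I|_{v_i}$ contributes degree $3-d$ once the shifts are removed) gives $(d-3)(e-v-1)+$\,shift, but this is a minor arithmetic slip and does not affect the argument.
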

Theorems \ref{Kc2.1}, \ref{Kc2} and \ref{Kc1} fit together with 
diagram \ref{rbg_comp} and map \ref{nc_dq} together:
\begin{theorem}\label{comp_rg_al}
Given $I^q\in MCE(\mathcal{F}^{pq}(V_B))$ denote by $I=p(I^q)\in MCE(\mathcal{F}^{fr}(V_B))$ its dequantization. Then following diagram commutes:
$$\begin{tikzcd}
\mathcal{F}^{fr}(V_B)&\mathcal{F}^{pq}(V_B)\arrow{l}{p}&
\mathcal{F}^{pq}(V_B)\arrow[equal]{l}\\
\mathcal{RT}^d_{B}\arrow{u}{\rho_{I}}&\mathcal{RG}^d_{B}\llbracket\gamma\rrbracket\arrow[swap]{l}\arrow{r}\arrow{u}{\rho_{I}}&
s\mathcal{RG}^d_{B}\llbracket\gamma\rrbracket\arrow[bend left=20]{ll}\arrow{u}{\rho_{I^q}}.
    \end{tikzcd}$$
\end{theorem}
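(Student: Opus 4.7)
The plan is to verify commutativity of each square and of the curved arrow separately, by unwinding the explicit formula \eqref{main_constr_nc} and the dequantization maps on basis elements of the graph complexes (symmetric products of connected oriented graphs). The key observation is that the numerical invariants of a graph ($g(\Gamma)$, $f_{0}(\Gamma)$, $f_{>0}(\Gamma)$, and the defects $g_i$, $b_\lambda(v_i)$, $c(v_i)$) control exactly which homogeneous components of $I^q$ are picked up by $\rho_{I^q}$, and these correspond to the gradings on $\mathcal{F}^{pq}$ (powers of $\gamma$, $\nu$, Sym-length) by which $p_{\mathcal{F}}$ projects. The same invariants control when the graph-side dequantizations vanish.

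First I would treat the right square, i.e. show that $\rho_{I^q}\circ v = \rho_I$ as maps $\mathcal{RG}^d_B\llbracket\gamma\rrbracket\to\mathcal{F}^{pq}(V_B)$. For a connected ribbon graph $\Gamma$, the image $v(\Gamma)$ is a stable ribbon graph with $g(v_i)=b_\lambda(v_i)=0$ and $c(v_i)=1$ at every vertex. Unwinding the projection \eqref{iso_sc}, the vertex-local tensor $I^q|_{v_i}$ then equals the $Sym^1$, $\gamma^0$, $\nu^0$ part of $I^q$ of cyclic length $|v_i|$, which by Definition~\ref{nc_dq} is precisely $I|_{v_i}$ for $I=p(I^q)$. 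Since the remaining data $\sigma$, $\langle-,-\rangle_e^{-1}$ and $proj$ entering \eqref{main_constr_nc} do not depend on $I^q$, we obtain $\rho_{I^q}(v(\Gamma))=\rho_I(\Gamma)$ on the nose; then extend symmetrically and $\gamma$-linearly.

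Next I would treat the left square, i.e. $p_{\mathcal{F}}\circ\rho_I = \rho_I\circ p_{\text{graph}}$. For a basis element $\Gamma_1\cdots\Gamma_n\gamma^k$ with $\Gamma_i$ connected ribbon graphs, the formula \eqref{main_constr_nc} shows $\rho_I(\Gamma_1\cdots\Gamma_n\gamma^k)$ lies in $Sym^{\sum f_{>0}(\Gamma_i)}$ with coefficient $\gamma^{k+\sum g(\Gamma_i)}\nu^{\sum f_0(\Gamma_i)}$. Since $p_{\mathcal{F}}$ projects onto $Sym^1$ and sets $\gamma$ and $\nu$ to zero, the LHS vanishes unless $n=1$, $k=0$, $g(\Gamma_1)=0$, $f_{0}(\Gamma_1)=0$, $f_{>0}(\Gamma_1)=1$; these are exactly the conditions defining a ribbon tree. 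The graph-side map $p_{\text{graph}}$ vanishes off the same locus by Definition~\ref{deq_rib} (projection to $Sym^1$, $\gamma^0$, and to the ribbon-tree subcomplex). When $\Gamma_1$ is a ribbon tree, both sides reduce to $\rho_I(\Gamma_1)$ computed on $\mathcal{RT}^d_B$ — the two forms of $\rho_I$ share the same combinatorial formula and differ only by shifts, which are matched precisely by the shifts built into $p_{\mathcal{F}}$ and $p_{\text{graph}}$.

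The curved arrow $p_{\mathcal{F}}\circ\rho_{I^q}=\rho_I\circ p_{\text{bend}}$ follows from the same analysis: for a connected stable ribbon graph $\Gamma$, a vertex with positive $g_i$ or $b_\lambda(v_i)$ contributes a positive power of $\gamma$ or $\nu$ to $\rho_{I^q}(\Gamma)$, and a vertex with $c(v_i)>1$ contributes a $Sym^{>1}$ factor in $I^q|_{v_i}$; all such contributions are killed by $p_{\mathcal{F}}$ and their counterparts on the graph side are killed by the composite $s\mathcal{RG}\to\mathcal{RG}\hookleftarrow\mathcal{RT}$ dualized. Only ribbon trees survive on both sides, reducing the identity to the combination of the two previous squares.

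The main obstacle is purely organizational, namely keeping the degree shifts $(d-3)$, $(3-d)$, $(2d-6)$ consistent across the different versions of $\rho$ and the dequantization maps. Once one observes that the grading by $(g(\Gamma),f_0(\Gamma),f_{>0}(\Gamma))$ on the graph side corresponds bijectively under $\rho$ to the tri-grading by $(\gamma\text{-power},\nu\text{-power},Sym\text{-length})$ on the $\mathcal{F}$ side, the diagrams commute by term-wise comparison and no new conceptual ingredient beyond Theorems~\ref{Kc1}, \ref{Kc2.1}, \ref{Kc2} is required.
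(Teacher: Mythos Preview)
Your proposal is correct and follows the same strategy as the paper. The paper's own proof is a single sentence (``This basically follows by construction. Note that it is essential that the dequantization maps project to symmetric words of length one, both for (stable) ribbon graphs and for cyclic words.''), and your argument is exactly a careful unpacking of that sentence: you identify the tri-grading by $(\gamma\text{-power},\nu\text{-power},Sym\text{-length})$ on the $\mathcal{F}$ side with the invariants $(g(\Gamma),f_0(\Gamma),f_{>0}(\Gamma))$ on the graph side and check that the projections match.
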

\begin{proof}
    This basically follows by construction. Note that it is essential that the dequantization maps
    project to symmetric words of length one, both for (stable) ribbon graphs and for cyclic words. 
\end{proof}
Recall the two Maurer-Cartan elements from \ref{RS_mce} and \ref{sRG_mce} in the respective graph complexes. We state following lemmas, which are straightforward to prove:
\begin{lemma}\label{comp_1}
Let $B$ be a set and given $I^q\in MCE(\mathcal{F}^{pq}(V_B))$ we have that $\rho_{I^q}(S)=I^q$.
\end{lemma}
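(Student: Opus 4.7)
The plan is to unpack both sides of the equation $\rho_{I^q}(S)=I^q$ in a common basis and observe that the sum over graphs exactly reassembles $I^q$ from its components. Since $S=\sum_{n>0}\sum_{(\#)} S_{n,C}$ is supported on stable ribbon graphs with one vertex $v$ and no internal edges, the definition of $\rho_{I^q}$ in \eqref{main_constr_nc} simplifies dramatically: there are no pairings $\langle\_,\_\rangle^{-1}_e$ and the permutation $\sigma$ reduces to the identity on the half-edges at $v$. Thus
\[
\rho_{I^q}(S_{n,C})=\mathrm{proj}\bigl(I^q|_v\bigr)\,\nu^{f_{tot}(\Gamma)}\gamma^{g(\Gamma)},
\]
where, for such a one-vertex zero-edge graph, $g(\Gamma)=g_v$ (from the definition of arithmetic genus) and $f_{tot}(\Gamma)=f_0(\Gamma)+\sum_{\lambda\in B}b_\lambda(v)$.

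The next step is to match this to a basis decomposition of $\mathcal{F}^{pq}(V_B)=Sym(Cyc^*(V_B)[d-4])\llbracket\gamma\rrbracket[3-d]$. By construction, $I^q|_v$ is the projection of $I^q$ onto precisely the piece whose symmetric-in-cyclic word structure matches the cycles $c_1,\ldots,c_{c(v)}$ of $v$ (with their lengths and boundary-coloring read off from the free boundaries) and whose $\gamma$- and $\nu_\lambda$-powers match $g_v$ and $b_\lambda(v)$. Applying $\mathrm{proj}=\mathrm{sym}^{f_{>0}}(\mathrm{cyc}_{m_1}\otimes\cdots\otimes \mathrm{cyc}_{m_{f_{>0}}})$ then re-identifies this component with the corresponding monomial in $\mathcal{F}^{pq}(V_B)$, and the factor $\nu^{f_{tot}(\Gamma)}$ restores the $\nu_\lambda$-contributions arising both from the boundary defects and from empty-leaf faces (i.e.\ those colored cycles that contribute only a central $\nu_\lambda$ rather than a genuine cyclic word).

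The core of the argument is then the combinatorial observation that isomorphism classes of one-vertex, zero-edge, $B$-colored stable ribbon graphs with $n>0$ leaves are in canonical bijection with the monomials in a basis of $\mathcal{F}^{pq}(V_B)$: the cycle structure records the symmetric word in cyclic words, the coloring of free boundaries records the boundary conditions, and the pair $(g_v,b_\lambda(v))$ records the powers of $\gamma$ and $\nu_\lambda$ respectively. Running this bijection, the sum $\sum_{n,C}\rho_{I^q}(S_{n,C})$ reassembles $I^q$ component by component.

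The main obstacle — really more of a bookkeeping check than a conceptual hurdle — is to verify that there are no overcounting or sign issues. This requires that (i) the orientation is trivial since $e(\Gamma)=0$, so no sign from reversing edge orderings arises, (ii) the automorphisms of $S_{n,C}$ match the symmetry factor built into the identification of invariants with coinvariants used when viewing $I^q|_v\in W^{\otimes|v|}$, and (iii) the assignment of $\nu_\lambda$ factors between the boundary-defect contribution $\prod_{v}\nu_\lambda^{b_\lambda(v)}$ and the empty-face contribution $\prod_{\ell\in F_0(\Gamma)}\nu_{\lambda(\ell)}$ correctly reproduces the central-extension part of $Cyc^*(V_B)$ as in \eqref{ce}. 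Once these are checked, the equality $\rho_{I^q}(S)=I^q$ follows directly from the basis decomposition.
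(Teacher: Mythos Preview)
Your proposal is correct and follows the same approach the paper implicitly has in mind: the paper does not actually write out a proof, stating only that the lemma is ``straightforward to prove,'' and your unpacking of $\rho_{I^q}$ on one-vertex zero-edge graphs is exactly the intended verification. One minor simplification you could make: for a one-vertex graph with no internal edges and $n>0$ leaves, every face coincides with a cycle at the vertex and therefore carries at least one leaf, so $F_0(\Gamma)=\varnothing$ and the empty-face contribution to $\nu^{f_{tot}(\Gamma)}$ is trivial; only the boundary-defect part $\prod_\lambda\nu_\lambda^{b_\lambda(v)}$ survives, which matches precisely the $\nu$-powers stripped off by the map $(\_)|_v$ of \eqref{iso_sc}.
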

\begin{lemma}\label{comp_2}
   Let $B$ be a set and given $I\in MCE(\mathcal{F}^{pq}(V_B))$ we have $\rho_I(D)=I.$
\end{lemma}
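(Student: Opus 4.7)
The plan is to compute $\rho_I(D_{n,C})$ for each summand of $D=\sum_{n>2}\sum_{(\#)}D_{n,C}$ separately and to observe that the total sum reassembles $I$ cyclic-word-by-cyclic-word. Reading the statement via theorem \ref{Kc1} (so $\rho_I\colon\mathcal{RT}^d_B\to\mathcal{F}^{fr}(V_B)$ is obtained from the same construction as $\rho_{I^q}$ with the non-quantum shifts), I would first observe that isomorphism classes of $B$-colored ribbon-tree corollas with one vertex, no internal edges, and $n$ leaves are in canonical bijection with cyclic words of length $n$ in the alphabet $B$: the only combinatorial data is the cyclic ordering of free boundaries around the unique vertex together with their $B$-coloring, and this is exactly what indexes the basis of the length-$n$ part of $Cyc^*_+(V_B)[-1]$.

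Next I would apply the defining formula for $\rho_I$, i.e.\ the non-quantum analog of \eqref{main_constr_nc}, to a single corolla $\Gamma=D_{n,C}$. For such $\Gamma$ the formula simplifies drastically: $E(\Gamma)=\varnothing$, so the edge-pairing tensor $\bigotimes_{e\in E(\Gamma)}\langle\_,\_\rangle_e^{-1}$ is the empty product; $g(\Gamma)=0$, $c(v)=1$, $b_\lambda(v)=0$ and $F_0(\Gamma)=\varnothing$, so the weighting $\nu^{f_{tot}(\Gamma)}\gamma^{g(\Gamma)}$ is trivial; the permutation $\sigma$ of construction \ref{perm} degenerates to the cyclic ordering of the $n$ leaves of the single face; and $proj$ reduces to the identity on the direct summand of $Sym^1\bigl(Cyc^*_+(V_B)[-1]\bigr)$ consisting of cyclic words of length $n$ with the boundary-condition pattern prescribed by $C$. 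Consequently $\rho_I(D_{n,C})$ equals the restriction $I|_v$, which is exactly the $(n,C)$-isotypic component of $I$ inside $Sym^1(Cyc^*_+(V_B)[-1])\subset\mathcal{F}^{fr}(V_B)$.

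Summing over $n>2$ and all iso-classes $C$ then enumerates every cyclic word of length $\geq 3$ in the letters of $B$, i.e.\ every basis element of the length-$\geq 3$ part of $Cyc^*_+(V_B)[-1]$. Since $I$ is an $A_\infty$-hamiltonian, with $m_1$ absorbed into the internal differential and with zero constant terms (as imposed on interaction terms in the discussion around definition \ref{qcat}), $I$ lives inside $Sym^1(Cyc^*_+(V_B)[-1])$ and is supported on cyclic words of length $\geq 3$. Therefore the sum $\rho_I(D)$ recovers $I$ exactly.

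I do not expect any real conceptual obstacle: the argument is a direct unpacking of the definition of $\rho_I$ on corollas. The only slightly delicate bookkeeping is to check that the suspensions (by $d-3$ and friends) and Koszul signs that enter $\rho_I$ all collapse on a leafless-edge corolla to give the identity on the corresponding isotypic component, and that the bijection between iso-classes of $B$-colored corollas and cyclic words accounts correctly for cyclic symmetries (both sides are canonically indexed up to rotation, with no overcounting). Both checks follow mechanically from the constructions in section \ref{ksc}.
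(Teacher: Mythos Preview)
Your proposal is correct and is exactly the direct unpacking of the definition that the paper has in mind; the paper itself gives no proof beyond declaring the lemma ``straightforward to prove.'' Your identification of $B$-colored corollas with cyclic words in $B$, the observation that the edge-pairing factor and the $\nu$- and $\gamma$-weightings are all trivial on a corolla, and the conclusion that $\rho_I(D_{n,C})$ returns the $(n,C)$-isotypic piece of $I$ constitute precisely this straightforward verification.
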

Thus we can induce the following theorem:
 \begin{theorem}\label{aiaiai}
Given a quantization $I^q$ of a cyclic $A_\infty$-category $\mathcal{C}$ there is a commutative diagram as follows:        
    $$\begin{tikzcd}
      \mathcal{F}^{cl}(\mathcal{C})&  \mathcal{F}^{q}(\mathcal{C})\arrow{l}\\
      \mathcal{RT}^{d,tw}_{Ob\mathcal{C}}\arrow{u}{\rho_{I}}&s\mathcal{RG}^{d,tw}_{Ob\mathcal{C}}\llbracket\gamma\rrbracket\arrow{u}{\rho_{I^q}}\arrow{l}.
    \end{tikzcd}$$
    Here the horizontal maps are dequantization maps, the right vertical map is a map of $(d-2)$-twisted BD algebras and 
    the left vertical maps is a map of $(d-2)$-shifted Poisson algebras
    \end{theorem}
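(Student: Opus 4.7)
The plan is to obtain this diagram as the twisted version of the outer square of the commutative diagram in theorem \ref{comp_rg_al}. To that end, I first specialize theorem \ref{comp_rg_al} to the collection $V_B = V_{Ob\mathcal{C}}$ underlying $\mathcal{C}$ and to the pair $(I, I^q)$, where $I = I_\mathcal{C} \in MCE(\mathcal{F}^{fr}(V_{Ob\mathcal{C}}))$ is the Hamiltonian of $\mathcal{C}$ from lemma \ref{Ham1} and $I^q$ is the given quantization, satisfying $p(I^q) = I$ by definition \ref{qcat}. This yields a commutative square
\begin{equation*}
\begin{tikzcd}
\mathcal{F}^{fr}(V_{Ob\mathcal{C}}) & \mathcal{F}^{pq}(V_{Ob\mathcal{C}})\arrow{l}{p}\\
\mathcal{RT}^d_{Ob\mathcal{C}}\arrow{u}{\rho_I} & s\mathcal{RG}^d_{Ob\mathcal{C}}\llbracket\gamma\rrbracket\arrow{l}{p}\arrow{u}{\rho_{I^q}},
\end{tikzcd}
\end{equation*}
whose right vertical arrow is a map of $(d-2)$-twisted BD algebras, whose left vertical arrow is a map of $(d-2)$-shifted Poisson algebras, and whose horizontal arrows are dequantization maps.

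Next I identify, on each corner, a Maurer-Cartan element and verify that all four are mutually compatible under the four maps of the square. By theorem \ref{sRG_mce}, $S \in s\mathcal{RG}^d_{Ob\mathcal{C}}\llbracket\gamma\rrbracket$ is a MCE; by theorem \ref{RS_mce}, $D \in \mathcal{RT}^d_{Ob\mathcal{C}}$ is a MCE, and by remark \ref{prak_1} we have $p(S) = D$. On the analytic side, $I^q$ is a MCE of $\mathcal{F}^{pq}(V_{Ob\mathcal{C}})$ with $p(I^q) = I$. Finally, lemma \ref{comp_1} gives $\rho_{I^q}(S) = I^q$ and lemma \ref{comp_2} gives $\rho_I(D) = I$. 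These four identities are exactly the compatibility conditions needed to twist the whole square simultaneously.

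Now I invoke the standard fact that twisting a commutative square of dg shifted Lie algebras by a compatible collection of MCEs yields another commutative square: each map, being a map of shifted Lie algebras, transports MCEs to MCEs and intertwines the associated twisted differentials. Since the dequantization maps respect the shifted Lie brackets (definition \ref{dq}), and since $\rho_I, \rho_{I^q}$ respect the relevant brackets/algebra structures by theorems \ref{Kc1}, \ref{Kc2}, the twisted maps still have the required types: the right vertical map remains a map of $(d-2)$-twisted BD algebras, the left vertical a map of $(d-2)$-shifted Poisson algebras, and the horizontal maps remain dequantization maps. After twisting, the corners become, by definitions \ref{F_cl}, \ref{F_q}, \ref{sRG_tw} and the analogous twisted ribbon tree complex $\mathcal{RT}^{d,tw}_{Ob\mathcal{C}}$, precisely the four objects appearing in the statement, and the commuting square follows.

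The main (minor) obstacle is purely bookkeeping: checking that the multiplicative parts also behave well under twisting, since the right vertical map is a BD map (its multiplication is not preserved by the dequantization maps, only the bracket is). This is fine because the twisting procedure for BD/shifted Poisson algebras only modifies the differential via $\{I^q, -\}$ resp.\ $\{I,-\}$, leaving the products and brackets unchanged, so the weighted/algebraic compatibilities inherited from theorem \ref{comp_rg_al} descend directly.
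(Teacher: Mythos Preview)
Your proof is correct and follows essentially the same approach as the paper: take the outer square of theorem \ref{comp_rg_al}, verify via remark \ref{prak_1} and lemmas \ref{comp_1}, \ref{comp_2} that the Maurer--Cartan elements $S$, $D$, $I^q$, $I$ are compatible under all four maps, and then twist. The paper's proof is a one-line summary of exactly this argument.
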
 
    \begin{proof}
  Indeed this follows from the previous theorem and definitions and the consecutive two lemmas by twisting the outer diagram by the compatible Maurer-Cartan elements.      
    \end{proof}
Lastly assume we are given an essentially finite (in the sense of definition \ref{essfin}) cyclic $A_\infty$-category $\mathcal{C}$ with two choices of skeleta $(\mathcal{C},Sk\mathcal{C}_1)$ and $(\mathcal{C},Sk\mathcal{C}_2)$.
Further assume that we are given compatible quantizations of these three categories in the sense of definition \ref{qesfq}.
Let us denote by $\widebar{\mathcal{C}}$ the full subcategory on the images of $Sk\mathcal{C}_1$ and $Sk\mathcal{C}_2$.
$\widebar{\mathcal{C}}$ can be endowed with the structure of a quantum $A_\infty$-category, it has finitely many objects.
Its quantization gives another finite presentation of the quantization of $\mathcal{C}$ and it is compatible with the ones by $\mathcal{C}_1$ and $\mathcal{C}_2$ by definition.
We summarize the situation by following diagram, where all the maps are equivalences of quantum $A_\infty$-categories in the sense of \ref{BDqi}.
\begin{equation}\label{dall}\begin{tikzcd}
    &\mathcal{C}&\\
   Sk\mathcal{C}_1\arrow[hookrightarrow]{ur}\arrow[hookrightarrow]{r}&\widebar{\mathcal{C}}\arrow[hookrightarrow]{u}& Sk\mathcal{C}_2\arrow[hookrightarrow]{ul}\arrow[hookrightarrow]{l}
\end{tikzcd}\end{equation}

Then, the vertical right maps of theorem \ref{aiaiai} induced from the quantization of $\mathcal{C}_1$ and from the quantization of $\mathcal{C}_2$ are compatible as follows:
 \begin{equation}\label{unabhai}
\begin{tikzcd}
      \mathcal{F}^{q}(Sk\mathcal{C}_1)&  \mathcal{F}^{q}(\widebar{\mathcal{C}})\arrow{l}[swap]{\simeq}\arrow{r}{\simeq}&\mathcal{F}^{q}(Sk\mathcal{C}_2)\\
      s\mathcal{RG}^{d,tw}_{ObSk\mathcal{C}_1}\llbracket\gamma\rrbracket\arrow{u}&s\mathcal{RG}^{d,tw}_{Ob\widebar{\mathcal{C}}}\llbracket\gamma\rrbracket\arrow{u}\arrow{r}\arrow{l}&s\mathcal{RG}^{d,tw}_{ObSk\mathcal{C}_2}\llbracket\gamma\rrbracket\arrow{u}.
    \end{tikzcd}\end{equation}
Note that here the left and right horizontal map are just given by extending by zero on elements which are not colored by $Sk\mathcal{C}_1$ respectively $Sk\mathcal{C}_2$.
Further $s\mathcal{RG}^{d,tw}_{ObSk\mathcal{C}_1}\llbracket\gamma\rrbracket$ and 
$s\mathcal{RG}^{d,tw}_{ObSk\mathcal{C}_2}\llbracket\gamma\rrbracket$ are in fact (non-canonically) isomorphic, since $ObSk\mathcal{C}_1$ and $ObSk\mathcal{C}_2$ are isomorphic sets, both being the skeleton of the same category.
Thus \ref{unabhai} tells us the left and right vertical map are equivalent in the derived category, up to extending these maps by zero to a bigger domain.

\subsection{Commutative World}\label{cksc}
In this section we explain how to connect the  graph complex world, section \ref{grs}, with the world of cyclic $L_\infty$-algebras and their quantizations, section \ref{cw}.
We do so using a slight generalization of Penkava's cocycle construction \cite{pe96}, generalizing it to graphs with leafs.

 Given $V$ a cyclic chain complex of odd degree $d$ together with $I^q\in MCE (\mathcal{O}^{pq}(V))$ we construct a map 
 $$\theta_{I^q}: s\mathcal{G}^d\llbracket\hbar\rrbracket\rightarrow \mathcal{O}^{pq}(V).$$

We explain how to to so in multiple steps.
We begin by associating certain tensors, denoted $I^q|_{v_i}$, to each vertex $v_i$ of a connected stable graph.
\begin{cons}
  Let $\Gamma$ be a connected stable graph and $v_i$ one of its vertices:\begin{itemize}
      \item 
 To such a vertex $v_i$ of $\Gamma$ is associated a loop defect $o_i\in\mathbb{N}_{\geq0}$ by definition, see \ref{sgrdf}. 
 \item  Given a vertex $v_i$ of $\Gamma$ we define a map of degree $(d-3)o_i$, denoted
  \begin{equation}\label{iso_scg}
(\_)|_{v_i}:\mathcal{O}^{pq}(V)\rightarrow C^*(V)\end{equation}
by restricting to the power $\hbar^{o_i}$.
\item Thus for $I^q\in MCE(\mathcal{O}^{pq}(V))$, which has degree $(3-d)$, we have that $${|{I}^q|_{v_i}|=(3-d)(1-o_i)}.$$
\end{itemize}
\end{cons}
Next we explain how the datum of an oriented stable graph together with some auxiliary choices defines a permutation of the half-edges of such a stable  graph:
\begin{cons}\label{perm_g}
 Assume that $\Gamma$  has $n$ vertices.
 \begin{itemize}
     \item 
Chose an ordering of its vertices and at each vertex $v_i$  an ordering of the half-edges, denoted, 
$$h_{v_i,1}\cdots h_{v_i,|v_1|}.$$
With the ordering of the vertices together this defines an ordering of all half edges of $\Gamma$
$$h_{v_1,1}\cdots h_{v_1,|v_1|}h_{v_2,1}\cdots h_{v_r,|v_r|}.$$
\end{itemize}
Next assume that $\Gamma$ is oriented, ie we have an ordering $(e_1\cdots e_n)$ of the internal edges.
\begin{itemize}
    \item 
Chose an ordering of the half edges belonging to a given internal edge.
Together with the orientation this gives an ordering of the half edges belonging to the internal edges 
$$h_{1_x}h_{1_y}\cdots h_{n_x} h_{n_y}.$$ 
\end{itemize}
Given the choices made and given the orientation we can define a permutation of all the half-edges as follows
$$
\sigma: H(\Gamma)\rightarrow H(\Gamma) $$
$$
h_{v_1,1}\cdots h_{v_1,|v_1|}h_{v_2,1}\cdots h_{v_r,|v_r|}\mapsto h_{1_x}h_{1_y}\cdots h_{n_x} h_{n_y}l_{1,1}\cdots l_{1,m_1}l_{2,1}\cdots l_{b,m_b} $$
\begin{itemize}
    \item 

We begin the ordering of the leaves on the right hand side by the first leave that appears on the left hand side, then continue with the next one and so on.
\end{itemize}
\end{cons}
 We still fix the same oriented stable graph $\Gamma$ and the choices of orderings made.
 Denote $U:=V^\vee[-1]$. We next explain how to define, given these choices, a map
$$U^{\otimes h(\Gamma)}\rightarrow C^*(V)$$
 of degree $e(\Gamma)(d-2)$.
\begin{cons}\label{con_proj_g}
The permutation $\sigma$ induces a map which we denote by abuse of notation by the same letter
$${\sigma}:U^{\otimes h(\Gamma)}\rightarrow U^{\otimes h(\Gamma)}\cong U^{\otimes 2e(\Gamma)}\otimes U^{\otimes l(\Gamma)}.$$ 
\begin{itemize}
    \item 
Denote by 
$$proj: U^{\otimes l(\Gamma)}\rightarrow C^*(V)$$
the projection to symmetric words, which has degree 0.
\end{itemize}
Thus we can define a map of degree $e(\Gamma)(d-2)$
$$\bigotimes_{e\in E(\Gamma)}\langle\_,\_\rangle^{-1}\otimes proj:\ U^{\otimes 2e(\Gamma)}\otimes U^{\otimes l(\Gamma)}\rightarrow C^*(V).$$
\end{cons}
\begin{df}
Fixing the same oriented stable graph $\Gamma$ as in the previous constructions and the choices made we define 
\begin{equation}\label{main_constr_c}
\theta_{I^q}(\Gamma)=(\bigotimes_{e\in E(\Gamma)}\langle\_,\_\rangle^{-1}\otimes proj)\circ{\sigma}\big(I^q|_{v_1}\otimes\ldots\otimes I^q|_{v_n}\big)\hbar^{bt(\Gamma)}\in C^*(V)\llbracket\hbar\rrbracket.
\end{equation}
Here we use the the standard isomorphism between invariants and coinvariants in characteristic zero to view $I^q|_{v_i}$
as an invariant tensors, which we can thus see as an element of $U^{\otimes r},$ for $r=|v_i|$.
Further the order of the tensors is determined by the choice of ordering of vertices. 
\end{df}
\begin{lemma}
   Given $I^q\in MCE (\mathcal{O}^{pq}(V))$ the assignment  \ref{main_constr_c} defines a degree zero map of vector spaces
\begin{equation}\label{Kon_gen_2}
\begin{split}
\theta_{I^q}:{s\mathcal{G}_{c,\bullet}}^\vee[d-3]&\rightarrow C^*(V)\llbracket\hbar\rrbracket,\\
\Gamma&\mapsto \rho_{I^q}(\Gamma).
\end{split}
\end{equation}
In particular, it is independent of the choices made.
\end{lemma}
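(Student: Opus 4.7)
The plan is to follow the same three-part recipe used for the analogous non-commutative statement (the lemma immediately after construction \ref{main_constr_nc}), but substantially simplified since there are no cyclic orderings at vertices, no face-colouring, and no free boundaries to keep track of. The three ingredients are: a degree count to verify that the map has the claimed degree, invariance under the auxiliary ordering choices on vertices and half-edges, and compatibility with reversal of orientation on $\Gamma$.

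First I would carry out the degree count. The vertex tensor $I^q|_{v_i}$ sits in degree $(3-d)(1-o_i)$, each pairing $\langle\_,\_\rangle^{-1}$ contributes $(d-2)$, the symmetrisation $proj$ is of degree zero, and $\hbar^{bt(\Gamma)}$ contributes $(3-d)\,bt(\Gamma)$. Plugging in the definition of the first Betti number $bt(\Gamma)=1-v(\Gamma)+e(\Gamma)+\sum_i o_i$ (definition \ref{betti}), the total degree of $\theta_{I^q}(\Gamma)$ simplifies, after cancellation of $\sum o_i$, to exactly what is required so that $\theta_{I^q}$ becomes a degree zero map after the shift $[d-3]$ built into $s\mathcal{G}^d\llbracket\hbar\rrbracket$ and taking into account that $\Gamma$ itself sits in chain-degree $e(\Gamma)$.

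Next I would verify independence of the auxiliary choices. Since $d$ is odd, the factor $(3-d)$ is even, so each $I^q|_{v_i}$ has even total degree; hence reordering the global list of vertices introduces no Koszul sign in $I^q|_{v_1}\otimes\cdots\otimes I^q|_{v_n}$. Reordering the half-edges at a single vertex is trivial because $I^q|_{v_i}\in C^*(V)$ is $S_{|v_i|}$-symmetric by construction as a Chevalley-Eilenberg cochain; here we use the standard characteristic zero identification of invariants with coinvariants, just as in the ribbon-graph case. Swapping the two half-edges attached to a given internal edge is absorbed by the symmetry of the inverse pairing $\langle\_,\_\rangle^{-1}$. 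Reordering the leaves among themselves is absorbed by $proj$, which projects onto symmetric words.

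Finally, compatibility with the orientation relation is the only step that generates signs. Each $\langle\_,\_\rangle^{-1}$ is of odd degree $(d-2)$, so a transposition of two consecutive internal edges in the orientation propagates through $\sigma$ to swap two odd tensor factors and produces a single minus sign, matching the relation that reversing the orientation of $\Gamma$ multiplies by $-1$. Taken together, these checks show that $\theta_{I^q}$ descends from the choice-dependent construction \ref{main_constr_c} to a well-defined degree zero linear map ${s\mathcal{G}_{c,\bullet}}^\vee[d-3]\to C^*(V)\llbracket\hbar\rrbracket$. There is no conceptual obstacle here; the main difficulty is purely bookkeeping, namely keeping the various shifts and the $\hbar$-weighting consistent, but each verification is strictly simpler than its ribbon-graph counterpart already worked out in section \ref{ksc}.
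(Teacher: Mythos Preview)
Your proposal is correct and follows essentially the same three-step scheme as the paper's own proof: a degree count using the definition of $bt(\Gamma)$, independence of the auxiliary orderings (even degree of the $I^q|_{v_i}$, symmetry of each vertex tensor, symmetry of $\langle\_,\_\rangle^{-1}$, absorption by $proj$), and the orientation sign from the odd degree of the pairings. The only cosmetic difference is that the paper bundles the vertex-ordering and half-edge-ordering checks into a single observation about the permutation $\tau$, whereas you split these into separate atomic verifications; the content is the same.
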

\begin{proof}
The fact that the degrees work out follows by counting together 
$$3-d=-e+(d-2)e+(3-d)(bt+v-\sum_{v_i}o_i)$$
and definition \ref{betti} of the betti number. 
This map is independent of the choices made as follows:
\begin{itemize}
    \item Let $\Gamma$ be a stable graph as in the previous construction.
    Assume that we chose a different ordering of vertices and of the half-edges at each vertex, but for now take the same orientation datum and ordering of half-edges belonging to each internal edge.
    Let us call the initial permutation as in construction \ref{perm_g} $\sigma_1:H(\Gamma)\rightarrow H(\Gamma)$ and the new one $\sigma_2:H(\Gamma)\rightarrow H(\Gamma)$.
    Then we have that 
    $${\sigma_1}\big(I^q|_{v_1}\otimes\ldots\otimes I^q|_{v_n}\big)=\tau\circ{\sigma_2}\big(I^q|_{v_{\Tilde{1}}}\otimes\ldots\otimes I^q|_{v_{\Tilde{n}}}\big),$$
    where on the right-hand-side we use the new ordering of the vertices to order the input tensors.
    Here $\tau$ is a general permutation.
    There is no sign showing up because all the $|I^q(v_i)|$ are even.
    
    Since in the definition of $\theta_{I^q}$, described in the second bullet point of construction \ref{con_proj_g}, we project to the quotient determined by all permutations $\tau$, that is symmetric words,
    we have that $\theta_{I^q}$ is independent of a different ordering of vertices and of an ordering of the half-edge at each vertex.
    \item Now assume that we chose a different ordering of the half-edges belonging to an internal edge respectively.
    Since the pairings $\langle\_,\_\rangle^{-1}$ are symmetric it follows that this gives the same result $\theta_{I^q}$.
  \end{itemize}  
Lastly this map is compatible with the orientation: Since the pairings have odd degree if
we change the orientation by an odd permutation the result changes by a factor of $(-1)$.
    
\end{proof}
\begin{lemma}\label{bra_comp_2}
     Given $I^q\in MCE (\mathcal{O}^{pq}(V))$ we have that 
     $\theta_{I^q}(\{\_,\_\})=\{\theta_{I^q}(\_)\theta_{I^q}(\_)\}.$

\end{lemma}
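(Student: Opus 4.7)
The plan is to argue in close parallel to lemma \ref{bra_comp}, with the key simplification that in the commutative world there are no faces, no boundary decorations, and the involutive bi-Lie bialgebra on cyclic cochains is replaced by the much simpler shifted Lie algebra structure on $V^\vee[-1]$ given by the inverse pairing $\langle\_,\_\rangle^{-1}$. The bracket on $\mathcal{O}^{pq}(V)=C^*(V)\llbracket\hbar\rrbracket$ is by construction the $\hbar$-linear biderivation extension of this shifted Lie bracket, so the claim reduces to verifying that $\theta_{I^q}$ turns ``gluing one leaf of $\Gamma_1$ to one leaf of $\Gamma_2$ to form a new internal edge'' into ``contracting one letter from the tensor associated to $\Gamma_1$ with one letter from the tensor associated to $\Gamma_2$ via $\langle\_,\_\rangle^{-1}$.''

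\textbf{Key steps.} First I would expand the right-hand side: writing $\theta_{I^q}(\Gamma_i)=T_i\cdot\hbar^{bt(\Gamma_i)}$ where $T_i$ is the contracted tensor in $C^*(V)$ of construction \ref{con_proj_g}, the biderivation property of $\{\_,\_\}$ and its $\hbar$-linearity give $\{\theta_{I^q}(\Gamma_1),\theta_{I^q}(\Gamma_2)\}=\{T_1,T_2\}\cdot\hbar^{bt(\Gamma_1)+bt(\Gamma_2)}$, where $\{T_1,T_2\}$ is a sum over pairs (one letter from $T_1$, one letter from $T_2$) of the corresponding $\langle\_,\_\rangle^{-1}$-contractions, with the projection back to $C^*(V)$ at the end. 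Second I would expand the left-hand side using the graph-level formula $\{\Gamma_1,\Gamma_2\}=\sum_{l_1\in L(\Gamma_1),\,l_2\in L(\Gamma_2)}\frac{\Gamma_1\cup_{l_1,l_2}\Gamma_2}{c_{l_1}(\Gamma_1)c_{l_2}(\Gamma_2)}$ exactly as in the proof of lemma \ref{bra_comp}. By inspection of definition \eqref{main_constr_c}, the image $\theta_{I^q}(\Gamma_1\cup_{l_1,l_2}\Gamma_2)$ is obtained from $\theta_{I^q}(\Gamma_1)$ and $\theta_{I^q}(\Gamma_2)$ by contracting the letters attached to $l_1$ and $l_2$ via $\langle\_,\_\rangle^{-1}$, since this is the only structural change introduced by the new internal edge in construction \ref{con_proj_g}.

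\textbf{Bookkeeping checks.} Three alignments remain. (i) Betti numbers: for connected $\Gamma_1,\Gamma_2$ a direct computation with definition \ref{betti} gives $bt(\Gamma_1\cup_{l_1,l_2}\Gamma_2)=bt(\Gamma_1)+bt(\Gamma_2)$ (we add one edge but also merge two components), so the $\hbar$-powers on the two sides match. (ii) Orientation/Koszul signs: the orientation convention for $\Gamma_1\cup_{l_1,l_2}\Gamma_2$ places the new internal edge between the edge orderings of $\Gamma_1$ and $\Gamma_2$, which produces exactly the sign that the biderivation rule on the symmetric algebra introduces when one letter of $T_2$ is moved past the letters of $T_1$ to be paired. (iii) Automorphism counting: summing over ordered pairs $(l_1,l_2)$ and dividing by $c_{l_1}(\Gamma_1)c_{l_2}(\Gamma_2)$ produces exactly one copy per isomorphism class of glued graph, which matches the normalization of the biderivation on the polynomial side, exactly as in the parenthetical argument of lemma \ref{bra_comp}.

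\textbf{Anticipated obstacle.} The routine calculations all reduce, modulo the three checks above, to pattern-matching with lemma \ref{bra_comp}; the only genuinely delicate part is verifying that the permutation $\sigma$ of construction \ref{perm_g}, applied to the composite graph $\Gamma_1\cup_{l_1,l_2}\Gamma_2$ with the prescribed orientation, generates the same Koszul sign as the biderivation rule applied to $\{T_1,T_2\}$. Since in the commutative case there is no cyclic ordering within faces to worry about, this sign bookkeeping is strictly simpler than in lemma \ref{bra_comp}, and I expect it to go through by choosing the ordering data on $\Gamma_1\cup_{l_1,l_2}\Gamma_2$ so that it restricts to the given orderings on the two summands and places $l_1,l_2$ first among the half-edges of internal edges.
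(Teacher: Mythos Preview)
Your proposal is correct and follows essentially the same approach as the paper's own proof: both expand the graph-level bracket via the formula $\{\Gamma_1,\Gamma_2\}=\sum_{l_1,l_2}\frac{\Gamma_1\cup_{l_1,l_2}\Gamma_2}{c_{l_1}(\Gamma_1)c_{l_2}(\Gamma_2)}$ and then match it against the biderivation pairing on the polynomial side, exactly mirroring lemma~\ref{bra_comp}. You are in fact more explicit than the paper about the three bookkeeping items (Betti number additivity, orientation/Koszul signs, automorphism overcounting), which the paper leaves implicit or absorbs into the phrase ``this proves the result.''
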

\begin{proof}
    Recall that the bracket on 
    $C^*(V)\llbracket\hbar\rrbracket$
    is defined by pairing each letter of a symmetric word with a letter of a symmetric word exactly once,
    by that joining the two symmetric words and taking into account the Koszul rule. 

    On the other hand we can write 
    $$\{\Gamma_1,\Gamma_2\}=\sum_{l_1\in L(\Gamma_1),l_2\in L(\Gamma_1)} \frac{\Gamma_1\cup_{l_1,l_2} \Gamma_2}{c_{l_1}(\Gamma_1)c_{l_2}(\Gamma_2)},$$
    the sum over all leafs of the graphs and gluing the two graphs together along those.
    Here $c_{l_1}(\Gamma)=n\in \mathbb{N}$ if there are automorphisms of $\Gamma$ that send $l_1$ to $n$ other leafs. 

    By definition \ref{main_constr_c} we see that $\sum_{l_1\in L(\Gamma_1),l_2\in L(\Gamma_1)} \theta(\Gamma_1\cup_{l_1,l_2} \Gamma_2)$
    is given by pairing the tensor 
    $(\bigotimes_{e\in E(\Gamma_1)}\langle\_,\_\rangle^{-1})\circ{\sigma}\big(I^q|_{v_1}\otimes\ldots\otimes I^q|_{v_n}\big)$
    and the tensor $(\bigotimes_{e\in E(\Gamma_2)}\langle\_,\_\rangle^{-1})\circ{\sigma}\big(I^q|_{v_1}\otimes\ldots\otimes I^q|_{v_n}\big)$
    together in all possible ways, using a letter $l_1$ and $l_2$ from each word and then projecting to symmetric words.
    However, compared to $\{\theta(\Gamma_1),\theta(\Gamma_2\}$ we are overcounting exactly $c_{l_1}(\Gamma_1)c_{l_2}(\Gamma_2)$ terms.
    This proves the result. 
\end{proof}
\begin{lemma}\label{diff_comp_2}
Given $I^q\in MCE (\mathcal{O}^{pq}(V))$ we have $\theta_{I^q}\circ \Delta_i=\hbar\Delta\circ\theta_{I^q}$.
\end{lemma}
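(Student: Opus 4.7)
The plan is to follow closely the argument used for Lemma \ref{diff_comp} in the non-commutative world and for Lemma \ref{bra_comp_2} above, adapting the bookkeeping to the commutative setting where there is no notion of face. First I would fix a connected stable graph $\Gamma$ and unwind $\Delta_i \Gamma = \sum_{(\#)} \circ_{l_1,l_2}\Gamma$, noting that each summand has the same vertex set as $\Gamma$, one additional internal edge (obtained by joining two leaves), and Betti number $bt(\circ_{l_1,l_2}\Gamma) = bt(\Gamma)+1$. By the definition \eqref{main_constr_c} of $\theta_{I^q}$, it follows that $\theta_{I^q}(\circ_{l_1,l_2}\Gamma)$ is obtained from $\theta_{I^q}(\Gamma)$ by inserting one extra pairing $\langle\_,\_\rangle^{-1}$ at the two half-edges forming the new edge and by multiplying by an additional $\hbar$.

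Second I would expand the right-hand side. Recall from Theorem \ref{c_pq} that the BV operator $\Delta$ on $\mathcal{O}^{pq}(V)$ is determined by $\Delta(a\cdot b)=\langle a,b\rangle^{-1}$ on $V^\vee[-1]\otimes V^\vee[-1]$ together with the BD relation and $\hbar$-linearity; hence on a general element of $C^*(V)$, i.e.\ a symmetric word in letters of $V^\vee[-1]$, the operator $\Delta$ acts by summing over all ordered pairs of letter-positions contracted against the inverse pairing. Applied to $\theta_{I^q}(\Gamma) = (\bigotimes_{e}\langle\_,\_\rangle^{-1}\otimes proj)\circ\sigma(I^q|_{v_1}\otimes\cdots\otimes I^q|_{v_n})\,\hbar^{bt(\Gamma)}$, the operator $\hbar\Delta$ therefore produces a sum indexed by pairs of leaf-positions of $\Gamma$, contracted via $\langle\_,\_\rangle^{-1}$, carrying the weight $\hbar^{bt(\Gamma)+1}$.

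Third I would match the two sums. As in the proof of Lemma \ref{bra_comp_2}, passing from the sum over unordered pairs of leaves of $\Gamma$ to the sum over isomorphism classes of $\circ_{l_1,l_2}\Gamma$ introduces precisely the automorphism factor $c_{l_1}(\Gamma)c_{l_2}(\Gamma)$, which matches the corresponding combinatorial overcounting on the $\Delta$-side. The new pairing factor introduced at the glued edge is literally the same $\langle\_,\_\rangle^{-1}$ that $\Delta$ uses to contract the two letters, and the extra $\hbar$ on the graph side comes exactly from the jump in Betti number. Extension from $s\mathcal{G}^{\vee}_{c,\bullet}[d-3]$ to all of $s\mathcal{G}^d\llbracket\hbar\rrbracket$ follows from the derivation property of both $\Delta_i$ and $\Delta$, together with the symmetric-algebra extension of $\theta_{I^q}$.

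The main obstacle is the sign analysis. One has to show that the Koszul signs produced by the permutation $\sigma$ from Construction \ref{perm_g}, together with the sign of inserting the new pairing at the beginning of the ordering of internal edges, coincide with those introduced by the BV operator $\Delta$ when it re-orders two letters of a symmetric word to apply $\langle\_,\_\rangle^{-1}$. Provided one fixes the orientation convention on $\circ_{l_1,l_2}\Gamma$ so that the new edge comes first, just as was done when verifying $(\delta_i+\nabla)^2=0$ and $[d,\Delta_i]=0$, these signs agree on the nose; the remaining verification is then a direct check analogous to parts (2) and (3) of the proof of Lemma \ref{diff_comp2r}.
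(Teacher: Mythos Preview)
Your argument is correct and follows exactly the route the paper intends when it writes ``This follows similarly to above lemma''; you have simply spelled out the matching of self-gluings with the BV contraction and tracked the extra $\hbar$ coming from $bt(\circ_{l_1,l_2}\Gamma)=bt(\Gamma)+1$. One small caveat: at this point in the paper $\theta_{I^q}$ is only the map \eqref{Kon_gen_2} on connected graphs, so your last sentence about extending to all of $s\mathcal{G}^d\llbracket\hbar\rrbracket$ is not needed here---and in any case $\Delta$ on $C^*(V)\llbracket\hbar\rrbracket$ is a second-order BV operator, not a derivation, so that extension argument as stated would not go through (the actual extension is handled separately in Theorem~\ref{Kc3} via $\Delta_e$ and the Chevalley--Eilenberg differential).
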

\begin{proof}
This follows similarly to above lemma.
In the second equality the additional $\hbar$ comes from the fact that $\Delta_i$ increases the loop defect by one.
\end{proof}
\begin{lemma}\label{diff_comp2}
   Given $I^q\in MCE (\mathcal{O}^{pq}(V))$ we have that 
   $\theta_{I^q}\circ (-d)=d\circ \theta_{I^q}$.
\end{lemma}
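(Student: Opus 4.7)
The plan is to adapt the proof of Lemma \ref{diff_comp2r} to the commutative setting, invoking the BD Maurer-Cartan equation for $I^q \in \mathcal{O}^{pq}(V)$, namely
\[ dI^q + \tfrac{1}{2}\{I^q,I^q\} + \hbar\Delta I^q = 0, \]
in place of the non-commutative one. Given a stable graph $\Gamma$ with vertices $v_1, \ldots, v_n$, I write
\[ \theta_{I^q}(-d\Gamma) = -\sum_{(\ast)} \theta_{I^q}(\Gamma'), \]
where $(\ast)$ ranges over isomorphism classes of stable graphs that contract to $\Gamma$ along a single edge. Each such $\Gamma'$ agrees with $\Gamma$ away from one vertex $v_i$, so only the tensor $X$ contributed at the varying vertex differs; the remaining $n-1$ vertex contributions are unchanged.

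Next I split $(\ast)$ into the two cases of Definition \ref{cont_g}. For rule (1), non-loop contractions, the local argument of Lemma \ref{bra_comp_2} shows that summing over all expansions of $v_i$ into two vertices joined by a new edge produces $\tfrac{1}{2}\{I^q, I^q\}|_{v_i}$ at that vertex, the factor $\tfrac{1}{2}$ coming from overcounting via the two endpoints of the new edge. For rule (2), loop contractions, summing over all ways of detaching a loop from $v_i$ yields $\hbar\Delta I^q|_{v_i}$: the vertex $v_i^{\mathrm{loop}}$ of $\Gamma'$ has loop defect $o(v_i)-1$, so $I^q|_{v_i^{\mathrm{loop}}}$ picks out the $\hbar^{o(v_i)-1}$-coefficient of $I^q$, and the additional self-pairing coming from the loop together with the extra $\hbar$ needed to restore $\hbar^{bt(\Gamma)}$ (using $bt(\Gamma') = bt(\Gamma)$ from Definition \ref{betti}) assembles precisely the $\hbar^{o(v_i)}$-coefficient of $\hbar\Delta I^q$. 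Invoking the Maurer-Cartan equation, the sum over $(\ast)$ localised at $v_i$ yields $-dI^q|_{v_i}$, and because $proj$ and each $\langle\_,\_\rangle^{-1}$ is a chain map, summing over $i$ gives
\[ \theta_{I^q}(-d\Gamma) = \sum_{i=1}^n \Big(\bigotimes_{e\in E(\Gamma)}\langle\_,\_\rangle^{-1}\otimes proj\Big)\circ \sigma\big(I^q|_{v_1}\otimes \cdots \otimes dI^q|_{v_i}\otimes \cdots \otimes I^q|_{v_n}\big)\hbar^{bt(\Gamma)} = d\,\theta_{I^q}(\Gamma). \]

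The main obstacle will be the meticulous bookkeeping of $\hbar$-weights and automorphism-group multiplicities. The Betti number is invariant under both contraction rules, so the external factor $\hbar^{bt(\Gamma)}$ is unchanged, but local loop defects shift and therefore the $\hbar$-coefficient of $I^q$ selected at each vertex shifts as well; one must verify that these local shifts are exactly compensated by the extra $\hbar$ provided by $\hbar\Delta$ on the Maurer-Cartan side. The combinatorial coefficient $\tfrac{1}{2}$ in front of the bracket contribution arises, as in Lemma \ref{bra_comp_2}, from isomorphism-class counting in $(\ast)$, and this requires care in the commutative setting where only the symmetric (rather than cyclic) ordering of half-edges constrains the pairings.
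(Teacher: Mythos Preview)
Your proposal is correct and follows essentially the same approach as the paper's own proof: expand $-d\Gamma$ as a sum over stable graphs $\Gamma'$ contracting to $\Gamma$, localise the discrepancy to a single vertex $v_i$, split into the two cases of Definition~\ref{cont_g} to obtain $\tfrac{1}{2}\{I^q,I^q\}|_{v_i}$ and $(\hbar\Delta I^q)|_{v_i}$ respectively, invoke the Maurer--Cartan equation, and pull $d$ through using that $proj$ and $\langle\_,\_\rangle^{-1}$ are chain maps. Your additional remarks on the $\hbar$-bookkeeping under the loop-defect shift are exactly the point the paper compresses into the phrase ``here we need to include a $\hbar$ since for this summand we have decreased the betti number by one.''
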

\begin{proof}
    We have 
    $$\theta_{I^q}(-d\Gamma)=-\theta_{I^q}(\sum_{(*)}\Gamma')=-\sum_{(*)}(\bigotimes_{e\in E(\Gamma')}\langle\_,\_\rangle^{-1}\otimes proj)\circ\sigma'\circ(I^q|_{v_1}\otimes\ldots \otimes X\otimes\ldots I^q|_{v_n})\hbar^{bt(\Gamma)}.$$
    Here $(*)$ indicates that the sum is over isomorphism classes of stable graphs $\Gamma'$
    such that when contracting one of their legs according to rule \ref{cont_g} we recover $\Gamma$.
    Say $\Gamma$ has $n$ vertices.
    Note that a graph $\Gamma'$ as before has at most n+1 vertices and $n-1$ of those actually `coincide' with $n-1$ ones of $\Gamma$;
    let us call the vertex of $\Gamma$ which is not matched $v_i$.
    That is how the second equality above follows, where we denote by $X$ the tensor that gets associated to the 
    at most two vertices which are different (and which we assume wlog to be numbered consecutively).
    Note further that $d$ preserves the betti number.
    
    According to the rule how to contract \ref{cont_g} the sum over $(*)$ splits up into two summands: 
    \begin{enumerate}
        \item 
    Similarly to lemma \ref{bra_comp_2} we have that for the first summand, corresponding to rule \ref{cont_g}.1
    $$-\sum_{(*.1)}(\bigotimes_{e\in E(\Gamma')}\langle\_,\_\rangle^{-1}\otimes proj)\circ\sigma'(\cdots\otimes X\otimes\cdots )=-(
    \bigotimes_{e\in E(\Gamma)}\langle\_,\_\rangle^{-1}\otimes proj)\circ\sigma(\cdots\otimes  \frac{1}{2}\{I^q,I^q\}|_{v_i}\otimes\cdots).$$
 \item Again similarly to lemma \ref{diff_comp_2} we have that for the second summand, corresponding to rule \ref{cont_g}.2
 $$-\sum_{(*.2)}(\bigotimes_{e\in E(\Gamma')}\langle\_,\_\rangle^{-1}\otimes proj)\circ\sigma'(\cdots\otimes X\otimes\cdots )=-(
    \bigotimes_{e\in E(\Gamma)}\langle\_,\_\rangle^{-1}\otimes proj)\circ\sigma(\cdots\otimes  (\hbar\Delta I^q)|_{v_i}\otimes\cdots).$$
Here we need to include a $\hbar$ since for this summand we have decreased the betti number by one.
 \end{enumerate}
 Since we have that $I^q$ is a Maurer-Cartan element, ie in particular
 $$dI^q|_{v_i}=-(\frac{1}{2}\{I^q,I^q\}|_{v_i}+\hbar\Delta I^q|_{v_i})$$
 it follows that above sum is equal to
 $$=\sum_{i=1}^n(\bigotimes_{e\in E(\Gamma)}\langle\_,\_\rangle^{-1}\otimes proj)\circ\sigma\circ(I^q|_{v_1}\otimes\ldots dI^q|_{v_i}\cdots I^q|_{v_n})\hbar^{bt(\Gamma)},$$
 which we can rewrite, since both $proj$ and $\langle\_,\_\rangle^{-1}$ are chain maps, as 
 
 $$=d\circ\big(\bigotimes_{e\in E(\Gamma)}\langle\_,\_\rangle^{-1}\otimes proj\big)\circ\sigma\circ(I^q|_{v_1}\otimes\cdots \otimes I^q|_{v_n})\hbar^{g(\Gamma)}=d\theta_{I^q}(\Gamma).$$ \end{proof}
\begin{cons}\label{ext_sym_g}
    We extend the map \ref{Kon_gen_2} as a map of algebras to 
\begin{equation}\label{sdnit}
Sym({s\mathcal{G}_{c,\bullet}}^\vee[d-3])\rightarrow C^*(V)\llbracket\hbar\rrbracket,\end{equation}
which by abuse of notation we denote by the same letter.
\end{cons}
\begin{theorem}\label{Kc3}
    The map \ref{sdnit} is a map of $(d-2)$-twisted BD algebras
    $$\theta_{I^q}: s\mathcal{G}^d\llbracket\hbar\rrbracket\rightarrow \mathcal{O}^{pq}(V),$$
    which is functorial with respect to strict cyclic $L_\infty$-morphisms.
\end{theorem}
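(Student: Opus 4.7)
The plan is to follow verbatim the template of Theorem~\ref{Kc2}, taking advantage of the fact that the commutative case is strictly simpler: there is no $B$-coloring to track, no central extension $\nu_\lambda$ to handle, and the BD differential on $\mathcal{O}^{pq}(V)$ has only two pieces $d + \hbar\Delta$ instead of $d + \nabla + \gamma\delta$. Thus construction~\ref{ext_sym_g} produces a map of graded commutative algebras by design, and it remains to check compatibility with (i) the internal differential, (ii) the BV-type operator $\hbar\Delta$, (iii) the Chevalley--Eilenberg piece $\hbar\Delta_e$, and (iv) the shifted Poisson bracket.

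First I would record that the three lemmas already established do the bulk of the work: Lemma~\ref{diff_comp2} gives $\theta_{I^q}\circ(-d) = d\circ\theta_{I^q}$ on connected generators, Lemma~\ref{diff_comp_2} gives $\theta_{I^q}\circ\Delta_i = \hbar\Delta\circ\theta_{I^q}$, and Lemma~\ref{bra_comp_2} gives $\theta_{I^q}(\{\_,\_\}) = \{\theta_{I^q}(\_),\theta_{I^q}(\_)\}$. Since by construction $\theta_{I^q}$ is multiplicative, both $-d$ and $\Delta_i$ act as derivations on $\mathrm{Sym}(s\mathcal{G}^{\vee}_{c,\bullet}[d-3])$ while $d$ and $\hbar\Delta$ act as derivations (up to the BD correction, which is controlled by $\{\_,\_\}$) on $\mathcal{O}^{pq}(V)$, so these identities propagate from generators to arbitrary elements of $s\mathcal{G}^d\llbracket\hbar\rrbracket$ by the Leibniz rule.

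Next I would handle the Chevalley--Eilenberg piece $\hbar\Delta_e$. Since Lemma~\ref{bra_comp_2} says $\theta_{I^q}$ is a map of shifted Lie algebras on connected generators, the canonical extension of such a map to symmetric algebras automatically intertwines the Chevalley--Eilenberg differentials built from the respective shifted brackets; thus $\theta_{I^q}\circ\hbar\Delta_e = \hbar\Delta_e\circ\theta_{I^q}$, where on the right $\Delta_e$ denotes the Chevalley--Eilenberg differential associated to the shifted Lie bracket on $C^*(V)\llbracket\hbar\rrbracket$. Combining the four pieces and using $\hbar$-linearity then yields
\[
\theta_{I^q}\bigl((-d + \Delta_i + \hbar\Delta_e)(\Gamma_1\cdots\Gamma_n)\bigr) = (d + \hbar\Delta)\,\theta_{I^q}(\Gamma_1\cdots\Gamma_n),
\]
together with compatibility with brackets, which is the full statement that $\theta_{I^q}$ is a map of $(d-2)$-twisted BD algebras.

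Finally, for functoriality, given a strict cyclic $L_\infty$-morphism $\phi: V_1\to V_2$ (which is an inclusion because of cyclicity and strictness, cf.\ the non-commutative discussion) with $\phi^* I^q_2 = I^q_1$, the induced map $\phi^*: \mathcal{O}^{pq}(V_2)\to\mathcal{O}^{pq}(V_1)$ is a map of BD algebras, and the identity on graphs gives a map $s\mathcal{G}^d\llbracket\hbar\rrbracket\to s\mathcal{G}^d\llbracket\hbar\rrbracket$. Unwinding definition~\ref{main_constr_c} vertex by vertex (using that $\phi^*$ respects the inverse pairings because $\phi$ is cyclic, and that the tensors $I^q_1|_{v_i} = \phi^*(I^q_2|_{v_i})$) gives the square
\[
\begin{tikzcd}
\mathcal{O}^{pq}(V_2)\arrow{r}{\phi^*} & \mathcal{O}^{pq}(V_1)\\
s\mathcal{G}^d\llbracket\hbar\rrbracket\arrow[equal]{r}\arrow{u}{\theta_{I^q_2}} & s\mathcal{G}^d\llbracket\hbar\rrbracket\arrow{u}{\theta_{I^q_1}},
\end{tikzcd}
\]
proving functoriality. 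I do not expect a genuine obstacle here; the only delicate point is the bookkeeping of which shift (by $(d-3)$ on generators, $\hbar$ of degree $(3-d)$) makes the degree count in the extended map work out, but this is already settled in the degree computation preceding the statement and it exactly mirrors the argument given at the start of the proof of Theorem~\ref{Kc2}.
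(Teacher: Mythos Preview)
Your proposal is correct and follows essentially the same approach as the paper's own proof: multiplicativity by construction, then Lemmas~\ref{diff_comp2}, \ref{diff_comp_2}, \ref{bra_comp_2} for the differential pieces and bracket, the observation that a Lie map to a Poisson algebra extends over the free symmetric algebra (hence intertwines the Chevalley--Eilenberg parts), and finally the functoriality square unwound from definition~\ref{main_constr_c}. The only cosmetic difference is that the paper presents the functoriality as a triangle (since the graph complex carries no $B$-coloring there is no induced map on the bottom), which is equivalent to your square with the identity on the bottom edge.
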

\begin{proof}  \begin{itemize}
    \item 
  It is a map of algebras by construction.
\item By the first part of lemma \ref{diff_comp_2} and construction $\theta_{I^q}$ intertwines $\Delta_i$ with $\hbar\Delta$, further by lemma \ref{diff_comp2} it intertwines the internal differentials.
\item We proved that map \ref{Kon_gen_2} is a map of shifted Lie algebras,
in fact it is a map of a shifted Lie algebra to a shifted Poisson algebra.
Any such map determines a map of shifted Poisson algebras from the free algebra on the domain to the target and this is indeed
how we defined the shifted Poisson structure on 
$s\mathcal{G}_B^d\llbracket\hbar\rrbracket$.
\item Furthermore this implies that $\theta_{I^q}$ intertwines $\Delta_e$ with the associated Chevalley-Eilenberg differential of the Lie algebra $\mathcal{O}^{pq}(V)$.
This, together with the arguments in bullet point two imply
$$\theta_{I^q}\big((-d+\hbar\Delta_e+\Delta_i)(\Gamma_1\cdot\ldots\cdot \Gamma_n)\big)=(d+\hbar\Delta)\rho_{I^q}(\Gamma_1\cdot\ldots\cdot \Gamma_n),$$
which finishes the first part of the proof.
\item Given $F:V\rightarrow W$ a strict cyclic $L_\infty$-morphism such that $F^*I^q=J^q$ one can check that 
\begin{equation}\label{hdlm}\begin{tikzcd}
 \mathcal{F}^{pq}(W)\arrow{rr}{F^*}&&   \mathcal{F}^{pq}(V)\\
&s\mathcal{G}^d\llbracket\hbar\rrbracket\arrow[swap]{ur}{\theta_{I^q}}\arrow{ul}{\theta_{J^q}}&
\end{tikzcd}\end{equation}
commutes, using the definition \ref{main_constr_c}.
\end{itemize}\end{proof}

Given an $I\in MCE(\mathcal{O}^{fr}(V))$, we can construct a map
$$\theta_{I}:{\mathcal{G}_{c,\bullet}}^\vee[d-3]\rightarrow C^*(V)\llbracket\hbar\rrbracket$$
given by exactly the same construction. Furthermore, in the same way as in lemma \ref{bra_comp_2} and \ref{diff_comp2} we can prove that this maps intertwines the algebraic structures.
In the same way as in construction \ref{ext_sym_g} we get a map, also denoted
\begin{equation}\label{hhd}
    \theta_{I}:Sym({\mathcal{G}_{c,\bullet}}^\vee[d-3])\llbracket\hbar\rrbracket\rightarrow C^*(V)\llbracket\hbar\rrbracket
    \end{equation}
for which we can argue exactly the same as in theorem \ref{Kc3} that we have: 
\begin{theorem}\label{Kc3.1}
Given $I\in MCE(\mathcal{O}^{fr}(V))$ the map \ref{hhd} is a map of $(d-2)$-twisted BD algebras
    $$\theta_{I}: \mathcal{G}^d\llbracket\hbar\rrbracket\rightarrow \mathcal{O}^{pq}(V),$$
    functorial with respect to strict cyclic $L_\infty$-morphisms.
\end{theorem}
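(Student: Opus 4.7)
The approach is to define $\theta_I$ on connected graphs by literally the same formula \eqref{main_constr_c} as in the stable setting, using that every vertex of a graph has loop defect $o_i = 0$, so that $I|_{v_i}$ is simply the component of $I \in MCE(\mathcal{O}^{fr}(V))$ on symmetric words of length $|v_i|$, with overall prefactor $\hbar^{bt(\Gamma)}$. Well-definedness (independence of the auxiliary orderings of vertices, half-edges at vertices, and the two half-edges of each internal edge) follows verbatim from the argument for the analogous lemma in section \ref{cksc}. Extending multiplicatively and $\hbar$-linearly produces the candidate map
$$\theta_I\colon \mathcal{G}^d\llbracket\hbar\rrbracket \to \mathcal{O}^{pq}(V)$$
of graded commutative $k\llbracket\hbar\rrbracket$-algebras.

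To upgrade this to a map of $(d-2)$-twisted BD algebras, I will check three intertwining relations on connected generators from which the general case follows by the BD Leibniz rule. The bracket identity $\theta_I(\{\Gamma_1,\Gamma_2\}) = \{\theta_I(\Gamma_1),\theta_I(\Gamma_2)\}$ is proved verbatim as in Lemma \ref{bra_comp_2}; as a formal consequence $\theta_I$ intertwines $\hbar\Delta_e$ with the BV part of $d + \hbar\Delta$ on the target. The loop-gluing identity $\theta_I(\Delta_i\Gamma) = \hbar\Delta\,\theta_I(\Gamma)$ is the one of Lemma \ref{diff_comp_2}, the extra $\hbar$ arising because gluing two leaves of $\Gamma$ increases $bt$ by one.

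The main point requiring care, and the only place where the argument really diverges from Theorem \ref{Kc3}, is the internal-differential identity $\theta_I(-d\Gamma) = d\,\theta_I(\Gamma)$. I plan to reuse the edge-contraction case analysis of Lemma \ref{diff_comp2}, noting that on $\mathcal{G}^d$ (unlike on $s\mathcal{G}^d$) any contraction of a loop (case 2 of \ref{cont_g}) produces a stable graph with positive loop defect, which is killed by the projection $s\mathcal{G}_{c,\bullet}\to \mathcal{G}_{c,\bullet}$ defining the differential on $\mathcal{G}^d$. Only the non-loop contractions of case 1 survive, contributing exactly the $\tfrac12\{I,I\}|_{v_i}$ terms, so the identity reduces precisely to the classical Maurer-Cartan equation $dI + \tfrac12\{I,I\} = 0$ for $I$; no $\hbar\Delta I$ term is needed and none appears, because the corresponding loop-contraction contribution on the graph side vanishes. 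Functoriality with respect to strict cyclic $L_\infty$-morphisms $F\colon V_1 \to V_2$ satisfying $F^* I_2 = I_1$ then follows on single connected generators by exactly the tensorial argument of Theorem \ref{Kc3}, since $\theta_I(\Gamma)$ is built from $I$ and the pairing, both respected by $F^*$; this yields a commutative analogue of \eqref{hdlm}.
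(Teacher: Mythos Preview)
Your proposal is correct and follows exactly the approach the paper intends: the paper's own proof is a one-line reference to arguing ``exactly the same as in theorem~\ref{Kc3}'', and you have correctly unpacked the one genuine modification, namely that in the edge-expansion analysis on $\mathcal{G}^d$ only case~1 of Definition~\ref{cont_g} contributes (loop contractions would raise the loop defect above zero and are projected out), so the identity $\theta_I\circ(-d)=d\circ\theta_I$ reduces to the classical Maurer--Cartan equation $dI+\tfrac12\{I,I\}=0$ rather than the quantum one. One small imprecision: your phrasing ``$\theta_I$ intertwines $\hbar\Delta_e$ with the BV part of $d+\hbar\Delta$'' is slightly off, since it is the combination $\Delta_i+\hbar\Delta_e$ that is sent to $\hbar\Delta$ via the BD relation (the derivation part $\Delta_i$ accounts for the $\Delta$-action within each factor and $\hbar\Delta_e$ for the cross-terms $\{-,-\}$), but this is exactly how the paper's proof of Theorem~\ref{Kc3} organizes it.
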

Again, using the same construction, given $I\in MCE(\mathcal{O}^{fr}(V))$, we can construct a map
$$\theta_{I}:{\mathcal{T}_{c,\bullet}}^\vee[d-3]\rightarrow C^*_+(V).$$
Indeed we land in $C^*_+(V)$ since trees needs to have a non-zero number of leafs.
In the same way as in lemmas \ref{bra_comp_2} and \ref{diff_comp2} we can argue that this map is compatible with the algebraic structures.
As in construction \ref{ext_sym_g} we get a map, also denoted
\begin{equation}\label{ibmd}
\theta_{I}:Sym({\mathcal{T}_{c,\bullet}}^\vee[d-3])\rightarrow C^*_+(V)\end{equation}
for which we can argue analogously to same as in theorem \ref{Kc3} that we have:
\begin{theorem}\label{Kc4}
Given $I\in MCE(\mathcal{O}^{fr}(V))$ the map \ref{ibmd} is a map of $(d-2)$-shifted Poisson algebras
    $$\theta_{I}: \mathcal{T}^d\rightarrow \mathcal{O}^{fr}(V),$$
    functorial with respect to strict cyclic $L_\infty$-morphisms.
\end{theorem}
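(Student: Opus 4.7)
The plan is to mirror the constructions and proofs of Theorems~\ref{Kc3} and \ref{Kc3.1}, specialized and simplified to the tree setting where no loops appear and no formal parameter $\hbar$ is needed. First I would set up $\theta_I$ on a single oriented tree $\Gamma$ via the formula \eqref{main_constr_c}, reading it in the classical (non-quantum) regime: since every vertex of a tree has loop defect $0$, the restriction $I|_{v_i}$ picks up the $\hbar^0$ piece of $I\in\mathcal O^{fr}(V)$ at the appropriate arity, and since $bt(\Gamma)=0$ no $\hbar$ prefactor enters. The degree count reduces to $-e+(d-2)e+(3-d)\cdot v$ and matches $3-d$ using $bt(\Gamma)=0$, so $\theta_I(\Gamma)$ genuinely lands in $C^*_+(V)$. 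Independence from the auxiliary orderings of vertices, half-edges at each vertex, and half-edges within each internal edge follows exactly as in the non-commutative lemma: the pairings are symmetric, the tensors $I|_{v_i}$ have even degree $(3-d)(1-o_i)=(3-d)$, so reordering produces a permutation $\tau$ that becomes trivial after the projection to symmetric words.

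Next I would verify differential compatibility $\theta_I\circ(-d)=d\circ\theta_I$ along the lines of Lemma~\ref{diff_comp2}. The crucial simplification is that, on trees, rule (2) of Definition~\ref{cont_g} (contracting a loop) never produces a tree, so only the edge-contraction rule (1) contributes. Exactly as in that lemma, the sum over all expansions of a vertex by a bivalent split reassembles into $\tfrac12\{I,I\}|_{v_i}$ inserted at the affected vertex, and the Maurer-Cartan equation $dI+\tfrac12\{I,I\}=0$ in $\mathcal O^{fr}(V)$ (which now has no $\hbar\Delta$ contribution) closes the computation. Compatibility with the shifted Lie bracket, $\theta_I(\{\Gamma_1,\Gamma_2\})=\{\theta_I(\Gamma_1),\theta_I(\Gamma_2)\}$, follows by the gluing argument of Lemma~\ref{bra_comp_2}: the tree bracket sums over gluing a leaf of $\Gamma_1$ to a leaf of $\Gamma_2$ (and the result is still a tree), and this matches pairing one letter of each symmetric word in $C^*_+(V)$ via $\langle\_,\_\rangle^{-1}$, modulo the $c_{l_1}(\Gamma_1)c_{l_2}(\Gamma_2)$ symmetry factors, which cancel exactly as before.

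Then I extend $\theta_I$ as a map of graded commutative algebras to $Sym(\mathcal T^B_{c,\bullet})^{\vee}[d-3]$, making \eqref{ibmd} a morphism of algebras by construction. Combined with the bracket compatibility on generators, the Leibniz rule propagates the bracket identity to all of $\mathcal T^d$, giving a map of $(d-2)$-shifted Poisson dg algebras. Finally, for functoriality, a strict cyclic $L_\infty$-morphism $F:V\to W$ with $F^*I_W=I_V$ yields commutativity
\[
\begin{tikzcd}
\mathcal{O}^{fr}(W)\arrow{rr}{F^*} && \mathcal{O}^{fr}(V)\\
&\mathcal T^d\arrow{ul}{\theta_{I_W}}\arrow[swap]{ur}{\theta_{I_V}}&
\end{tikzcd}
\]
by the same computation as in \eqref{hdlm}: strictness plus the pairing compatibility $\langle F_1(x),F_1(y)\rangle_W=\langle x,y\rangle_V$ force $F^*\circ\theta_{I_W}(\Gamma)=\theta_{I_V}(\Gamma)$ on each tree generator, and algebraicity extends this to the whole symmetric algebra.

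The main obstacle is not conceptual but bookkeeping: one must confirm that the differential computation in Step~2 really does close without any residual $\nabla$ or $\delta$ contributions (which were essential in the stable and ribbon settings). Here trees have a single face and zero genus defect, so the rule~(2) and rule~(3) contractions of Definition~\ref{cont_g} are absent, and the classical Maurer-Cartan equation suffices. Once this is checked, the rest of the proof is a direct specialization of the arguments already carried out for $\theta_{I^q}$ and $\theta_I$ on stable graphs.
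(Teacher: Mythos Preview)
Your proposal is correct and follows essentially the same approach as the paper, which simply says to argue ``analogously to'' Theorem~\ref{Kc3} by specializing the stable graph construction to trees. Two minor notational slips: ordinary graphs have no $B$-coloring (write $\mathcal{T}_{c,\bullet}$, not $\mathcal{T}^B_{c,\bullet}$), and Definition~\ref{cont_g} has only rules (1) and (2), so your closing remark about ``rule~(3)'' and ``single face'' mixes in ribbon-graph terminology that is irrelevant here.
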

Theorems \ref{Kc3.1}, \ref{Kc4} and \ref{Kc3} fit together with 
diagram \ref{ag_dq} and the dequantization map \ref{fc_dq} together:
\begin{theorem}\label{comp_gr_al}
Given $I^q\in MCE(\mathcal{O}^{pq}(V))$ denote by $I=p(I^q)\in MCE(\mathcal{O}^{fr}(V))$ its dequantization, see \ref{fc_dq}. Then following diagram commutes:
$$\begin{tikzcd}
\mathcal{O}^{fr}(V)&\mathcal{O}^{pq}(V)\arrow{l}{p}&
\mathcal{O}^{pq}(V)\arrow[equal]{l}\\
\mathcal{T}^d\arrow{u}{\theta_{I}}&\mathcal{G}^d\llbracket\hbar\rrbracket\arrow[swap]{l}\arrow{r}\arrow{u}{\theta_{I}}&
s\mathcal{G}^d\llbracket\hbar\rrbracket\arrow[bend left=20]{ll}\arrow{u}{\theta_{I^q}}.
    \end{tikzcd}$$
\end{theorem}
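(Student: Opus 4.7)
The plan is to mirror the proof of Theorem~\ref{comp_rg_al}, splitting the diagram into its three commuting pieces: the right-hand square, the left-hand square, and the outer triangle traced out by the bent arrow along the bottom row. None of the three requires a genuinely new idea beyond what was already exhibited on the ribbon-graph side; the work is in checking that the vertex/edge/leaf bookkeeping behind Construction~\ref{con_proj_g} and the formula \eqref{main_constr_c} is compatible with the relevant projections.

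I would dispose of the right-hand square first. The top horizontal arrow is the identity and the bottom arrow is the inclusion $v$ of ordinary graphs into stable graphs as those with $o(v_i) = 0$ at every vertex. On such a graph, the vertex decoration $I^q|_{v_i}$ in \eqref{main_constr_c} extracts the $\hbar^0$-coefficient of $I^q$; the Maurer--Cartan compatibility $p(I^q) = I$ then identifies this coefficient with $I$ on the $Sym^{\geq 1}$-part, which is precisely the subspace that pairs with the $|v_i|$ half-edges at a valent vertex. The resulting tensor coincides with the one $\theta_I$ produces, and extending as an algebra map to disjoint unions handles higher symmetric powers.

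The left-hand square and the bent triangle both rely on the same principle, exactly as in the ribbon case: the dequantization maps project onto $\mathrm{Sym}^1$ factors at $\hbar = 0$. Concretely, $p: \mathcal{G}^d\llbracket\hbar\rrbracket \to \mathcal{T}^d$ from Definition~\ref{deq_sg} annihilates any product of two or more connected graphs, any positive power of $\hbar$, and any connected graph that is not a tree; the dequantization $p: \mathcal{O}^{pq}(V) \to \mathcal{O}^{fr}(V)$ is the analogous projection onto the $\mathrm{Sym}^1$, $\hbar = 0$ component, in keeping with Definition~\ref{nc_dq}. For a basis element $\Gamma_1 \cdots \Gamma_n \cdot \hbar^k$ I would compute both paths. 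Down-then-left returns $\theta_I(\Gamma_1)$ when $n = 1$, $k = 0$ and $\Gamma_1$ is a tree, and zero otherwise. Up-then-left gives $\hbar^k \prod_i \theta_I(\Gamma_i) \in \mathcal{O}^{pq}(V)$; setting $\hbar = 0$ forces $k = 0$ and each $\Gamma_i$ a tree (since $\theta_I(\Gamma_i)$ carries a factor $\hbar^{\mathrm{bt}(\Gamma_i)}$), while the $\mathrm{Sym}^1$ projection forces $n = 1$, and the two surviving expressions are equal. The bent triangle is treated verbatim, observing that the projection $p_{s\mathcal{G}} : s\mathcal{G}^d\llbracket\hbar\rrbracket \to \mathcal{T}^d$ kills any stable graph with $\mathrm{bt}(\Gamma) > 0$ or a nonzero loop defect, and on honest trees $\theta_{I^q}$ and $\theta_I$ agree by the Step~1 argument.

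The main obstacle is not conceptual but combinatorial: one must track the $\hbar$-weight $\hbar^{\mathrm{bt}(\Gamma)}$ accumulated by each connected component, the symmetric-word length on the graph and observable sides, and the dequantization identity $p(I^q) = I$ at every vertex decoration simultaneously. Once it is observed that the $\mathrm{Sym}^1$ truncation performed by both dequantization maps collapses both paths to the same low-degree sector, the verification is essentially formal, just as the author notes for the ribbon-graph analogue.
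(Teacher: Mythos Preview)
Your overall strategy of splitting into the right square, the left square, and the bent triangle is sound, and your treatment of the right square is correct. The gap is in your description of the dequantization on the observable side.

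You write that $p:\mathcal{O}^{pq}(V)\to\mathcal{O}^{fr}(V)$ is ``the analogous projection onto the $\mathrm{Sym}^1$, $\hbar=0$ component, in keeping with Definition~\ref{nc_dq}.'' But the theorem statement itself points to Definition~\ref{fc_dq}, not~\ref{nc_dq}: on the commutative side $p$ merely sets $\hbar=0$ and kills the $n=0$ (constant) summand of $C^*(V)$. There is \emph{no} $\mathrm{Sym}^1$ projection here --- unlike $\mathcal{F}^{pq}(V_B)$, the object $\mathcal{O}^{pq}(V)=C^*(V)\llbracket\hbar\rrbracket$ carries no outer ``$\mathrm{Sym}$ of connected pieces'' grading for such a projection to act on. Consequently the step ``the $\mathrm{Sym}^1$ projection forces $n=1$'' in your left-square and bent-triangle arguments is unjustified.

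Concretely, take a product $\Gamma_1\cdot\Gamma_2$ of two connected trees in $\mathcal{G}^d\llbracket\hbar\rrbracket$. Going up then left produces $\theta_I(\Gamma_1)\cdot\theta_I(\Gamma_2)\in C^*_+(V)$ (no $\hbar$ appears since $\mathrm{bt}=0$, and the product of two nonconstant elements is nonconstant, so $p$ leaves it unchanged). Going left then up, however, gives $\theta_I\big(p(\Gamma_1\cdot\Gamma_2)\big)=\theta_I(0)=0$, since the graph-side dequantization of Definition~\ref{deq_sg} \emph{does} project to $\mathrm{Sym}^1$. Your argument does not close this discrepancy.

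What this reveals is an asymmetry the paper's one-line proof glosses over: on the ribbon side both dequantizations pass through $\mathrm{Sym}^1$ (cf.\ the remark in the proof of Theorem~\ref{comp_rg_al}), but on the commutative side only the graph dequantization does. One clean way to make the left square and bent triangle genuinely commute is to replace the graph-side dequantization by the algebra map that sets $\hbar=0$ and applies $\mathrm{Sym}$ of the projection $\mathcal{G}^\vee_{c,\bullet}\to\mathcal{T}^\vee_{c,\bullet}$ factorwise (this is still a dequantization map in the sense of Definition~\ref{dq}); with that choice both paths send $\Gamma_1\cdots\Gamma_n\hbar^k$ to $\prod_i\theta_I(\Gamma_i)$ when $k=0$ and every $\Gamma_i$ is a tree, and to zero otherwise. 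Either make this adjustment explicit, or restrict your verification to connected graphs and argue separately that this suffices for the uses downstream.
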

\begin{proof}
    This follows by unraveling the constructions. 
\end{proof}
We recall the singled-out Maurer-Cartan elements from \ref{S_mce} and \ref{sG_mce} and state following lemmas, which are straightforward to prove:
\begin{lemma}\label{comp_3}
Given $I^q\in MCE(\mathcal{O}^{pq}(V))$ we have that $\theta_{I^q}(G)=I^q$.
\end{lemma}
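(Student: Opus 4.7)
The plan is to compute $\theta_{I^q}(G_{n,g})$ explicitly for each $n>0$ and $g\geq 0$ using the defining formula \eqref{main_constr_c}, and then to sum these contributions and identify the result with $I^q$.

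First I would observe that for $G_{n,g}$ the underlying oriented stable graph is particularly simple: one vertex $v$ of valency $n$ with loop defect $o(v)=g$, and no internal edges. By definition \ref{betti} this gives $bt(G_{n,g}) = 1-1+0+g = g$. Because $E(G_{n,g})=\varnothing$, the edge-pairing factor $\bigotimes_{e\in E(\Gamma)}\langle\_,\_\rangle^{-1}$ in formula \eqref{main_constr_c} is simply absent, and by construction \ref{perm_g} the permutation $\sigma$ reduces to the identity (the single vertex ordering of the $n$ half-edges coincides with the leaf ordering chosen to build $\sigma$). The tensor $I^q|_{v_1}\otimes\cdots\otimes I^q|_{v_n}$ collapses to the single factor $I^q|_v$, which by \eqref{iso_scg} is the $\hbar^g$-coefficient of $I^q$, viewed in valency $n$ (that is, in $Sym^n(V^\vee[-1])$) under the standard isomorphism between invariants and coinvariants.

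Consequently \eqref{main_constr_c} reduces to $\theta_{I^q}(G_{n,g}) = proj(I^q|_v)\,\hbar^g$, where $proj$ is the projection from construction \ref{con_proj_g} onto symmetric words. Since $I^q|_v$ is already a symmetric (invariant) tensor in $U^{\otimes n}$, $proj(I^q|_v)$ recovers the corresponding element of $Sym^n(V^\vee[-1])$ with no spurious factors, giving exactly the $(Sym^n,\hbar^g)$-homogeneous component of $I^q$. Summing over $n>0$ and $g\geq 0$ and using the standing convention (parallel to the non-commutative restriction stated after definition \ref{qcat}) that $I^q$ has no constant term, we obtain
\[
\theta_{I^q}(G)\;=\;\sum_{n>0,\,g\geq 0}\bigl(I^q\bigr)_{n,g}\,\hbar^g\;=\;I^q,
\]
which is the claim.

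The main obstacle I anticipate is purely bookkeeping: matching the normalization of the invariant-coinvariant identification used implicitly when interpreting $I^q|_v$ as a tensor in $U^{\otimes n}$ with the normalization of $proj$, so that the apparent combinatorial factor of $n!$ does not survive. Once the conventions are aligned (as they are forced to be by, e.g., the compatibility of $\theta_{I^q}$ with the bracket in lemma \ref{bra_comp_2}), the computation is a direct unwinding of the definitions and no automorphism factors for $G_{n,g}$ complicate the picture since the only non-trivial automorphisms act inside the single star of $n$ leaves and are absorbed by the symmetric projection.
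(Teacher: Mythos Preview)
Your proposal is correct and is precisely the direct unwinding of the definitions that the paper has in mind when it calls the lemma ``straightforward to prove.'' The only point you might add for completeness is that the stability constraints on stable graphs (e.g.\ $|v|\geq 3$ when $o(v)=0$) mean that $G_{1,0}$ and $G_{2,0}$ do not occur in the sum defining $G$, so the corresponding $(Sym^1,\hbar^0)$ and $(Sym^2,\hbar^0)$ components of $I^q$ must be handled by convention---but these are exactly the ``constant'' and ``quadratic free'' pieces that are either absorbed into the differential $d$ of the underlying chain complex or excluded by the standing assumptions on Maurer--Cartan elements, so no content is lost.
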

\begin{lemma}\label{comp_4}
    Given $I\in MCE(\mathcal{O}^{fr}(V))$ we have $\theta_I(T)=I.$
\end{lemma}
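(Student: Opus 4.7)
The plan is to unpack the definition of $\theta_I$ on each individual generator $T_n$ and match the result with the $Sym^n$-component of the Maurer-Cartan element $I$.

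First I would observe that $T_n$ is represented by an oriented tree with a single vertex $v_1$ of loop defect $o_1=0$, no internal edges, and exactly $n$ leaves. Applied to this graph, the construction of $\theta_I$ from formula~\eqref{main_constr_c} (adapted to the free case by the remark following theorem~\ref{Kc4}) simplifies considerably: there are no pairings $\langle\_,\_\rangle^{-1}$ since there are no internal edges, the restriction $I|_{v_1}$ extracts the part of $I$ in $Sym^n(V^{\vee}[-1])$ (the loop defect being zero selects the $\hbar^0$-part, which in the free case is all of $I$ projected to the appropriate symmetric degree), and the permutation $\sigma$ acts trivially on the ordering of the $n$ leaves. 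What remains is the map
\[
\theta_I(T_n)=proj\circ\bigl(I|_{v_1}\bigr),
\]
where $proj: (V^{\vee}[-1])^{\otimes n}\to Sym^n(V^{\vee}[-1])$ is symmetrization.

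Next I would use the standard characteristic-zero identification between invariants and coinvariants: viewing the $Sym^n$-component $I_n\in Sym^n(V^{\vee}[-1])$ as a symmetric invariant tensor in $(V^{\vee}[-1])^{\otimes n}$, the composition $proj\circ I_n$ returns $I_n$. Hence $\theta_I(T_n)=I_n\in C^{*}_+(V)$ for each $n>2$.

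Finally I would invoke the structure of the Maurer-Cartan element. Since $I=\langle l\_, \_\rangle$ with $l^1=0$ (as assumed after the definition of $L_\infty$-algebra) and the cyclic/symmetric invariance of $\langle\_,\_\rangle$ places $\langle l^k(\_,\dots,\_),\_\rangle$ in $Sym^{k+1}(V^{\vee}[-1])$ for $k\ge 2$, the element $I$ decomposes as $I=\sum_{n>2} I_n$. Combining this with the previous step, extending $\theta_I$ additively and summing over $n>2$, we conclude
\[
\theta_I(T)=\sum_{n>2}\theta_I(T_n)=\sum_{n>2}I_n=I.
\]
No serious obstacle is anticipated here: the only subtlety worth double-checking is the sign/orientation convention (there are no edges to order, so the orientation datum is vacuous) and the bookkeeping between the invariant and coinvariant presentations, which is harmless since we are in characteristic zero. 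The proof is then essentially a one-line unwinding, analogous to the $A_\infty$-case handled in lemma~\ref{comp_2}.
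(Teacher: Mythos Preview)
Your proposal is correct and matches the paper's approach: the paper declares this lemma ``straightforward to prove'' without giving details, and the direct unwinding you describe---evaluating $\theta_I$ on the one-vertex, edgeless tree $T_n$ and recognizing the result as the $Sym^n$-component of $I$---is exactly the intended argument, parallel to lemma~\ref{comp_2}. The only minor point is that you assume $I$ has no $Sym^{\leq 2}$-components (via the $l^1=0$ convention); this is implicit in the paper's usage as well, since the lemma is only ever applied to Hamiltonians of cyclic $L_\infty$-algebras.
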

Thus we can induce the following theorem:
 \begin{theorem}\label{fde}
Given a quantization $I^q$ of a cyclic $L_\infty$-algebra $L$ there is a commutative diagram as follows:        
    $$\begin{tikzcd}
      \mathcal{O}^{cl}(L)&  \mathcal{O}^{q}(L)\arrow{l}\\
      \mathcal{T}^{d,tw}\arrow{u}{\theta_{I}}&s\mathcal{G}^{d,tw}\llbracket\hbar\rrbracket\arrow{u}{\theta_{I^q}}\arrow{l}.
    \end{tikzcd}$$
    Here the horizontal maps are dequantization maps, the right vertical map is a map of $(d-2)$-twisted BD algebras and the left vertical maps is a map of $(d-2)$-shifted Poisson algebras
    \end{theorem}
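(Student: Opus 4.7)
The plan is to deduce Theorem \ref{fde} as a corollary of Theorem \ref{comp_gr_al} by twisting the outer square of that diagram by compatible Maurer-Cartan elements.

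First, I would invoke Theorem \ref{comp_gr_al} to get the commutative diagram
$$\begin{tikzcd}
\mathcal{O}^{fr}(V) & \mathcal{O}^{pq}(V)\arrow[swap]{l}{p}\\
\mathcal{T}^d\arrow{u}{\theta_I} & s\mathcal{G}^d\llbracket\hbar\rrbracket\arrow{l}\arrow{u}{\theta_{I^q}},
\end{tikzcd}$$
in which the vertical maps have the desired algebraic types before twisting (Theorems \ref{Kc3} and \ref{Kc4}) and the horizontal map on the right is the dequantization map of Definition \ref{fc_dq}. By construction (Definition \ref{deq_sg}) the bottom horizontal map is also a dequantization map.

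Next I would verify compatibility of the canonical Maurer-Cartan elements along both vertical edges. By Lemma \ref{comp_3} we have $\theta_{I^q}(G) = I^q$, and by Lemma \ref{comp_4} we have $\theta_I(T) = I$. Since $\theta_{I^q}$ is a map of dg shifted Lie algebras (as part of it being a BD map) and $\theta_I$ is a map of dg shifted Lie algebras (as part of it being a shifted Poisson map), they send MCEs to MCEs; the two identities above already single out the canonical choice. Moreover, $p(G) = T$ by Remark \ref{prak_2} and $p(I^q) = I$ by definition of a quantization (Section \ref{cw}), and $p$ is a dequantization map, hence a map of dg shifted Lie algebras, so both the horizontal MCEs are compatible under $p$.

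With these compatibilities in place, I would twist the four vertices of the square by their respective MCEs. Twisting a BD algebra (resp.\ shifted Poisson algebra) by an MCE produces another BD algebra (resp.\ shifted Poisson algebra) of the same flavor, and a map of dg shifted Lie algebras intertwining compatible MCEs descends to a map of the twisted objects; for BD/Poisson maps this descended map remains a BD/Poisson map because the product and bracket are unchanged under twisting. Applied to $\theta_{I^q}$ this yields the map $s\mathcal{G}^{d,tw}\llbracket\hbar\rrbracket \to \mathcal{O}^q(L)$ of $(d-2)$-twisted BD algebras; applied to $\theta_I$ and to the dequantization maps $p$ it gives the remaining edges of the square claimed in Theorem \ref{fde}, still commuting since twisting is functorial and the underlying untwisted diagram commutes by Theorem \ref{comp_gr_al}.

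I do not anticipate any real obstacle: all the nontrivial analytic content (that $\theta_{I^q}$ is a BD map, that $\theta_I$ is a shifted Poisson map, that the untwisted square commutes, and that the chosen elements are MCEs) is already established. The only thing to check carefully is the naturality of twisting with respect to dequantization maps, i.e.\ that a dequantization map $R \to P$ sending an MCE $x \in R$ to $p(x) \in P$ induces a dequantization map $R^{tw_x} \to P^{tw_{p(x)}}$ — but this is immediate from the definitions since a dequantization map is by Definition \ref{dq} a map of the underlying shifted Lie algebras and twisting changes only the differential by $\{x,-\}$, which is intertwined by $p$.
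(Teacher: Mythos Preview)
Your proposal is correct and follows essentially the same approach as the paper: deduce the theorem from Theorem~\ref{comp_gr_al} by twisting its outer square by the compatible Maurer--Cartan elements, using Lemmas~\ref{comp_3} and~\ref{comp_4} (together with Remark~\ref{prak_2} and the definition of a quantization) to verify the compatibilities. The paper's proof is a one-line reference to exactly these ingredients; you have simply spelled out the routine verifications more explicitly.
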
 
    \begin{proof}
  Indeed this follows from the previous theorem and the consecutive two lemmas by twisting the outer diagram by the compatible Maurer-Cartan elements.      
    \end{proof}

\section{Main Compatibility Results}
In this section we explain how cyclic $A_\infty$-categories and their quantizations (section \ref{ncw}), cyclic $L_\infty$-algebras and their quantizations (section \ref{cw}),
(stable) ribbon graphs (section \ref{rbgs}) and (stable) graphs (section \ref{grs}) fit together, using the (quantized) LQT map (section \ref{slqtq}), Kontsevich's cocycle construction (section \ref{cksc}), its commututive analogue (section \ref{ksc}) and the map from section \ref{vsgzg}.
\begin{theorem}\label{Main}
Assume we are given $I^q\in MCE(\mathcal{F}^{pq}(V_{Ob\mathcal{C}}))$,
a quantization of an essentially finite dimension $d$ cyclic $A_\infty$-category $\mathcal{C}$ with its associated hamiltonian 
$I\in MCE(\mathcal{F}^{fr}(V_{Ob\mathcal{C}}))$.
For $N\in \mathbb{N}$ denote by $\mathfrak{gl}_NA_\mathcal{C}$ the associated cyclic commutator $L_\infty$-algebra with values in $N\times N$-matrices
and by $I^q_N$ its quantization determined by theorem \ref{LQT_q}.
Then following diagram commutes.
Further it is functorial with respect to strict cyclic morphisms of quantizations of essentially finite cyclic $A_\infty$-categories
and independent up to isomorphism from the choice of a finite presentation.   
$$ \begin{tikzcd}[row sep=scriptsize, column sep=scriptsize]
& \mathcal{F}^{cl}(\mathcal{C})  \arrow[rr,"LQT^{cl}"] & & \mathcal{O}^{cl}(\mathfrak{gl}_NA_\mathcal{C})   \\ \mathcal{F}^{q}(\mathcal{C})\arrow[rr, "LQT^q"{xshift=-7pt}]  \arrow[ur] & & \mathcal{O}^{q}(\mathfrak{gl}_NA_\mathcal{C})\arrow[ur] \\
& \mathcal{RT}^{d,tw}_{Ob(Sk\mathcal{C})}\arrow[uu,dashed, "\rho_{I}^{tw}" {yshift=-10pt}]   & & \mathcal{T}^{d,tw}\arrow[ll,dashed]\arrow[ uu,"\theta_{I_N}^{tw}" {yshift=-8pt}]  \\
 s\mathcal{RG}^{d,tw}_{Ob(Sk\mathcal{C})}\llbracket\gamma\rrbracket \arrow[uu,"\rho_{I^q}^{tw}" {yshift=5pt}]\arrow[ur] & & s\mathcal{G}^{d,tw}\llbracket\hbar\rrbracket\arrow[ll]\arrow[ur]\arrow[uu,"\theta_{I^q_N}^{tw}" {yshift=-10pt}]\\
\end{tikzcd}
$$
The objects on the front face are $(d-2)$-twisted BD algebras, the vertical arrows are maps of $(d-2)$-twisted BD algebras, the upper horizontal arrow is a 2-weighted map of  BD algebras and the lower one is the predual to such a map. The back face commutes as a diagram $(d-2)$-shifted Poisson algebras. The maps in between the two faces are dequantization maps.   
\end{theorem}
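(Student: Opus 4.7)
The strategy is to reduce commutativity of the cube to commutativity of each of its six faces and verify each one separately. Three faces follow directly from results already in the paper: the \emph{left face} (non-commutative, connecting the classical and quantum corners via the dequantization map and the maps $\rho_I$, $\rho_{I^q}$) is theorem~\ref{aiaiai}; the \emph{right face} (commutative, connecting the classical and quantum corners via $\theta_{I_N}$ and $\theta_{I^q_N}$) is theorem~\ref{fde}; and the \emph{bottom face} (relating the ribbon and ordinary graph complexes via $\pi$ and dequantization) is the outer square displayed at the end of section~\ref{vsgzg}, twisted by the compatible Maurer-Cartan elements $S$, $D$, $G$, $T$ whose compatibilities $p(S)=D$, $p(G)=T$, $\pi(G)=S$, $\pi(T)=D$ are recorded in remarks~\ref{prak_1} and \ref{prak_2} and the paragraph preceding them.

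Next, the \emph{top face} expresses that the LQT maps are compatible with dequantization. This is a twisting of diagram~\eqref{pqdfr} by the pair of compatible Maurer-Cartan elements $(I^q,I)$ on the domain side and $(I^q_N,I_N)$ on the codomain side; the required compatibility $p(I^q_N)=I_N$ follows from applying the dequantization map to both sides of $LQT^{pq}_N(I^q)=I^q_N$, which is precisely how $I^q_N$ is defined in the proof of theorem~\ref{LQT_q} via diagram~\eqref{pqdfr}.

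The essential content therefore lies in the \emph{front} and \emph{back faces}, which together assert that $LQT^q\circ\rho_{I^q}=\theta_{I^q_N}\circ\pi$ on the stable graph complex and $LQT^{cl}\circ\rho_I=\theta_{I_N}\circ\pi$ on the tree complex. It suffices to prove the quantum identity; the classical one then follows by applying the dequantization maps on both sides, using the top and bottom faces already verified. By theorem~\ref{LQT_pq} and theorem~\ref{GAL}, $LQT^{pq}_N$ is a $2$-weighted map of BD algebras and $\pi$ is pre-dual to one, while $\rho_{I^q}$ (theorem~\ref{Kc2}) and $\theta_{I^q_N}$ (theorem~\ref{Kc3}) are genuine BD-algebra maps; consequently both compositions are $2$-weighted maps and it suffices to check the identity on connected stable graphs $\Gamma$. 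For such a $\Gamma$, one expands $\pi(\Gamma)$ as a sum over all B-colored ribbon structures, applies $\rho_{I^q}$ to obtain a sum of vertex-decorated tensors built from the cyclic-word components of $I^q$ contracted along edges using the pairings, and then applies the trace map of $LQT^{pq}_N$; matching this against the direct computation of $\theta_{I^q_N}(\Gamma)$ reduces, vertex by vertex, to the observation that the $N\times N$-matrix hamiltonian $I^q_N=LQT^{pq}_N(I^q)$ is by construction the sum, over all cyclic orderings of the matrix inputs at each vertex, of traces of the corresponding cyclic words in $I^q$.

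The hard part will be the weight bookkeeping: tracking the $N^{f_0(\Gamma)}$ factors produced by empty colored faces on the ribbon side versus the $\hbar$-powers on the graph side, the conversion of $\gamma^{g(\Gamma)}$ into $\hbar^{2g}$ under the $2$-weighted identification, and the reconciliation of the symmetry factors $1/|\mathrm{Aut}(\Gamma)|$ between the two combinatorial sums; here the key identity is the Euler relation linking arithmetic genus, number of faces and loop defect that already underlies lemma~\ref{fal} and the construction of $\pi$ from equation~\eqref{ebieb}. Finally, functoriality with respect to strict cyclic $A_\infty$-morphisms and independence up to isomorphism from the choice of finite skeleton follow by combining the functoriality statements already proved in theorems~\ref{LQT_q}, \ref{aiaiai} and \ref{fde} with the zig-zag~\eqref{undfq} and diagram~\eqref{unabhai}.
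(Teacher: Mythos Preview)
Your overall decomposition into the six faces is exactly the paper's strategy, and your identification of the left, right, bottom and top faces with theorems~\ref{aiaiai}, \ref{fde}, the end of section~\ref{vsgzg}, and diagram~\eqref{pqdfr} is correct. The paper organises this slightly differently: it first proves an \emph{untwisted} and larger diagram (theorem~\ref{main_compa}, with an additional middle column involving $\mathcal{RG}^d_B$ and $\mathcal{G}^d$) and then obtains the cube of theorem~\ref{Main} by restricting to the outer faces and twisting by the compatible Maurer-Cartan elements. This is a packaging difference, not a mathematical one.

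Where your proposal has a genuine gap is the front face, which you correctly flag as the crux. Your sketch (``reduces, vertex by vertex, to the observation that $I^q_N$ is the sum over cyclic orderings\ldots'') is the right intuition but is not how the paper actually closes the argument, and your acknowledgment that the weight bookkeeping is ``the hard part'' is accurate: that bookkeeping is nontrivial and requires a specific tool you have not invoked. The paper factors the key square through the one-object case via $\iota_B^*$ (reducing $B$-coloured ribbon graphs to $\{*\}$-coloured ones), and then handles the remaining square using Hamilton's tensors $\mu^{g,b}_{n_1,\ldots,n_m}$ for the Frobenius algebra $M_N$ (lemma~3.10 of \cite{Ham11}). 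The five relations satisfied by these tensors are precisely what allows one to collapse the sum over ribbon structures on a fixed stable graph $\Gamma$ into a single expression with the correct powers of $N$, $\nu$, $\gamma$, and to match the Euler-type identity $bt(\Gamma)=2g(\bar\Gamma)+f_{tot}(\bar\Gamma)-1$ governing the $\hbar$-weight. Without this lemma your direct vertex-by-vertex matching would have to rediscover these relations ad hoc; citing and using them is the missing step.
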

\begin{proof}
Commutativity will follow from the next theorem \ref{main_compa}. More precisely, consider the outer commuting cube featuring in the next theorem. Rotating that cube around the z-axis by 90 degrees and then twisting that diagram by the compatible Maurer-Cartan in each corner (see lemma \ref{comp_1}, lemma \ref{comp_2} and lemma \ref{comp_3}, lemma \ref{comp_4}) gives the commutative cube of this theorem.

We have explained in theorem \ref{LQT_q} and \ref{LQT_cl} how the top face is functorial as claimed in the theorem. This also explains functoriality of the right face by diagram \ref{hdlm}. In the same cited theorems the independence of the top face on the choice of a finite presentation is explained. 

In theorem \ref{Kc2} and theorem \ref{Kc1} functoriality for the left face is explained. Further we have elaborated in \ref{unabhai} how the left face is independent of the choice of finite presentation. The corresponding statements for the other faces can be deduced from these results.
\end{proof}

\begin{theorem}\label{main_compa}
Assume we are given $I^q\in MCE(\mathcal{F}^{pq}(V_{Ob\mathcal{C}}))$, a quantization of an essentially finite dimension $d$ cyclic $A_\infty$-category $\mathcal{C}$ with its associated hamiltonian $I\in MCE(\mathcal{F}^{fr}(V_{Ob\mathcal{C}}))$. Fix the same notations as in the previous theorem. Then following diagram, where $B=ObSk\mathcal{C}$ finite, commutes.
$$\begin{tikzcd}
&\mathcal{O}^{fr}(\mathfrak{gl}_NV)&&\mathcal{O}^{pq}(\mathfrak{gl}_NV)\arrow[swap]{ll}{p}&&
\mathcal{O}^{pq}(\mathfrak{gl}_NV)\arrow[equal]{ll}\\
\mathcal{F}^{fr}(V_B)\arrow{ur}{LQT^{fr}_N}&&\mathcal{F}^{pq}(V_B)\arrow[swap,ll, "p" {xshift=10pt}]\arrow{ur}{LQT^{pq}_N}&&
\mathcal{F}^{pq}(V_B)\arrow[equal]{ll}\arrow{ur}{LQT^{pq}_N}&\\ 
&\mathcal{T}^d\arrow[dashed, uu,"\theta_{I_N}" {yshift=-8pt}]\arrow{dl}{\pi}&&\mathcal{G}^d\llbracket\hbar\rrbracket\arrow[swap,dashed]{ll}\arrow[dashed]{rr}\arrow[dashed, uu,"\theta_{I_N}" {yshift=-10pt}]\arrow{dl}&&
s\mathcal{G}^d\llbracket\hbar\rrbracket\arrow[bend left=12,dashed]{llll}\arrow[dashed, uu,"\theta_{I^q_N}" {yshift=-8pt}]\arrow{dl}{\pi}\\
\mathcal{RT}^d_{B}\arrow[uu,"\rho_{I}" {yshift=5pt}]&&\mathcal{RG}^d_{B}\llbracket\gamma\rrbracket\arrow[swap]{ll}\arrow{rr}\arrow[uu,"\rho_{I}" {yshift=5pt}]&&
s\mathcal{RG}^d_{B}\llbracket\gamma\rrbracket\arrow[bend left=12]{llll}\arrow[uu,"\rho_{I^q}" {yshift=5pt}]&
\end{tikzcd}$$
The corners of the cube on the right are $(d-2)$-twisted BD algebras, the upper arrows pointing inward are 2-weighted maps of BD algebras, whereas the lower arrows pointing outward are predual to such.
The horizontal arrows are maps of $(d-2)$-twisted BD algebras. The horizontal arrows in the left cube are dequantization maps, as are the curved arrows.
Lastly the left face commutes as a diagram $(d-2)$-shifted Poisson algebras.  
\end{theorem}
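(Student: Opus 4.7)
The plan is to decompose this (two-layer) cube into its faces and reduce to one key new combinatorial identity. Most of the faces have already been established earlier in the paper. The bottom (graph-complex-only) face is diagram \eqref{comp_g_rg}. The top face, relating the LQT map to the dequantization map $p$, is diagram \eqref{pqdfr}. The two "left" faces — Kontsevich respectively Penkava construction compatible with dequantization — are Theorem \ref{comp_rg_al} and Theorem \ref{comp_gr_al}. The remaining equalities between the classical, prequantum, and stable-prequantum columns follow from these together with functoriality of $\rho$ and $\theta$ (Theorems \ref{Kc2}, \ref{Kc3}).

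The new content is the commutativity of three "vertical trapezoids", one in each column of the cube, each of the shape
$$\begin{tikzcd}
s\mathcal{G}^d\llbracket\hbar\rrbracket \arrow{d}{\pi} \arrow{rr}{\theta_{I^q_N}} & & \mathcal{O}^{pq}(\mathfrak{gl}_N V_B) \\
s\mathcal{RG}^d_B\llbracket\gamma\rrbracket \arrow{r}{\rho_{I^q}} & \mathcal{F}^{pq}(V_B) \arrow{ur}{LQT^{pq}_N} &
\end{tikzcd}$$
and analogously for the $\mathcal{G}^d\llbracket\hbar\rrbracket$ and $\mathcal{T}^d$ columns. The identity to verify is $LQT^{pq}_N \circ \rho_{I^q} \circ \pi = \theta_{I^q_N}$, and analogously in the other columns. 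I would prove this on generators. Given a connected stable graph $\Gamma$, the map $\pi$ sends it to the sum over isomorphism classes of stable ribbon graphs with underlying stable graph $\Gamma$, with the $\hbar$-to-$\gamma$ reweighting of \eqref{ebieb} and \eqref{p2w}. Then $\rho_{I^q}$ applied to each such $\bar{\Gamma}$ tensors the vertex-decorations $I^q|_{v_i}$, contracts at each internal edge with the pairing inverse on $V_B$, and projects the leafs onto cyclic words according to the face-structure of $\bar{\Gamma}$, weighted by the $\nu_\lambda$. Finally $LQT^{pq}_N$ replaces each cyclic word by its matrix trace, converting each $\nu_\lambda$ into a factor of $N$ and inserting the appropriate powers of $\hbar$ (cf.\ \eqref{LQT_r}). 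On the other side, $\theta_{I^q_N}(\Gamma)$ is computed directly in $\mathfrak{gl}_N V_B$, whose pairing is $\langle Mu,M'u'\rangle = tr(MM')\langle u,u'\rangle$ by Remark \ref{cycM}; tensor-contracting the same vertex data with this pairing and projecting leafs to a symmetric word gives the right-hand side.

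The matching of the two expressions is the standard 't Hooft ribbon-graph argument underlying LQT: when one expands each tensor factor of $\mathfrak{gl}_N V_B$ at a vertex as a sum over cyclic orderings of its half-edges (this is exactly what \ref{cyccomm} and the map $\iota$ do internally) and then contracts edge by edge using the factorized pairing, the trace around each closed matrix-index loop produces precisely the sum over \emph{faces} of the corresponding ribbon structure. Each empty face produces a factor $N$, matching the assignment $\nu_\lambda \mapsto N$ in \eqref{LQT_r}; each face with open boundaries produces a cyclic word of the correct length; and the powers of $N$, $\hbar$ and the multiplication weighting $\hbar^{k-1}$ inherent in the prequantum LQT map fit together with $\gamma^{g(\Gamma)}$ after passing through $\pi$ thanks to the Euler-characteristic bookkeeping built into Constructions \ref{perm}--\ref{con_proj} and \ref{perm_g}--\ref{con_proj_g}. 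The most delicate step will be checking all combinatorial factors and signs: the automorphism multiplicities implicit in the sum defining $\pi(\Gamma)$ must cancel the overcounting in the sum-over-cyclic-orderings expansion of the traces, and the orientation conventions must be compared carefully under the Koszul rule.

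Once this trapezoid identity holds for the three untwisted columns, the full diagram of the theorem follows by twisting. The Maurer--Cartan elements on the graph-complex side are sent to the corresponding MC elements on the function-algebra side by the Kontsevich-type maps (Lemmas \ref{comp_1}--\ref{comp_4}) and are compatible under $\pi$ (as observed after diagram \eqref{comp_g_rg}); the LQT maps intertwine them by the very construction of $I_N$ and $I^q_N$ in Theorem \ref{LQT_q}. Twisting the three untwisted trapezoids simultaneously by these compatible MC elements and gluing to the already-established face commutativities yields the full cube. Functoriality with respect to strict cyclic morphisms of quantum essentially finite cyclic $A_\infty$-categories, and independence up to isomorphism from the choice of finite skeleton, are inherited face by face from the corresponding functoriality and independence statements of Theorems \ref{LQT_q} and \ref{Kc2}, together with the compatibility observed in \eqref{unabhai}.
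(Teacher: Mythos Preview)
Your decomposition into faces is exactly right and matches the paper's: top face is \eqref{pqdfr}, bottom is \eqref{comp_g_rg}, front and back are Theorems \ref{comp_rg_al} and \ref{comp_gr_al}, and the new content is the three parallel ``trapezoids'' $LQT^{pq}_N \circ \rho_{I^q} \circ \pi = \theta_{I^q_N}$ (and their classical/graph analogues).

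Where you diverge is in how you attack this key identity. The paper factors it in two steps: first it passes from the $B$-colored to the one-object situation via $\iota_B^*$ (summing over colorings), and then it proves the one-object trapezoid by invoking Hamilton's tensors $\mu^{g,b}_{n_1,\ldots,n_m}$ for the Frobenius algebra $M_N$ (Lemma~3.10 of \cite{Ham11}); the five structural identities satisfied by those tensors are exactly what encode how contracting matrix indices along edges accumulates genus and empty-face counts. Your proposed direct 't~Hooft expansion is the same phenomenon viewed from the other side --- it is morally correct and more elementary, but the automorphism/overcounting and $\hbar$-versus-$\gamma$ bookkeeping you flag as ``the most delicate step'' is precisely what the $\mu^{g,b}$ machinery packages cleanly. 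Either route works; the paper's buys you a citation instead of a combinatorial verification.

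One small confusion: your final paragraph about twisting by the compatible Maurer--Cartan elements and about functoriality/skeleton-independence does not belong to this theorem. Theorem~\ref{main_compa} is the \emph{untwisted} cube; the twisting step and the functoriality claims are the content of Theorem~\ref{Main}, which is deduced from \ref{main_compa}. Once the three trapezoids commute you are done here.
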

\begin{proof}
We have already seen that the upper two faces are commuting by remark \ref{pqdfr}, the lower two faces, including the curved arrows, are commuting by remark \ref{comp_g_rg}.
Further the front two faces, including the curved arrows, are commuting by theorem \ref{comp_rg_al} as are the two faces in the back, including the curved arrows, by theorem \ref{comp_gr_al}.
Thus it remains to show that the three parallel squares commute. We show that the rightmost one commutes, for the other two it follows similarly.
By definition of the prequantum LQT map this outermost diagram is equal to the following one.
\begin{equation}\label{comp_LQT_}
\begin{tikzcd}
\mathcal{F}^{pq}(V_B)\arrow{r}{\iota_B^*}&\mathcal{F}^{pq}(\widetilde{V_B})\arrow{r}{LQT_N^{pq}}&\mathcal{O}^{pq}(\mathfrak{gl}_NV)\\ \\
s\mathcal{RG}^d_{B}\llbracket\gamma\rrbracket\arrow{uu}{\rho_{I^q}}&s\mathcal{RG}^d_{\{*\}}\llbracket\gamma\rrbracket\arrow{uu}{\rho_{\widetilde{I}^q}}\arrow{l}&s\mathcal{G}^d\llbracket\hbar\rrbracket\arrow{l}{\pi}\arrow{uu}{\theta_{I^q_N}}
\end{tikzcd}
\end{equation}
Here we denoted by $\widetilde{V_B}$ the cyclic chain complex associated to $V_B$ under the cyclic commutator functor \ref{cyccomm} and  $\widetilde{I}^q:=\iota_B^*I^q\in MCE(\mathcal{F}^{pq}(\widetilde{V_B}).$
Further the lower left horizontal map is induced by the map of sets $B\rightarrow \{*\}$.
We prove commutativity of diagram \ref{comp_LQT_} in two steps:
    \begin{lemma}
The left square of diagram \ref{comp_LQT_} commutes. 
\end{lemma}
\begin{proof}
For $\Gamma$ a stable ribbon graph colored just by one element $\{*\}$ we can write
$$\rho_{\widetilde{I}^q}(\Gamma)=\sum_C \rho_{I^q}(\Gamma_C),$$
where the sum is over all isomorphism classes of $B$-colored stable ribbon graphs $\Gamma_C$ whose underlying $\{*\}$-colored stable ribbon graph is $\Gamma$.
Here we denoted by $C$ such a coloring.
Indeed, this follows by recalling the definition \ref{cyccomm} of the cyclic pairing on the cyclic chain complex associated to a collection of cyclic chain complexes and the construction \ref{main_constr_nc} of the map $\rho$.
But this equality expresses exactly commutativity of the diagram since $\iota_B^*$ is just an inclusion.
\end{proof}
\begin{lemma}
The right square of diagram \ref{comp_LQT_} commutes.
\end{lemma}
\begin{proof}
By recalling the definition of the prequantum LQT map for algebras the right square of the diagram is given by
$$
\begin{tikzcd}
\mathcal{F}^{pq}(\widetilde{V_B})\arrow{r}{tr^{pq}}&\mathcal{F}^{pq}(M_N\widetilde{V_B})\arrow{r}{m^{pq}\circ f^{pq}}&\mathcal{O}^{pq}(\mathfrak{gl}_NV)\\ \\
s\mathcal{RG}^d_{\{*\}}\llbracket\gamma\rrbracket\arrow{uu}{\rho_{\widetilde{I}^q}}\arrow{uur}{\rho_{M_N\widetilde{I}^q}}&&s\mathcal{G}^d\llbracket\hbar\rrbracket\arrow{ll}{\pi}\arrow{uu}{\theta_{I^q_N}}.
\end{tikzcd}
$$
Here $M_N\widetilde{I}^q=tr^{pq}\widetilde{I}^q$ and $I^q_N=m^{pq}\circ f^{pq}(M_N\widetilde{I}^q).$
We prove commutativity of the triangle first.

We recall from around lemma \ref{tr} the definition of $tr^{pq}$.
As explained in \cite{GGHZ21} theorem 4.9 and theorem 4.15, where $tr^{pq}$ is denoted $\mathcal{M}_{\gamma,\nu}$, one way to understand this map is by tensoring with certain tensors
$$\mu^{g,b}_{n_1\cdots n_m}\in M_N^{-\otimes n_1}\otimes\cdots \otimes M_N^{-\otimes n_m},$$
which exist for all integers $g,b\geq 0$ and $m,n_i>0$.
The properties of these tensors is described in lemma 3.10 of \cite{Ham11}, where they are denoted $\alpha^{g,b}$.
To be precise we are interested in these tensors applied to the Frobenius algebra $M_N$ of $N\times N$ matrices, in the set-up of \cite{Ham11}.

In the following we use the ambiguous notation $\mu^{g,b}_{m}$ for such a tensor above. 
We verify commutativity of the triangle for connected stable ribbon graphs $\Gamma$ from which general commutativity follows. We have by definition that
$$\rho_{M_N\widetilde{I}^q}(\Gamma)=proj \circ \bigotimes_{e\in E(\Gamma)}(\langle\_,\_\rangle^{-1}_e\langle\_,\_\rangle^{-1}_{M_N})\sigma \Big((\mu^{g_1, b_1}_{h_1}\otimes \widetilde{I}^q)|_{v_1}\otimes\cdots\otimes (\mu^{g_n, b_n}_{h_n}\otimes \widetilde{I}^q)|_{v_n}\Big)\nu^{{tot(\Gamma)}}\gamma^{g(\Gamma)}$$
Using implicitly the isomorphism $M_N^{\otimes 2}\cong {M_N^{\otimes -2}}^\vee$ and picking a good basis for $M_N$ we may write $$\langle\_,\_\rangle^{-1}_{M_N}=x_i\otimes y^i$$
where $x_i,y^i\in M_N$ and an implicit sum over $i$ is omitted. Then consecutively using properties (1)-(5) of the tensors $\mu^{g,b}_{m}$ as described in lemma 3.10 of \cite{Ham11} we have that above is equal to

$$=proj \circ \bigotimes_{e\in E(\Gamma)}(\langle\_,\_\rangle^{-1}_e\otimes\mu^{g(\Gamma), f_{tot}(\Gamma)}_{l(\Gamma)})\sigma(\widetilde{I}^q|_{v_1}\otimes\cdots\otimes \widetilde{I}^q|_{v_n})\nu^{f_{tot}}\gamma^{g(\Gamma)} =tr^{pq}\rho_{\widetilde{I}^q}$$
Indeed, since the graph is connected we end up with just one tensor $\mu^{g(\Gamma), f_{tot}(\Gamma)}_{l(\Gamma)}$.
While consecutively applying (3)-(5) of these properties (and possibly using (1)-(2) to bring elements in the good positions - no signs arise since $M_N$ is concentrated in degree zero) we pick up exactly the genus and the total number of faces of $\Gamma$, recalling definitions \ref{armg} and \ref{faces}.
For that latter part we have used the (not written `extreme' terms) of lemma 3.10 (3)-(5) in loc. cit.
$$(3)\ \ \mu^{g,b}_{n+1}((x_iy^i)\otimes (a_{11}\cdots a_{1k_1})\otimes\cdots\otimes (a_{n1}\cdots a_{nk_n} )=\mu^{g,b+2}_{n}( (a_{11}\cdots a_{1k_1})\otimes\cdots \otimes(a_{n1}\cdots a_{nk_n} ))$$
$$(4)\ \ \mu^{g,b}_{n+2}((x_i)\otimes(y^i)\otimes (a_{11}\cdots a_{1k_1})\otimes\cdots\otimes (a_{n1}\cdots a_{nk_n} )=\mu^{g+1,b+1}_{n}( (a_{11}\cdots a_{1k_1})\otimes\cdots \otimes(a_{n1}\cdots a_{nk_n} ))$$\begin{equation*}
\begin{split}
(5)\ \
& \mu^{g_1,b_1}_{n_1+1}\big( (a_{11}\cdots a_{1k_1})\otimes\cdots\otimes (a_{n_11}\cdots a_{n_1k_{n_1}})\otimes(x_i) \big)\mu^{g_2, b_2}_{n_2+1}\big((y^i)\otimes (b_{11}\cdots b_{1k_1})\otimes\cdots\otimes (b_{n_21}\cdots b_{n_2k_{n_2}}) \big)\\
&=\mu^{g_1+g_2,b_1+b_2+1}_{n_1+n_2}\big( (a_{11}\cdots a_{1k_1})\otimes\cdots\otimes (a_{n_11}\cdots a_{n_1k_{n_1}} )\otimes(b_{11}\cdots b_{1k_1})\otimes\cdots\otimes (b_{n_21}\cdots b_{n_2k_{n_2}})\big)
\end{split}
\end{equation*}
Now we show commutativity of the odd square. This is expressed by the following equality of first and third line:
\begin{equation*}
\begin{split}
&m^{pq}f^{pq}\rho_{M_N\widetilde{I}^q}(\pi(\Gamma))=m^{pq}f^{pq}\Big(\sum_{\bar{\Gamma}}(\bigotimes_{e\in E(\bar{\Gamma})}\langle\_,\_\rangle^{-1}\otimes proj)\circ{\sigma}\big(M_N\widetilde{I}^q|_{v_1}\otimes\ldots\otimes M_N\widetilde{I}^q|_{v_n}\big)\nu^{tot}\gamma^{g(\Gamma)}\Big)\\
&=(\bigotimes_{e\in E(\Gamma)}\langle\_,\_\rangle^{-1}\otimes proj)\circ{\sigma}\big({I}^q_N|_{v_1}\otimes\ldots\otimes {I}^q_N|_{v_n}\big)\hbar^{2g(\bar{\Gamma})+f_{tot}(\bar{\Gamma})-1}\\
&=(\bigotimes_{e\in E(\Gamma)}\langle\_,\_\rangle^{-1}\otimes proj)\circ{\sigma}\big({I}^q_N|_{v_1}\otimes\ldots\otimes {I}^q_N|_{v_n}\big)\hbar^{bt(\Gamma)}=\theta_{I^q_N}(\Gamma)
\end{split}
\end{equation*}
Here $\Gamma$ denotes a connected stable graph and the sum in the first line is over all stable ribbon graphs $\bar{\Gamma}$ whose underlying connected stable graph is $\Gamma$.
The first equality follows since  pairing the tensors $\{I^q_N|_{v_i}\}$ according to the graph $\Gamma$ is the same as pairing the tensors $\{M_NI^q|_{v_i}\}$ in all possible ways, but respecting the rule imposed by $\Gamma$.
However we can reorder all these possible ways according to each way of putting the structure of a stable ribbon graph on $\Gamma$ and then pairing the tensors $M_NI^q|_{v_i}$ respectively,
which is expressed by the sum in the first line. 

The second equality follows since given a connected stable ribbon graph $\bar{\Gamma}$ with underlying stable graph $\Gamma$ (recall definition \ref{sRGsG}) we have
$$bt((\Gamma))=2g(\bar{\Gamma})+f_{tot}(\bar{\Gamma})-1,$$
as can be verified by a simple computation further; recalling the definitions of the arithmetic genus \ref{armg}, of the total number of empty faces \ref{faces} as well as of the betti number \ref{betti}.
\end{proof}

These two lemmas finish the proof of theorem \ref{main_compa}.
\end{proof}

\bibliography{refs}
\bibliographystyle{alpha}
\end{document}